\definecolor{darkgreen}{cmyk}{1,0,1,.2}
\definecolor{m}{rgb}{1,0.1,1}
\newcommand\ip{{\imath\pi}}
\newcommand\rank{\operatorname{rank}}
\newcommand\intsi{\stackrel{\scriptscriptstyle{o}}{\sigma}}
\newcommand\res{\operatorname{Res}}
\newcommand\Kp{{\mathscr{K}}}
\renewcommand\lq{\leqslant}
\newcommand\gq{\geqslant}
\newcommand\CC{\mathcal{C}}
\newcommand\A{\mathcal{A}}
\newcommand\diag{\operatorname{diag}}
\newcommand\I{\operatorname{I}_F^\Ga}
\newcommand\ds{\displaystyle}
\renewcommand\S{\mathbb{S}}
\newcommand\C{\mathbb C}
\newcommand\To{{\mathbb T}}
\newcommand\K{\mathcal{K}}
\newcommand\F{\mathcal{F}}
\newcommand\Si{\Sigma}
\newcommand\De{\Delta}
\newcommand\JR{\mathcal{J}_{\Ga}^{red}}
\newcommand\JJ{\mathcal{J}}
\renewcommand\H{\mathscr{H}}
\newcommand\de{\delta}
\newcommand\N{\mathbb N}
\newcommand\Id{\mathcal{I}d}
\newcommand\R{\mathbb R}
\newcommand\st{\text{ such that }}
\newcommand\Z{\mathbb Z}
\newcommand\T{\mathcal{T}}
\newcommand\TT{\mathcal{T}}
\renewcommand\O{\mathcal{O}}
\newcommand\MM{\mathcal{M}}
\newcommand\D{\mathcal D}
\newcommand\DD{\mathcal D}
\newcommand\G{\mathcal{G}}
\newcommand\ts{{\otimes}}
\newcommand\rt{{\rtimes}}
\newcommand\rtr{{\rtimes_{red}}}
\newcommand\si{\sigma}
\newcommand\la{\lambda}
\newcommand{\aeq}{\stackrel{(\lambda,h)}{\sim}}
\newcommand\Ga{{\Gamma}}
\newcommand\ga{{\gamma}}
\newcommand\al{{\alpha}}
\newcommand\lto{{\longrightarrow}}
\newcommand\defi{{\stackrel{\text{def}}{=\!=}}}
\newcommand\AG{{A\rtimes_{red}\Ga}}
\newcommand\M{{M}}
\newcommand\U{\operatorname{U}}
\renewcommand\P{\operatorname{P}}
\newcommand\ka{\kappa}
\newcommand\ue{\operatorname{U}_n^{\varepsilon,r}}
\newcommand\pe{\operatorname{P}_n^{\varepsilon,r}}
\newcommand\eps{\varepsilon}
\newcommand\erp{$\eps$-$r$-projection }
\newcommand\eru{$\eps$-$r$-unitary }
\theoremstyle{plain}
\newtheorem{theorem}{Theorem}[section]
\newtheorem{proposition}[theorem]{Proposition}
\newtheorem{corollary}[theorem]{Corollary}
\newtheorem{lemma}[theorem]{Lemma}
\newtheorem{definition}[theorem]{Definition}
\theoremstyle{definition}
\newtheorem{remark}[theorem]{Remark}
\newtheorem{example}[theorem]{Example}
\newtheorem{notation}[theorem]{Notation}
\begin{document}
\title[Quantitative  $K$-theory and the K\"unneth formula]{Quantitative  $K$-theory and the K\"unneth formula for operator algebras }

 \author[H. Oyono-Oyono]{Herv\'e Oyono-Oyono}
 \address{Universit\'e de Lorraine, Metz , France}
 \email{herve.oyono-oyono@math.cnrs.fr}
\author[G. Yu]{Guoliang Yu }
 \address{Texas A\&M University, USA and Shanghai Center for Mathematical Sciences, Fudan University, China}
 \email{guoliangyu@math.tamu.edu}
\thanks{Oyono-Oyono is partially supported by the ANR ``SingStar'' and Yu is partially supported by  a grant from the
US National Science Foundation and by the NSFC 11420101001}
\begin{abstract} In this paper, we apply quantitative operator  $K$-theory to develop an algorithm for computing $K$-theory for the class of filtered $C^\ast$-algebras with asymptotic finite nuclear decomposition. As a consequence, we  prove the K\"unneth formula for $C^\ast$-algebras in this class.
Our main technical tool is a quantitative Mayer-Vietoris sequence for $K$-theory of filtered $C^\ast$-algebras.
\end{abstract}
\maketitle

\begin{flushleft}{\it Keywords:
    quantitative operator $K$-theory,  K\"unneth formula, filtered $C^\ast$-algebras}

\medskip

{\it 2000 Mathematics Subject Classification: 19K35,46L80,58J22}
\end{flushleft}

\tableofcontents
\section{Introduction}
The concept of quantitative operator $K$-theory was first introduced in \cite{y2} and was set-up  in full generality for     filtered $C^\ast$-algebras in \cite{oy2}.
The aim of this paper is to develop techniques of quantitative operator $K$-theory to compute $K$-theory of $C^\ast$-algebras. In particular, we introduce a concept of finite asymptotic  nuclear decomposition    for filtered $C^\ast$-algebras. This $C^*$-algebraic concept  can be viewed as the noncommutative analogue of metric spaces with finite asymptotic dimension. We  establish  an algorithm for computing $K$-theory of  $C^\ast$-algebras with finite asymptotic  nuclear decomposition.  As a consequence, we prove the K\"unneth formula   for  $C^\ast$-algebras in this class. 
 The key idea is that the  finite asymptotic  nuclear decomposition  for a  $C^\ast$-algebra would allow us to compute quantitative $K$-theory for the $C^\ast$-algebra at each scale and the $K$-theory of the $C^\ast$-algebra can then be computed by letting the scale go to infinity. The main technical tool to compute quantitative $K$-theory is   controlled Mayer-Vietoris pairs   and attached  controlled six term exact sequences. 
 \medskip
 
 The paper is organized as follows. In Section 1, we give from \cite{oy2,oy3} an overview of quantitative $K$-theory.
 In Section 2 we introduce the concept of a controlled Mayer-Vietoris pair. This is the key ingredient to define later on  the class of $C^\ast$-algebras with finite asymptotic  nuclear decomposition. Typical example of these objects arise from Roe algebras and more generally from $C^*$-algebras of \'etale groupo\"\i ds.  In Section $3$ is stated for a controlled Mayer-Vietoris pair the controlled six term exact sequence. We apply this sequence to $K$-contractibility of  $C^*$-algebra. Section 4 is devoted to the quantitative K\"unneth formula, which implies the K\"unneth formula in $K$-theory. We show that examples of filtered $C^*$-algebras for which  the quantitative K\"unneth formula holds  are provided by crossed product of $C^*$-algebras by finitely generated groups  satisfying the  Baum-Connes conjecture with coefficients. The main result of these section in that the quantitative K\"unneth formula is stable under decomposition by controlled Mayer-Vietoris pair.
 In Section 5 we introduce the class of $C^\ast$-algebras with finite asymptotic  nuclear decomposition. This class is the class   of filtered $C^*$-algebras $A$ for which there exists a integer $n$ such that at every order  $r$, the $C^*$-algebra  $A$ decomposes  in $n$ steps   under controlled Mayer-Vietoris into $C^*$-algebras in the Bootstrap category.
 We then provide an  algorithm for computing $K$-theory for $C^*$-algebras in this class. In particular we show that these $C^*$-algebras satisfy  the quantitative K\"unneth formula. As a consequence, we show that the uniform Roe algebra of a discrete metric space with bounded geometry and with finite asymptotic dimension satisfies the Kunneth formula.
\section{Overview of quantitative $K$-theory}
In this section, we recall the basic concepts of quantitative $K$-theory for filtered $C^\ast$-algebras and collect  the main results of \cite{oy2} concerning quantitative $K$-theory that we shall use throughout  this paper. Roughly speaking,  quantitative $K$-theory is the abelian groups of $K$-theory elements at each given scale and $K$-theory can be obtained as an inductive  limit of quantitative $K$-groups (see Corollary \ref{cor-limit-quant-K-groups}).   The key point is that quantitative $K$-theory is in numerous geometric situations  more computable that usual $K$-theory. The structure of filtered $C^*$-algebras allows us to talk about the scale of elements in the $C^\ast$-algebras.
\begin{definition}
A filtered $C^*$-algebra $A$ is a $C^*$-algebra equipped with a family
$(A_r)_{r>0}$ of  closed linear subspaces   indexed by positive numbers such that:
\begin{itemize}
\item $A_r\subset A_{r'}$ if $r\lq r'$;
\item $A_r$ is stable by involution;
\item $A_r\cdot A_{r'}\subset A_{r+r'}$;
\item the subalgebra $\ds\bigcup_{r>0}A_r$ is dense in $A$.
\end{itemize}
If $A$ is unital, we also require that the identity  $1$ is an element of $ A_r$ for every positive
number $r$. The elements of $A_r$ are said to have {\bf propagation $r$}.
\end{definition}

Many examples of filtered  $C^*$-algebras arise from geometry.
Typical examples are provided  by Roe algebras, group and crossed-product algebra \cite{oy2}, groupoid algebras (see Section \ref{subsection-groupoid}) and finitely generated $C^*$-algebras.
Indeed all   filtered $C^*$-algebras are associated with a  {\bf length function}:
let $A$ be a $C^*$-algebra and assume that there exists  a function $\ell:A\to \R^+\cup\{\infty\}$ such that
\begin{itemize}
\item $\ell(0)=0$;
\item $\ell(x+y)\lq \max\{\ell(x),\ell(y)\}$ for all $x$ and $y$ in $A$;
\item $\ell(x^*)=\ell(x)$ for all $x$ in $A$;
\item $\ell(\la x)=\ell(x)$ for all $x$ in $A$ and $\la$ in $\C\setminus\{0\}$;
\item $\ell(xy)\lq\ell(x)+\ell(y)$ for all $x$ and $y$ in $A$;
\item  $\{x\in A \text{ such that }\ell(x)\lq r\}$ is closed in $A$ for all positive number $r$;
\item  $\bigcup_{r>0}\{x\in A \text{ such that }\ell(x)\lq r\}$  is dense in $A$.
\end{itemize}
If we set $A_r=\{x\in A\text{ such that }\ell(x)\lq r\}$, then $A$ is filtered by $(A_r)_{r>0}$.
It is straightforward to show  that the category of filtered $C^*$-algebras is equivalent to the category of $C^*$-algebras equipped with a length function. In essence, we study geometric $C^*$-algebras just as group theorists study geometric  group theory.

\medskip
 Let $A$ and $A'$ be respectively  $C^*$-algebras filtered by
$(A_r)_{r>0}$ and  $(A'_r)_{r>0}$. A homomorphism of $C^*$
-algebras $\phi:A\lto A'$
is a {\bf filtered homomorphism} (or a {\bf homomorphism of  filtered $C^*$-algebras}) if  $\phi(A_r)\subset A'_{r}$ for any
positive number $r$.
If $A$ is not unital, let us denote by $\tilde{A}$ its unitarization, i.e.,
$$\tilde{A}=\{(x,\lambda);\,x\in A\,,\lambda\in \C\}$$  with the product $$(x,\lambda)(x',\lambda')=(xx'+\lambda x'+\lambda' x)$$ for all $(x,\lambda)$ and $(x',\lambda')$ in $\tilde{A}$. Then ${\tilde{A}}$ is filtered with
$${\tilde{A}_r}=\{(x,\lambda);\,x\in {A}_{r}\,,\lambda\in \C\}.$$
We also define $\rho_A:\tilde{A}\to\C;\, (x,\lambda)\mapsto \lambda$.

\subsection{Definition of quantitative $K$-theory}

Let $A$ be a unital filtered $C^*$-algebra. For any  positive
numbers $r$ and $\eps$ with $\eps<1/4$, we call
\begin{itemize}
\item an element $u$ in $A$  an $\eps$-$r$-unitary if $u$
  belongs to $A_r$,  $\|u^*\cdot
  u-1\|<\eps$
and  $\|u\cdot u^*-1\|<\eps$. The set of $\eps$-$r$-unitaries on $A$ will be denoted by $\operatorname{U}^{\varepsilon,r}(A)$.
\item an element $p$ in $A$   an $\eps$-$r$-projection    if $p$
  belongs to $A_r$,
  $p=p^*$ and  $\|p^2-p\|<\eps$. The set of $\eps$-$r$-projections on $A$ will be denoted by $\operatorname{P}^{\varepsilon,r}(A)$.
\end{itemize} Then $\eps$ is the called the control and $r$ is called the propagation of the $\eps$-$r$-projection or of the $\eps$-$r$-unitary.
Notice that an $\eps$-$r$-unitary is invertible, and that if $p$ is an \erp in $A$, then it has a spectral gap around $1/2$ and then gives rise by functional calculus to a  projection $\ka_{0}(p)$  in  $A$ such that
 $\|p-\ka_{0}(p)\|< 2\eps$.

Recall the following from \cite[Lemma 1.7]{oy2}
the following result that  will be  used quite  extensively throughout  the paper.
\begin{lemma}\label{lemma-almost-closed}
 Let $A$ be a  $C^*$-algebra filtered by $(A_r)_{r>0}$.
\begin{enumerate}

\item  If $p$  is an \erp in $A$ and  $q$ is a self-adjoint element of $A_r$ such
  that
   $\|p-q\|<\frac{\eps-\|p^2-p\|}{4}$, then $q$ is an  $\eps$-$r$-projection. In
   particular, if $p$ is an \erp in $A$ and if $q$ is a self-adjoint  element in
   $A_r$ such that $\|p-q\|< \eps$, then $q$ is a
   $5\eps$-$r$-projection in $A$ and $p$ and $q$ are connected by a
   homotopy of $5\eps$-$r$-projections.
\item  If $A$ is unital and if $u$ is an \eru and  $v$ is an element of $A_r$ such that
  $\|u-v\|<\frac{\eps-\|u^*u-1\|}{3}$, then $v$ is an $\eps$-$r$-unitary. In
  particular, if $u$ is an \eru and $v$ is an element of $A_r$ such that
  $\|u-v\|<\eps$, then $v$ is an $4\eps$-$r$-unitary  in $A$ and $u$
  and $v$ are connected by a homotopy of $4\eps$-$r$-unitaries.
 \item  If $p$  is a projection  in $A$ and  $q$ is a self-adjoint element of $A_r$ such
  that
   $\|p-q\|<\frac{\eps}{4}$, then $q$ is an  $\eps$-$r$-projection. 
   \item  If $A$ is unital and if $u$ is a unitary in $A$  and  $v$ is an element of $A_r$ such that
  $\|u-v\|<\frac{\eps}{3}$, then $v$ is an $\eps$-$r$-unitary.
\end{enumerate}
\end{lemma}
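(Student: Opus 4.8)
The plan is to establish (i) and (ii) by a direct algebraic expansion together with elementary norm estimates, and then to obtain (iii) and (iv) as the degenerate cases in which the defect $\|p^2-p\|$, respectively $\|u^{*}u-1\|$, vanishes.

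For (i) I would set $h=q-p$, a self-adjoint element of $A_r$ with $\delta:=\|h\|<\frac{\eps-\|p^2-p\|}{4}$, and expand
\[
q^2-q=(p^2-p)+ph+hp+h^2-h,
\]
so that $\|q^2-q\|\le\|p^2-p\|+(2\|p\|+1)\delta+\delta^2$. The next step is a crude a priori bound on $\|p\|$: since $p=p^{*}$ one has $\|p\|^2=\|p^2\|\le\|p^2-p\|+\|p\|<\eps+\|p\|\le\tfrac14+\|p\|$, forcing $\|p\|<\tfrac{1+\sqrt2}{2}<\tfrac32$; combined with $\delta<\tfrac1{16}$ this gives $(2\|p\|+1)\delta+\delta^2<4\delta$, hence $\|q^2-q\|<\|p^2-p\|+4\delta<\eps$. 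For the ``in particular'' clause I would note that then $\|p-q\|<\eps\le\frac{5\eps-\|p^2-p\|}{4}$, so the first part applied with $5\eps$ in place of $\eps$ shows $q$ is a $5\eps$-$r$-projection, and that the affine path $p_t=(1-t)p+tq$ runs through self-adjoint elements of $A_r$ with $\|p_t-p\|<\eps$, hence through $5\eps$-$r$-projections, yielding the claimed homotopy.

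The argument for (ii) runs in exact parallel. Writing $v=u+h$ with $\delta:=\|h\|<\frac{\eps-\|u^{*}u-1\|}{3}$, one has $v^{*}v-1=(u^{*}u-1)+u^{*}h+h^{*}u+h^{*}h$ and likewise for $vv^{*}-1$, so that $\|v^{*}v-1\|,\|vv^{*}-1\|\le\|u^{*}u-1\|+2\|u\|\delta+\delta^2$. The bound $\|u\|^2=\|u^{*}u\|\le\|u^{*}u-1\|+1<\tfrac54$ gives $\|u\|<\tfrac{\sqrt5}{2}$, and with $\delta<\tfrac1{12}$ this yields $2\|u\|\delta+\delta^2<3\delta$, so both quantities are $<\eps$. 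The ``in particular'' clause and the homotopy of $4\eps$-$r$-unitaries follow exactly as in (i). Finally (iii) is (i) specialized to $\|p^2-p\|=0$ (so the admissible threshold becomes $\eps/4$) and (iv) is (ii) specialized to $\|u^{*}u-1\|=0$ (threshold $\eps/3$).

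The only genuinely delicate point is the bookkeeping of constants: one must verify that the linear-plus-quadratic perturbation term $(2\|p\|+1)\delta+\delta^2$, respectively $2\|u\|\delta+\delta^2$, stays below $4\delta$, respectively $3\delta$. This is precisely what the denominators $4$ and $3$ in the hypotheses are calibrated for, and it relies in an essential way both on the a priori norm bound coming from $\eps<\tfrac14$ and on the smallness of $\delta$, which lets the quadratic term $\delta^2$ be absorbed into a small multiple of $\delta$.
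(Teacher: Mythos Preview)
Your argument is correct and is the natural direct computation. The paper does not prove this lemma at all: it simply recalls the statement from \cite[Lemma~1.7]{oy2}, so there is no alternative approach to compare against.

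One small imprecision in your treatment of (ii): the expansion of $vv^{*}-1$ produces the term $\|uu^{*}-1\|$, not $\|u^{*}u-1\|$, so your displayed bound $\|vv^{*}-1\|\le\|u^{*}u-1\|+2\|u\|\delta+\delta^{2}$ implicitly uses $\|uu^{*}-1\|=\|u^{*}u-1\|$. This is true here---$u^{*}u$ and $uu^{*}$ are self-adjoint with the same non-zero spectrum, and $\eps<\tfrac14$ forces both to be invertible, so their spectra and hence their distances to $1$ coincide---but it deserves a sentence. A second minor point: your reduction of (iii) to (i) is formally not quite a specialization, since (i) assumes $p\in A_r$ while (iii) allows $p$ to be any projection in $A$; however, your expansion only needs $q\in A_r$ and the norm bound $\|p\|\le 1$, so the computation carries over unchanged.
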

Let us also mention the following result concerning homotopy up to stabilization of products of $\eps$-$r$-unitaries \cite[Corollary 1.8]{oy2}.
 \begin{lemma}\label{cor-example-homotopy} Let $\eps$ and $r$ be positive numbers with $\eps<1/12$ and let $A$ be a  unital filtered $C^*$-algebra.
 \begin{enumerate}
 \item Let $u$ and $v$ be  $\eps$-$r$-unitaries in $A$, then $\diag(u,v)$ and
 $\diag(uv,1)$ are homotopic as   $3\eps$-$2r$-unitaries  in   $M_2(A)$; \item Let $u$ be an $\eps$-$r$-unitary in  $A$, then $\diag(u,u^*)$ and
 $I_2$ are homotopic as   $3\eps$-$2r$-unitaries  in   $M_2(A)$.
 \end{enumerate}

 \end{lemma}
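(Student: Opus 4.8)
The plan is to adapt the classical Whitehead rotation homotopy to the quantitative setting, keeping track of propagation and control at every step. For part (i), I would introduce the scalar rotation matrices
\[
R_t=\left(\begin{smallmatrix}\cos t&-\sin t\\ \sin t&\cos t\end{smallmatrix}\right),\qquad t\in[0,\pi/2],
\]
which lie in $M_2(A_r)$ for every $r>0$ because $A$ is unital, and consider the path $w_t=\diag(u,1)\,R_t\,\diag(v,1)\,R_t^*$ in $M_2(A)$. A direct computation gives $w_0=\diag(uv,1)$ and $w_{\pi/2}=\diag(u,v)$; the map $t\mapsto w_t$ is norm-continuous; and each $w_t$ has propagation at most $2r$, since multiplication by the scalar matrix $R_t$ does not increase propagation whereas $\diag(u,1)$ and $\diag(v,1)$ have propagation $r$.

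The substance of the argument is the control estimate $\|w_t^*w_t-I_2\|<3\eps$ and $\|w_tw_t^*-I_2\|<3\eps$. Writing $\diag(u,1)^*\diag(u,1)=I_2+\delta$ and $\diag(v,1)^*\diag(v,1)=I_2+\eta$ with $\|\delta\|,\|\eta\|<\eps$, and using that $R_t$ is a scalar unitary (so $R_t^*R_t=I_2$ and conjugation by $R_t$ is isometric), one obtains
\[
w_t^*w_t=I_2+R_t\,\eta\,R_t^*+R_t\,\diag(v,1)^*\,R_t^*\,\delta\,R_t\,\diag(v,1)\,R_t^*,
\]
whence $\|w_t^*w_t-I_2\|<\eps+\eps\,\|\diag(v,1)\|^2<\eps+\eps(1+\eps)<3\eps$, using that $\eps<1/12<1$; the estimate for $w_tw_t^*$ is symmetric. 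As $3\eps<1/4$, every $w_t$ is a $3\eps$-$2r$-unitary, so $(w_t)_{t\in[0,\pi/2]}$ is the desired homotopy.

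For part (ii), I would apply (i) with $v=u^*$ (which is again an $\eps$-$r$-unitary, $A_r$ being involution-stable), obtaining a homotopy of $3\eps$-$2r$-unitaries from $\diag(u,u^*)$ to $\diag(uu^*,1)$. It then remains to connect $\diag(uu^*,1)$ to $I_2$: with $F=uu^*-1\in A_{2r}$ and $\|F\|<\eps$, the affine path $s\mapsto\diag(1+sF,1)$, $s\in[0,1]$, runs through self-adjoint elements of propagation at most $2r$ satisfying $\|(1+sF)^2-I_2\|\lq 2\|F\|+\|F\|^2<3\eps$, hence through $3\eps$-$2r$-unitaries. Concatenating the two homotopies completes the proof.

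The only genuine difficulty is the bookkeeping: one must check that the accumulated error terms stay strictly below $3\eps$—which is exactly what the hypothesis $\eps<1/12$ secures, and it simultaneously keeps $3\eps$ below the threshold $1/4$ required for $\eps$-$r$-unitaries to be defined—and that the scalar rotations do not inflate the propagation past $2r$. Both points are controlled by the isometry of conjugation by $R_t$ together with the bound $\|\diag(u,1)\|^2=\max(\|u\|^2,1)\lq 1+\eps$.
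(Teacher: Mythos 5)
Your proof is correct. The paper itself does not prove this lemma but recalls it from \cite[Corollary 1.8]{oy2}; your argument is the standard Whitehead rotation adapted to the quantitative setting, which is the natural and expected route. Every step checks out: the scalar rotation $R_t$ lies in $M_2(A_{r'})$ for all $r'>0$ since the entries are scalar multiples of the unit, so $R_t\diag(v,1)R_t^*$ stays in $M_2(A_r)$ (its entries being linear combinations of $v$ and $1$ in the closed subspace $A_r$), and hence $w_t\in M_2(A_{2r})$; the bound $\|w_t^*w_t-I_2\|<\eps+\eps(1+\eps)<3\eps$ uses $\|v\|^2\lq 1+\eps$ and $\eps<1$, while $\eps<1/12$ also keeps $3\eps<1/4$ so the notion of $3\eps$-$2r$-unitary remains meaningful; and for (ii), the straight-line path $\diag(1+sF,1)$ through self-adjoint elements with $\|(1+sF)^2-1\|\lq 2\|F\|+\|F\|^2<3\eps$ correctly closes the homotopy to $I_2$.
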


 For purpose of rescalling the control and the propagation of an $\eps$-$r$-projection or of an $\eps$-$r$-unitary, we introduce the  following   concept of control pair.

 \begin{definition} A control pair  is a pair $(\lambda,h)$, where
\begin{itemize}
 \item $\lambda$ is a positive number with $\lambda >1$;
\item  $h:(0,\frac{1}{4\lambda})\to (1,+\infty);\, \eps\mapsto h_\eps$  is a map such that there exists a non-increasing map
$g:(0,\frac{1}{4\lambda})\to (1,+\infty)$, with $h\lq g$.
\end{itemize}\end{definition}
 The set of control pairs is equipped with a partial order:
 $(\lambda,h)\lq (\lambda',h')$ if $\lambda\lq\lambda'$ and $h_\eps\lq h'_\eps$
for all $\eps$ in $(0,\frac{1}{4\lambda'})$.

Recall the following from \cite[Corollary 1.31]{oy2}.
\begin{proposition}\label{prop-conjugate} There exists a control pair
  $(\alpha,k)$    such  that the following holds:

For any    unital
   filtered $C^*$-algebra $A$, any  positive numbers $\eps$ and $r$ with $\eps<\frac{1}{4\alpha}$ and any
  homotopic $\eps$-$r$-projections $q_0$ and $q_1$
   in $\pe(A)$, then there is for some integers $k$ and $l$  an $\alpha\eps$-$k_{\eps}r$-unitary
$W$ in  $\U^{\alpha\eps,k_{\eps}r}_{n+k+l}(A)$ such that
$$\|\diag(q_0,I_k,0_l)-W\diag(q_1,I_k,0_l)W^*\|<\alpha\eps.$$
\end{proposition}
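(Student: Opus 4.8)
Here is the plan I would follow.

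The plan is to break the given homotopy into finitely many short steps, realize each step by an explicit rotation of propagation $2r$, and then compose these rotations \emph{without} forming a single product --- whose propagation would grow linearly in the number of steps --- by spreading them along the diagonal of a large matrix algebra, paying a path-dependent increase $k=l$ in the matrix size. Write $\operatorname{P}^{\eps,r}_m(A)=\operatorname{P}^{\eps,r}(M_m(A))$, and assume the homotopy is a norm-continuous path $(q_t)_{t\in[0,1]}$ in $\operatorname{P}^{\eps,r}_n(A)$.

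First I would fix a small universal multiple $\delta$ of $\eps$ (to be pinned down at the end) and use uniform continuity of $t\mapsto q_t$ on $[0,1]$ to get $0=t_0<\dots<t_N=1$ with $\|q_{t_i}-q_{t_{i+1}}\|<\delta$ for all $i$; here $N$ depends on the modulus of continuity of the path, but it will affect only the matrix size, not the propagation or the control. Put $v_i=q_{t_{i+1}}q_{t_i}+(1_n-q_{t_{i+1}})(1_n-q_{t_i})\in M_n(A)_{2r}$. Since $\|v_i-1_n\|=O(\eps+\delta)$, Lemma~\ref{lemma-almost-closed} shows $v_i$ is an $\eps_1$-$2r$-unitary with $\eps_1=O(\eps+\delta)$, and an elementary estimate (exact when the $q_{t_i}$ are genuine projections, since then $v_iq_{t_i}v_i^{-1}=q_{t_{i+1}}$) gives $\|v_iq_{t_i}v_i^*-q_{t_{i+1}}\|=O(\eps+\delta)$. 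The obvious conjugator $v_{N-1}\cdots v_0$ carries $q_0$ close to $q_1$, but has propagation $2Nr$, which is not of the allowed form $k_\eps r$; so this product must be avoided.

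The key step is a \emph{pumping} device in $M_N(M_n(A))=M_{Nn}(A)$. Let $V=\operatorname{diag}(v_0,\dots,v_{N-1})$, of propagation $2r$, and let $P$ be the scalar block cyclic permutation sending block $j$ to block $j+1\bmod N$, of propagation $0$. Then $W_1:=PV$ is an $\eps_1$-$2r$-unitary, and since conjugating $\operatorname{diag}(q_{t_i})_i$ by $V$ replaces block $i$ by $\approx q_{t_{i+1}}$ while $P$ then shifts the last block $\approx q_{t_N}=q_1$ into position $0$, one gets
$$\|W_1\operatorname{diag}(q_0,q_{t_1},\dots,q_{t_{N-1}})W_1^*-\operatorname{diag}(q_1,q_{t_1},\dots,q_{t_{N-1}})\|=O(\eps+\delta).$$
Thus, at the cost of the padding $P_{\mathrm{pad}}:=\operatorname{diag}(q_{t_1},\dots,q_{t_{N-1}})$, we conjugate $\operatorname{diag}(q_0,P_{\mathrm{pad}})$ to $\operatorname{diag}(q_1,P_{\mathrm{pad}})$ with propagation only $2r$.

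It remains to replace $P_{\mathrm{pad}}$ by one of the prescribed form $I_k\oplus 0_l$. For $i=1,\dots,N-1$, the self-adjoint element $S_i=\begin{pmatrix} q_{t_i}&1_n-q_{t_i}\\ 1_n-q_{t_i}&-q_{t_i}\end{pmatrix}\in M_{2n}(A)_r$ satisfies $S_i^2=1_{2n}+2\operatorname{diag}(q_{t_i}^2-q_{t_i},q_{t_i}^2-q_{t_i})$, hence is a $2\eps$-$r$-unitary, with $\|S_i\operatorname{diag}(1_n,0_n)S_i^*-\operatorname{diag}(q_{t_i},1_n-q_{t_i})\|<2\eps$. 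Assembling the $S_i$ block-diagonally and conjugating by fixed scalar permutations produces a $2\eps$-$r$-unitary $T\in M_{2(N-1)n}(A)$ with $\|T\operatorname{diag}(I_{(N-1)n},0_{(N-1)n})T^*-\operatorname{diag}(P_{\mathrm{pad}},\bar P_{\mathrm{pad}})\|<2\eps$, where $\bar P_{\mathrm{pad}}=\operatorname{diag}(1_n-q_{t_1},\dots,1_n-q_{t_{N-1}})$. Taking $k=l=(N-1)n$ and
$$W:=\operatorname{diag}(1_n,T^*)\cdot\operatorname{diag}(W_1^*,1_{(N-1)n})\cdot\operatorname{diag}(1_n,T)\in M_{n+k+l}(A),$$
one has $\operatorname{propagation}(W)\le r+2r+r=4r$, and chaining the three approximate conjugations yields $\|\operatorname{diag}(q_0,I_k,0_l)-W\operatorname{diag}(q_1,I_k,0_l)W^*\|=O(\eps+\delta)$. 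Fixing $\delta$ as a suitable small multiple of $\eps$ makes both this error and the defect of $W$ from being unitary bounded by $\gamma\eps$ for a universal $\gamma$, so the control pair $(\alpha,k)$ with $\alpha=\gamma$ and $k_\eps\equiv 4$ works, the hypothesis $\eps<\tfrac1{4\alpha}$ guaranteeing all intermediate controls stay below $1/4$. I expect the only real obstacle to be exactly this propagation bookkeeping: the naive subdivide-and-multiply argument gives propagation proportional to the (uncontrolled) number of subdivision steps, and the cyclic-shift device above is what trades this growth for the permitted growth in matrix size; the perturbative estimates for $v_i$, $S_i$ and the triple product are routine and of the type already encoded in Lemma~\ref{lemma-almost-closed}.
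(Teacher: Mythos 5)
Your proof is correct. The paper itself does not prove Proposition \ref{prop-conjugate}; the statement is quoted from \cite[Corollary 1.31]{oy2}, so a line-by-line comparison with the source's argument is not possible from the text at hand. That said, the difficulty you isolate --- that multiplying the stepwise conjugators $v_{N-1}\cdots v_0$ yields propagation $2Nr$ with $N$ uncontrolled --- is exactly what any proof of this result must circumvent, and your pumping device (packing the $v_i$ along the diagonal of $M_{Nn}(A)$ and composing with a scalar cyclic shift, so that $N$ is absorbed into the stabilization size $k=l=(N-1)n$ rather than into the propagation) is the standard mechanism for trading one for the other. The conjugators $v_i=q_{t_{i+1}}q_{t_i}+(1_n-q_{t_{i+1}})(1_n-q_{t_i})$ and the symmetries $S_i$ are the right formulas, the estimates $\|v_iq_{t_i}v_i^*-q_{t_{i+1}}\|=O(\eps+\delta)$ and $\|S_i\diag(1_n,0_n)S_i^*-\diag(q_{t_i},1_n-q_{t_i})\|<2\eps$ hold, and the propagation bound $k_\eps\equiv 4$ and error bound $\gamma\eps$ (after fixing $\delta$ as a universal multiple of $\eps$) follow; nothing in the argument depends on $N$ except the integers $k$ and $l$, which the proposition explicitly allows to vary with the data.
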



 For any  $n$ integer, we set  $\ue(A)=\operatorname{U}^{\varepsilon,r}(M_n(A))$ and
$\pe(A)=\operatorname{P}^{\varepsilon,r}(M_n(A))$.
For any  unital filtered $C^*$-algebra $A$, any
 positive  numbers $\eps$ and $r$ and  any positive integer $n$, we consider inclusions
$$\P_n^{\eps,r}(A)\hookrightarrow \P_{n+1}^{\eps,r}(A);\,p\mapsto
\begin{pmatrix}p&0\\0&0\end{pmatrix}$$ and
$$\U_n^{\eps,r}(A)\hookrightarrow \U_{n+1}^{\eps,r}(A);\,u\mapsto
\begin{pmatrix}u&0\\0&1\end{pmatrix}.$$ This allows us to  define
 $$\U_{\infty}^{\eps,r}(A)=\bigcup_{n\in\N}\ue(A)$$ and
$$\P_{\infty}^{\eps,r}(A)=\bigcup_{n\in\N}\pe(A).$$

For a unital filtered $C^*$-algebra $A$, we define the
following
equivalence relations on $\P_\infty^{\eps,r}(A)\times\N$ and on  $\U_\infty^{\eps,r}(A)$:
\begin{itemize}
\item if $p$ and $q$ are elements of $\P_\infty^{\eps,r}(A)$, $l$ and
  $l'$ are positive integers, $(p,l)\sim(q,l')$ if there exists a
  positive integer $k$ and an element $h$ of
  $\P_\infty^{\eps,r}(A[0,1])$ such that $h(0)=\diag(p,I_{k+l'})$
and $h(1)=\diag(q,I_{k+l})$.
\item if $u$ and $v$ are elements of $\U_\infty^{\eps,r}(A)$, $u\sim v$ if
  there exists an element $h$ of
  $\U_\infty^{3\eps,2r}(A[0,1])$ such that $h(0)=u$
and $h(1)=v$.
\end{itemize}

If $p$ is an  element of $\P_\infty^{\eps,r}(A)$ and  $l$ is an integer, we
denote by $[p,l]_{\eps,r}$ the equivalence class of $(p,l)$ modulo  $\sim$
and if $u$ is an element of $\U_\infty^{\eps,r}(A)$ we denote by
$[u]_{\eps,r}$ its  equivalence class  modulo  $\sim$.
\begin{definition} Let $r$ and $\eps$ be positive numbers with
  $\eps<1/4$.
We define:
\begin{enumerate}
\item $K_0^{\eps,r}(A)=\P_\infty^{\eps,r}(A)\times\N/\sim$ for $A$ unital and
$$K_0^{\eps,r}(A)=\{[p,l]_{\eps,r}\in \P^{\eps,r}({\tilde{A}})\times\N/\sim \st
\rank \kappa_0(\rho_{A}(p))=l\}$$ for $A$ non unital ($\kappa_0(\rho_{A}(p))$ being the spectral projection associated to $\rho_A(p)$);
\item $K_1^{\eps,r}(A)=\U_\infty^{\eps,r}({\tilde{A}})/\sim$, with $\tilde{A}=A$ if $A$ is already unital.
\end{enumerate}
\end{definition}

 Then $K_0^{\eps,r}(A)$ turns to be an abelian group \cite[Lemma 1.15]{oy2}, where
 $$[p,l]_{\eps,r}+[p',l']_{\eps,r}=[\diag(p,p'),l+l']_{\eps,r}$$  for any  $[p,l]_{\eps,r}$ and $[p',l']_{\eps,r}$ in $K_0^{\eps,r}(A)$. According to Corollary \ref{cor-example-homotopy},  $K_1^{\eps,r}(A)$ is
 equipped with a structure of abelian group such that
$$[u]_{\eps,r}+[u']_{\eps,r}=[\diag(u,v)]_{\eps,r},$$ for
any  $[u]_{\eps,r}$ and $[u']_{\eps,r}$ in $K_1^{\eps,r}(A)$.

  \medskip

  Recall from \cite[Corollaries 1.19 and 1.21]{oy2} that
for any positive numbers  $r$ and $\eps$ with $\eps<1/4$, then
$$K_0^{\eps,r}(\C)\to\Z;\,[p,l]_{\eps,r}\mapsto \rank\kappa_0(p)-l$$
is an isomorphism and
$K_1^{\eps,r}(\C)=\{0\}$.

 \medskip

 We have for any filtered $C^*$-algebra $A$ and any  positive numbers
$r$, $r'$, $\eps$ and $\eps'$  with
  $\eps\lq\eps'<1/4$ and $r\lq r'$  natural group homomorphisms called the structure maps:
\begin{itemize}
\item $\iota_0^{\eps,r}:K_0^{\eps,r}(A)\lto K_0(A);\,
[p,l]_{\eps,r}\mapsto [\kappa_0(p)]-[I_l]$ (where  $\kappa_0(p)$ is the spectral projection associated to $p$);
\item $\iota_1^{\eps,r}:K_1^{\eps,r}(A)\lto K_1(A);\,
  [u]_{\eps,r}\mapsto [u]$  ;
\item $\iota_*^{\eps,r}=\iota_0^{\eps,r}\oplus \iota_1^{\eps,r}$;
\item $\iota_0^{\eps,\eps',r,r'}:K_0^{\eps,r}(A)\lto K_0^{\eps',r'}(A);\,
[p,l]_{\eps,r}\mapsto [p,l]_{\eps',r'};$
\item $\iota_1^{\eps,\eps',r,r'}:K_1^{\eps,r}(A)\lto K_1^{\eps',r'}(A);\,
  [u]_{\eps,r}\mapsto [u]_{\eps',r'}$.
\item $\iota_*^{\eps,\eps',r,r'}=\iota_0^{\eps,\eps',r,r'}\oplus\iota_1^{\eps,\eps',r,r'}$
\end{itemize}
If some of the indices $r,r'$ or $\eps,\eps'$ are equal, we shall not
repeat it in $\iota_*^{\eps,\eps',r,r'}$.
The structures maps satisfy  the obvious compatibilitity rules with respect to compositions. We  have in the formalism of quantitative $K$-theory the analogue of the standard form for a $K$-theory class.
 \begin{lemma}\label{lemma-almost-canonical-form}
Let $A$  be a non unital filtered $C^*$-algebra. Let $\eps$ and $s$ be positive numbers with $\eps<\frac{1}{36}$. Then for any $x$ in
$K_0^{\eps,s}(A)$, there exist
\begin{itemize}
\item two integers $k$ and $n$ with $k\lq n$;
\item $q$ a  $9\eps$-$s$-unitary in $M_n(\widetilde{A})$
\end{itemize}
such that $\rho_A(q)=\diag(I_k,0)$ and $x=[q,k]_{9 \eps, s}$ in $K_0^{9\eps,s}(A)$.\end{lemma}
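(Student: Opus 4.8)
The plan is to unwind the definition of $K_0^{\eps,s}(A)$ for non-unital $A$ and normalize a representative in two steps. First I would choose $p\in\P^{\eps,s}(M_n(\widetilde{A}))$ and an integer $l\lq n$ with $\rank\ka_0(\rho_A(p))=l$ and $x=[p,l]_{\eps,s}$. Since $\rho_A\colon M_n(\widetilde{A})\to M_n(\C)$ is a $*$-homomorphism, $\rho_A(p)$ is a self-adjoint element of $M_n(\C)$ with $\|\rho_A(p)^2-\rho_A(p)\|\lq\|p^2-p\|<\eps$, so it is an $\eps$-$s$-projection; its spectral projection $\ka_0(\rho_A(p))$ has rank $l$ and satisfies $\|\rho_A(p)-\ka_0(\rho_A(p))\|<2\eps$. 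I would then pick a scalar unitary $u_0\in M_n(\C)$ with $u_0\ka_0(\rho_A(p))u_0^*=\diag(I_l,0_{n-l})$, join $u_0$ to $I_n$ by a path $(u_t)$ in the connected unitary group of $M_n(\C)$, and observe that $t\mapsto u_tpu_t^*$ is a homotopy of $\eps$-$s$-projections (conjugation by a scalar unitary preserves propagation and the defect $\|(\cdot)^2-(\cdot)\|$). Hence $[u_0pu_0^*,l]_{\eps,s}=[p,l]_{\eps,s}$ and $\ka_0(\rho_A(u_0pu_0^*))=\diag(I_l,0_{n-l})$, so after replacing $p$ by $u_0pu_0^*$ I may assume $\ka_0(\rho_A(p))=\diag(I_l,0_{n-l})$.

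Next I would set $e=\diag(I_l,0_{n-l})\in M_n(\C)$ and define $q=p-\rho_A(p)+e$. Then $q=q^*$; since $e-\rho_A(p)\in M_n(\C)\subset M_n(\widetilde{A}_s)$ we get $q\in M_n(\widetilde{A}_s)$; and $\rho_A(q)=e=\diag(I_l,0_{n-l})$ by construction. Moreover $\|q-p\|=\|e-\rho_A(p)\|=\|\ka_0(\rho_A(p))-\rho_A(p)\|<2\eps$. As $\eps<1/36$, we have $9\eps<1/4$, and from $\|p^2-p\|<\eps$ we get $\tfrac{9\eps-\|p^2-p\|}{4}>2\eps$; so Lemma~\ref{lemma-almost-closed}(i), applied in $M_n(\widetilde{A})$ with control $9\eps$, shows that every point of the straight-line path $p_t=p+t(e-\rho_A(p))$, $t\in[0,1]$, is a $9\eps$-$s$-projection (note $\|p_t-p\|\lq\|q-p\|<2\eps$). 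In particular $t\mapsto p_t$ is a homotopy of $9\eps$-$s$-projections from $p$ to $q$ inside $M_n(\widetilde{A})$.

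Finally I would use the homotopy $t\mapsto\diag(p_t,I_l)$ (with bookkeeping integer $0$) to conclude that $(p,l)$ and $(q,l)$ are equivalent for the relation defining $K_0^{9\eps,s}(A)$, whence $[q,l]_{9\eps,s}=[p,l]_{9\eps,s}=\iota_0^{\eps,9\eps,s,s}(x)$ in $K_0^{9\eps,s}(A)$. Taking $k=l\lq n$ then yields the asserted standard form, with $q$ a $9\eps$-$s$-projection in $M_n(\widetilde{A})$ such that $\rho_A(q)=\diag(I_k,0)$. I expect the only genuine obstacle to be purely quantitative: one must verify that the perturbation $\|q-p\|<2\eps$ introduced when correcting $\rho_A(p)$ to the honest projection $\diag(I_l,0_{n-l})$ stays \emph{strictly} below the threshold $\tfrac{9\eps-\|p^2-p\|}{4}$ demanded by Lemma~\ref{lemma-almost-closed}(i) at control $9\eps$; this tight comparison is precisely what dictates the jump from $\eps$ to $9\eps$ and the hypothesis $\eps<1/36$.
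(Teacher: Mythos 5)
Your proof is correct and follows essentially the same route as the paper's: choose a representative $(p,l)$, conjugate by a scalar unitary to align $\ka_0(\rho_A(p))$ with $\diag(I_l,0)$, then perturb by the scalar error $\rho_A(p)-\ka_0(\rho_A(p))$ of norm $<2\eps$ and invoke Lemma~\ref{lemma-almost-closed}(i) at control $9\eps$ via the tight bound $2\eps<\tfrac{9\eps-\|p^2-p\|}{4}$. The only difference is presentational — you split the normalization into two explicit homotopies (scalar conjugation, then straight-line perturbation), while the paper compresses them into a single step; both arguments are the same calculation.
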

\begin{proof}
Let $x$ be an element in $K^{\eps,s}_0({A})$, let $p$ be an $\eps$-$s$-projection in some $M_n(\tilde{A})$ and let $k$  be an integer with $\rank \kappa_0(\rho_A(q))=k$ and such that
$x=[p,k]_{\eps,s}$. We can assume without loss of generality that $k\leq n$.  Let  $U$ be a unitary in $M_n(\C)$ such that  
$U\cdot \kappa_0(\rho_A(q))\cdot U^*=\diag(I_k,0)$. Since $U$ is homotopic to $I_n$ as a unitary of $M_n(\C)$, we see that $U\cdot p\cdot U^*$ and $p$ are homotopic as
$\eps$-$s$-projections  in $M_n(\tilde{A})$. Set then $$q'=U\cdot q\cdot U^*+  \diag(I_k,0) -U\cdot  \rho_A(q)\cdot U^*.$$      Since 
\begin{eqnarray*}
\|q'-U\cdot q\cdot U^*\|&=&\| U\cdot (\kappa_0( \rho_A(p))- \rho_A(p))\cdot U^*\|\\
&=&\|\kappa_0( \rho_A(p))- \rho_A(p) \|\\&<&2 \eps,
\end{eqnarray*} we get according to Lemma \ref{lemma-almost-closed} that $q$ and $q'$ are homotopic $9\eps$-$s$-projections in  $M_n(\tilde{A})$. Since $\rho_A(q')=\diag(I_k,0)$, we get the result.\end{proof}

We have a similar result in the odd case.
\begin{lemma}\label{lemma-almost-canonical-form-odd}
Let $A$  be a non unital  $C^*$-algebra filtered by $(A_s)_{s>0}$. Let $\eps$ and $s$ be positive numbers with $\eps<\frac{1}{84}$.
\begin{enumerate}
\item for any $x$ in $K^{\eps,s}_1(A)$, there exists an  $21\eps$-$s$-unitary $u$ in $M_n(\tilde{A})$,
such that $\rho_A(u)=I_n$ and $\iota^{\eps,21\eps,s}_1(x)=[u]_{21\eps,s}$  in $K_1^{21\eps,s}(A)$;
\item if $u$ and $v$ are two of $\eps$-$r$-unitaries in  $M_n(\tilde{A})$ such that $\rho_A(u)=\rho_A(v)=I_n$ and 
$[u]_{\eps,s}=[v]_{\eps,s}$  in $K_1^{\eps,r}(A)$, then there exists an integer $k$ and  a homotopy $(w_t)_{t\in[0,1]}$ of
$21\eps$-$s$-unitaries of $M_{n+k}(\tilde{A})$ between $\diag(u,I_k)$ and $\diag(v,I_k)$ such that $\rho_A(w_t)=I_{n+k}$ for every $t$ in $[0,1]$.
\end{enumerate}
\end{lemma}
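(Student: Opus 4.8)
The plan is to derive both parts from one device: over $\widetilde{A}$ the scalar part $\rho_A(\cdot)$ of a controlled unitary can be normalised to the identity by right multiplication by a suitable invertible \emph{scalar} matrix, at the price of a controlled enlargement of $\eps$. The only computation I will use repeatedly is the elementary perturbation estimate: if $x\in M_n(\widetilde{A})$ is an $\eta$-$r$-unitary and $c\in M_n(\C)$ has $\|c^*c-1\|<\delta<1$, then $xc$ still has propagation $r$ (multiplication by a scalar matrix does not raise propagation), $\rho_A(xc)=\rho_A(x)\,c$, and $xc$ is an $\eta'$-$r$-unitary with $\eta'$ bounded by an explicit function of $\eta,\delta$, using $\|c^{-1}\|^2=\|(c^*c)^{-1}\|\lq(1-\delta)^{-1}$; the hypothesis $\eps<\frac1{84}$, i.e. $21\eps<\frac14$, is precisely what keeps every $\eta'$ that occurs below under $21\eps$. (This is of the same nature as Lemma~\ref{lemma-almost-closed}.)

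For (i), given $x=[u]_{\eps,s}$ with $u\in\U_n^{\eps,s}(\widetilde{A})$ I would set $w=\rho_A(u)\in M_n(\C)$; from $\|w^*w-1\|\lq\|u^*u-1\|<\eps$ it is invertible. Unlike in the even case (Lemma~\ref{lemma-almost-canonical-form}), $w$ need not be close to $I_n$, so one first carries it there: using its polar decomposition $w=\widetilde w\,|w|$ and path-connectedness of $\U_n(\C)$, build a homotopy $(w_t)_{t\in[0,1]}$ of $3\eps$-unitaries in $M_n(\C)$ with $w_0=w$ and $w_1=I_n$ (over $\C$ the propagation is immaterial). Put $u'=u\,w^{-1}$: it has propagation $s$, $\rho_A(u')=ww^{-1}=I_n$, and the estimate above (with $c=w^{-1}$) gives $u'\in\U_n^{21\eps,s}(\widetilde{A})$. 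Then $t\mapsto u\,w^{-1}w_t$ is a path of $21\eps$-$s$-unitaries from $u$ (at $t=0$, since $w^{-1}w_0=1$) to $u'$ (at $t=1$), so $\iota_1^{\eps,21\eps,s}(x)=[u]_{21\eps,s}=[u']_{21\eps,s}$, and $u'$ is the representative sought.

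For (ii), from $[u]_{\eps,s}=[v]_{\eps,s}$ and the definition of the equivalence relation behind $K_1^{\eps,s}(A)$ I would fix $m\gq n$ and a homotopy $(h_t)_{t\in[0,1]}$ of $3\eps$-$2s$-unitaries in $M_m(\widetilde{A})$ with $h_0=\diag(u,I_{m-n})$, $h_1=\diag(v,I_{m-n})$. The key point is that its scalar part $g_t=\rho_A(h_t)$ is a path of $3\eps$-unitaries in $M_m(\C)$ which is a \emph{loop} at $I_m$, namely $g_0=\diag(\rho_A(u),I_{m-n})=I_m$ and $g_1=\diag(\rho_A(v),I_{m-n})=I_m$, exactly because $\rho_A(u)=\rho_A(v)=I_n$. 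I would then put $w_t=h_t\,g_t^{-1}$: it is continuous in $t$, $w_0=h_0=\diag(u,I_{m-n})$ and $w_1=h_1=\diag(v,I_{m-n})$ (the endpoints unchanged, as $g_0=g_1=I_m$), $\rho_A(w_t)=g_tg_t^{-1}=I_m$ for all $t$, and by the estimate above (with $c=g_t^{-1}$, $\delta=3\eps$) each $w_t$ is a $21\eps$-unitary, of the propagation carried by $h_t$. Taking $k=m-n$ then yields the desired homotopy between $\diag(u,I_k)$ and $\diag(v,I_k)$.

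The perturbation bounds folded into the first paragraph are routine, and are where the constant $21$ (and the threshold $\frac1{84}$) get spent. The step I expect to be the genuine point is the bookkeeping in (ii): one must make sure the corrected homotopy $(w_t)$ starts \emph{exactly} at $\diag(u,I_k)$ and ends \emph{exactly} at $\diag(v,I_k)$ — this is why one straightens by the honest inverse $g_t^{-1}$ (and not by a unitary approximating $g_t$, which would displace the endpoints) and why normalising $\rho_A(u)=\rho_A(v)=I_n$ is precisely what makes $g_t$ a based loop, so that no correction is needed at $t=0,1$. (If the output homotopy must literally have propagation $s$ rather than the propagation $2s$ carried by $(h_t)$, one further invokes the propagation-rescaling estimates of \cite{oy2}.)
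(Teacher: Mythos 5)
Your treatment of part (i) is essentially the paper's own argument: both normalize the scalar part by multiplying the given unitary by the inverse of $\rho_A(\cdot)$ (you multiply on the right, the paper on the left, setting $u=\rho_A(v)^{-1}v$), and both then connect the resulting representative to the original by contracting the scalar $\eps$-unitary factor to $I_n$ in $M_n(\C)$; the paper cites [Remark~1.4] and [Lemma~1.20] of \cite{oy2} for that contraction where you use the polar decomposition of $\rho_A(u)$, which amounts to the same observation. The perturbation estimate you sketch — multiplication by an almost-unitary scalar matrix keeps propagation and enlarges the control by a bounded factor — is exactly what the paper uses to land on $21\eps$.

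For part (ii) the paper actually gives no proof at all (its displayed proof covers only (i)), so your argument is filling a real gap. Your mechanism — take the $3\eps$-$2s$-homotopy $(h_t)$ supplied by the equivalence $\sim$, observe that the scalar trajectory $g_t=\rho_A(h_t)$ is a based loop at $I_m$ precisely because $\rho_A(u)=\rho_A(v)=I_n$, and right-multiply by the honest (not approximate) inverse $g_t^{-1}$ so that the endpoints are fixed pointwise — is correct and is the clean way to get a homotopy staying over $\rho_A\equiv I$. One point to flag: the homotopy you produce has propagation $2s$, not $s$, because the definition of $\sim$ supplies an $h\in\U_\infty^{3\eps,2s}$, and multiplication by the scalar matrix $g_t^{-1}$ does not change propagation. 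Your closing parenthetical — invoking ``propagation-rescaling estimates'' to bring $2s$ down to $s$ — is not a fix, since propagation can only be enlarged, never reduced. In light of the several other $r$/$s$ mismatches already present in the statement of part (ii), the intended conclusion is almost certainly $21\eps$-$2s$-unitaries (and indeed this is what the later invocation in Lemma~\ref{lemma-decomp-invertible-non-unital}, which starts from an honest $\eps$-$r$-homotopy rather than from $\sim$, is consistent with); with that reading your proof of (ii) is complete.
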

\begin{proof}
Let $v$ be an $\eps$-$r$-unitary in some $M_n(\tilde{A})$ such that $x=[v]_{\eps,r}$.
According to \cite[Remark 1.4]{oy2}, we have that $\|\rho_A(v)^{-1}-\rho_A(v^*)\|<2\eps$. In particular, $\rho_A(v)^{-1}$ is a $7\eps$-$r$unitary and $\rho_A(v)^{-1}$ is homotopic to $I_n$ as a $7\eps$-$s$-unitary of $M_n(\C)$, where $\C$ is provided with the trivial filtration \cite[Lemma 1.20]{oy2}.
Hence, if we set $u=\rho_A(v)^{-1} v$, then $u$ is a $21\eps$-$s$ unitary of $M_n(\tilde{A})$ such that $\rho_A(u)=I_n$ and $u$ and $v$ are homotopic as $21\eps$-$s$ unitaries of $M_n(\tilde{A})$. Hence  we have the equality 
$$\iota^{\eps,21\eps,s}_1(x)=[v]_{21\eps,s}=[u]_{21\eps,s}.$$
\end{proof}

%
%
%

 If  $\phi:A\to B$ is a  homomorphism  filtered $C^*$-algebras, then since $\phi$ preserves $\eps$-$r$-projections
 and $\eps$-$r$-unitaries, it obviously induces  for any positive number $r$ and any
$\eps\in(0,1/4)$ a
group homomorphism $$\phi_*^{\eps,r}:K_*^{\eps,r}(A)\longrightarrow
K_*^{\eps,r}(B).$$  Moreover quantitative $K$-theory is homotopy invariant with respect to homotopies that preserves propagation
\cite[Lemma 1.26]{oy2}.
There is also a quantitative version of Morita equivalence \cite[Proposition 1.28]{oy2}.
\begin{proposition}\label{prop-morita}
If $A$ is a filtered algebra and $\H$ is a separable Hilbert space, then the homomorphism
$$A\to \Kp(\H)\otimes A;\,a\mapsto \begin{pmatrix}a&&\\&0&\\&&\ddots\end{pmatrix}$$
induces a ($\Z_2$-graded) group isomorphism (the Morita equivalence)
$$\MM_A^{\eps,r}:K_*^{\eps,r}(A)\to K_*^{\eps,r}(\Kp(\H)\otimes A)$$
for any positive number $r$ and any
$\eps\in(0,1/4)$.
\end{proposition}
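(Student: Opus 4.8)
The plan is to realise $\MM_A^{\eps,r}$ as a composite of two isomorphisms: one identifying $K_*^{\eps,r}(\Kp(\H)\otimes A)$ with the inductive limit of the groups $K_*^{\eps,r}(M_n(A))$ along the upper-left corner inclusions, the other identifying that limit with $K_*^{\eps,r}(A)$. Fix an orthonormal basis $(\xi_i)_{i\in\N}$ of $\H$ with associated matrix units $e_{ij}\in\Kp(\H)$ and set $e_{[1,n]}=\sum_{i\lq n}e_{ii}$. Then $\Kp(\H)\otimes A$ is the norm-closure of the increasing union of its corners $e_{[1,n]}(\Kp(\H)\otimes A)e_{[1,n]}=M_n(A)$, with filtration $(\Kp(\H)\otimes A)_r=\overline{\bigcup_n M_n(A_r)}$. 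Write $\mu_n\colon A\to M_n(A),\ a\mapsto\diag(a,0,\dots,0)$ and $\phi_n\colon M_n(A)\to M_{n+1}(A),\ T\mapsto\diag(T,0)$ for the corner $\ast$-homomorphisms; they are filtered, satisfy $\phi_n\circ\mu_n=\mu_{n+1}$, and $\MM_A^{\eps,r}$ is the map induced on quantitative $K$-theory by the corner inclusion $A=M_1(A)\hookrightarrow\Kp(\H)\otimes A$.

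The first ingredient is that each $\mu_n$ induces an isomorphism $K_*^{\eps,r}(A)\to K_*^{\eps,r}(M_n(A))$, for every $\eps\in(0,1/4)$ and $r>0$, \emph{with no loss of control}. This is the statement for the finite-dimensional $M_n(\C)$ in place of $\Kp(\H)$ and, no completion being involved, it can be verified directly on representatives: a candidate inverse of $(\mu_n)_*^{\eps,r}$ is obtained by reinterpreting an $\eps$-$r$-projection (resp.\ $\eps$-$r$-unitary) over $M_k(\widetilde{M_n(A)})$ as one over $M_{nk}(\widetilde A)$ via $\widetilde{M_n(A)}\hookrightarrow M_n(\widetilde A)$, after repackaging its scalar part; that the two composites are the identity reduces to (a) a block permutation --- a unitary over $\C$, hence homotopic to the identity through unitaries since the unitary group of $M_{nk}(\C)$ is connected --- which converts $\widetilde\mu_n(p)$ into $\diag(p,\rho_A(p),\dots,\rho_A(p))$; (b) the fact that over the trivially filtered $\C$ an $\eps$-$r$-projection is homotopic through $\eps$-$r$-projections to its spectral projection and an $\eps$-$r$-unitary is homotopic through $\eps$-$r$-unitaries to the identity (this is the content of $K_0^{\eps,r}(\C)\cong\Z$, $K_1^{\eps,r}(\C)=0$ and Lemma \ref{lemma-almost-closed}); and (c) the absorption of the resulting standard projections or identity unitaries into the $\times\N$ bookkeeping of $K_0^{\eps,r}$, the normalisation $\rank\kappa_0(\rho_A(\cdot))=l$ forcing the integer labels to match --- in the odd case one uses Lemma \ref{cor-example-homotopy} for the group operation.

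The second ingredient is that the inclusions $M_n(A)\hookrightarrow\Kp(\H)\otimes A$ induce an isomorphism $\varinjlim_n K_*^{\eps,r}(M_n(A))\to K_*^{\eps,r}(\Kp(\H)\otimes A)$, again with no change of $\eps$ or $r$. For surjectivity, represent a class by an $\eps$-$r$-projection $p$ (resp.\ $\eps$-$r$-unitary $u$) over $M_m(\widetilde{\Kp(\H)\otimes A})$, with scalar part already normalised as in Lemmas \ref{lemma-almost-canonical-form}/\ref{lemma-almost-canonical-form-odd} up to a harmless rescaling, and approximate its non-scalar part --- which lies in a matrix algebra over $(\Kp(\H)\otimes A)_r=\overline{\bigcup_n M_n(A_r)}$ --- by an element supported in a finite corner $M_n$, to within a distance strictly less than the slack $\frac{\eps-\|p^2-p\|}{4}$ (resp.\ $\frac{\eps-\|u^*u-1\|}{3}$) afforded by the \emph{sharp} first clauses of Lemma \ref{lemma-almost-closed}; applying those clauses along the linear interpolation shows the perturbed element is still an $\eps$-$r$-projection (resp.\ $\eps$-$r$-unitary), has the same scalar part, and is homotopic to the original through $\eps$-$r$-projections (resp.\ $\eps$-$r$-unitaries), so the class lies in the image of $K_*^{\eps,r}(M_n(A))$. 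For injectivity, the same approximation applied to a homotopy over $(\Kp(\H)\otimes A)[0,1]$ witnessing the vanishing of a class --- compactness of $[0,1]$ supplying a uniform slack --- pushes the whole homotopy into a finite corner, so the class already vanishes in $K_*^{\eps,r}(M_n(A))$. Finally, $\phi_n\circ\mu_n=\mu_{n+1}$ means the isomorphisms of the first ingredient intertwine the inductive system $\bigl(K_*^{\eps,r}(M_n(A)),(\phi_n)_*^{\eps,r}\bigr)$ with the constant system on $K_*^{\eps,r}(A)$; hence $\varinjlim_n K_*^{\eps,r}(M_n(A))\cong K_*^{\eps,r}(A)$ with $\mu_1$ realising the identity, and composing with the second ingredient shows that $\MM_A^{\eps,r}$ --- which \emph{is} the $n=1$ structure map followed by the isomorphism of the second ingredient --- is an isomorphism; it is a homomorphism respecting the $\Z_2$-grading because it is induced by a $\ast$-homomorphism.

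The main obstacle is carrying out both ingredients \emph{without any loss in $\eps$ or $r$}, since the statement claims an isomorphism at each fixed control. Two features make this delicate: the non-unitality of $\Kp(\H)\otimes A$ --- present even when $A$ is unital --- which forces one to separate the scalar part of every representative and track it via $\rho_A$, together with the attendant $\times\N$ bookkeeping in $K_0^{\eps,r}$, this being the real content of the first ingredient; and the need to invoke the \emph{sharp} clauses of Lemma \ref{lemma-almost-closed} rather than their ``in particular'' consequences, exploiting the strict inequalities $\|p^2-p\|<\eps$ and $\|u^*u-1\|<\eps$ built into the definitions (and compactness of $[0,1]$ for homotopies) to produce genuine slack for the finite-corner approximations. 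Everything else is formal.
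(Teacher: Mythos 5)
The paper does not prove this proposition: it is imported, statement only, from Proposition~1.28 of \cite{oy2}, so there is no in-text proof here to compare against. Your two-ingredient reduction --- Morita invariance for $M_n(A)$ at fixed $(\eps,r)$, then a finite-corner approximation inside $\Kp(\H)\otimes A$ exploiting the strict inequalities built into $\eps$-$r$-projections and $\eps$-$r$-unitaries --- is the natural argument, is sound, and with the details filled in gives a complete proof; I find no gap.

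Two places you gesture at that would need explicit treatment in a full write-up. In step (b) of your first ingredient, the scalar homotopies are $t\mapsto(1-t)\lambda+t\kappa_0(\lambda)$ for an almost-projection $\lambda\in M_j(\C)$, and for an almost-unitary $u\in M_j(\C)$ the polar path $t\mapsto v|u|^{1-t}$ to the unitary part $v$ followed by a path in the connected group $U_j(\C)$; that the $\eps$-$r$-control is preserved along these paths is a small spectral-theorem check (monotonicity of $x\mapsto x(1-x)$ on either side of $1/2$, respectively $\||u|^{2s}-1\|\lq\|u^*u-1\|$ for $s\in[0,1]$), and works precisely because the filtration on $\C$ is trivial, as you observe. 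In the injectivity half of your second ingredient, pushing a homotopy over $(\Kp(\H)\otimes A)[0,1]$ into a finite corner is not a pointwise statement: one should partition $[0,1]$ finely, approximate at the nodes inside a common $M_n(A)$, and interpolate piecewise-linearly in $t$, with compactness supplying uniform slack under $3\eps$ for $K_1$ (the equivalence relation there uses $3\eps$-$2r$-homotopies) and under $\eps$ for $K_0$. Neither point is a gap; both are straightforward once stated.
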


The following observation establishes a connection between quantitative $K$-theory and classical $K$-theory (see \cite[Remark 1.17]{oy2}).
\begin{proposition}\label{proposition-approximation-K-th}\
\begin{enumerate}
\item Let $A$ be a filtered $C^\ast$-algebra. For any positive number $\eps$ with $\eps< \frac{1}{4}$ and  any element $y$ of $K_*(A)$,  there exists  a positive number $r$ and an element  $x$ of  $K_*^{\eps,r}(A)$ such that $\iota_{*}^{\eps,r}(x)=y$;
\item There exists a positive number $\lambda_0$ such that for any $C^*$-algebra $A$, any positive numbers $\eps$  and $r$ with $\eps< \frac{1}{4\lambda_0}$ and any element $x$ of  $K_*^{\eps,r}(A)$ for which  $\iota_*{\eps,r}(x)=0$ in $K_*(A)$, then there exists a positive number $r'$ with $r'\gq r$ such that $\iota_*^{\eps,\lambda_0\eps,r,r'}(x)=0$ in $K_*^{\lambda\eps,r'}(A)$.
\end{enumerate}
\end{proposition}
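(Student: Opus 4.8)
The plan is to deduce both statements from the description of $K_*(A)$ as an inductive limit of the quantitative groups $K_*^{\eps,r}(A)$ along the propagation parameter $r$, for a fixed (small) control $\eps$. This inductive limit description is precisely Corollary \ref{cor-limit-quant-K-groups}, to which the excerpt alludes; so the two assertions are nothing but the surjectivity of the canonical map to an inductive limit (statement (i)) and the injectivity criterion for an element mapping to zero in the limit (statement (ii)).

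For \textbf{statement (i)}, I would argue separately in even and odd degree, using the canonical forms from Lemmas \ref{lemma-almost-canonical-form} and \ref{lemma-almost-canonical-form-odd}. Given $y\in K_0(A)$, write $y=[p]-[I_l]$ with $p$ a genuine projection in some $M_n(\widetilde{A})$ with $\rho_A(p)=\diag(I_l,0)$. Since $\bigcup_{r>0}M_n(\widetilde A_r)$ is dense in $M_n(\widetilde A)$, choose a self-adjoint $q$ of propagation $r$ (for $r$ large enough) with $\|p-q\|$ as small as we like; by Lemma \ref{lemma-almost-closed}(iii), $q$ is an $\eps$-$r$-projection for any prescribed $\eps<1/4$ once $\|p-q\|<\eps/4$, and $\kappa_0(q)$ is homotopic to $p$ (indeed norm-close, hence $\kappa_0(\rho_A(q))$ still has rank $l$ after a small correction as in the proof of Lemma \ref{lemma-almost-canonical-form}). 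Then $x=[q,l]_{\eps,r}\in K_0^{\eps,r}(A)$ satisfies $\iota_0^{\eps,r}(x)=[\kappa_0(q)]-[I_l]=y$. The odd case is identical: represent $y\in K_1(A)$ by a unitary $u$ in $M_n(\widetilde A)$ with $\rho_A(u)=I_n$, approximate by $v\in M_n(\widetilde A_r)$ with $\|u-v\|<\eps/3$, invoke Lemma \ref{lemma-almost-closed}(iv) to get that $v$ is an $\eps$-$r$-unitary, and note $\iota_1^{\eps,r}([v]_{\eps,r})=[v]=[u]=y$ since norm-close invertibles are homotopic.

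For \textbf{statement (ii)}, the content is a uniformity statement, so I would again pass through the inductive-limit picture of $K_*(A)\cong\varinjlim_r K_*^{\eps,r}(A)$. If $x\in K_*^{\eps,r}(A)$ has $\iota_*^{\eps,r}(x)=0$, then its image in the inductive limit is zero, and the defining property of an inductive limit of groups gives some $r'\gq r$ with $\iota_*^{\eps,\eps,r,r'}(x)=0$. The subtlety — and the reason a fixed universal constant $\lambda_0$ (independent of $A$) appears rather than $\lambda_0=1$ — is that the equivalence relations defining the quantitative groups and the passage $K_*^{\eps,r}\to K_*(A)$ both involve controlled homotopies whose propagation and control get rescaled by the universal control pair of Proposition \ref{prop-conjugate} (and by Lemma \ref{cor-example-homotopy} in the odd case): turning a genuine homotopy witnessing $\iota_*^{\eps,r}(x)=0$ back into a controlled homotopy of $\eps$-$r$-objects costs a controlled degradation $\eps\rightsquigarrow\lambda_0\eps$ at a larger propagation $r'$. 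Concretely, in degree $0$ one represents $x$ by an $\eps$-$r$-projection $p$, uses $\iota_0^{\eps,r}(x)=0$ to find that $\kappa_0(p)$ is stably homotopic to a trivial projection in $A$, approximates that homotopy by a self-adjoint path of finite propagation $r'$, and applies Lemma \ref{lemma-almost-closed}(i) and Proposition \ref{prop-conjugate} to convert it into a controlled homotopy of $\lambda_0\eps$-$r'$-projections; the constant $\lambda_0$ is the one coming from Proposition \ref{prop-conjugate} (enlarged to absorb the numerical factors $5,9,21$ above). The odd case is handled the same way with Lemma \ref{lemma-almost-canonical-form-odd} and Lemma \ref{cor-example-homotopy} supplying the stabilization.

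The main obstacle is bookkeeping the constants in \textbf{statement (ii)}: one must check that a single $\lambda_0$, depending on none of $A,\eps,r$, suffices to absorb all of the rescalings — those from the definition of $\sim$ (factor $3$ in the odd relation), from $\kappa_0$ (factor $2$), from Lemma \ref{lemma-almost-closed} (factors $4,5$), from the canonical-form lemmas (factors $9,21$), and above all from the control pair $(\alpha,k)$ of Proposition \ref{prop-conjugate}. Since all of these are universal, taking $\lambda_0$ to be a large enough product of them (and assuming $\eps<\tfrac1{4\lambda_0}$ from the outset so that all intermediate objects remain legitimate $\eps'$-$r'$-projections/unitaries) closes the argument; the only genuinely non-formal input is Proposition \ref{prop-conjugate}, which is quoted.
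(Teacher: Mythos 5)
The paper does not prove this Proposition itself; it quotes it from \cite[Remark 1.17]{oy2}, so there is no internal proof to compare against. Your direct arguments, however, are sound and are in the expected spirit. For (i): choose a genuine projection (resp.\ unitary) representing the class in normalized form in some $M_n(\widetilde A)$, approximate it in norm by a self-adjoint element (resp.\ invertible) of finite propagation using density of $\bigcup_s A_s$, and invoke Lemma~\ref{lemma-almost-closed}~(iii) (resp.~(iv)); this is correct. For (ii): approximate the genuine homotopy witnessing $[\kappa_0(p)\oplus I_k]=[I_{l+k}\oplus 0]$ uniformly by a path of self-adjoint finite-propagation elements (using compactness of $[0,1]$), glue to the endpoints via Lemma~\ref{lemma-almost-closed}~(i), and read off the universal rescaling $\lambda_0$ from the bound $\|p-\kappa_0(p)\|<2\eps$ together with the numerical factors in that lemma; also correct.

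Two remarks. First, the opening framing via Corollary~\ref{cor-limit-quant-K-groups} is logically circular: in this paper that corollary is stated as a consequence of the present Proposition (its statement even begins ``Let $\lambda_0$ be as in Proposition~\ref{proposition-approximation-K-th}''), so it cannot be invoked here. Fortunately your actual arguments are direct and do not depend on it, so this is only a presentational lapse, but it should be removed. Second, the invocation of Proposition~\ref{prop-conjugate} in (ii) is superfluous: the approximation-and-gluing argument alone already produces a homotopy of $\lambda_0\eps$-$r'$-projections (or unitaries) for a purely numerical $\lambda_0$, without having to replace a homotopy by a controlled conjugation. Bringing in the control pair $(\alpha,k)$ of Proposition~\ref{prop-conjugate} is harmless because $r'$ is unconstrained, but it makes $\lambda_0$ needlessly large and obscures the point that the rescaling of $\eps$ is a fixed numerical constant independent of the algebra and of the propagation.
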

As a consequence , we get the following approximation property.
\begin{corollary}\label{cor-limit-quant-K-groups}
Let $\lambda_0$ be as in Proposition \ref{proposition-approximation-K-th}. Then for any positive number $\eps$ with $\eps<\frac{1}{4\la_0}$ and for any filtered $C^*$-algebra $A$, then
$$K_*(A)=\lim_r \iota^{\eps,\la_0\eps,r}_*(K_*^{\eps,r}(A)).$$
\end{corollary}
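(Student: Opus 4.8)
The corollary is a formal consequence of Proposition \ref{proposition-approximation-K-th} combined with the composition rules for the structure maps, and the plan is simply to make the inductive limit explicit and then quote the two parts of that proposition. Throughout, fix $\eps<\frac{1}{4\la_0}$ and write $\eps'=\la_0\eps$. First I would observe that the subgroups $\iota^{\eps,\eps',r}_*\big(K_*^{\eps,r}(A)\big)\subseteq K_*^{\eps',r}(A)$ do form an inductive system indexed by the positive reals: for $r\lq r'$ the composition rules give
$$\iota_*^{\eps',\eps',r,r'}\circ\iota^{\eps,\eps',r}_*=\iota_*^{\eps,\eps',r,r'}=\iota^{\eps,\eps',r'}_*\circ\iota_*^{\eps,\eps,r,r'},$$
so $\iota_*^{\eps',\eps',r,r'}$ carries $\iota^{\eps,\eps',r}_*\big(K_*^{\eps,r}(A)\big)$ into $\iota^{\eps,\eps',r'}_*\big(K_*^{\eps,r'}(A)\big)$, and these restrictions (which satisfy the cocycle condition, again by the composition rules) are the transition maps whose inductive limit is $\lim_r\iota^{\eps,\eps',r}_*\big(K_*^{\eps,r}(A)\big)$. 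The structure maps $\iota_*^{\eps',r}:K_*^{\eps',r}(A)\to K_*(A)$ restrict to this subsystem and are compatible with the transition maps, so they assemble into a homomorphism
$$\Phi:\lim_r\iota^{\eps,\eps',r}_*\big(K_*^{\eps,r}(A)\big)\lto K_*(A),$$
and what has to be shown is that $\Phi$ is an isomorphism.

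For surjectivity I would invoke part (i) of Proposition \ref{proposition-approximation-K-th} (applicable since $\eps<\tfrac14$): given $y\in K_*(A)$ there are $r>0$ and $x\in K_*^{\eps,r}(A)$ with $\iota_*^{\eps,r}(x)=y$; then $\iota^{\eps,\eps',r}_*(x)$ lies in the $r$-th term of the system and $\iota_*^{\eps',r}\big(\iota^{\eps,\eps',r}_*(x)\big)=\iota_*^{\eps,r}(x)=y$, so $y$ is hit by $\Phi$.

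For injectivity I would invoke part (ii), and this is the only place the hypothesis $\eps<\frac{1}{4\la_0}$ enters. A class in the limit killed by $\Phi$ has a representative of the form $z=\iota^{\eps,\eps',r}_*(x)$ with $\iota_*^{\eps',r}(z)=0$ in $K_*(A)$; since $\iota_*^{\eps',r}(z)=\iota_*^{\eps,r}(x)$, part (ii) produces $r'\gq r$ with $\iota_*^{\eps,\eps',r,r'}(x)=0$ in $K_*^{\eps',r'}(A)$, whence $\iota_*^{\eps',\eps',r,r'}(z)=\iota_*^{\eps,\eps',r,r'}(x)=0$. Thus the transition map from level $r$ to level $r'$ annihilates $z$, so $z$ represents $0$ in the limit and $\Phi$ is injective.

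I do not expect any genuine obstacle here: the real content — that every $K$-theory class and every relation among such classes is, after relaxing the control, detected at some finite propagation — is exactly Proposition \ref{proposition-approximation-K-th}, imported from \cite{oy2}, whereas the corollary itself is just the diagram chase above through the composition rules for the structure maps.
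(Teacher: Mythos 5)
Your proof is correct and is exactly the formal argument the paper leaves implicit by stating the corollary "as a consequence" of Proposition \ref{proposition-approximation-K-th}: one builds the directed system of images $\iota^{\eps,\la_0\eps,r}_*(K_*^{\eps,r}(A))$ inside $K_*^{\la_0\eps,r}(A)$, checks the compatible family of structure maps to $K_*(A)$ induces a map on the limit, and then reads off surjectivity from part (i) and injectivity from part (ii) of the proposition. No gap, and no divergence from the intended route.
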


\subsection{Quantitative objects}
In order to study the functorial properties of quantitative $K$-theory, we introduced in \cite{oy3} the concept of quantitative object.

\begin{definition}
 A quantitative object is a family   $\O=(O^{\eps,r})_{0<\eps<1/4,r>0}$ of abelian groups,
together with a family of
group homomorphisms  $$\iota_{\O}^{\eps,\eps',r,r'}:O^{\eps,r}\to O^{\eps',r'}$$ for $0<\eps\lq \eps'<1/4$ and $0<r\leq r'$ called the structure maps such that
\begin{itemize}
 \item $\iota_{\O}^{\eps,\eps,r,r}=Id_{O^{\eps,r}}$ for any  $0<\eps<1/4$ and $r>0$;
 \item $\iota_{\O}^{\eps',\eps'',r',r''}\circ\iota_{\O}^{\eps,\eps',r,r'}=\iota_{\O}^{\eps,\eps'',r,r''}$ for any $0<\eps\lq \eps'\lq \eps''<1/4$ and $0<r\lq r'\lq r''$.
\end{itemize}
\end{definition}
\begin{example}\
Our prominent example will be of course quantitative $K$-theory $\K_*(A)=(K^{\eps,r}_*(A))_{0<\eps<1/4,r>0}$ of a filtered $C^*$-algebra  $A$  with structure maps $\iota^{\eps,\eps',r,r'}_*:K^{\eps,r}_*(A)\lto K^{\eps',r'}_*(A)$  for $0<\eps\lq \eps'<1/4$ and $0<r\lq r'$.
%
\end{example}

\subsection{Controlled morphisms}

In this subsection, we recall from  \cite[Section 2]{oy2}  the relevant notion of morphisms in the framework of quantitative objects.
\begin{definition}
 Let $(\lambda,h)$ be a control pair and let  $\O=(O^{\eps,r})_{0<\eps<1/4,r>0}$ and $\O'=(O'^{\eps,r})_{0<\eps<1/4,r>0}$ be
quantitative objects. A $(\lambda,h)$-controlled morphism
$$\F:\O\to\O'$$ is a family $\F=(F^{\eps,r})_{0<\eps< \frac{1}{4\lambda},r>0}$ of  groups homomorphisms
 $$F^{\eps,r}:\O^{\eps,r} \to \O'^{\lambda\eps,h_\eps r}$$ such that for any positive
numbers $\eps,\,\eps',\,r$ and $r'$ with
$0<\eps\lq\eps'< \frac{1}{4\lambda},\, r\lq r'$ and $h_\eps r\lq h_{\eps'}r'$, we have
$$F^{\eps',r'}\circ \iota_\O^{\eps,\eps',r,r'}=\iota_{\O'}^{\lambda\eps,\lambda\eps', h_\eps r,h_{\eps'}r'}\circ F^{\eps,r}.$$
\end{definition}
When  it is not necessary  to specify the control pair, we will just say that $\F$ is a controlled morphism.
If $\O=(O^{\eps,r})_{0<\eps<1/4,r>0}$ is a quantitative object, let us define the identity $(1,1)$-controlled morphism
$$\Id_\O=(Id_{O^{\eps,r}})_{0<\eps< 1/4,r>0}:\O\to\O.$$ Recall that if $A$ and $B$ are filtered
$C^*$-algebras and if $\F:\K_*(A)\to\K_*(B)$ is a $(\lambda,h)$-controlled morphism, then $\F$ induces
 a  morphism $F:K_*(A)\to K_*(B)$ uniquely defined
by $\iota^{\eps,r}_*\circ F^{\eps,r}=F\circ\iota^{\eps,r}_*$.

In some situation,  as  for instance  control boundary maps of controlled Mayer-Vietoris pair (see Section \ref{subs-section-boundary}), we deal with  family 
$F^{\eps,r}:\O^{\eps,r} \to \O'^{\lambda\eps,h_\eps r}$ of group morphism  defined indeed only up to  a certain order.
\begin{definition}
 Let $(\lambda,h)$ be a control pair, let  $\O=(O^{\eps,s})_{0<\eps<1/4,s>0}$ and $\O'=(O'^{\eps,s})_{0<\eps<1/4,s>0}$ be
quantitative objects and let $r$ be a positive number.
A $(\lambda,h)$-controlled morphism of order $r$ 
$$\F:\O\to\O'$$ is a family $\F=(F^{\eps,s})_{0<\eps< \frac{1}{4\lambda},0<s<\frac{r}{h_\eps}}$ of  groups homomorphisms
 $$F^{\eps,s}:\O^{\eps,s} \to \O'^{\lambda\eps,h_\eps s}$$ such that for any positive
numbers $\eps,\,\eps',\,s$ and $s'$ with
$0<\eps\lq\eps'< \frac{1}{4\lambda},\,s\lq s'<r$ and $h_\eps s\lq h_{\eps'}s'$, we have
$$F^{\eps',s'}\circ \iota_\O^{\eps,\eps',s,s'}=\iota_{\O'}^{\lambda\eps,\lambda\eps', h_\eps s,h_{\eps'}s'}\circ F^{\eps,s}.$$
\end{definition}

 $(\lambda,h)$ and $(\lambda',h')$ are two control pairs, define
$$h*h': (0,\frac{1}{4\lambda\lambda'})\to (1,+\infty);\, \eps\mapsto h_{\lambda'\eps}h'_\eps.$$
Then $(\lambda\lambda',h*h')$  is again  a control pair.
 Let   $\O=(O^{\eps,r})_{0<\eps<1/4,r}$, $\O'=(O'^{\eps,r})_{0<\eps<1/4,r}$ and $\O''=(O''^{\eps,r})_{0<\eps<1/4,r}$
be quantitative objects, let $$\F=(F^{\eps,r})_{0<\eps<\frac{1}{4\alpha_\F},r>0}:\O\to\O'$$
 be
a $(\alpha_\F,k_\F)$-controlled morphism, and let
 $$\G=(G^{\eps,r})_{0<\eps<\frac{1}{4\alpha_\G},r>0}:\O'\to\O''$$
be a $(\alpha_\G,k_\G)$-controlled morphism. Then $\G\circ\F:\O\to \O''$ is the $(\alpha_\G\alpha_F,k_\G*k_\F)$-controlled
morphism defined by the family
\begin{equation}\label{equ-composition}(G^{\alpha_\F\eps,k_{\F,\eps}r}\circ F^{\eps,r}:O^{\eps,r}\to
{O''}^{\alpha_\G\alpha_\F\eps,k_{\F,\eps}k_{\G,\alpha_\F,\eps}r})_{0<\eps<\frac{1}{4\alpha_\F\alpha_\G},r>0}.\end{equation}Notice that if  
let $\F:\O\to\O'$ and     $\G:\O'\to\O''$  are respectively
a $(\alpha_\F,k_\F)$-controlled morphism  and  $(\alpha_\G,k_\G)$-controlled morphism of order $r$,  
then equation (\ref{equ-composition}) defines a  $(\alpha_\G\alpha_F,k_\G*k_\F)$-controlled
morphism  $\G\circ\F:\O\to \O''$ of order $r$.

\begin{notation}
 Let $(\la,h)$ be a control pair and  let   $\O=(O^{\eps,r})_{0<\eps<1/4,r>0}$ and $\O'=(O'^{\eps,r>0})_{0<\eps<1/4,r>0}$be  quantitative objects and
let $\F=(F^{\eps,r})_{0<\eps<1/4,r>0}:\O\to\O'$ (resp. $\G=(G^{\eps,r})_{0<\eps<1/4,r>0}:\O\to\O'$) be a
$(\alpha_\F,k_\F)$-controlled morphism (resp. a $(\alpha_\G,k_\G)$-controlled morphism). Then we write
$\F\aeq\G$ if
\begin{itemize}
 \item $(\alpha_\F,k_\F)\lq (\lambda,h)$ and $(\alpha_\G,k_\G)\lq (\lambda,h)$.
\item for every $\eps$ in $(0,\frac{1}{4\lambda})$ and $r>0$, then
$$\iota^{\alpha_\F\eps,\lambda\eps,k_{\F,\eps}r,h_\eps r}_j\circ F^{\eps,r}=\iota^{\alpha_\G\eps,\lambda\eps,k_{\G,\eps}r,h_\eps r}_j\circ G^{\eps,r}.$$
\end{itemize}
\end{notation}


\begin{definition}
Let $\F:\O_1\to \O'_1,\, \F:\O_2\to \O'_2,\,\G:\O_1\to \O_2$ and $\G':\O'_1\to \O'_2$ be controlled
morphisms and let $(\lambda,h)$ be a control pair. Then the diagram (or the square)
$$\begin{CD}
\O_1@>\G>> \O_2 \\
         @V\F VV
         @VV\F'V\\
 \O'_1@>\G'>> \O'_2
\end{CD}$$
is called ${\mathbf{(\lambda,h)}}${\bf -commutative} (or  $\mathbf{(\lambda,h)}${\bf-commutes})  if $\G' \circ \F\aeq \F'\circ \G$.
The definition of $(\lambda,h)$-commutative diagram can be obviously extended to the setting  of controlled morphism of order $r$.
\end{definition}

Recall from \cite{oy3} the definition of  a controlled isomorphisms.

 \begin{definition}
  Let $(\lambda,h)$ be a control pair, and let $\F:\O\to\O'$  be a $(\alpha_\F,k_\F)$-controlled morphism with
$(\alpha_\F,k_\F)\lq (\lambda,h)$.
 $ \F$ is called  $(\lambda,h)$-invertible or a $(\lambda,h)$-isomorphism if there exists a controlled morphism
$\G:\O'\to\O$ such that
$\G\circ\F\aeq \Id_{\O}$ and
 $\F\circ\G\aeq \Id_{\O'}$. The controlled morphism $\G$ is called a $(\lambda,h)$-inverse for $\G$.
 \end{definition}

In particular, if $A$ and $B$ are filtered
$C^*$-algebras and if $\G:\K_*(A)\to\K_*(B)$ is a $(\lambda,h)$-isomorphism, then the induced  morphism $G:K_*(A)\to K_*(B)$  is an isomorphism
and its inverse is induced by  a controlled morphism (indeed induced by any $(\lambda,h)$-inverse for $\F$).

 
 \medskip

In order to state  in Section \ref{sec-kunneth} the quantitative K\"unneth formula, we will need the more general of quantitative isomorphism. Let  $\O_1=(O^{\eps,s}_1)_{0<\eps<1/4,s>0}$ and $\O_2=(O^{\eps,s}_2)_{0<\eps<1/4,s>0}$ be
quantitative objects. For a $(\alpha,h)$-controlled morphism  
 $$\F=(F^{\eps,r})_{0<\eps< \frac{1}{4\al},s>0}:\O_1\to\O_2,$$ consider the following statements:
 \begin{description}
\item[$QI_{\F}(\eps,\eps',s,s');\, 0<\eps\lq \eps' < \frac{1}{4\al},\,0<s\lq s'$] 
for any element  $x$  in $O^{\eps,s}_1$ such that   $F^{\eps,s}(x)=0$ in $O_2^{\al\eps,{h_\eps}s}$, then $\iota_{\O_1}^{\eps',s'}(x)=0$ in $O_1^{\eps', s'}$;
\item[$QS_{\F}(\eps,\eps',s,s');\, 0<\eps\lq \eps' < \frac{1}{4\al},\,0<s\lq h_{\eps'}s'$] for any element $y$ in $O_2^{\eps,s}$, there exists  an element  $x$ in $O_1^{\eps',s'}$ such that $F^{\eps',s'}(x)=\iota_{\O_2}^{\eps,\al\eps',s,h_{\eps'} s'}(y)$ in $O_2^{\al\eps',h_{\eps'}s'}$.
 \end{description} 
\begin{definition}\label{def-quantitative-morphism}\
\begin{itemize}
\item Let  $\O_1=(O^{\eps,s}_1)_{0<\eps<1/4,s>0}$ and $\O_2=(O^{\eps,s}_2)_{0<\eps<1/4,s>0}$ be
quantitative objects.  Then a   quantitative isomorphism 
$$\F:\O_1\to\O_2$$ is  an $(\alpha,h)$-controlled morphism  
 $\F=(F^{\eps,r})_{0<\eps< \frac{1}{4\al},s>0}$   for some control pair $(\al,h)$
   that satisfies the following: there exists a positive number $\la_0$, with $\la_0\gq 1$ such that
 \begin{itemize}
 \item for any  positive numbers $\eps$ and $s$  with   $\eps<\frac{1}{4\al\la_0}$  there exists a positive number   $s'$ with $s\lq s'$
    such that $QI_{\F}(\eps,\la_0\eps,s,s')$ is satisfied;
 \item for any  positive numbers $\eps$ and $s$  with   $\eps<\frac{1}{4\al}$,  there exists a  positive number  $s'$ with  $s \lq s'h_{\la_0\eps}$  such that $QS_{\F}(\eps,\la_0\eps,s,s')$ is satisfied. The positive number $\la_0$ is called the {\bf rescaling}  of the quantitative isomorphism $\F$.
 \end{itemize} 
 \item A uniform family of quantitative isomorphisms if a family $(\F_i)_{i\in I}$ where,
 $\F_i:\O_i\to\O'_i$ is for any  $i$ in $I$ an $(\al,h)$-controlled morphism for a given control pair $(\al,h)$ such that there exists a positive number $\la_0$, with $\la_0\gq 1$ for which the following holds
 \begin{itemize}
 \item for any  positive numbers $\eps$ and $s$  with   $\eps<\frac{1}{4\al\la_0}$  there exists a positive number   $s'$ with $s\lq s'$
    such that $QI_{\F_i}(\eps,\la_0\eps,s,s')$ is satisfied for any $i$ in $I$;
\item for any  positive numbers $\eps$ and $s$  with   $\eps<\frac{1}{4\al}$,  there exists a  positive number  $s'$ with  $s \lq s'h_{\al_0\eps}$  such that  $QS_{\F_i}(\eps,\la_0\eps,s,s')$ is satisfied for any $i$ in $I$.
 \end{itemize} 
 \end{itemize}
 \end{definition}
 In particular, if $A$ and $B$ are filtered
$C^*$-algebras and if $\G:\K_*(A)\to\K_*(B)$ is a  quantitative  isomorphism, then the induced  morphism $G:K_*(A)\to K_*(B)$  is an isomorphism (but its inverse is no more given by a quantitative isomorphism).

\subsection{Controlled exact sequences}\label{subsection-control-exact-sequence}

In this subsection, we recall the controlled exact sequence for quantitative objects. This controlled exact sequence is
an important tool in computing quantitative $K$-theory for filtered $C^\ast$-algebras.

\begin{definition} Let $(\lambda,h)$ be a control pair,
\begin{itemize}
 \item
Let $\O=(O_{\eps,s})_{0<\eps< \frac{1}{4},s>0},\, \O'=(O'_{\eps,s})_{0<\eps< \frac{1}{4},s>0}$  and
$\O''=(O''_{\eps,s})_{0<\eps< \frac{1}{4},s>0}$ be quantitative objects and let
$$\F=(F^{\eps,s})_{0<\eps< \frac{1}{4\lambda},s>0}:\O\to\O'$$ be a  $(\alpha_\F,k_\F)$-controlled morphism and
  let   $$\G=(G^{\eps,s})_{0<\eps<\frac{1}{4\alpha_\G},r>0}:\O'\to \O''$$
be a $(\alpha_\G,k_\G)$-controlled morphism.
Then the composition
$$\O\stackrel{\F}{\to}O'\stackrel{\G}{\to}\O''$$ is said to be $(\lambda,h)$-exact at $\O'$ if
 $\G\circ\F=0$ and if
 for any $0<\eps<\frac{1}{4\max\{\lambda\alpha_\F,\alpha_\G\}}$, any $s>0$  and  any $y$ in
$O'^{\eps,s}$  such that  $G^{\eps,s}(y)=0$ in $O''_{\eps,r}$, there exists an
  element $x$ in $O^{\lambda\eps,h_\eps s}$
such that $$F^{\lambda\eps,h_{\lambda\eps}s}(x)=\iota_{\O'}^{\eps,\alpha_\F\lambda\eps,r,k_{\F,\lambda\eps}h_\eps s}(y)$$ in
$O'^{\alpha_\F\lambda\eps,k_{\F,\lambda\eps}h_\eps s}$.
\item  A sequence of controlled morphisms
$$\cdots\O_{k-1}\stackrel{\F_{k-1}}{\lto}\O_{{k}}\stackrel{\F_{k}}{\lto}
\O_{{k+1}}\stackrel{\F_{k+1}}{\lto}\O_{{k+2}}\cdots$$ is called $(\lambda,h)$-exact if for every $k$,
the composition   $$\O_{{k-1}}\stackrel{\F_{k-1}}{\lto}\O_{{k}} \stackrel{\F_{k}}{\lto}
\O_{{k+1}}$$ is $(\lambda,h)$-exact at $\O_{{k}}$.
\item the notion of $(\lambda,h)$-exactness of a composition and of a sequence can obviously be extended to the setting of  controlled morphism of order $r$. \end{itemize}\end{definition}

\subsection{Controlled exact sequence in quantitative $K$-theory}

Examples of controlled exact sequences in quantitative $K$-theory are provided by controlled six term exact sequences associated to a completely filtered extensions of $C^*$-algebras \cite[Section 3]{oy2}.

\begin{definition}\label{def-completely-filtered-extension}
Let $A$ be a $C^*$-algebra filtered by $(A_r)_{r>0}$, let $J$ be an
ideal of $A$ and set $J_r=J\cap A_r$. The  extension of $C^*$-algebras $$0\to J\to A\to A/J\to 0$$ is called a completely filtered extension of $C^*$-algebras  if 
 the bijective continuous linear map $$A_r/J_r\lto (A_r+J)/J$$ induced by the inclusion $A_r\hookrightarrow\ A$  is  a complete  isometry i.e for any integer $n$, any positive number $r$ and any $x$ in $M_n(A_r)$,   then  $$\inf_{y\in M_n(J_r)}\|x+y\|=\inf_{y\in M_n(J)}\|x+y\|.$$\end{definition}
 Notice that  in this case, the ideal $J$ is  filtered by $(A_r\cap J)_{r>0}$  and $A/J$ is filtered by $(A_r+J)_{r>0}$.
 A particular case of completely   filtered extension of $C^*$-algebra is the case of filtered and semi-split extension of $C^*$-algebras \cite[Lemma 3.3]{oy2} (or a semi-split extension of filtered  algebras) i.e extension 
$$0\to J\to A\to A/J {\to} 0,$$ where
\begin{itemize}
\item $A$ is filtered by $(A_r)_{r>0}$;
\item   there
exists a completely positive (complete) norm decreasing cross-section  $$s:A/J\to A$$ such that
$$s(A_r+J)\subseteq A_r$$  for any
number $r>0$. 
\end{itemize}

For any extension of $C^*$-algebras $$0\to J\to A\to A/J\to 0$$  we denote by $\partial_{J,A}:K_{*}(A/J)\to K_{*}(J)$ the associated
(odd degree) boundary map in $K$-theory.
\begin{proposition}\label{prop-bound}
There exists a control pair $(\alpha_\DD,k_\DD)$  such that for any      completely filtered extension of   $C^*$-algebras
 $$0\longrightarrow J \longrightarrow A \stackrel{q}{\longrightarrow}
A/J\longrightarrow 0,$$ there exists a $(\alpha_\DD,k_\DD)$-controlled  morphism of odd degree
 $$\DD_{J,A}=(\partial_ {J,A}^{\eps,r})_{0<\eps<\frac{1}{4\alpha_\DD},r>0}: \K_{*+1}(A/J)\to
  \K_*(J)$$
 which induces in $K$-theory   $\partial_{J,A}:K_{*}(A/J)\to K_{*+1}(J)$.
\end{proposition}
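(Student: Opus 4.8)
The plan is to construct the controlled boundary map $\DD_{J,A}$ by mimicking the classical construction of the boundary map via the mapping cone (or, more concretely, via lifts of quasi-unitaries and quasi-projections), while carefully tracking how the control $\eps$ and the propagation $r$ deteriorate at each step. Since by \cite[Lemma 3.3]{oy2} (quoted above) a completely filtered extension behaves, at the level of norms on matrix algebras over $A_r$, exactly like the corresponding quotient, the essential point is that approximate lifts exist with controlled propagation: given $x$ in $M_n((A/J)_r)$ one can find $\tilde x$ in $M_n(A_r)$ with $\|q(\tilde x)-x\|$ and $\|\tilde x\|$ controlled (by the complete isometry hypothesis, with no loss in norm beyond an arbitrarily small $\delta$, which we absorb into the control pair). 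This is the replacement for the completely positive norm-decreasing cross-section used in the semi-split case, and it is what makes the construction work for the more general completely filtered extensions.

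First I would treat the odd-to-even part $\partial^{\eps,r}_{J,A}:K_1^{\eps,r}(A/J)\to K_0^{?,?}(J)$. Given an $\eps$-$r$-unitary $u$ in $M_n(\widetilde{A/J})$ representing a class in $K_1^{\eps,r}(A/J)$, lift $\diag(u,u^*)$ — which by Lemma \ref{cor-example-homotopy} is close to $I_{2n}$ through $3\eps$-$2r$-unitaries — to a self-adjoint element, or more directly lift $u$ to an element $v$ in $M_n(A_r)$ with $q(v)=u$ up to a small error and with $\|v\|$ controlled; then $\diag(v,v^*)$ is an approximate unitary in $M_{2n}(\widetilde A)$ lifting $\diag(u,u^*)$, and $w\,\diag(I_n,0)\,w^*$ for a suitable lift $w$ of $\diag(u,u^*)$ gives an $\eps'$-$r'$-projection over $\widetilde J$ (after subtracting its scalar part) whose class in $K_0^{\eps',r'}(J)$ we declare to be $\partial^{\eps,r}_{J,A}([u]_{\eps,r})$. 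Here $\eps'=\lambda\eps$ and $r'=h_\eps r$ for an explicit $(\lambda,h)$ coming from the finitely many applications of Lemma \ref{lemma-almost-closed} and Lemma \ref{cor-example-homotopy} needed to see that the relevant elements are genuine $\eps'$-$r'$-projections. The even-to-odd part is dual: an $\eps$-$r$-projection $p$ over $\widetilde{A/J}$ is lifted to a self-adjoint $\tilde p$ in $M_n(A_r)$ with $\|\tilde p-\tilde p^2\|$ controlled and $\rho$-part as prescribed, and one sets $\partial^{\eps,r}_{J,A}([p,l]_{\eps,r})=[\exp(2\pi i\tilde p)]$, which is an approximate unitary over $\widetilde J$ of propagation $\lesssim r$ — crucially, $\exp(2\pi i\tilde p)$ lies in (a matrix algebra over) $\widetilde J$ because $q(\tilde p)$ is an approximate projection so $\exp(2\pi i q(\tilde p))$ is close to $1$; truncating the power series for the exponential keeps propagation finite at the cost of a further controlled error.

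Next I would check the three formal properties. Well-definedness: if $h$ is a homotopy of $\eps$-$r$-unitaries (resp. projections) over the quotient, lift the homotopy parameterwise with uniformly controlled propagation (the lift construction is uniform in the base point of $[0,1]$ since it only uses the complete isometry), producing a homotopy of the output quasi-projections (resp. quasi-unitaries) at a uniformly controlled scale; this shows $\partial^{\eps,r}_{J,A}$ descends to $K^{\eps,r}_{*+1}(A/J)$. Compatibility with the structure maps — i.e. that $\DD_{J,A}$ is a genuine $(\alpha_\DD,k_\DD)$-controlled morphism — follows because enlarging $\eps$ to $\eps''$ and $r$ to $r''$ before taking the lift versus after taking the lift yields the same element up to a homotopy at the common larger scale, again by the uniformity of the construction. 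Additivity (group homomorphism property) follows from $\diag$ng the lifts. Finally, that $\DD_{J,A}$ induces $\partial_{J,A}$ in $K$-theory is immediate once one observes that applying $\iota^{\eps,r}_*$ to the above recipe gives literally the classical mapping-cone/exponential formula for the boundary map, using Proposition \ref{proposition-approximation-K-th} to pass between the quantitative and the usual picture.

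The main obstacle, and the part requiring genuine care rather than bookkeeping, is producing the approximate lifts with \emph{controlled propagation} and controlled norm from the completely filtered (not merely semi-split) hypothesis: one does not have an honest cross-section $s$ with $s((A/J)_r)\subseteq A_r$, only the isometric identification $M_n(A_r/J_r)\cong M_n((A_r+J)/J)$. I expect to handle this by fixing an auxiliary small $\delta$, choosing for a given quotient element an $A_r$-representative that is $\delta$-optimal in norm, and then arranging the relevant algebraic identities (self-adjointness, the correct $\rho$-values, the $\diag(u,u^*)\sim I$ trick) by small self-adjoint and scalar corrections that live in $A_r$ and do not increase propagation; the accumulated $\delta$'s and the finitely many invocations of Lemmas \ref{lemma-almost-closed} and \ref{cor-example-homotopy} then determine the universal control pair $(\alpha_\DD,k_\DD)$. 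Once the controlled lifting lemma is in place, the rest is the controlled analogue of a standard argument and goes through as in \cite[Section 3]{oy2}.
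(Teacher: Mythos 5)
The paper does not actually prove Proposition~\ref{prop-bound}; it recalls it from \cite[Section~3]{oy2}, so your task was genuinely blind. Your overall strategy — construct the controlled boundary map by the index/exponential picture, using the complete isometry $M_n(A_r/J_r)\cong M_n((A_r+J)/J)$ to produce approximate lifts of controlled norm and propagation in lieu of a cross-section — is the right one and matches the construction in \cite{oy2}. Two steps, however, are not correct as written.

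First, in the odd-to-even part you assert that for an arbitrary lift $v\in M_n(A_r)$ of $u$ the matrix $\diag(v,v^*)$ ``is an approximate unitary.'' It is not: $\diag(v,v^*)^*\diag(v,v^*)=\diag(v^*v,vv^*)$, and $v^*v$ is only close to $1$ \emph{modulo} $J$, not in $A$. The quasi-unitary lift $w$ of $\diag(u,u^*)$ that you gesture at has to be produced explicitly, and the right device is exactly the polynomial Whitehead lift that this paper uses in Lemma~\ref{lem-commutator}: set $w=X(v)Y(-v^*)X(v)\left(\begin{smallmatrix}0&-1\\1&0\end{smallmatrix}\right)$, which is a product of elementary matrices in $M_{2n}(\widetilde{A}_{3r})$, is a genuine invertible with $w^{-1}$ also of propagation $3r$, and satisfies $q(w)$ close to $\diag(u,u^*)$ because $q(v)$ is close to $u$. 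Only then is $w\,\diag(I_n,0)\,w^* - \diag(I_n,0)$ an element of $M_{2n}(J_{6r})$ up to a controlled error, and Lemma~\ref{lemma-almost-closed} converts it to a quasi-projection over $\widetilde J$. Without the elementary-matrix lift, the construction stalls at the very first step.

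Second, in the even-to-odd part you set $\partial([p,l])=[\exp(2\pi i\tilde p)]$ and claim the result lies over $\widetilde J$ because $\exp(2\pi i q(\tilde p))$ is ``close to $1$.'' Close is not enough: to say the class lives in $K_1^{\eps',r'}(J)$ you need a $\eps'$-$r'$-unitary whose image in $\widetilde{A/J}$ is actually scalar, and $\exp(2\pi i p)$ is only approximately $1$ when $p$ is an $\eps$-$r$-projection rather than an exact projection. Some correction is required, and the cleaner route — the one this paper itself takes for the Mayer--Vietoris boundary in Section~\ref{subs-section-boundary} — is to define the even-degree boundary by controlled Bott periodicity from the odd-degree one via $\TT_\bullet([\partial])$ and $\TT_\bullet([\partial]^{-1})$, which sidesteps both the truncation of the exponential series and the ``only approximately scalar'' problem at once. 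If you insist on the direct exponential formula, you must first put $p$ in the canonical form of Lemma~\ref{lemma-almost-canonical-form} so that $\rho_{A/J}(p)=\diag(I_l,0)$ exactly, lift keeping the scalar part fixed, and then correct $\exp(2\pi i\tilde p)$ by a factor that kills the residual error in $A/J$; this is doable but needs to be spelled out, and the propagation bookkeeping is heavier than what you sketch.

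With those two repairs the rest of your argument — well-definedness via lifting homotopies, compatibility with structure maps via uniformity of the lifting, additivity via direct sums, and agreement with $\partial_{J,A}$ via Proposition~\ref{proposition-approximation-K-th} — goes through as you describe.
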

Moreover  the controlled boundary map enjoys the usual naturally properties with respect to extensions (see \cite[Remark 3.8]{oy2}).
\begin{theorem}\label{thm-six term}  There exists a control  pair $(\lambda,h)$  such that for any  completely  filtered extension of    $C^*$-algebras $$0  \longrightarrow J
\stackrel{\jmath}{\longrightarrow} A \stackrel{q}{\longrightarrow}
A/J\longrightarrow 0,$$ then the following six-term sequence is $(\lambda,h)$-exact
$$\begin{CD}
\K_0(J) @>\jmath_*>> \K_0(A)  @>q_*>>\K_0(A/J)\\
    @A\DD_{J,A} AA @.     @V\DD_{J,A} VV\\
\K_1(A/J) @<q_*<< \K_1(A)@<\jmath_*<< \K_1(J)
\end{CD}
$$
\end{theorem}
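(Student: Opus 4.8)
The plan is to derive the controlled six-term exact sequence from the controlled boundary map of Proposition \ref{prop-bound} together with the controlled analogues of the elementary exactness statements at $\K_*(J)$, $\K_*(A)$ and $\K_*(A/J)$, mimicking the classical derivation but keeping track of controls at every step. First I would fix the control pair $(\alpha_\DD,k_\DD)$ from Proposition \ref{prop-bound} and observe that the structure maps $\jmath_*^{\eps,r}$ and $q_*^{\eps,r}$ are $(1,1)$-controlled morphisms (they preserve propagation and control since $\jmath$ and $q$ are filtered homomorphisms); this already pins down a candidate control pair $(\lambda,h)$ large enough to dominate $(\alpha_\DD,k_\DD)$ and to absorb the finitely many rescalings that will appear. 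The six-term sequence has six junctions, but by the symmetry between the even and odd rows it suffices to prove $(\lambda,h)$-exactness at three spots: at $\K_0(A)$ for the pair $(\jmath_*,q_*)$, at $\K_0(A/J)$ for the pair $(q_*,\DD_{J,A})$, and at $\K_1(J)$ for the pair $(\DD_{J,A},\jmath_*)$ — the remaining three being the same statements applied to the suspension / using the $K_1$ descriptions.

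For exactness at $\K_0(A)$: given $[p,l]_{\eps,r}$ with $q_*^{\eps,r}([p,l]_{\eps,r})=0$ in $K_0^{\eps,r}(A/J)$, I would unwind the equivalence relation defining $K_0^{\eps,r}$ to produce, after passing to a controlled scale $(\lambda\eps,h_\eps r)$, a homotopy in $\P_\infty^{\lambda\eps,h_\eps r}((A/J)[0,1])$ witnessing that $q(p)$ is equivalent to a constant; then use the semi-split (or complete isometry) structure of the extension to lift the relevant approximate projections and homotopies from $A/J$ back to $A_r$, with a controlled loss of propagation governed by the lifting constants — this is exactly where Definition \ref{def-completely-filtered-extension} is used, and where Lemma \ref{lemma-almost-closed} is invoked repeatedly to turn near-projections into genuine $\eps$-$r$-projections. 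Exactness at $\K_1(J)$ is the mirror statement, using Lemma \ref{lemma-almost-canonical-form-odd} to put classes in standard unitary form and Lemma \ref{cor-example-homotopy} to handle the block-diagonal bookkeeping. Exactness at $\K_0(A/J)$ is the genuinely arithmetic one: one must show that the controlled boundary $\partial^{\eps,r}_{J,A}$ of a class that dies under $q_*$ is in the image (up to rescaling) of $\jmath_*$, and conversely that $\DD_{J,A}\circ q_* = 0$ up to rescaling — both follow by inspecting the explicit formula for $\DD_{J,A}$ from \cite[Section 3]{oy2}, whose construction via a Halmos-type unitary lift already lives in $A$, so that composing with $q$ produces a coboundary visibly trivial at a controlled scale.

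The main obstacle I anticipate is not any single exactness statement but the \emph{uniformity} of the control pair across all six junctions and across all completely filtered extensions simultaneously: each of the three lifting/descent arguments introduces its own multiplicative rescaling of $\eps$ and its own propagation blow-up $h_\eps$, and these must be shown to be bounded by a single $(\lambda,h)$ depending only on the universal constant $(\alpha_\DD,k_\DD)$ and on the absolute constants in Lemmas \ref{lemma-almost-closed}, \ref{cor-example-homotopy}, \ref{lemma-almost-canonical-form}, \ref{lemma-almost-canonical-form-odd}, and \emph{not} on the extension. Concretely one takes $\lambda$ to be a fixed product of $\alpha_\DD$ with the constants $5$, $4$, $9$, $21$ appearing in those lemmas (with enough room so that all intermediate controls stay below $1/4$), and $h = g \ast g \ast \cdots$ an appropriate composite of the non-increasing dominating functions; verifying that the compatibility squares $F^{\eps',r'}\circ\iota^{\eps,\eps',r,r'} = \iota^{\cdots}\circ F^{\eps,r}$ hold for this choice — i.e. that the maps produced are genuine controlled morphisms and not merely pointwise-defined families — is the routine-but-delicate part that I would carry out last, leaning on the compatibility of $\DD_{J,A}$ with structure maps asserted after Proposition \ref{prop-bound}. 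All of this is modeled on \cite[Section 3]{oy2}, so the proof is essentially a transcription of the arguments there adapted to the completely filtered (rather than semi-split) hypothesis, the only new input being \cite[Lemma 3.3]{oy2} which guarantees the lifting estimates still hold in that generality.
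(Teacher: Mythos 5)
The paper does not actually prove this statement: Theorem~\ref{thm-six term} is part of the Section~1 overview, recalled verbatim from \cite[Section 3]{oy2}, so there is no in-paper argument to compare your proposal against. As a reconstruction of the strategy in \cite{oy2}, your sketch is broadly plausible — fix the control pair $(\alpha_\DD,k_\DD)$ from Proposition~\ref{prop-bound}, check exactness at each junction by controlled lifting/descent, and dominate all the bookkeeping rescalings by a single $(\lambda,h)$ — but there are two points worth flagging before the sketch could be executed.

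First, your final remark has the logical dependence backwards. You say the ``only new input'' is that the lifting estimates carry over from the semi-split to the completely filtered case. But the paper (and \cite[Lemma 3.3]{oy2}) states the reverse implication: a filtered semi-split extension \emph{is} a completely filtered extension, and the boundary map and six-term sequence in \cite{oy2} are constructed and proved directly in the completely filtered setting. So there is no ``adaptation'' step left to do; what you would instead have to supply is the honest content of the lifting argument in a completely filtered extension, where one does \emph{not} have a completely positive norm-decreasing section. There the only tool is the complete isometry $A_r/J_r \cong (A_r+J)/J$ from Definition~\ref{def-completely-filtered-extension}, which gives, for any $\eta>0$, lifts of $\eps$-$r$-projections (resp.\ unitaries) in $A/J$ to elements of $A_r$ up to $(1+\eta)$-distortion of norms in all matrix levels; tracking that $\eta$ and the resulting rescalings of $\eps$ via Lemma~\ref{lemma-almost-closed} is exactly what produces the universal $(\lambda,h)$ and is where the real work is hidden in your phrase ``controlled loss of propagation governed by the lifting constants.'' Second, the reduction ``by symmetry between the even and odd rows'' to three junctions is not a free symmetry: it is controlled Bott periodicity, which in \cite{oy2} is itself a theorem built on the same boundary-map machinery, so one must either invoke it explicitly (and fold its control pair into $(\lambda,h)$) or prove all six junctions directly in both degrees. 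Either route works, but the reduction as you stated it would not compile without one of those choices being made.
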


 \subsection{$KK$-theory and controlled morphisms}

 In this subsection, we discuss compatibility of Kasparov's $KK$-theory with quantitative $K$-theory of filtered $C^\ast$-algebras.

 Let $A$ be a $C^*$-algebra and let $B$ be a filtered $C^*$-algebra filtered by $(B_r)_{r>0}$.  Let us define $A\ts B_r$ as the closure in the spatial tensor
product $A\ts B$ of the algebraic tensor product of $A$ and $B_r$. Then the $C^*$-algebra $A\ts B$ is filtered by $(A\ts B_r)_{r>0}$.
 If $f:A_1\to A_2$ is a homomorphism of $C^*$-algebras, let us set
 $$f_B:A_1\ts B\to A_2\ts B;\, a\ts b\mapsto f(a)\ts b.$$ Recall from \cite{kas} that for $C^*$-algebras  $A_1$, $A_2$ and $B$,  Kasparov defined a tensorization map
$$\tau_B:KK_*(A_1,A_2)\to KK_*(A_1\ts B,A_2\ts B).$$ If $B$ is a filtered $C^*$-algebra, then for any $z$ in $KK_*(A_1,A_2)$  the morphism
$$K_*(A_1\ts B)\lto K_*(A_2\ts B);\, x\mapsto x\ts_{A_1\ts B}\tau_B(z)$$  is induced by a controlled morphism  which enjoys compatibity properties with Kasparov product \cite[Theorem 4.4]{oy2}.
  \begin{theorem}\label{thm-tensor}
  There exists a control pair $(\alpha_\TT,k_\TT)$ such that
  \begin{itemize}
  \item for any filtered   $C^*$-algebra $B$;
  \item for any $C^*$-algebras $A_1$ and $A_2$;
  \item for any element $z$ in $KK_*(A_1,A_2)$,
  \end{itemize}
  There exists a $(\alpha_\TT,k_\TT)$-controlled morphism  $\TT_B(z):\K_*(A_1\ts B)\to \K_*(A_2\ts B)$  with same degree as $z$ that satisfies the following:
 \begin{enumerate}
 \item $\TT_B(z):\K_*(A_1\ts B)\to \K_*(A_2\ts B)$ induces in $K$-theory the right multiplication by $\tau_B(z)$;
\item  For any  elements $z$ and $z'$ in $KK_*(A_1,A_2)$ then
$$\TT_B(z+z')=\TT_B(z)+\TT_B(z').$$
\item Let $A'_1$ be a filtered $C^*$-algebras and  let $f:A_1\to A'_1$ be a homomorphism of  $C^*$-algebras, then
$\T_B(f^*(z))=\T_B(z)\circ f_{B,*}$  for all $z$ in $KK_* (A'_1,A_2)$.
\item Let $
A'_2$ be a $C^*$-algebra and  let $g:A'_2\to A_2$ be a homomorphism of $C^*$-algebras then
$\TT_B(g_{*}(z))=g_{B,*}\circ \TT_B(z)$
for any $z$ in $KK_*(A_1,A'_2)$.
\item $\TT_B([Id_{A_1}])\stackrel{(\alpha_\TT,k_\TT)}{\sim} \Id _{\K_*({A_1}\ts B)}$.
\item For any $C^*$-algebra $D$ and any element $z$ in $KK_*(A_1,A_2)$, we have  $\TT_B(\tau_D(z))=\TT_{B\ts D}(z)$.
\item For any  semi-split extension of  $C^*$-algebras $0\to J\to A\to A/J\to 0$    with corresponding   element   $[\partial_{J,A}]$  of
$KK_1(A/J,J)$
 that implements the boundary map, then we have
$$\TT_B([\partial_{J,A}])\stackrel{(\al_\TT,k_\TT)}{\sim}\DD_{J\ts B,A\ts B}.$$\end{enumerate}
\end{theorem}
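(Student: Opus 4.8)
The plan is to establish Theorem \ref{thm-tensor} by reducing the construction of the controlled tensorization morphism $\TT_B(z)$ to the already-available controlled machinery: the controlled six-term sequence (Theorem \ref{thm-six term}), the controlled boundary map (Proposition \ref{prop-bound}), quantitative Morita invariance (Proposition \ref{prop-morita}), and the quantitative half-exactness/homotopy invariance of $\K_*$. The key observation is that every class $z\in KK_*(A_1,A_2)$ can be written, after stabilizing $A_2$ by $\K(\H)$, in the Cuntz picture or — more convenient for our filtered bookkeeping — via a composition of a \emph{Kasparov–Stinespring dilation} and a \emph{boundary map} of a semi-split extension. Concretely, using the Cuntz/Higson description one has $z = \partial_{J,E}$ for a semi-split extension $0\to A_2\ts\K\to E\to A_1\to 0$ (up to the standard identifications), so item (7) of the statement is forced to be the defining property rather than a separate fact to be checked, and items (1)–(6) then follow by transporting the corresponding classical $KK$-identities through the controlled boundary map.

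First I would fix, once and for all, the control pair $(\al_\TT,k_\TT)$ to be (a fixed multiple of) the control pair $(\al_\DD,k_\DD)$ of Proposition \ref{prop-bound} composed with the control pair governing quantitative Morita equivalence and the controlled six-term sequence; since all these are absolute (independent of the algebras involved), the resulting pair is absolute, giving the required uniformity in $B$, $A_1$, $A_2$. Then for a given $z$, I would choose a semi-split extension $0\to A_2\ts\K(\H)\to E\to A_1\to 0$ of filtered $C^*$-algebras (filtering $E$ through the filtration on $B$ after tensoring, i.e. $(E\ts B_r)_{r>0}$, and noting semi-splitness is preserved by tensoring with $B$ since $\mathrm{id}_B$ is completely positive and the cross-section tensored with $\mathrm{id}_B$ still lands in the right filtration degree) whose boundary class is $z$, and \emph{define}
$$\TT_B(z) \defi \MM_{A_2\ts B}^{-1}\circ \DD_{A_2\ts\K(\H)\ts B,\, E\ts B},$$
reading the codomain $\K_*(A_2\ts\K(\H)\ts B)$ back to $\K_*(A_2\ts B)$ via Proposition \ref{prop-morita}. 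Property (1) is then immediate from the "induces in $K$-theory" clauses of Propositions \ref{prop-bound} and \ref{prop-morita} together with the fact that $\tau_B$ commutes with the boundary map construction at the level of $KK$ (this is classical). Property (5), $\TT_B([\mathrm{id}])\aeq\Id$, follows by taking the trivial split extension. Naturality in $A_1$ (item 3) and in $A_2$ (item 4) follows from the controlled naturality of $\DD$ with respect to morphisms of extensions (cited after Proposition \ref{prop-bound}), after observing that $f^*(z)$ and $g_*(z)$ are represented by the pulled-back resp. pushed-forward extensions. Additivity (item 2) follows from the fact that the boundary of a Baer sum / direct sum of extensions is represented by a block-diagonal extension, combined with the additivity of $\DD$ under direct sums, which is again part of its controlled naturality package. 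Item (6) is the associativity $\tau_{B\ts D}=\tau_B\circ\tau_D$ transported through the definition, using that $E\ts(B\ts D)=(E\ts B)\ts D$ as filtered algebras.

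The main obstacle, and where I would spend most of the work, is \textbf{well-definedness}: the class $z$ does not determine the extension $E$, only its stable Morita-and-homotopy class, so I must show $\TT_B(z)$ is independent of the chosen extension \emph{up to the equivalence $\aeq$ at the control pair $(\al_\TT,k_\TT)$} — i.e. that two semi-split extensions with the same boundary class in $KK_1(A_1,A_2\ts\K)$ give $(\al_\TT,k_\TT)$-equal controlled morphisms. This is genuinely delicate because the homotopy connecting two such extensions is itself an extension over $A_1[0,1]$, and one must track that evaluation at the endpoints, composed with the controlled boundary maps, yields morphisms that agree after a \emph{controlled} (not merely asymptotic) rescaling; the tool is the controlled six-term sequence of Theorem \ref{thm-six term} applied to the evaluation extension $0\to A_2\ts\K\ts B\ts C_0(0,1)\to E[0,1]\ts B\to (A_2\ts\K\ts B)^{\oplus 2}\to 0$, together with quantitative homotopy invariance to kill the $C_0(0,1)$ factor. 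A secondary technical point is choosing the semi-split extensions with \emph{propagation-controlled} cross-sections so that $s\ts\mathrm{id}_B$ respects $(E\ts B_r)_r$; this is automatic for the standard dilations since the cross-section there is built from a fixed completely positive map into $\K(\H)$ tensored with the identity on the relevant algebra, and $\K(\H)$ carries the trivial filtration. Once well-definedness is secured, properties (2)–(7) are each a short diagram chase using the already-cited controlled naturality of $\DD$ and the absoluteness of the control pairs.
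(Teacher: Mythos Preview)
The paper does not prove this theorem: it is quoted verbatim from \cite[Theorem~4.4]{oy2} (see the sentence immediately preceding the statement), so there is no in-paper proof to compare your proposal against. Your outline is in the right spirit --- in \cite{oy2} the controlled morphism $\TT_B(z)$ is indeed built by representing $z$ via a $K$-cycle/extension picture and then invoking the controlled boundary map of Proposition~\ref{prop-bound} together with controlled Morita invariance --- but since the present paper treats the result as a black box, any detailed verification belongs to \cite{oy2} rather than here.

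One substantive caution on your sketch: you reduce everything to the odd case by writing $z=[\partial_{J,E}]$ for a single semi-split extension, but an arbitrary even class in $KK_0(A_1,A_2)$ is not literally a boundary class; you need either a suspension step (bootstrapping $KK_0$ from $KK_1$ via $[\partial]\in KK_1(\C,C_0(0,1))$, which the paper does use elsewhere) or the Cuntz picture with a pair of homomorphisms into the Calkin-type quotient. Your well-definedness discussion also understates the issue slightly: two extensions with the same $KK$-class need not be connected by a homotopy of extensions but only after adding degenerate ones and stabilizing, so the controlled argument must absorb those stabilizations into the fixed control pair $(\al_\TT,k_\TT)$ --- this is where most of the actual work in \cite{oy2} lies.
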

Moreover, the controlled tensorization morphism $\TT_B$ is compatible with Kasparov products.
\begin{theorem}\label{thm-product-tensor} There exists a control pair $(\lambda,h)$ such that the following holds :

  let  $A_1,\,A_2$ and $A_3$  be separable $C^*$-algebras and let $B$ be a  filtered $C^*$-algebra. Then for any
$z$ in $KK_*(A_1,A_2)$ and  any  $z'$ in
$KK_*(A_2,A_3)$, we have
$$\TT_B(z\ts_{A_2} z')\aeq  \TT_B(z') \circ\TT_B(z).$$
\end{theorem}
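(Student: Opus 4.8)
The plan is to reduce the identity, using the seven functoriality properties of $\TT_B$ collected in Theorem \ref{thm-tensor} together with the controlled six term exact sequence of Theorem \ref{thm-six term}, to a single compatibility statement about controlled boundary maps, and then to read off the control pair. Since the control pairs provided by Theorems \ref{thm-tensor}, \ref{prop-bound} and \ref{thm-six term} are universal, and the reduction will involve only a number of compositions of controlled morphisms bounded independently of $A_1,A_2,A_3,B,z,z'$, the resulting $(\lambda,h)$ will automatically be universal, and I would not track it explicitly. I would also note at the outset that filtrations are never an obstacle: the auxiliary $C^*$-algebras that appear below (Cuntz algebras $qA$, free products, cones, mapping cylinders) need not be filtered, but their spatial tensor products with the filtered algebra $B$ always are, and the controlled objects $\K_*(-\ts B)$ see only those tensor products.

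I would then carry out the reduction via the Cuntz picture of $KK$-theory. For separable $A_1$, any $z\in KK_0(A_1,A_2)$ factors as $z=\mu_{A_1}\ts_{qA_1}[\psi_z]$, where $\mu_{A_1}\in KK_0(A_1,qA_1)$ is a fixed $KK$-equivalence, natural in $A_1$ and --- up to quantitative Morita equivalence (Proposition \ref{prop-morita}) --- itself a product of a bounded number of classes of $*$-homomorphisms and of boundary classes of split (hence semi-split) filtered extensions, while $\psi_z\colon qA_1\to A_2\ts\Kp(\H)$ is a genuine $*$-homomorphism; the odd-degree case is absorbed by also tensoring with $C_0(\R)$ and using the cone and Toeplitz extensions. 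Factoring $z$ and $z'$ this way, then repeatedly applying: (a) Theorem \ref{thm-tensor}(iii), which, together with (v), gives $\TT_B([\phi])\aeq\phi_{B,*}$ and hence $\TT_B([\phi]\ts w)\aeq\TT_B(w)\circ\TT_B([\phi])$ for an arbitrary class $w$; (b) Theorem \ref{thm-tensor}(iv), which dually gives $\TT_B(w\ts[\phi])\aeq\TT_B([\phi])\circ\TT_B(w)$ for an arbitrary class $w$; and (c) Theorem \ref{thm-tensor}(vii), which identifies $\TT_B([\partial_{J,E}])$ with the controlled boundary map $\DD_{J\ts B,E\ts B}$; together with associativity of the Kasparov product, one peels off all the $*$-homomorphism factors and reduces the general identity to a bounded number of instances in which both $z$ and $z'$ are boundary classes of semi-split filtered extensions. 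The decisive feature is that, because every $KK$-class is a product of finitely many such blocks and because (a) and (b) handle $*$-homomorphism factors against an arbitrary cofactor, this induction never requires the identity for the block currently being removed against an as-yet-unhandled long product, and so is not circular.

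The remaining case --- $z=[\partial_{J,E}]$ and $z'=[\partial_{J',E'}]$ with $E'/J'=J$ --- is where I expect the main difficulty. Classically, the Kasparov product $[\partial_{J,E}]\ts_J[\partial_{J',E'}]$ is again the boundary class of a semi-split extension, obtained from $E$, $E'$ and the inclusion $J\hookrightarrow E$ by a mapping-cylinder construction; granting that, and the fact (Theorem \ref{thm-tensor}(vii)) that $\TT_B$ sends it to the corresponding $\DD$, the problem becomes the purely controlled assertion that the composite $\DD_{J'\ts B,E'\ts B}\circ\DD_{J\ts B,E\ts B}$ of two controlled boundary maps equals, up to a universal control pair, the controlled boundary map of the spliced extension. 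I would prove this by building the spliced filtered $C^*$-algebra, fitting it together with $E\ts B$, $E'\ts B$ and $(E/J)\ts B$ into a diagram of semi-split filtered extensions, and running a diagram chase using the naturality of $\DD$ and the controlled six term exact sequence of Theorem \ref{thm-six term}. The delicate points are that the classical splicing rests on honest equalities of extensions whereas here the intermediate identifications hold only after applying structure maps $\iota$, so the entire chase must be carried out inside one ambient control pair; and that one must verify that the number of auxiliary extensions and composed controlled morphisms stays absolutely bounded, which is exactly what makes the final $(\lambda,h)$ universal. Once this boundary-composition lemma is established, the reduction of the previous paragraph assembles the general statement, and a common refinement of the finitely many control pairs that arise gives the required universal $(\lambda,h)$.
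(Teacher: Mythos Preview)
The paper does not prove this theorem here; it is quoted from \cite{oy2}. So there is no ``paper's own proof'' to compare against in this document. That said, your strategy diverges from the one the paper itself uses whenever it needs to manipulate $\TT_B$ of a product (see the proof of Lemma~\ref{lemma-mv-kunneth}), and your final step has a genuine gap.

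\medskip

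\textbf{The gap.} Your reduction ends with the case where $z=[\partial_{J,E}]$ and $z'=[\partial_{J',E'}]$ are both boundary classes of semi-split extensions, with $E'/J'=J$. You then assert that $[\partial_{J,E}]\ts_J[\partial_{J',E'}]$ is again the boundary class of a single semi-split extension obtained by a mapping-cylinder splice. This is false on degree grounds: each $[\partial]$ lies in $KK_1$, so their product lies in $KK_0(E/J,J')$, which cannot be the boundary class of any extension. The Yoneda splice of the two extensions is a \emph{two-step} filtration $0\to J'\to E'\to E\to E/J\to 0$, not a short exact sequence, and the corresponding $KK_0$-element is not a $[\partial]$. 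Your proposed ``controlled boundary-composition lemma'' is therefore comparing the even-degree controlled morphism $\DD_{J'\ts B,E'\ts B}\circ\DD_{J\ts B,E\ts B}$ with the odd-degree $\DD$ of a single extension, which does not typecheck. One can of course absorb the degree shift via Bott periodicity, but then you are no longer in the situation ``$\TT_B$ of a boundary class'', property~(vii) does not apply directly, and the identification you need is essentially a special case of the very theorem you are proving. The diagram chase you sketch does not close this circle.

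\medskip

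\textbf{A simpler route.} The argument the paper relies on elsewhere (and which is the one in \cite{oy2}) avoids boundary classes entirely in the even case. By \cite[Lemma~1.6.9]{laff-inv}, for $z'\in KK_0(A_2,A_3)$ there exist a separable $C^*$-algebra $D$ and $*$-homomorphisms $\theta:D\to A_2$, $\eta:D\to A_3$ with $[\theta]\in KK_0(D,A_2)$ invertible and $z'=[\theta]^{-1}\ts[\eta]$. From properties (iii)--(v) of Theorem~\ref{thm-tensor} alone one gets $\TT_B([\theta])\stackrel{(\al_\TT,k_\TT)}{\sim}\theta_{B,*}$ and, applying (iii) and (iv) to $[\theta]\ts[\theta]^{-1}=[Id_D]$ and $[\theta]^{-1}\ts[\theta]=[Id_{A_2}]$, that $\TT_B([\theta]^{-1})$ is a controlled inverse for $\theta_{B,*}$. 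Then for arbitrary $z$,
\[
\theta_{B,*}\circ\TT_B(z\ts[\theta]^{-1})=\TT_B\bigl((z\ts[\theta]^{-1})\ts[\theta]\bigr)=\TT_B(z)
\]
by (iv), whence $\TT_B(z\ts[\theta]^{-1})\sim\TT_B([\theta]^{-1})\circ\TT_B(z)$; composing with $\eta_{B,*}$ and using (iv) once more gives $\TT_B(z\ts z')\sim\TT_B(z')\circ\TT_B(z)$. This uses neither property~(vii) nor any statement about composing controlled boundary maps. The odd-degree cases are then reduced to this one by a single controlled Bott periodicity (which is available independently), rather than by your inductive peeling down to a product of two boundaries.
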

We also have in the case of finitely generated group a controlled version of the Kasparov transformation.
Let $\Ga$ be a finitely generated group. Recall that a length on $\Gamma$ is a map $\ell:\Gamma\to\R^+$ such
that
\begin{itemize}
\item $\ell(\gamma)=0$ if and only if $\gamma$ is the identity element  $e$  of $\Gamma$;
\item  $\ell(\gamma\gamma')\lq\ell(\gamma)+\ell(\gamma')$ for all
  element $\gamma$ and $\gamma'$ of $\Gamma$.
\item $\ell(\gamma)=\ell(\gamma^{-1})$.
\end{itemize}
In what follows, we will assume that $\ell$ is a word length arising from a finite generating symmetric  set $S$, i.e.,
$\ell(\gamma)=\inf\{d\text{ such that }\ga=\ga_1\cdots\ga_d\text{ with } \ga_1,\ldots,\ga_d\text{ in }S\}$.
Let us denote by $B(e,r)$ the
ball centered at the neutral  element of $\Ga$ with  radius $r$, i.e.,
$B(e,r)=\{\gamma\in\Gamma\text{ such that }\ell(\gamma)\lq r\}$. Let $A$ be a separable $\Ga$-$C^*$-algebra, i.e., a separable $C^*$-algebra provided with an action of $\Ga$ by automorphisms.  For any positive number $r$, we set
$$(A\rtr \Gamma)_r\defi\{f\in C_c(\Ga,A)\text{ with support in
}B(e,r)\}.$$ Then the $C^*$-algebra $A\rtr \Gamma$ is filtered by
$((A\rtr \Gamma)_r)_{r>0}$.
Moreover if $f:A\to B$ is a $\Ga$-equivariant morphism of $C^*$-algebras, then the induced homomorphism
$f_\Ga:A\rtr\Ga\to B\rtr\Ga$ is a filtered homomorphism. In \cite{kas},
for any $\Ga$-$C^*$-algebras $A$ and $B$, was  constructed  a natural transformation
$J_\Ga:KK_*^\Ga(A, B)\to KK_*(A\rtr\Ga,B\rtr\Ga)$ that preserves Kasparov products, called the Kasparov transformation.
The Kasparov transformation admits a quantitative version \cite[Section 5]{oy2}.
\begin{theorem}\label{thm-kas}
There exists a control pair $(\alpha_\JJ,k_\JJ)$ such that
\begin{itemize}
\item for any  separable  $\Gamma$-$C^*$-algebras  $A$ and $B$;
\item For any $z$ in $KK^\Ga_*(A,B)$,
\end{itemize}
there exists a $(\alpha_\JJ,k_\JJ)$-controlled morphism
$$\JR(z): \K_*(\AG)\to
\K_*(B\rtr\Gamma)$$
 of same degree as $z$ that satisfies the following:
\begin{enumerate}
\item For any element  $z$  of
$KK^\Ga_*(A,B)$, then
$\JR(z): \K_*(\AG)\to
\K_*(B\rtr\Gamma)$ induces in $K$-theory right multiplication by
$J_\Ga^{red}(z)$.
\item  For any $z$ and
  $z'$ in $KK_*^\Gamma(A,B)$, then
  $$\JR(z+z')=\JR(z)+\JR(z').$$

\item For any $\Gamma$-$C^*$-algebra $A'$, any homomorphism  $f:A\to A'$  of $\Gamma$-$C^*$-algebras and  any $z$ in $KK_*^\Gamma(A',B)$, then
$\JR(f^*(z))=
\JR(z)\circ f_{\Ga,*}$.
\item For any $\Gamma$-$C^*$-algebra $B'$, any homomorphism $g:B\to B'$ of
  $\Gamma$-$C^*$-algebras and  any
  $z$ in $KK_*^\Gamma(A,B)$, then $\JR(g_*(z))=g_{\Ga,*}\circ
\JR(z)$.
\item $\JR(Id_A)\stackrel{(\al_\JJ,k_\JJ)}{\sim}\Id_{\K_*(A\rtr\Ga)}$.
\item If $$0\to J\to A\to A/J\to 0$$ is a semi-split exact sequence of
  $\Ga$-$C^*$-algebras, let $[\partial_{J,A}]$ be the element of
  $KK^\Ga_1(A/J,J)$ that implements the boundary map
  $\partial_{J,A}$. Then we have
$$\JR([\partial_{J,A}])\stackrel{(\al_\JJ,k_\JJ)}{\sim}\D_{J\rtr\Ga,A\rtr\Ga}.$$\end{enumerate}
\end{theorem}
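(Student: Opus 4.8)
The natural strategy is to reduce the construction of $\JR$ to the already-established controlled tensorization morphism of Theorem \ref{thm-tensor}, exploiting the fact that the Kasparov transformation $J_\Ga$ can itself be built out of the descent construction together with external products by the unit of $KK^\Ga$. Concretely, for $\Ga$ a finitely generated group with a fixed word length, one first notes that for any separable $\Ga$-$C^*$-algebra $A$ the reduced crossed product $\AG$ carries the canonical filtration by $(\AG)_r$ defined in the excerpt, and that a $\Ga$-equivariant $*$-homomorphism induces a \emph{filtered} homomorphism on crossed products; this is exactly the input needed to make sense of the functoriality claims (iii) and (iv). The plan is then: (a) treat first the case where $z$ is represented by an equivariant homomorphism $f\colon A\to B$ (possibly after stabilizing $B$ and passing to a Kasparov bimodule of the form $(H_B,\pi_f,0)$), in which case $\JR(z)$ should simply be the controlled morphism induced by the filtered homomorphism $f_\Ga$ composed with the quantitative Morita equivalence of Proposition \ref{prop-morita}; (b) treat the case where $z=[\partial_{J,A}]$ is the class of a boundary map of a semi-split $\Ga$-equivariant extension, in which case $\JR(z)$ is defined to be the controlled boundary morphism $\D_{J\rtr\Ga,A\rtr\Ga}$ supplied by Proposition \ref{prop-bound}, using that the descent of a semi-split $\Ga$-extension is a semi-split filtered extension; (c) extend to general $z$ by writing an arbitrary Kasparov class as a Kasparov product of classes of the two preceding types (this is possible by the standard structure of $KK$ via the Cuntz picture / universal property), and \emph{defining} $\JR$ on a product by the composition of the corresponding controlled morphisms, which requires checking independence of the chosen factorization up to the equivalence relation $\aeq$ for a single universal control pair $(\alpha_\JJ,k_\JJ)$.

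The key technical point throughout is \textbf{uniformity of the control pair}: every elementary operation (functorial pushforward along a filtered homomorphism, Morita equivalence, controlled boundary map) comes with its own control pair by the earlier results, and when one composes finitely many of them the control pair degrades multiplicatively, so one must bound the \emph{number} of elementary steps needed to realize an arbitrary $z$. This is handled by fixing, once and for all, a bounded-length presentation of $KK$: any $z\in KK^\Ga_*(A,B)$ is the Kasparov product of at most two or three classes of the standard types, because one can present $KK^\Ga(A,B)$ as homotopy classes of equivariant $*$-homomorphisms from a free-product-type algebra $qA\otimes\K$ to $B\otimes\K$, and the Kasparov product with the Bott-type invertible element requires only a fixed, $z$-independent sequence of controlled moves. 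Thus the composite control pair $(\alpha_\JJ,k_\JJ)$ can be taken to be the (finite) product of the control pairs furnished by Propositions \ref{prop-morita}, \ref{prop-bound} and the compatibility of $\D$ with semi-split extensions, together with $(\alpha_\DD,k_\DD)$; no dependence on $A$, $B$ or $z$ enters.

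Having fixed the definition, properties (i)--(vi) are verified in order. Property (i) — that $\JR(z)$ induces right multiplication by $J_\Ga^{red}(z)$ in $K$-theory — follows by applying the structure maps $\iota_*^{\eps,r}$ and using that each elementary controlled morphism in the factorization induces the corresponding operation in ordinary $K$-theory (this is part of Propositions \ref{prop-morita}, \ref{prop-bound}), together with the fact that $J_\Ga^{red}$ is multiplicative and compatible with boundary maps. Additivity (ii) is obtained from the fact that direct sums of Kasparov bimodules give direct sums of controlled morphisms, using that $\K_*(\AG)$ is a group and the structure maps are homomorphisms. Naturality (iii), (iv) is immediate from the functoriality built into step (a) and the naturality of $\D_{J,A}$ with respect to morphisms of extensions noted after Proposition \ref{prop-bound}. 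The normalization $\JR(\mathrm{Id}_A)\stackrel{(\alpha_\JJ,k_\JJ)}{\sim}\Id_{\K_*(A\rtr\Ga)}$ (v) is the statement that the identity class factors as a trivial composition, up to the equivalence relation $\aeq$. Finally (vi), the compatibility with boundary maps, is true \emph{by construction} in step (b) once one checks that the descent of the $KK^\Ga_1$-class implementing $\partial_{J,A}$ is the $KK_1$-class implementing $\partial_{J\rtr\Ga,A\rtr\Ga}$, which is the classical compatibility of Kasparov descent with extensions.

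The main obstacle, and the part deserving the most care, is step (c): proving that $\JR$ is \emph{well defined} on Kasparov products, i.e. that two different factorizations of the same $z$ yield controlled morphisms that are equal up to $\aeq$ for the single universal control pair. This rests on the controlled Bott periodicity / controlled associativity machinery — essentially one needs a controlled version of the uniqueness statement in the universal characterization of $KK$. I would prove this by reducing it, via the controlled six-term exact sequence of Theorem \ref{thm-six term} and the compatibility of $\D$ with extensions, to the assertion that the controlled descent of the Kasparov product $\beta\otimes\alpha$ of the Bott and Dirac elements is $(\alpha_\JJ,k_\JJ)$-equivalent to the identity — and this in turn follows from the explicit nature of the Bott and Dirac bimodules, whose descent involves only crossed products by the trivial action on $C_0(\R^n)$, where propagation estimates are transparent. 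One delicate bookkeeping issue is that the Kasparov product of classes of arbitrary propagation need not have controlled propagation; this is circumvented exactly as in \cite[Section 5]{oy2} by working at each fixed scale $r$ and only afterward letting $r\to\infty$, which is harmless because of Corollary \ref{cor-limit-quant-K-groups}.
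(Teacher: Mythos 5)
Your opening claim---that the construction of $\JR$ reduces to the controlled tensorization morphism $\TT$ of Theorem~\ref{thm-tensor}---does not hold up: the Kasparov descent $J_\Ga$ and the tensorization $\tau_B$ are genuinely different operations (a reduced crossed product is not a tensor product except for trivial actions), and indeed your steps (a)--(c) never actually invoke $\TT$, so the first sentence is a red herring. The substantive gap is the well-definedness of $\JR$ on Kasparov products in step~(c). If $z$ admits two factorizations $z = z_1 \ts z_2 = z'_1 \ts z'_2$, you need $\JR(z_2)\circ\JR(z_1)$ and $\JR(z'_2)\circ\JR(z'_1)$ to agree up to the relevant controlled equivalence for a single universal control pair---but that is precisely the content of Theorem~\ref{thm-product}, which is downstream of Theorem~\ref{thm-kas} and cannot be invoked in its proof. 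You flag this as ``the main obstacle,'' but the remedy you offer (controlled Bott periodicity, the explicit Bott and Dirac bimodules, the controlled six-term sequence) only treats factorizations that differ by a Bott/Dirac cancellation, not arbitrary ones. Nor does the equivariant Cuntz-picture detour help: $(q^\Ga A)\rtr\Ga$ is not $q(A\rtr\Ga)$, and showing the relevant $KK$-equivalence descends to a controlled isomorphism with a $z$-independent control pair is exactly as hard as the original problem.

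The route in \cite[Section~5]{oy2}, which this paper cites, sidesteps arbitrary factorizations. By Kasparov's Stinespring-type theorem every $z \in KK^\Ga_1(A,B)$ is the boundary class of a $\Ga$-equivariant semi-split extension $0 \to B\ts\K \to E_z \to A \to 0$; its descent is a filtered, semi-split extension of crossed products, and $\JR(z)$ is defined directly as the controlled boundary morphism of Proposition~\ref{prop-bound} composed with the Morita equivalence of Proposition~\ref{prop-morita}, so the control pair is $(\al_\DD,k_\DD)$ up to a fixed Morita factor, independent of $z$ by construction. Well-definedness then reduces to a single concrete homotopy argument: two extensions representing the same class are joined by a $C([0,1])$-extension, which descends to a filtered $C([0,1])$-extension whose controlled boundary map interpolates between the two, and homotopy invariance of quantitative $K$-theory finishes the job. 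The even case is obtained by a fixed suspension, not an arbitrary product. This makes (ii) and (vi) immediate and reduces (i), (iii), (iv), (v) to the naturality of the controlled boundary map noted after Proposition~\ref{prop-bound}---which is essentially what your steps (a) and (b) already sketch. It is step~(c) that introduces the circularity and should be discarded in favor of this extension-level description.
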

The controlled Kasparov transformation is compatible with Kasparov products.
\begin{theorem}\label{thm-product} There exists a control pair $(\lambda,h)$ such that the following holds:
for every  separable $\Ga$-$C^*$-algebras $A,\,B$ and $D$, any elements $z$ in
$KK_*^\Gamma(A,B)$ and   $z'$ in
$KK_*^\Gamma(B,D)$, then
$$\JR(z\otimes_B z')\aeq \JR(z')\circ \JR(z).$$
\end{theorem}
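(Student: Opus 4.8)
The strategy is to show that the product formula is a \emph{formal} consequence of the functoriality of $\JR$ (items (3), (4) and (5) of Theorem \ref{thm-kas}) together with the equivariant Cuntz picture of $KK$-theory, the only genuinely quantitative input being that $\JR$ comes with the fixed control pair $(\al_\JJ,k_\JJ)$; the argument runs parallel to the proof of Theorem \ref{thm-product-tensor} for the tensorization morphism. Every relation $\aeq$ written below will hold for a control pair obtained from $(\al_\JJ,k_\JJ)$ by an absolutely bounded number of compositions, so at the end it will suffice to let $(\lambda,h)$ dominate all of them. Using Bott periodicity and the functoriality of $\JR$ we may assume $z$ and $z'$ have degree $0$, and by Proposition \ref{prop-morita} we may freely absorb stabilizations.

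First I would dispose of the two elementary cases. If $z=[f]$ is the class of a $\Ga$-equivariant homomorphism $f\colon A\to B$, then $z\otimes_B z'=f^*(z')$, so Theorem \ref{thm-kas}(3) gives $\JR(z\otimes_B z')=\JR(z')\circ f_{\Ga,*}$; since $[f]=f^*([\mathrm{Id}_B])$, parts (3) and (5) give $\JR([f])\aeq f_{\Ga,*}$, hence $\JR(z\otimes_B z')\aeq\JR(z')\circ\JR(z)$. Symmetrically, if $z'=[g]$ for a $\Ga$-equivariant homomorphism $g$, then $z\otimes_B z'=g_*(z)$, and Theorem \ref{thm-kas}(4) together with $\JR([g])\aeq g_{\Ga,*}$ gives $\JR(z\otimes_B z')\aeq\JR([g])\circ\JR(z)$. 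So the formula holds whenever one of the two factors is the class of a homomorphism.

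Now let $\theta_A\colon qA\to A$ be the canonical $\Ga$-equivariant homomorphism of the Cuntz picture, which is a $KK^\Ga$-equivalence and for which every $z\in KK^\Ga_0(A,B)$ can be written as $z=[\theta_A]^{-1}\otimes_{qA}[\varphi]$ with $\varphi\colon qA\to B$ (after stabilizing $B$) a $\Ga$-equivariant homomorphism. Applying the two elementary cases to the identities $[\theta_A]^{-1}\otimes_{qA}[\theta_A]=[\mathrm{Id}_A]$ and $[\theta_A]\otimes_A[\theta_A]^{-1}=[\mathrm{Id}_{qA}]$, and using (3) and (5), one finds that $(\theta_A)_{\Ga,*}\circ\JR([\theta_A]^{-1})\aeq\Id$ and $\JR([\theta_A]^{-1})\circ(\theta_A)_{\Ga,*}\aeq\Id$, i.e.\ $\JR([\theta_A]^{-1})$ is a two-sided inverse of $(\theta_A)_{\Ga,*}$ up to $\aeq$. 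Consequently, for every $w\in KK^\Ga_0(qA,D)$ one has $w=\theta_A^*([\theta_A]^{-1}\otimes_{qA}w)$, so Theorem \ref{thm-kas}(3) gives $\JR(w)=\JR([\theta_A]^{-1}\otimes_{qA}w)\circ(\theta_A)_{\Ga,*}$, whence $\JR(w)\circ\JR([\theta_A]^{-1})\aeq\JR([\theta_A]^{-1}\otimes_{qA}w)$; that is, the product formula holds whenever the first factor is $[\theta_A]^{-1}$.

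To conclude, write $z=[\theta_A]^{-1}\otimes_{qA}[\varphi]$ as above. By associativity of the Kasparov product, $z\otimes_B z'=[\theta_A]^{-1}\otimes_{qA}\varphi^*(z')$, so the previous paragraph (applied to $w=\varphi^*(z')$) together with Theorem \ref{thm-kas}(3) gives $\JR(z\otimes_B z')\aeq\JR(\varphi^*(z'))\circ\JR([\theta_A]^{-1})=\JR(z')\circ\varphi_{\Ga,*}\circ\JR([\theta_A]^{-1})$; on the other hand, the ``homomorphism second factor'' case and $\JR([\varphi])\aeq\varphi_{\Ga,*}$ give $\JR(z')\circ\JR(z)=\JR(z')\circ\JR([\theta_A]^{-1}\otimes_{qA}[\varphi])\aeq\JR(z')\circ\varphi_{\Ga,*}\circ\JR([\theta_A]^{-1})$. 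The two right-hand sides coincide, so $\JR(z\otimes_B z')\aeq\JR(z')\circ\JR(z)$. The points that require care — rather than presenting a real obstacle — are the verification that the equivariant Cuntz picture is available with $\theta_A$ an honest $\Ga$-equivariant homomorphism, and the bookkeeping that makes the resulting control pair $(\lambda,h)$ depend only on $(\al_\JJ,k_\JJ)$ (and on the fixed, bounded number of compositions used), hence independent of $A$, $B$, $D$, $z$ and $z'$, as the statement demands.
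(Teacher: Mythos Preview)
The paper does not actually prove this theorem: it is stated in the overview section (Section~1) as a result recalled from \cite{oy2}, so there is no ``paper's own proof'' to compare against. Your argument is nonetheless a correct and standard one; it is essentially the same dévissage used in \cite{laff-inv} (whence the Cuntz-picture factorisation $z=[\theta_A]^{-1}\otimes_{qA}[\varphi]$) and, as far as one can tell from the bibliography, also the scheme underlying the proof in \cite{oy2}. The logic is sound: the two ``one factor is a homomorphism'' cases follow directly from items (3)--(5) of Theorem~\ref{thm-kas}; the identification of $\JR([\theta_A]^{-1})$ as a controlled inverse of $(\theta_A)_{\Ga,*}$ is obtained by applying these cases to the two identities expressing that $[\theta_A]$ is a $KK^\Ga$-equivalence; and the final assembly uses only associativity of the Kasparov product and one more application of item (3).

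Two small points deserve to be made explicit rather than deferred to ``points requiring care''. First, the equivariant Cuntz algebra $qA$ and the homomorphism $\theta_A$ are indeed available $\Ga$-equivariantly (the construction is functorial and the $\Ga$-action on $A$ induces one on $QA=A\ast A$ and hence on $qA$); you should cite this rather than merely flag it. Second, your claim that the resulting control pair depends only on $(\al_\JJ,k_\JJ)$ requires that in the relation $\JR([f])\aeq f_{\Ga,*}$ the control pair is $(\al_\JJ,k_\JJ)$ itself (or at worst a fixed multiple of it), not something depending on $f$; this is indeed what item~(5) together with item~(3) gives, but it is worth spelling out, since the whole point of the theorem is the universality of $(\lambda,h)$.
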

\begin{remark}\label{remark-kasp-tens}
We can choose $(\al_\JJ,k_\JJ)$ such that $(\al_\JJ,k_\JJ)=(\al_\T,k_\T)$. In this case, for any $\Ga$-$C^*$-algebra $A$, any 
$C^*$-algebras $D_1$ and $D_2$ equipped with the  trivial action of $\Ga$ and any $z$ in $KK_*(D_1,D_2)$, then 
$$\T_{A\rtr \Ga,*}(z)=\JR(\tau_{A,*}(z)).$$
\end{remark}
We have similar result for maximal crossed products.
\subsection{Quantitative  assembly maps}\label{subsection-quantitative-assembly-map}

In this subsection, we discuss a quantitative version of the Baum-Connes assembly map.

Let $\Ga$ be a finitely generated group and let $B$ be a $\Ga$-$C^*$-algebra. We equip $\Ga$ with any word metric. Recall that if $d$ is  a positive number, then the Rips complex of degree $d$ is the set $P_d(\Ga)$ of probability measures with support of diameter less than $d$. Then $P_d(\Ga)$ is a locally finite simplicial complex and  provided  with the simplicial topology, $P_d(\Ga)$ is endowed  with a proper and cocompact action of $\Ga$ by left translation.
In \cite{oy2},  for any $\Ga$-$C^*$-algebra $B$, we construct quantitative assembly maps
$$\mu_{\Gamma,B,*}^{\eps,r,d}: KK_*^\Gamma(C_0(P_d(\Ga)),B)\to
K_*^{\eps,r}(B\rtr\Ga),$$ with $d>0,\,\eps\in(0,1/4)$ and $r\gq r_{d,\eps}$,  where
$$[0,+\infty)\times (0,1/4)\lto (0,+\infty):\, (d,\eps)\mapsto r_{d,\eps}$$ is a function  independent on
$B$, non decreasing in $d$ and
non increasing  in $\eps$.  Moreover, the maps $\mu_{\Gamma,B,*}^{\eps,r,d}$ induce the usual assembly maps
$$\mu_{\Gamma,B,*}^{d}: KK_*^\Gamma(C_0(P_s(\Ga)),B)\to
K_*(B\rtr\Ga),$$ i.e.,  $\mu_{\Gamma,B,*}^{d}=\iota_*^{\eps,r}\circ\mu_{\Gamma,B,*}^{\eps,r,d}$.
Let us recall now the definition of the quantitative assembly maps.
Observe first that  any $x$ in $P_d(\Ga)$ can be written down  in a unique way as a finite convex combination
$$x=\sum_{\ga\in\Ga}\lambda_\ga(x)\de_\ga,$$ where $\de_\ga$ is the Dirac probability measure at $\ga$ in $\Ga$.
The functions $$\lambda_\ga:P_d(\Ga)\to [0,1]$$ are continuous and
$\ga(\lambda_{\ga'})=\lambda_{\ga\ga'}$ for all $\ga$ and $\ga'$ in $\Ga$.
 The
function
$$p_{\Ga,d}:\Gamma\to C_0(P_d(\Ga));\, \gamma\mapsto\sum_{\gamma\in\Gamma}\lambda_e^{1/2}\lambda_\ga^{1/2}$$
 is a projection of
 $C_0(P_d(\Ga))\rtr\Gamma$ with  propagation  less than $d$. Let us set then $r_{d,\eps}=k_{\JJ,\eps/\alpha_\JJ}d$, where
 the control pair $(\al_\JJ,k_\JJ)$ is as  inTheorem \ref{thm-kas}. Recall that $k_\JJ$ can be chosen non increasing and in this case,  $r_{d,\eps}$ is non decreasing in
 $d$ and non increasing in $\eps$.
\begin{definition}\label{def-quantitative-assembly-map}
For any $\Gamma$-$C^*$-algebra $A$ and any positive numbers $\eps$, $r$ and
$d$ with $\eps<1/4$ and
$r\gq r_{d,\eps}$, we define the quantitative assembly map
\begin{eqnarray*}
\mu_{\Gamma,A,*}^{\eps,r,d}: KK_*^\Gamma(C_0(P_d(\Ga)),A)&\to&
K_*^{\eps,r}(\AG)\\
z&\mapsto&\big(J_\Gamma^{red,\frac{\eps}{\alpha_\JJ},\frac{r}{k_{\JJ,{\eps}/{\alpha_\JJ}}}}(z)\big)\left([p_{\Ga,d},0]_{\frac{\eps}{\alpha_\JJ},\frac{r}{k_{\JJ,{\eps}/{\alpha_\JJ}}}}\right),
\end{eqnarray*} where the notation $  [p_{\Ga,d},0]_{\frac{\eps}{\alpha_\JJ},\frac{r}{k_{\JJ,{\eps}/{\alpha_\JJ}}}}$ is as in Definition 1.5.
\end{definition}
Then according to  theorem \ref{thm-kas}, the map  $\mu_{\Gamma,A}^{\eps,r,d}$
is a  group homomorphism. For any positive numbers
 $d$ and $d'$ such that $d\lq d'$, we denote by
$q_{d,d'}:C_0(P_{d'}(\Ga))\to C_0(P_d(\Ga))$ the homomorphism induced by the restriction
  from $P_{d'}(\Ga)$ to $P_d(\Ga)$. It is straightforward to check that if
  $d$, $d'$ and  $r$  are positive numbers such that $d\lq d'$ and
  $r\gq r_{d',\eps}$, then
$\mu_{\Gamma,A}^{\eps,r,d}=\mu_{\Gamma,A}^{\eps,r,d'}\circ
q_{d,d',*}$. Moreover, for every positive numbers
$\eps,\,\eps',\,d,\,r$ and $r'$ such that $\eps\lq\eps'\lq
1/4$, $r_{d,\eps}\lq r$, $r_{d,\eps'}\lq r'$, and
$r<r'$, we get by definition of a controlled morphism that
\begin{equation*}
\iota^{\eps,\eps',r,r'}_*\circ \mu_{\Ga,A,*}^{\eps,r,d}=\mu_{\Ga,A,*}^{\eps',r',d}.
\end{equation*}
In \cite{oy2} we introduced quantitative statements for the quantitative assembly maps.
For a  $\Ga$-$C^*$-algebra $A$ and positive numbers
$d,d',r,r',\eps$ and $\eps'$ with $d\lq d'$, $\eps\lq\eps'< 1/4$,  $r_{d,\eps}\lq
r'$ and  $ r\lq
r'$ we set:
\begin{description}
\item[$QI_{\Ga,A,*}(d,d',r,\eps)$]  for any element $x$ in
  $KK_*^\Gamma(C_0(P_d(\Ga)),A)$, then
  $\mu_{\Gamma,A,*}^{\eps,r,d}(x)=0$
  in $K_*^{\eps,r}(\AG)$ implies that
  $q_{d,d'}^*(x)=0$ in $KK_*^\Gamma(C_0(P_{d'}(\Ga)),A)$.
\item[$QS_{\Ga,A,*}(d,r,r',\eps,\eps')$] for every $y$
  in  $K_*^{\eps,r}(\AG)$, there exists an element $x$ in
  $KK_*^\Gamma(C_0(P_d(\Ga)),A)$ such that
$\mu_{\Gamma,A,*}^{\eps',r',d}(x)=\iota_*^{\eps,\eps',r,r'}(y).$
\end{description}
The following results were then proved.
\begin{theorem}\label{thm-quant-surj}Let $\Ga$ be a discrete group.
\begin{enumerate}
\item Assume that   for any  $\Ga$-$C^*$-algebra
  $A$, the assembly map $\mu_{\Ga,A,*}$ is one-to-one.
    Then for any positive numbers $d$,
  $\eps$ and $r\gq r_{d,\eps}$ with $\eps<1/4$ and  $r\gq r_{d}$, there
exists   a positive number $d'$ with $d'\gq d$ such that
$QI_{\Ga,A}(d,d',r,\eps)$ is satisfied  for every $\Ga$-$C^*$-algebra
  $A$;
\item Assume that   for any  $\Ga$-$C^*$-algebra
  $A$, the assembly map $\mu_{\Ga,A,*}$ is onto.  Then for some  positive number $\al_0$ which not depends  on $\Ga$ or on $A$ and such that with $\al_0>1$ and for any  positive numbers $\eps$
and $r$ with  $\eps<\frac{1}{4\al_0}$, there exist
positive numbers $d$ and  $r'$ with $
r_{d,\eps}\lq r'$ and $r\lq r'$ such that  $QS_{\Ga,A}(d,r,r',\eps,\al_0\eps)$
is satisfied for every $\Ga$-$C^*$-algebra
  $A$.
\end{enumerate}
In particular, if  $\Ga$ satisfies the Baum-Connes conjecture with
coefficients, then $\Ga$ satisfies points  {\rm (i)} and {\rm
  (ii)} above.
\end{theorem}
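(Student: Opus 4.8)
The plan is to derive both statements from the corresponding quantitative statements $QI_{\Ga,A,*}$ and $QS_{\Ga,A,*}$ for the quantitative assembly maps, using the compatibility of $\mu_{\Ga,A,*}^{\eps,r,d}$ with the structure maps and with the restriction maps $q_{d,d'}$. For \textrm{(i)}, suppose that $\mu_{\Ga,A,*}$ is injective for all $\Ga$-$C^*$-algebras $A$, and fix $d$, $\eps<1/4$, and $r\gq r_{d,\eps}$. Let $x$ be in $KK_*^\Ga(C_0(P_d(\Ga)),A)$ with $\mu_{\Gamma,A,*}^{\eps,r,d}(x)=0$ in $K_*^{\eps,r}(\AG)$. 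Applying the structure map $\iota_*^{\eps,r}$ and using $\mu_{\Gamma,A,*}^{d}=\iota_*^{\eps,r}\circ\mu_{\Gamma,A,*}^{\eps,r,d}$, we get $\mu_{\Gamma,A,*}^{d}(x)=0$ in $K_*(\AG)$. The injectivity of the full assembly map $\mu_{\Ga,A,*}=\lim_d \mu_{\Ga,A,*}^d$ then forces $q_{d,d'}^*(x)=0$ for some $d'\gq d$; the point, and where a genuine argument is needed, is to make the choice of $d'$ \emph{uniform in $A$}. This is the main obstacle. It should be handled by a reindexing/compactness argument in the variable $d$: one exploits the fact that the groups $KK_*^\Ga(C_0(P_d(\Ga)),A)$ form an inductive system as $d\to\infty$, the naturality of $\mu^{\eps,r,d}$ in $A$, and the fact that the function $(d,\eps)\mapsto r_{d,\eps}$ does not depend on $A$. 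Concretely, one shows that failure of uniformity would produce a coefficient algebra (e.g. an $\ell^\infty$-type or inductive-limit construction over the would-be counterexamples) violating injectivity of the assembly map for that single algebra.

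For \textrm{(ii)}, suppose $\mu_{\Ga,A,*}$ is onto for all $A$. Fix $\eps<\frac{1}{4\al_0}$ and $r$, where $\al_0\gq 1$ will be the rescaling constant (independent of $\Ga$ and $A$) coming from Proposition \ref{proposition-approximation-K-th}\textrm{(ii)}. Given $y$ in $K_*^{\eps,r}(\AG)$, push it forward to $\iota_*^{\eps,r}(y)$ in $K_*(\AG)$; by surjectivity there is $\tilde x$ in $KK_*^\Ga(C_0(P_d(\Ga)),A)$ for some $d$ with $\mu_{\Gamma,A,*}^{d}(\tilde x)=\iota_*^{\eps,r}(y)$. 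Then $\mu_{\Gamma,A,*}^{\eps,r_{d,\eps},d}(\tilde x)$ and $\iota_*^{\eps,r',r_{d,\eps}}$-images of $y$ agree after applying $\iota_*^{\eps,r}$ into $K_*(\AG)$; invoking Proposition \ref{proposition-approximation-K-th}\textrm{(ii)} on the difference (which lives in $K_*^{\eps,r'}(\AG)$ for suitable $r'$ and dies in $K$-theory), we obtain a further $r'$ at which $\iota_*^{\eps,\al_0\eps,\cdot}$ kills the difference, i.e. $\mu_{\Gamma,A,*}^{\al_0\eps,r',d}(\tilde x)=\iota_*^{\eps,\al_0\eps,r,r'}(y)$. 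Again the delicate point is uniformity of $d$ and $r'$ in $A$, handled as in \textrm{(i)}; the constant $\al_0$ is manifestly independent of $\Ga$ and $A$ because it is the one from Proposition \ref{proposition-approximation-K-th}.

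Finally, the last assertion is immediate: if $\Ga$ satisfies the Baum–Connes conjecture with coefficients, then $\mu_{\Ga,A,*}$ is an isomorphism for every $\Ga$-$C^*$-algebra $A$, in particular both injective and surjective, so \textrm{(i)} and \textrm{(ii)} apply. I expect the bookkeeping for the uniformity in the coefficient algebra — passing from ``for each $A$ there is $d'$'' to ``there is $d'$ for all $A$'' — to be the only substantial step; everything else is a diagram chase combining the structure maps, the relation $\mu^d=\iota_*^{\eps,r}\circ\mu^{\eps,r,d}$, the compatibility $\mu^{\eps,r,d}=\mu^{\eps,r,d'}\circ q_{d,d',*}$, and Proposition \ref{proposition-approximation-K-th}.
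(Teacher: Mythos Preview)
The paper does not actually prove this theorem here; it is quoted from \cite{oy2} with the phrase ``The following results were then proved,'' and no argument is given in the present text. So there is no in-paper proof to compare against.

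That said, your sketch is essentially the right shape and matches the strategy used in \cite{oy2}. The diagram chase you describe for each fixed $A$ is correct: in (i) one passes from $\mu_{\Gamma,A,*}^{\eps,r,d}(x)=0$ to $\mu_{\Gamma,A,*}^{d}(x)=0$ via $\iota_*^{\eps,r}$, and injectivity of the limit map gives $q_{d,d'}^*(x)=0$ for some $d'$; in (ii) one lifts $\iota_*^{\eps,r}(y)$ by surjectivity and then uses Proposition~\ref{proposition-approximation-K-th}(ii) to bring the difference to zero at a controlled scale. You are also right that the only genuine content is the uniformity of $d'$ (resp.\ of $d$ and $r'$) over all coefficient algebras $A$, and your proposed mechanism---assembling a family of would-be counterexamples $(A_n)$ into a single $\Ga$-$C^*$-algebra of $\ell^\infty$-type and deriving a contradiction with injectivity/surjectivity of $\mu_{\Ga,\bullet,*}$ for that single algebra---is exactly the device used in \cite{oy2}. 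One point worth sharpening: for this product argument to go through you need that the quantitative assembly map is compatible with such products (so that a sequence of classes $x_n\in KK_*^\Ga(C_0(P_d(\Ga)),A_n)$ with $\mu^{\eps,r,d}(x_n)=0$ assembles into a single class for the product algebra with vanishing image), and that the left-hand side $KK_*^\Ga(C_0(P_d(\Ga)),-)$ behaves well enough on products; this is where the actual work in \cite{oy2} lies, and your sketch should flag it explicitly rather than leaving it as ``reindexing/compactness.''
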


In \cite{oy3} we developed  a geometric version of the controlled assembly maps and of the quantitative statements in the following setting.
Let $\Si$ be a proper discrete metric space and let $A$ be  a $C^*$-algebra.
Then the distance $d$ on $\Si$ induces  a filtration on $A\ts \K(\ell^2(\Si))$ in the following way: let $r$ be a positive number and
 $T=(T_{\si,\si'})_{(\si,\si')\in\Si^2}$ be an element in $A\ts \K(\ell^2(\Si))$, with $T_{\si,\si'}$ in $A$ for any $\si$ and $\si'$ in $\Si^2$. Then 
 $T$ has propagation  less that $r$ if $T_{\si,\si'}=0$ for $\si$ and $\si'$ in $\Si$ such that $d(\si,\si')>r$. As for finitely generated group,  we define 
 the  Rips complex of degree $d$ of $\Si$ as  the set $P_d(\Si)$ of probability measure with support of diameter less than $d$. Then $P_d(\Si)$ is a locally finite simplicial complex and  is locally compact when endowed   with the simplicial topology.
 Let us define then $$K_*(P_d(\Si),A)\defi \lim_{Z\subseteq P_d(\Si)} KK_*(C(Z),A),$$ where $Z$ runs through compact subset of $P_d(\Si)$.
In \cite{oy3},  for any  $C^*$-algebra $A$, was  constructed  local quantitative coarse assembly maps
$$\nu_{\Si,A,*}^{\eps,r,d}: K_*(P_d(\Si)),A)\to
K_*^{\eps,r}(A\ts \K(\ell^2(\Si))),$$ with $d>0,\,\eps\in(0,1/4)$ and $r\gq r_{d,\eps}$.   The map  $\nu_{\Si,\bullet,*}^{\eps,r,d}$ is natural in the $C^*$-algebra and induces in $K$-theory the index map, i.e the maps $\iota_*^{\eps,r}\circ\nu_{\Si,A,*}^{\eps,r,d}$  is up   to Morita equivalence  given for any compact subset $Z$ of $P_d(\Si)$ by the morphism in the inductive limit $KK_*(C(Z),A)\to K_*(A)$ induced by the map  $Z\to \{pt\}$.
Moreover, the maps  $\nu_{\Si,A,*}^{\bullet,\bullet,\bullet}$ are compatible with  structure morphisms  and with inclusion of Rips complexes:
\begin{itemize}
 \item $\iota^*_{\eps,\eps',r,r'}\circ \nu_{\Si,A,*}^{\eps,r,d}=\nu_{\Si,A,*}^{\eps',r',d}$ for any
positive numbers $\eps,\,\eps',\,r,\,,r'$ and $s$ such that $\eps\lq\eps'<1/4,\,r_{d,\eps}\lq r,\,r_{d,\eps'}\lq r'$ and
$r\lq r'$;
\item $\nu_{\Si,A,*}^{\eps,r,d'}\circ q^*_{d,d'}=\nu_{\Si,A,*}^{\eps,r,d}$ for any
positive numbers $\eps,\,r,\,d$ and $d'$ such that $\eps<1/4,\,d\lq d'$ and $r_{d',\eps}\lq r$, where $$q^*_{d,d'}:KK_*(C_0(P_{d'}(\Si)),A)\to KK_*(C_0(P_d(\Si)),A)$$  is the homomorphism induced by the restriction
  from $P_{d'}(\Si)$ to $P_d(\Si)$
\end{itemize}
 Let us consider then 
for 
$d,d',r,r',\eps$ and $\eps'$ positive numbers with $d\lq d'$, $\eps'\lq\eps< 1/4$,  
$r_{d,\eps}\lq
r$ and  $ r'\lq
r$ the following statements:
\begin{description}
\item[$QI_{\Si,A,*}(d,d',r,\eps)$]  for any element $x$ in
  $K_*(P_d(\Si),A)$, then 
  $\nu_{\Si,A,*}^{\eps,r,d}(x)=0$
  in $K_*^{\eps,r}(A\ts \K(\ell^2(\Si)))$ implies that
  $q_{d,d'}^*(x)=0$ in    $K_*(P_{d'}(\Si),A)$.
\item[$QS_{\Si,A,*}(d,r,r',\eps,\eps')$] for every $y$
  in  $K_*^{\eps',r'}(A\ts \K(\ell^2(\Si)))$, there exists an element $x$ in $K_*(P_d(\Si),A)$
   such that
$$\nu_{\Si,A,*}^{\eps,r,d}(x)=\iota_*^{\eps',\eps,r',r}(y).$$
\end{description}

Recall that a   a proper discrete metric space $\Si$ with bounded geometry coarsely embeds  in a Hilbert space if there exist 
\begin{itemize}
\item a map $\phi:\Si\to\H$ where $\H$ is a Hilbert space;
\item $\rho_\pm:\R^+\to\R^+$   two maps with $\lim_{+\infty}\rho_\pm=+\infty$,
\end{itemize}
such that $$\rho_-(d(x,y))\lq \|\phi(x)-\phi(y)\|\lq \rho_+(d(x,y))$$ for any $x$ and $y$ in $\Si$.
Proper discrete metric spaces with bounded geometry that coarsely embed into a Hilbert space provide numerous examples that  satisfy the above geometric quantitative statement \cite[Theorems 4.9 and 4.10]{oy3}. 
\begin{theorem}\label{thm-geo-statement}  Let $\Si$ be a 
 discrete metric space with bounded geometry that coarsely embeds into a Hilbert space.
\begin{enumerate}
\item  For any positive numbers $d$,
  $\eps$ and $r$ with  $\eps<1/4$ and  $r\gq r_{d,\eps}$, there
exists   a positive number $d'$ with $d'\gq d$ for which
$QI_{\Si,A,*}(d,d',r,\eps)$ is satisfied for any $C^*$-algebra $A$.
\item There exists a positive number $\lambda>1$ such that for any positive numbers $\eps$ and $r'$  with $\eps<\frac{1}{4\lambda}$, there exist
positive numbers $d$ and  $r$ with  $
r_{d,\eps}\lq r$ and $r'\lq r$ for which  $QS_{\Si,A,*}(d,r,r',\lambda\eps,\eps)$
is satisfied for any $C^*$-algebra $A$.
\end{enumerate}
\end{theorem}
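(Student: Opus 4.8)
The plan is to treat this as a quantitative refinement of Yu's theorem that a bounded geometry metric space which coarsely embeds into Hilbert space satisfies the coarse Baum--Connes conjecture, rerunning that argument while keeping uniform control on propagation throughout. Fix the coarse embedding $\phi:\Si\to\H$ with control functions $\rho_\pm$. For each finite dimensional affine subspace $V\subseteq\H$ let $\A(V)$ denote the associated Bott--Dirac (Clifford) algebra, and set $\A=\varinjlim_V\A(V)$ along an increasing exhaustion of $\H$ by finite dimensional subspaces. Using $\phi$ one builds, for each $d>0$, a twisted Roe picture $A\ts\K(\ell^2(\Si))\ts\A$, filtered so that an element of propagation $r$ involves only the subalgebras $\A(V)$ for $V$ in the part of the exhaustion determined by $\rho_+(r)$, together with a twisted local coarse assembly map
$$\nu^{\eps,r,d}_{\Si,A\ts\A,*}\colon K_*(P_d(\Si),A\ts\A)\longrightarrow K_*^{\eps,r}(A\ts\K(\ell^2(\Si))\ts\A).$$

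First I would show that the family $(\nu^{\eps,r,d}_{\Si,A\ts\A,*})$ is a uniform family of quantitative isomorphisms in the sense of Definition \ref{def-quantitative-morphism}, with control pair and rescaling independent of $A$. Because $\A$ carries the finite dimensional filtration inherited from $\H$, the twisted algebra has a genuinely local structure over $P_d(\Si)$ and can be cut, along a uniformly bounded open cover of $P_d(\Si)$ by subcomplexes, into pieces on which the assembly map reduces --- after Morita equivalence and a \emph{finite} dimensional Bott periodicity --- to the identity. Reassembling these pieces by a controlled Mayer--Vietoris induction over the cover (exactly the kind of argument developed in this paper for controlled Mayer--Vietoris pairs, here applied to the cover of the Rips complex) produces the quantitative isomorphism, the loss in control and in the Rips parameter $d$ being bounded in terms of the multiplicity and Lebesgue number of the cover, hence uniform in $A$ by the bounded geometry of $\Si$.

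Next I would connect the twisted and untwisted pictures by controlled Dirac and dual-Dirac morphisms. The infinite dimensional Bott periodicity of Higson--Kasparov--Trout provides asymptotic morphisms $\beta$ (Bott) and $\alpha$ (Dirac) relating $A\ts\K(\ell^2(\Si))$ and its twist by $\A$; evaluated on an element of propagation $r$, only the truncations of $\A$ determined by $\rho_+(r)$ enter, so $\beta$ and $\alpha$ are realised by honest $(\lambda,h)$-controlled morphisms with $h$ governed by $\rho_+$ --- Theorem \ref{thm-tensor} being precisely the device that repackages such $KK$-theoretic input into a controlled morphism. One then checks the quantitative Bott periodicity, namely $\alpha\circ\beta\aeq\Id_{\K_*(A\ts\K(\ell^2(\Si)))}$ up to rescaling, by truncating the standard rotation homotopy to finite dimensional subspaces and bounding the truncation error at controlled propagation cost. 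Composing these controlled morphisms with the local coarse assembly maps and using naturality, $\nu_{\Si,A,*}$ factors up to controlled isomorphism through the twisted assembly map, and then $QI_{\Si,A,*}$ and $QS_{\Si,A,*}$ follow from the quantitative isomorphism of the first step together with the (elementary) $QI$ and $QS$ for the twisted assembly map, the enlargement of $d$ appearing in the statement being the accumulated effect of the successive rescalings.

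The hard part will be the bookkeeping in the second step: one must verify that \emph{every} homotopy in the Bott periodicity argument --- the rotation homotopy, the contraction of the Dirac operator of a point, and the passage from the asymptotic morphism to an honest homomorphism --- can be performed with propagation growing only as a fixed function of the scale, and that this function, together with all the intervening control pairs, is independent of the coefficient algebra $A$. Equivalently, the truncation of the infinite dimensional Clifford algebra governed by $\rho_+(r)$ must interact with the filtration compatibly with all the controlled-morphism axioms of Section 1; this is where the bounded geometry of $\Si$ (uniform local finiteness of the covers of $P_d(\Si)$) and the quantitative properties of $\TT_B$ from Theorem \ref{thm-tensor} do the essential work.
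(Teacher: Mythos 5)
The paper does not prove this theorem: it is quoted verbatim from \cite[Theorems 4.9 and 4.10]{oy3}, so there is no in-paper proof against which to compare. What can be said is whether your sketch matches the strategy one would expect in \cite{oy3}, and in broad outline it does. You correctly identify the right framework: a quantitative refinement of Yu's Dirac--dual-Dirac proof of the coarse Baum--Connes conjecture for bounded geometry spaces coarsely embeddable in Hilbert space, twisting the Roe algebra by the Higson--Kasparov--Trout algebra $\A(\H)$ filtered along finite-dimensional subspaces, showing the twisted local assembly map $\nu^{\eps,r,d}_{\Si,A\ts\A,*}$ is a uniform family of quantitative isomorphisms in the sense of Definition \ref{def-quantitative-morphism}, and transporting this back by controlled Bott ($\beta$) and Dirac ($\alpha$) morphisms whose composite is a controlled rescaling of the identity.

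Two reservations. First, your reassembly step appeals to the controlled Mayer--Vietoris formalism ``developed in this paper,'' which is chronologically unavailable to \cite{oy3}; the argument there is instead run through the controlled six-term sequence for completely filtered (ideal) extensions (Theorem \ref{thm-six term}) and an induction over the exhaustion of $\H$ by finite dimensional subspaces rather than over covers of $P_d(\Si)$. The two routes are morally parallel, but if you insist on the present paper's controlled Mayer--Vietoris pairs you still owe the CIA property and coercivity estimates for the cover algebras, which is not a triviality. Second, and more seriously, the whole weight rests on the step you yourself flag as hard: establishing quantitative Bott periodicity $\alpha\circ\beta\aeq\Id_{\K_*(A\ts\K(\ell^2(\Si)))}$ with a control pair independent of the coefficient algebra $A$. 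This means truncating the rotation homotopy for the infinite dimensional Bott element to finite-dimensional Clifford algebras with explicit error bounds, and carrying the filtration through the asymptotic morphism $\beta$; Theorem \ref{thm-tensor} packages controlled morphisms out of $KK$-elements, but does not by itself produce the controlled \emph{asymptotic} morphism you need here. As written the proposal names this difficulty but does not resolve it. In short: correct road map, consistent with what \cite{oy3} does, but not a proof.
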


\section{Controlled Mayer-Vietoris pairs}
In this section, we introduce the concepts of neighborhood $C^\ast$-algebras and controlled Mayer-Vietoris pair.
This is the cornerstone for the  controlled Mayer-Vietoris sequence for quantitative $K$-theory which will be stated in Section \ref{section-six-terms}. We also discuss a few technical lemmas useful for establishing  the latter.
\subsection{$\eps$-$r$-$N$-invertible elements of  a filtered $C^*$-algebra}
In \cite[Section $7$]{oy2} was introduced the notion of $\eps$-$r$-$N$-invertible element of a unital Banach algebra. In this subsection, we study  $\eps$-$r$-$N$-invertible elements for $C^*$-algebras. In particular, we state   an analogue of the polar decomposition  in the setting of $\eps$-$r$-unitaries.
\begin{definition}Let $A$  be a  unital $C^*$-algebra filtered by $(A_s)_{s>0}$  and
let $\eps,\,r$ and $N$ be positive numbers with $\eps<1$.
An element $x$ in $A_r$ is called $\eps$-$r$-$N$-invertible if $\|x\|\lq N$ and  there exists $y$ in $A_r$ such that
$\|y\|\lq N,\,\|xy-1\|<\eps$ and $\|yx-1\|<\eps$. Such an element $y$ is called an $\eps$-$r$-$N$-inverse for $x$.
\end{definition}
\begin{remark}\label{rem-norm-inv}If  $x$ is $\eps$-$r$-$N$-invertible, then $x$ is invertible and for any $\eps$-$r$-$N$-inverse  $y$ for $x$, we have  $\|x^{-1}-y\|\leq \frac{\eps N}{1-\eps} $.\end{remark}
\begin{definition}Let $A$  be a unital  $C^*$-algebra filtered by  $(A_s)_{s>0}$ and
let $\eps,\,r$ and $N$ be positive numbers with $\eps<1$. Two $\eps$-$r$-$N$-invertibles in $A$ are called homotopic if there
exists $Z:[0,1]\to A$ an  $\eps$-$r$-$N$-invertible  in $A[0,1]$ such that $Z(0)=x$ and $Z(1)=y$.
\end{definition}
In the setting of $\eps$-$r$-$N$-invertibles and of $\eps$-$r$-unitaries, there is the analogue of the polar decomposition.

\begin{lemma}\label{lemma-polar-decomp}
For any  positive number $N$ there exists a control pair $(\alpha,l)$ and a positive number $N'$ with $N'\gq N$ such that the following holds.

\medskip

For any filtered unital $C^*$-algebra  $A$   filtered by $(A_s)_{s>0}$, any positive numbers $\eps$ and $r$ with $\eps<\frac{1}{4\al}$ and every $\eps$-$r$-$N$-invertible element $x$ of $A$, there exist $h$ positive and $\alpha\eps$-$l_\eps r$-$N'$-invertible in $A$  and $u$ an $\alpha\eps$-$l_\eps r$-unitary in $A$ such that $\||x|-h\|<\alpha\eps$ and $\|x-uh\|<\alpha\eps$. Moreover we can choose $u$ and $h$ such that
\begin{itemize}
\item there exists a real  polynomial function $Q$  with $Q(1)=1$ such that  $u=xQ(x^*x)$ and $h=x^*xQ(x^*x)$;
\item $h$ admits a positive $\alpha\eps$-$l_\eps r$-$N'$-inverse;
\item If $x$ is homotopic to $1$ as an $\eps$-$r$-$N$-invertible, then $u$ is homotopic to $1$ as an $\alpha\eps$-$l_\eps r$-unitary.
\end{itemize}
\end{lemma}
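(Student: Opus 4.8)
The plan is to imitate the classical polar decomposition $x=u|x|$, where $u=x(x^*x)^{-1/2}$ and $|x|=(x^*x)^{1/2}$, replacing the functions $t\mapsto t^{-1/2}$ and $t\mapsto t^{1/2}$ by polynomials; this is the only place where control of the propagation could break down. First I would record the spectral estimate: by Remark~\ref{rem-norm-inv}, an $\eps$-$r$-$N$-invertible $x$ is invertible with $\|x^{-1}\|\lq\frac{4N}{3}$ as soon as $\eps<\frac14$, hence $\|x^*x\|\lq N^2$ and $\|(x^*x)^{-1}\|=\|x^{-1}\|^2\lq\frac{16N^2}{9}$, so that $\operatorname{sp}(x^*x)\subseteq[a,b]$ with $a=\frac{9}{16N^2}$, $b=N^2$; the same bounds hold for $xx^*$. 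The point is that $[a,b]$, hence everything below, depends only on $N$, not on $A$, $x$ or $r$.

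For each $\eps$, choose a real polynomial $Q=Q_\eps$, of degree $m_\eps$ (which may be taken non-increasing in $\eps$), with $Q_\eps(1)=1$ and $\sup_{t\in[a,b]}|Q_\eps(t)-t^{-1/2}|<\eps$; this exists by Weierstrass approximation --- pick $\widetilde Q$ with $\sup_{[a,b]}|\widetilde Q-t^{-1/2}|<\eps/2$ and set $Q_\eps=\widetilde Q+1-\widetilde Q(1)$. Since $x^*x\in A_{2r}$ we have $Q_\eps(x^*x)\in A_{2m_\eps r}$, so putting
$$u=xQ_\eps(x^*x),\qquad h=x^*xQ_\eps(x^*x),\qquad h'=Q_\eps(x^*x),$$
all three lie in $A_{l_\eps r}$ with $l_\eps:=2m_\eps+2$, a legitimate component of a control pair ($>1$, non-increasing in $\eps$). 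As $Q_\eps$ is real, $h'=Q_\eps(x^*x)$ is self-adjoint and, for $\eps$ small, strictly positive on $\operatorname{sp}(x^*x)$; likewise $h=g(x^*x)$ with $g(t)=tQ_\eps(t)$ is positive.

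The verification is functional calculus on $[a,b]$, the one nontrivial ingredient being the intertwining identity $xp(x^*x)=p(xx^*)x$, valid for all polynomials $p$. It gives $u=Q_\eps(xx^*)x$, whence $uu^*=xx^*Q_\eps(xx^*)^2$ is a function of $xx^*$, while $u^*u=x^*xQ_\eps(x^*x)^2$, $uh=x\cdot x^*xQ_\eps(x^*x)^2$ and $h'h=hh'=x^*xQ_\eps(x^*x)^2$ are functions of $x^*x$. With $g_2(t)=tQ_\eps(t)^2$ one checks on $[a,b]$ that $|tQ_\eps(t)-t^{1/2}|\lq b\eps$ and $|g_2(t)-1|\lq C_N\eps$ for some $C_N$ depending only on $N$, so that $\||x|-h\|\lq N^2\eps$, $\|x-uh\|\lq NC_N\eps$, $\|u^*u-1\|,\|uu^*-1\|,\|h'h-1\|,\|hh'-1\|\lq C_N\eps$, and $\|h\|,\|h'\|\lq 2N$. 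Taking $\alpha$ larger than all the constants appearing (and large enough that the hypothesis $\eps<\frac1{4\alpha}$ forces the finitely many smallness conditions used above) and $N'=2N$ gives exactly the conclusion: $h$ positive and $\alpha\eps$-$l_\eps r$-$N'$-invertible with positive $\alpha\eps$-$l_\eps r$-$N'$-inverse $h'$, $u$ an $\alpha\eps$-$l_\eps r$-unitary, $\||x|-h\|<\alpha\eps$, $\|x-uh\|<\alpha\eps$, together with the required formulas $u=xQ_\eps(x^*x)$, $h=x^*xQ_\eps(x^*x)$, $Q_\eps(1)=1$.

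Finally, the homotopy statement: if $Z$ is an $\eps$-$r$-$N$-invertible in $A[0,1]$ with $Z(0)=x$, $Z(1)=1$, then each $Z(t)$ is an $\eps$-$r$-$N$-invertible, so $\operatorname{sp}(Z(t)^*Z(t))\subseteq[a,b]$ for all $t$; applying the same formula $U=ZQ_\eps(Z^*Z)$ and using that all estimates above are uniform over $\eps$-$r$-$N$-invertibles, one gets a path of $\alpha\eps$-$l_\eps r$-unitaries from $u$ to $Q_\eps(1)=1$, where the normalization $Q_\eps(1)=1$ is precisely what makes the endpoint $1$. I expect the only genuine difficulty to be the bookkeeping: ensuring that $(Q_\eps)$, $[a,b]$, and hence $l$ and $\alpha$ depend on $N$ alone so that they assemble into an honest control pair --- which is exactly what forces $l_\eps\to\infty$ as $\eps\to0$, and control pairs allow that.
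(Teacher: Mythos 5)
Your construction is the same as the paper's: given the spectral bound $\operatorname{sp}(x^*x)\subseteq[a,b]$ with $a,b$ depending only on $N$, approximate $t\mapsto t^{-1/2}$ on that interval by a real polynomial $Q$ normalized so that $Q(1)=1$, then put $u=xQ(x^*x)$, $h=x^*xQ(x^*x)$, with $Q(x^*x)$ the positive approximate inverse of $h$, and use the intertwining $xp(x^*x)=p(xx^*)x$ to control $uu^*$; the homotopy claim follows by applying the same formula pointwise. The only genuine difference is the source of the polynomial: the paper truncates the power series of $(1+t)^{-1/2}$ centered at a fixed $t_1>\max(N^2,1)$ (which makes $Q(1)=1$ automatic and the degree explicit), whereas you invoke Weierstrass approximation and renormalize by adding $1-\widetilde Q(1)$.

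There is, however, a small gap in the renormalization step. You approximate $t^{-1/2}$ only on $[a,b]=\bigl[\tfrac{9}{16N^2},N^2\bigr]$, but the shift $Q_\eps=\widetilde Q+1-\widetilde Q(1)$ is controlled only if $\widetilde Q(1)$ is close to $1^{-1/2}=1$, i.e.\ only if $1\in[a,b]$. Nothing in the hypotheses forces $N\gq 1$; one can have $\eps$-$r$-$N$-invertibles with $\tfrac{\sqrt3}{2}<N<1$, so $b=N^2<1$ and $1\notin[a,b]$, in which case $|\,1-\widetilde Q(1)\,|$ is not bounded by your $\eps/2$ and the shift may destroy the approximation on $[a,b]$. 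The fix is immediate — approximate on $[a,\max(b,1)]$ — and it is exactly what the paper builds in by choosing $t_0<\min\bigl(\tfrac1{4N^2},1\bigr)$ and $t_1>\max(N^2,1)$. With that correction your argument goes through; in particular the resulting positivity of $Q_\eps$ (hence of $h$ and $h'$) for $\eps$ small, the degree bound giving $l_\eps=2\deg Q_\eps+2$ dominated by a non-increasing function of $\eps$, and the constants $C_N$, $N'=2N$ all depend only on $N$, so $(\alpha,l)$ is a legitimate control pair.
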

\begin{proof}
According to Remark \ref{rem-norm-inv} and since $\eps<1/4$, if $x$ is an $\eps$-$r$-$N$-invertible, then $x$ is invertible
and $\|x^{-1}\|<2N$ and hence $\|(x^*x)^{-1}\|<4N^2$. This implies that the spectrum of $x^*x$ is included in $[\frac{1}{4N^2},N^2]$. Let $t_0$ and $t_1$ be positive numbers such that $t_0<\min(\frac{1}{4N^2},1)$ and $\max(N^2,1)<t_1$.
Let us consider the power serie $\sum a_nt^n$ of $t\mapsto \frac{1}{\sqrt{1+t}}$ for $t$ in $[0,1]$ and let $l_\eps$ be the smallest integer such that
$$\sum_{k=l_\eps+1}^{+\infty}|a_k|\left(\frac{1-t_1}{t_1}\right)^k<\frac{\min(\sqrt{t_1},1)\eps}{2}$$ and
$$\sum_{k=0}^{l_\eps}a_kt^k\gq 1/2$$ for all $t$ in $[0,1-\frac{t_0}{t_1}]$.
 Since $\sum a_n\left(\frac{x^*x-t_1}{t_1}\right)^n$ converges to
$\sqrt{t_1}(x^*x)^{-1/2}=\sqrt{t_1}|x|^{-1}$,  if we set
$$Q(t)=\frac{1}{\sqrt{t_1}}\sum_{k=0}^{l_\eps}a_k\left(\frac{t-t_1}{t_1}\right)^k+\frac{1}{\sqrt{t_1}}\sum_{k=l_\eps+1}^{+\infty}a_k\left(\frac{1-t_1}{t_1}\right)^k$$ then $Q$ is a polynomial of degree $l_\eps$ such that $Q(1)=1,\,$ $Q(t)\gq 0$ for every $t$ in $[0,1-\frac{t_0}{t_1}]$ and $\|Q(x^*x)-(x^*x)^{-1/2}\|<\eps$.
If we set $u=xQ(x^*x)$, then $u$ is a $\alpha$-$2l_\eps+1$-unitary for some $\alpha>1$ depending only on $N$.
Set now $h=u^*x=x^*xQ(x^*x)$, then up to taking a large $\alpha$, there exists a control pair $(\alpha,k)$ and a positive number $N'$  depending only on $N$, with $N'\gq N$ and such that $u$ and $h$ satisfy the required properties and $Q(x^*x)$ is a positive $\alpha\eps$-$k_\eps r$-$N'$-inverse for $h$. Moreover, if $(x_t)_{t\in[0,1]}$ is a homotopy of $\eps$-$r$-invertibles between $1$ and $x$, then
$(x_tQ(x_t^*x_t))_{t\in[0,1]}$ is a homotopy of $\alpha\eps$-$k_\eps r$-unitaries between $1$ and $u$.
\end{proof}
Let $A$  be a  unital $C^*$-algebra filtered by $(A_s)_{s>0}$. For $x$ and $y$ in $A$, set
$$X(x)=\begin{pmatrix}1&x\\0&1\end{pmatrix}$$ and $$Y(y)=\begin{pmatrix}1&0\\y&1\end{pmatrix}$$ and consider the commutators $$Z(x,y)=X(x)Y(y)X(x)^{-1}Y(y)^{-1}=\begin{pmatrix}1+xy+xyxy&-xyx\\yxy&1-yx\end{pmatrix}$$ and
$$Z'(x,y)=Y(y)^{-1}X(x)^{-1}Y(y)X(x)=\begin{pmatrix}1-xy&-xyx\\yxy&1+yx+yxyx\end{pmatrix}.$$

\begin{lemma}\label{lem-commutator}
Let  $A$  be a  unital  $C^*$-algebra filtered by $(A_s)_{s>0}$  and
let $\eps$ and $r$  be positive numbers with $\eps<1/4$. Let $x_1$ and $x_2$ in $A_r$ such that $x_1+x_2$ is an $\eps$-$r$-unitary.Then we have the inequality
\begin{equation*}\begin{split}
\left\|X(x_1)Z(x_2,-x_1^*)Y(-x_1^*)X(x_1)X(x_2)Y(-x_2^*)\right.Z'(&x_1,-x_2^*)X(x_2)\begin{pmatrix}0&-1\\1&0\end{pmatrix} \\
 &\left.-\begin{pmatrix}x_1+x_2&0\\0&x_1^*+x_2^*\end{pmatrix}\right\|< 3\eps.\end{split}
\end{equation*}
\end{lemma}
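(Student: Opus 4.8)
The plan is to expand the big product on the left as a telescoping product of elementary matrices and commutators, and to show that after all cancellations it reduces, up to an error of size $3\eps$, to $\diag(x_1+x_2, x_1^*+x_2^*)$. Write $u = x_1+x_2$; by hypothesis $u \in A_r$ and $\|u^*u - 1\| < \eps$, $\|uu^* - 1\| < \eps$. First I would record the algebraic identity that would hold \emph{exactly} if $x_1^*$ and $x_2^*$ were the true inverse-contributions, i.e.\ if $u^*u = 1$ and $uu^* = 1$: in that idealized case the claim is a pure identity in the free group on $X(\cdot), Y(\cdot)$ (this is the standard ``$(a,b) \mapsto \diag(a,a^{-1})$ via commutators of elementary matrices'' trick, as in Lemma \ref{cor-example-homotopy} and the classical Whitehead-lemma manipulations). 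So the first, and conceptually main, step is to verify that this algebraic identity is correct on the nose under the substitutions $x \rightsquigarrow x_1$, $y \rightsquigarrow -x_1^*$, etc., \emph{assuming} $x_1^*+x_2^* = u^*$ inverts $u$; this is a finite $2\times 2$ matrix computation over the free algebra generated by $x_1, x_2, x_1^*, x_2^*$ subject to $x_1 x_1^* + x_1 x_2^* + x_2 x_1^* + x_2 x_2^* = 1$ (and its adjoint), and I would carry it out by multiplying the matrices left to right, using $Z$ and $Z'$ in their explicit form given just before the statement.

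Second, I would pass from the idealized identity to the actual estimate. Here the point is that every factor $X(x_i)$, $Y(-x_i^*)$, $Z(x_2,-x_1^*)$, $Z'(x_1,-x_2^*)$ and $\begin{pmatrix}0&-1\\1&0\end{pmatrix}$ is a contraction-bounded matrix with entries of norm controlled by $\|x_i\|, \|x_i^*\| \le \|u\| < \sqrt{1+\eps} < 2$ (using $\|x_i\| \le \|x_1+x_2\|$ is \emph{not} automatic, so more carefully one bounds $\|x_i\|$ by noting $x_i \in A_r$ and the product structure only needs entrywise control — I would track the individual norms $\|x_1\|, \|x_2\|$ through the computation, which remain $O(1)$). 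Then the difference between ``the product computed with the relation $u^*u=1$, $uu^*=1$ imposed'' and ``the product computed in $A$'' is a sum of finitely many terms, each of which contains at least one factor of $(u^*u - 1)$ or $(uu^* - 1)$, hence has norm $\le C\eps$ for an absolute constant $C$. The content of the lemma is that the bookkeeping can be arranged so that this constant is exactly $3$: I expect this to follow because, after the telescoping cancellations, the surviving ``error terms'' are few — essentially one contribution from each of the two diagonal slots plus a cross term — each carrying a single factor $(u^*u-1)$ or $(uu^*-1)$ of norm $<\eps$, with the matrix entries around them being genuine contractions (the elementary matrices $X, Y$ and the flip are norm-one on the nose in the relevant compressions).

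Concretely the steps in order are: (1) substitute and write out $Z(x_2,-x_1^*)$ and $Z'(x_1,-x_2^*)$ explicitly; (2) multiply the seven matrix factors together symbolically, keeping $u^*u$ and $uu^*$ as formal symbols and \emph{not} yet setting them to $1$ — collect the result as $\diag(x_1+x_2, x_1^*+x_2^*) + E$ where $E$ is a $2\times 2$ matrix each of whose four entries is manifestly a polynomial in $x_1,x_2,x_1^*,x_2^*$ lying in the two-sided ideal generated by $u^*u - 1$ and $uu^* - 1$; (3) factor out $u^*u - 1$ (resp.\ $uu^* - 1$) from each entry of $E$ and bound the remaining cofactor in norm by an absolute constant, arranging the grouping so the total is $< 3\eps$; (4) conclude. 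The main obstacle is purely step (2)–(3): it is a moderately long but entirely mechanical $2\times 2$ computation, and the only real subtlety is organizing the cancellation so that the final constant is $3$ rather than some larger absolute constant — i.e.\ making sure one does not crudely triangle-inequality a dozen $\eps$-terms when in fact they combine into three. I would do this by first performing all cancellations that are \emph{exact} (independent of the relation $u^*u=1$), so that $E$ appears in already-simplified form with as few terms as possible before any estimate is invoked.
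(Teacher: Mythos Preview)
Your plan would eventually work, but it misses the organizing observation that turns the proof into a few lines rather than a long symbolic expansion. The paper does \emph{not} multiply out the eight factors and then collect error terms. Instead it observes that the big product is \emph{exactly} equal to
\[
W(u) \;=\; X(u)\,Y(-u^*)\,X(u)\begin{pmatrix}0&-1\\1&0\end{pmatrix},
\]
with no error whatsoever. This is a purely algebraic identity: since $X(a)X(b)=X(a+b)$ and $Y(a)Y(b)=Y(a+b)$, one has $X(u)=X(x_1)X(x_2)$ and $Y(-u^*)=Y(-x_1^*)Y(-x_2^*)$, so
\[
W(u)=X(x_1)X(x_2)Y(-x_1^*)Y(-x_2^*)X(x_1)X(x_2)\begin{pmatrix}0&-1\\1&0\end{pmatrix};
\]
then substituting the definitions $Z(x_2,-x_1^*)=X(x_2)Y(-x_1^*)X(x_2)^{-1}Y(-x_1^*)^{-1}$ and $Z'(x_1,-x_2^*)=Y(-x_2^*)^{-1}X(x_1)^{-1}Y(-x_2^*)X(x_1)$ and cancelling the inverse pairs recovers exactly the product in the statement. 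This is the identity you allude to in your step~(1), but the crucial point is that it holds \emph{on the nose}, not merely modulo the relations $u^*u=1$ and $uu^*=1$: the commutators $Z$ and $Z'$ are present precisely to absorb the reordering of the $X$'s and $Y$'s.

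Once you have this, the only estimate needed is $\left\|W(u)-\diag(u,u^*)\right\|<3\eps$, and that follows immediately from the explicit computation
\[
W(u)=\begin{pmatrix}2u-uu^*u & uu^*-1\\ 1-u^*u & u^*\end{pmatrix},
\]
since each entry of the difference $W(u)-\diag(u,u^*)$ is either zero or contains a single factor $u^*u-1$ or $uu^*-1$. In particular your worry about controlling $\|x_1\|$ and $\|x_2\|$ individually is a red herring: after the algebraic simplification only $u=x_1+x_2$ appears, and $\|u\|^2\le 1+\eps$. Your steps (2)--(3) thus collapse to a three-line matrix multiplication, and the constant $3$ drops out directly rather than requiring any delicate bookkeeping of cancellations.
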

\begin{proof}
Let us set $u=x_1+x_2$.
Consider the matrix
$$W(u)=X(u)Y(-u^*)X(u)\begin{pmatrix}0&-1\\1&0\end{pmatrix}=\begin{pmatrix}2u-uu^*u&uu^*-1\\1-u^*u&u^*\end{pmatrix}.$$
Since $u$ is an $\eps$-$r$-unitary, then
$$\left\|\begin{pmatrix}u& 0\\0&u^*\end{pmatrix}-W(u)\right\|<3\eps.$$
We have
$$W(u)=X(x_1)X(x_2)Y(-x_1^*)Y(-x_2^*)X(x_1)X(x_2)\begin{pmatrix}0&-1\\1&0\end{pmatrix}.$$
 This, together with the definition of $Z$ and $Z'$, implies that $$W(u)=X(x_1)Z(x_2,-x_1^*)Y(-x_1^*)X(x_1)X(x_2)Y(-x_2^*)Z'(x_1,-x_2^*)X(x_2)\begin{pmatrix}0&-1\\1&0\end{pmatrix}.$$
\end{proof}

\subsection{Coercive decomposition pair and $R$-neighborhood $C^*$-algebra}
We introduce in this subsection the basic ingredient to define  controlled Mayer-Vietoris pairs.

\medskip

If $\De$ and $\De'$ are two closed linear subspace of a $C^*$-algebra $A$ such that $\De\subseteq \De'$,
we equipped   $M_n(\De/\De')\cong M_n(\De)/M_n(\De')$ with the quotient $C^*$-algebra norm, i.e
if $x$ is a element of  $M_n(\De)$, then $\|x+M_n(\De')\|=\inf\{\|x+y\|;\,y\in M_n(\De') \}$. Then this family of norms  is a matrix norm on $\De/\De'$.
\begin{definition}
Let $A$ be a $C^*$-algebra filtered by $(A_s)_{s>0}$ and let $r$ be a positive number. 
\begin{itemize}
\item a    coercive decomposition pair of degree $r$ for
$A$ (or a  coercive decomposition $r$-pair) is a pair   $(\Delta_1,\Delta_2)$  of  closed linear subspaces of $A_r$  such that
there exists  a positive number  $C$ satisfying the following: for any positive number $s$ with $s\lq r$ the  inclusion $\De_1\cap A_s \hookrightarrow A_s$
induces an  isomorphism $$\frac{\De_1\cap A_s}{\De_1\cap \De_2\cap A_s}\stackrel{\cong}{\lto} \frac{A_s}{\De_2\cap A_s}$$
whose inverse is  bounded in norm by $C$.
\item a completely coercive decomposition  pair of degree $r$ for
$A$ (or a completely coercive decomposition $r$-pair) is a pair   $(\Delta_1,\Delta_2)$  of  closed linear subspaces of $A_r$ such that
there exists  a positive number  $C$ satisfying the following: for any positive number $s$ with $s\lq r$ the  inclusion $\De_1\cap A_s \hookrightarrow A_s$
induces a complete isomorphism $$\frac{\De_1\cap A_s}{\De_1\cap \De_2\cap A_s}\stackrel{\cong}{\lto} \frac{A_s}{\De_2\cap A_s}$$
whose inverse has complete norm bounded by $C$.
\end{itemize}
\end{definition}

\begin{remark}
Let $A$ be a $C^*$-algebra filtered by $(A_s)_{s>0}$, let $r$ be a positive number and let $(\Delta_1,\Delta_2)$  be a pair of  closed linear subspaces of $A_r$. 
Then $(\Delta_1,\Delta_2)$
is a  coercive decomposition pair of degree $r$ for
$A$ if and only if there exists a positive number  $c$ such that for every positive number $s$ with $s\lq r$  and any $x$ in 
$A_s$, there exists $x_1$ in $\De_1\cap A_s$ and $x_2$ in $\De_2\cap A_s$, both with norm at most $c\|x\|$ and such that 
$x=x_1+x_2$. In the same way, 
 $(\Delta_1,\Delta_2)$
is a  completely coercive decomposition pair of degree $r$ for
$A$ if and only if there exists a positive number  $c$ such that for every positive number $s$ with $s\lq r$, any integer $n$ and any $x$ in 
$M_n(A_s)$, there exists $x_1$ in $M_n(\De_1\cap A_s)$ and $x_2$ in $M_n(\De_2\cap A_s)$, both with norm at most $c\|x\|$ and such that 
$x=x_1+x_2$. 
The (completely) coercive decomposition  $r$-pair   $(\De_1,\De_2)$ is said to have  {\bf coercity $\mathbf{c}$}. 
\end{remark}
The aim of this subsection is to show that   for any coercive decomposition  $r$-pair   $(\De_1,\De_2)$, there exists a control pair $(\al,h)$ depending indeed only on the coercitivity, such that up to stabilisation, any $\eps$-$s$-unitary of $A$ with $0<\eps\lq \frac{1}{4\al}$ and $0<s\lq\frac{r}{h_\eps}$ can be decomposed as a  product of 
two $\al\eps$-$h_\eps s$-unitaries  lying respectively in  the unitarization of  some suitable neighborhood algebras of $\De_1$ and $\De_2$.

\begin{definition} Let $A$  be a  $C^*$-algebra filtered by $(A_s)_{s>0}$. Let $r$ and $R$ be  positive numbers  and let $\Delta$ be a closed  linear subspace  of  $A_r$. We define $C^*N_\Delta^{(r,R)}$, the $R$-neighborhood $C^*$-algebra of $\Delta$,  as the $C^\ast$-subalgebra of $A$ generated by its $R$-neighborhood $N_\Delta^{(r,R)}=\Delta+A_R\cdot \Delta+\Delta\cdot A_R+A_R \cdot \Delta \cdot A_R$.
\end{definition}


Notice that  $C^*N_\Delta^{(r,R)}$ inherits from $A$ a structure of filtered $C^*$-algebra with
$C^*N^{(r,R)}_{\Delta,s}=C^*N_\Delta^{(r,R)}\cap A_s$ for every positive number $s$. For a positive number $s$ satisfying $s\lq r$, we also denote by $C^*N^{(s,R)}_{\Delta}$ for the $R$-neighbohood $C^*$-algebra of $\Delta\cap A_s$.

\begin{lemma}\label{lemma-decomp-invertible} For any positive number $c$,   there exist   positive numbers $\lambda,\,C$ and $N$, with $\lambda>1$ and $C>1$ such that the following holds.

Let $A$  be a  unital $C^*$-algebra filtered by $(A_s)_{s>0}$, let $r$  and $\eps$ be  positive numbers such that $\eps<\frac{1}{4\lambda}$ and let
$(\Delta_1,\Delta_2)$ be a  coercive decomposition pair for $A$ of degree $r$ with coercity $c$. Then
for any $\eps$-$r$-unitary $u$ in $A$ homotopic to $1$, there exist
an integer $k$ and $P_1$ and $P_2$ in $M_k(A_{C r})$ such that
\begin{itemize}
\item $P_1$ and $P_2$ are invertible;
\item $P_i-I_k$ and $P^{-1}_i-I_k$ are elements in the matrix algebra   $M_n(C^*N^{(r,4r)}_{\Delta_i,C r})$ for $i=1,2$;
\item $\|P_i\|<N$ and  $\|P_i^{-1}\|<N$ for $i=1,2$;
\item for $i=1,2$, there exists a homotopy $(P_{i,t})_{t\in[0,1]}$ of invertible elements  in   $M_k(A_{C r})$ between $1$ and $P_i$ such that  $\|P_{i,t}\|<N,\, \|P_{i,t}^{-1}\|<N$ and $P_{i,t}-I_k$ and $P^{-1}_{i,t}-I_k$ are elements in the matrix algebra  $M_n(C^*N^{(r,4r)}_{\Delta_i,C r})$ for every $t$ in $[0,1]$.
\item $\|\diag(u,I_{k-1})-P_1P_2\|<\lambda\eps$.
\end{itemize}
\end{lemma}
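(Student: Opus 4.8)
The plan is to reduce the decomposition of an $\eps$-$r$-unitary $u$ into a product of two invertibles living near $\Delta_1$ and $\Delta_2$ to a sequence of elementary matrix manipulations, exactly mimicking the classical computation that expresses a $2\times2$ unitary as a product of upper- and lower-triangular matrices. First I would use the coercive decomposition pair to write $u = x_1 + x_2$ with $x_i \in \Delta_i \cap A_r$ and $\|x_i\| \lq c\|u\| \lq c(1+\eps)$; this is the only place the coercity hypothesis enters, and it is the source of the constants $\lambda, C, N$. Then I would invoke Lemma~\ref{lem-commutator}, which gives an explicit identity expressing the diagonal matrix $\diag(u, u^*)$, up to error $3\eps$, as a product of the elementary matrices $X(x_1), Z(x_2, -x_1^*), Y(-x_1^*), X(x_1), X(x_2), Y(-x_2^*), Z'(x_1, -x_2^*), X(x_2)$ and the flip $\begin{pmatrix}0&-1\\1&0\end{pmatrix}$.

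The key observation is how these elementary factors distribute between the two neighborhood algebras. Each $X(\cdot)$ and each $Z(\cdot, \cdot)$ or $Z'(\cdot,\cdot)$ built from $x_1$-type entries (including the products $x_1 x_2$, $x_2 x_1$, etc.\ appearing in $Z, Z'$) has all its off-diagonal entries in $N_{\Delta_1}^{(r,4r)}$ after we note that $x_1 x_2 \in \Delta_1 \cdot A_r$, $x_2 x_1 \in A_r \cdot \Delta_1$, and $x_2 x_1 x_2 \in A_r \cdot \Delta_1 \cdot A_r$, and similarly with the roles of $1$ and $2$ swapped; so the matrices $X(x_1), Z(x_2,-x_1^*), Y(-x_1^*)$ are elementary in $C^*N^{(r,4r)}_{\Delta_1}$ while $X(x_2), Y(-x_2^*), Z'(x_1,-x_2^*)$ are elementary in $C^*N^{(r,4r)}_{\Delta_2}$ — here the entry $x_1 x_2 x_1$ appearing in $Z'(x_1,-x_2^*)$ must be placed carefully; it is $x_1\cdot(x_2 x_1) \in A_r\cdot \Delta_2 \cdot A_r$ via the middle factor, or rearranged as needed. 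After collecting the $\Delta_1$-factors into one invertible matrix $\tilde P_1$ and the $\Delta_2$-factors into $\tilde P_2$, one is left with $\diag(u,u^*) \approx \tilde P_1 \tilde P_2 \cdot (\text{flip})$ up to $3\eps$; the flip and the $u^*$ block are then absorbed by conjugating and by Morita-type stabilization (Lemma~\ref{cor-example-homotopy}), after which only the $\diag(u, I_{k-1})$ block survives.

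To finish I would pass from the approximate matrix identity to genuine invertibles: each elementary factor $X(\cdot), Y(\cdot), Z(\cdot,\cdot), Z'(\cdot,\cdot)$ is literally invertible (its inverse is again of the same elementary form, with $P_i^{-1} - I_k$ still in the matrix algebra over $C^*N^{(r,4r)}_{\Delta_i,Cr}$), their norms and the norms of their inverses are bounded by a constant $N$ depending only on $c$ (because $\|x_i\| \lq c(1+\eps) \lq 2c$), and each admits an obvious linear homotopy $X(tx_i)$, $Y(tx_i)$, etc., to the identity staying inside the required matrix algebra with uniformly bounded norms — this yields the homotopies $(P_{i,t})_{t\in[0,1]}$ in the statement. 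The propagation bookkeeping is routine: every factor has propagation $\lq r$ in its entries and the products have propagation $\lq Cr$ for a suitable universal constant $C$ (essentially $C \sim 5$ or so, since the longest word has $\lq 5$ factors of propagation $\lq r$, plus the $4r$ in the neighborhood). The homotopy hypothesis on $u$ is used only to conclude that no obstruction prevents the flip/stabilization cleanup, i.e.\ that the leftover block is honestly $\diag(u, I_{k-1})$ and not $\diag(u, \text{something nontrivial})$. The main obstacle is the careful assignment of each of the roughly ten matrix entries occurring in $Z, Z'$ to the correct neighborhood algebra $C^*N^{(r,4r)}_{\Delta_i}$ while simultaneously keeping the propagation at $\lq Cr$ and the norms at $\lq N$ — this is a finite but intricate check, and getting the constant $C$ (and verifying $C > 1$, $\lambda > 1$) right is where most of the care goes.
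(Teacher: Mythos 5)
Your outline gets the basic ingredients (coercive decomposition of $u$, the commutator identity of Lemma \ref{lem-commutator}, elementary matrices staying in neighborhood algebras, linear homotopies $X(tx_i)$) but misses the structural mechanism of the paper's proof, and the place you wave your hands is exactly where the lemma lives or dies.

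The central gap: applying Lemma \ref{lem-commutator} to a single decomposition $u = x_1 + x_2$ gives a factorization of $\diag(u,u^*)$, \emph{not} of $\diag(u,I_{k-1})$. You propose to ``absorb'' the $u^*$ block by ``conjugating and by Morita-type stabilization (Lemma \ref{cor-example-homotopy})'', but that lemma only says $\diag(u,u^*)$ is homotopic to $I_2$ as a $3\eps$-$2r$-unitary; it produces a homotopy in $M_2(A)$, not a product decomposition, and there is no reason such a homotopy respects the condition $P_i - I_k \in M_k(C^*N^{(r,4r)}_{\Delta_i,Cr})$. Once you have written $\diag(u,u^*)\approx P_1 P_2$ there is no soft way to replace $u^*$ by $1$ on the left side while keeping the $\Delta_i$-localization of each factor. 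The paper avoids this entirely by using the homotopy $(u_t)_{t\in[0,1]}$ \emph{constructively}: it partitions $[0,1]$ finely enough that consecutive $u_{t_i}$ are $\eps$-close, forms the large diagonals $V=\diag(u_{t_0},\ldots,u_{t_k},u_{t_0}^*,\ldots,u_{t_k}^*)$ and $W=\diag(1,u_{t_0}^*,\ldots,u_{t_{k-1}}^*,u_{t_0},\ldots,u_{t_{k-1}},1)$, and observes that the product $VW$ telescopes to within $4\eps$ of $\diag(u,I_{2k+1})$ --- with honest $I$'s, not $u^*$'s. Only then does it apply the commutator identity (twice, to $V$ and $W$ simultaneously). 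So the homotopy is not, as you assert, ``used only to conclude that no obstruction prevents the flip/stabilization cleanup''; it is the engine that produces $\diag(u,I)$ rather than $\diag(u,u^*)$.

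A second gap you underestimate: after writing $\diag(u,I_{2k+1})\approx T(x_1,x_2)\,T^{-1}(-x_2,-x_1)\,U_{k+1}\,\widetilde{T}(y_1,y_2)\,\widetilde{T}^{-1}(-y_2,-y_1)\,\widetilde{U_k}$, the $\Delta_1$-type and $\Delta_2$-type factors are interleaved ($\Delta_1,\Delta_2,\text{flip},\Delta_1,\Delta_2,\text{flip}$), not already sorted. The paper must show that conjugating the inner $\Delta_2$-factor $T^{-1}(-x_2,-x_1)$ past $U_{k+1}\widetilde{T}(y_1,y_2)$ keeps it in a $\Delta_2$-neighborhood algebra (the four bullet-point estimates with propagation growing from $9r$ to $21r$), which is how the exponent $4r$ in $C^*N^{(r,4r)}_{\Delta_i}$ and the constant $C\approx 35$ arise. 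Moreover the flip matrix $U_{k+1}\widetilde{U_k}$ lives in neither neighborhood algebra; the paper factors it as $Q_1(v_k,w_k)Q_2(v_k,w_k)$ using the decomposition $1=v_k+w_k$ of the endpoint $u_{t_k}=1$ of the homotopy. Your proposal does not account for the flip at all. So both the ``one-step'' use of the coercive decomposition and the claimed trivial cleanup step need to be replaced by the paper's homotopy-partition construction and the commutator-rearrangement-plus-flip-factorization.
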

%
\begin{proof}
Let $(u_t)_{t\in[0,1]}$ be a homotopy of $\eps$-$r$-unitaries of $A$ between $u=u_0$ and $1=u_1$ and let $t_0=0<t_1<\cdots<t_k=1$ be a partition of $[0,1]$ such that $\|u_{t_{i}}-u_{t_{i-1}}\|<\eps$ for $i=1,\cdots,k$.
Set $$V=\diag(u_{t_0},\ldots,u_{t_k},u_{t_0}^*,\ldots,u_{t_k}^*)$$ and $$W=\diag(1,u_{t_0}^*,\ldots,u_{t_{k-1}}^*,u_{t_0},\ldots,u_{t_{k-1}},1).$$
Then we have
\begin{equation*}\begin{split}
&\|\diag(u,I_{2k+1})-VW\|\lq\|\diag(u,I_{2k+1})-\diag(u_0,u_{t_1}u_{t_1}^*,\ldots,u_{t_k}u_{t_k}^*,I_{k+1})\|\\
&+\|\diag(u_{t_0},u_{t_1}u_{t_1}^*,\ldots,u_{t_k}u_{t_k}^*,I_{k+1})-\diag(u_{t_0},u_{t_1}u_{t_0}^*,\ldots,u_{t_k}u_{t_{k-1}}^*,I_{k+1})\| \\&+\|\diag(u_{t_0},u_{t_1}u_{t_0}^*,\ldots,u_{t_k}u_{t_{k-1}}^*,I_{k+1})-\diag(u_{t_0},u_{t_1}u_{t_0}^*,\ldots,u_{t_k}u_{t_{k-1}}^*,u_{t_0}u_{t_0}^*,u_{t_1}u_{t_1}^*,\ldots,u_{t_k}u_{t_k}^*)\|\\
&<4\eps.
\end{split}
\end{equation*}  For any matrix $X$ in $M_{2k}(A)$, let us set $\widetilde{X}=\diag(1,X,1)$ in  $M_{2k+2}(A)$. For every integer $i=-1,0,\ldots,k$, pick $v_i$ in $\De_1$ and $w_i$ in $\De_2$ such that $u_{t_i}=v_i+w_i$ such  that $\|v_i\|\lq c\|u_{t_i}\|$ and $\|w_i\|\lq c \|u_{t_i}\|$.
Set $x_1=\diag(v_0,\ldots,v_k)\,, x_2=\diag(w_0,\ldots,w_k)\,,y_1=\diag(v_0^*,\ldots,v_{k-1}^*)$
and  $y_2=\diag(w^*_0,\ldots,w^*_{k-1})$.  
Since we have
$$V=\diag(x_1+x_2,x_1^*+x_2^*)$$ and $$W=\widetilde{\diag}(y_1+y_2,y_1^*+y_2^*),$$ then if  we set
$$T(x,y)=X(x)Z(y,-x^*)Y(-x^*)X(x),$$
we deduce from Lemma \ref{lem-commutator} that
$$\left\|VW-T(x_1,x_2) T^{-1}(-x_2,-x_1)U_{k+1}\widetilde{T}(y_1,y_2) \widetilde{T}^{-1}(-y_2,-y_1)\widetilde{U_k}\right\|<9\eps$$ with $U_k=\begin{pmatrix}0&-I_{k}\\I_{k}&0\end{pmatrix}$ in $M_{2k}(\C)$ and hence
\begin{equation}\label{equation-P1P2}\left\|\diag(u,I_{2k+1})- {T}(x_1,x_2)T^{-1}(-x_2,-x_1)U_{k+1}\widetilde{T}(y_1,y_2)\widetilde{T}^{-1}(-y_2,-y_1)\widetilde{U_k}\right\|<13\eps\end{equation}
Let us show that \begin{equation}\label{equation-S}S(x_1,x_2,y_1,y_2)\defi T(x_1,x_2) T^{-1}(-x_2,-x_1)U_{k+1}\widetilde{T}(y_1,y_2) \widetilde{T}^{-1}(-y_2,-y_1)\widetilde{U_k}\end{equation} can  be decomposed  as  a   product  $P_1P_2$ with $P_1$ and $P_2$ satisfying the required properties. Notice  that as a product of elementary matrices, then $T(x_1,x_2)$ is invertible.
 By the definition of the neighborhood $C^\ast$-algebra, $T(x_1,x_2)-I_{2k+2}$ and $T^{-1}(x_1,x_2)-I_{2k+2}$
are elements in the matrix algebra   $M_{2k+2}(C^*N^{(r,r)}_{\Delta_1,7r})$. The same holds for $\widetilde{T}(y_1,y_2)$, and we have similar properties for $T(-x_2,-x_1) $ and
$\widetilde{T}(-y_2,-y_1)$ with respect to $C^*N^{(r,r)}_{\Delta_2,7r}$. In order to bring out some commutators, let us study the term  $$\widetilde{T}(y_1,y_2)^{-1}{U_{k+1}^*}T^{-1}(-x_2,-x_1){U_{k+1}}\widetilde{T}(y_1,y_2)$$ which is the product of the four first terms in equation (\ref{equation-S}).
Let us make the following observations:
\begin{itemize}
\item $\widetilde{X}(-y_1){U_{k+1}^*}{T}^{-1}(-x_2,-x_1){U_{k+1}}\widetilde{X}(y_1)-I_{2k+2}$ is an element in the matrix algebra $M_{2k+2}(C^*N^{(r,2r)}_{\Delta_2,9r})$;
\item $\widetilde{Z}^{-1}(y_1,y_2)\widetilde{X}(-y_1){U_{k+1}^*}{T}^{-1}(-x_2,-x_1){U_{k+1}}\widetilde{X}(y_1)\widetilde{Z}(y_1,y_2)-I_{2k+2}$ is an element  in the matrix algebra 
 $M_{2k+2}(C^*N^{(r,2r)}_{\Delta_2,17r})$ (because  $Z^{-1}(y_1,y_2)-I_{2k}=Z'(y_1,y_2)-I_{2k}$ and $Z(y_1,y_2)-I_{2k}$  are  elements   of  the matrix algebra   $M_{2k}(C^*N^{(r,r)}_{\Delta_2,4r}$));
\item $\widetilde{Y}(y_1^*)\widetilde{Z}^{-1}(y_1,y_2)\widetilde{X}(-y_1){U_{k+1}^*}{T}^{-1}(-x_2,-x_1){U_{k+1}}\widetilde{X}(y_1)\widetilde{Z}(y_1,y_2)\widetilde{Y}(y_1)-I_{2k+2}$ is an  element  in the matrix algebra  $M_{2k+2}(C^*N^{(3r,r)}_{\Delta_2,19r})$;
\item $\widetilde{X}(-x_1)\widetilde{Y}(y_1^*)\widetilde{Z}^{-1}(y_1,y_2)\widetilde{X}(-y_1){U_{k+1}^*}{T}^{-1}(-x_2,-x_1){U_{k+1}}\widetilde{X}(y_1)\widetilde{Z}(y_1,y_2)\widetilde{Y}(y_1)X(x_1)-I_{2k+2}$ is an element  in the matrix algebra  $M_{2k+2}(C^*N^{(r,4r)}_{\Delta_2,21r})$.\end{itemize}
Hence $\widetilde{T}(y_1,y_2)^{-1}{U_{k+1}^*}T^{-1}(-x_2,-x_1){U_{k+1}}\widetilde{T}(y_1,y_2)-I_{2k+1}$ is an element  in the matrix algebra 
  $M_{2k+2}(C^*N^{(r,r)}_{\Delta_2,21r})$. 
  Since $1=v_k+w_k$, then we have $$\begin{pmatrix}0&1\\-1&0\end{pmatrix}= T(v_k,w_k) T^{-1}(-w_k,-v_k).$$
 Therefore there exists for $i=1,2$ an invertible  matrix $Q_i(v_k,w_k)$ in $M_{2k+2}(A)$ such that
  $Q_i(v_k,w_k)-I_{2k+2}$ and  $Q^{-1}_i(v_k,w_k)-I_{2k+2}$ lie in $M_{2k+2}(C^*N^{(r,r)}_{\Delta_i,7r})$and
  $$Q_1(v_k,w_k)Q_2(v_k,w_k)=U_{k+1}\widetilde{U_{k}}.$$

  Therefore if we write 
  $S(x_1,x_2,y_1,y_2)=P_1 P_2$ with  $$P_1=T(x_1,x_2){U_{k+1}}\widetilde{T}(y_1,y_2)U_{k+1}^*Q_1(v_k,w_k)$$ and $$P_2=Q_2(v_k,w_k)\widetilde{U_k^*}\widetilde{T}(y_1,y_2)^{-1}{U_{k+1}^*}T^{-1}(-x_2,-x_1){U_{k+1}}\widetilde{T}(y_1,y_2)\widetilde{T}^{-1}(-y_2,-y_1)\widetilde{U_{k}}$$ are invertible matrices of $M_{2k+2}(A)$ such that
\begin{itemize}
\item $P_1-I_{2k+2}$ and $P^{-1}_1-I_{2k+2}$ are elements  in the matrix algebra    $M_{2k+2}(C^*N^{(r,r)}_{\Delta_1,21r})$.
\item $P_2-I_{2k+2}$ and $P^{-1}_2-I_{2k+2}$ are elements  in the matrix algebra    $M_{2k+2}(C^*N^{(r,4r)}_{\Delta_2,35r})$.
\end{itemize}
Since $P_1$ and $P_2$ can be written as a product of a fixed number, say $p$,  of matrices $X(x)$ or $Y(x)$ with $\|x\|< 2c$,
we see that $P_1$ and $P_2$ have norm less than $(2c+1)^p$.
According to  equation (\ref{equation-P1P2}), we have $$\left\|\diag(u,I_{2k+1})-P_1P_2\right\|<13\eps.$$
The required homotopies are then $$(T(tx_1,tx_2)U_{k+1}\widetilde{T}(ty_1,ty_2)U_{k+1}^*Q_1(tv_k,tw_k))_{t\in[0,1]}$$ and $$(Q_2(tv_k,tw_k)\widetilde{U_k^*}\widetilde{T}(ty_1,ty_2)^{-1}U_{k+1}T^{-1}(-tx_2,-tx_1)U_{k+1}^*\widetilde{T}(ty_1,ty_2)\widetilde{T}^{-1}(-ty_2,-ty_1)\widetilde{U_k})_{t\in[0,1]}.$$
\end{proof}

Let us briefly explain how we deal with  the non unital case. Let $A$ be a non unital filtered $C^*$-algebra and   let
$u$ be an $\eps$-$r$ unitary in $\tilde{A}$ such that $u-1$ is in $A$.
Assume that $u=x_1+x_2$  with $1-x_1$ and $x_2$ lie in  $A$. Proceeding as in the proof of Lemma  \ref{lem-commutator}, we see that 
$\diag(u,u^*)$ is $3\eps$-close to the product of
\begin{equation*}\label{equ-factor1}
P_1=X(x_2)X(x_1)Y(-x_1^*)X(x_1)\begin{pmatrix}0&-1\\1&0\end{pmatrix}X(-x_2)
\end{equation*}

and 
\begin{equation*}
P_2=X(x_2)\begin{pmatrix}0&1\\-1&0\end{pmatrix}X(-x_1)Y(-x_2^*)X(x_1)X(x_2)\begin{pmatrix}0&-1\\1&0\end{pmatrix}.
\end{equation*}
Now if $(\Delta_1,\Delta_2)$ be a  coercive decomposition pair for $A$ of degree $r$ with coercity $c$ and assume that
in the above decomposition of $u=x_1+x_2$ we have $1-x_1$ in $\De_1$ and $x_2$ in $\De_2$, we get  then
by  a straightforward computation that 
$P_1-I_2$ has coefficient in $C^*N^{(r,2r)}_{\Delta_1,5r}$ and 
$P_2-I_2$ has coefficient in $C^*N^{(r,2r)}_{\Delta_2,5r}$.
Notice that in  view of  the proof of Lemma \ref{lemma-almost-canonical-form-odd}, if under above assumption, $u$ is connected to $1$ as a $\eps$-$r$-unitary of $\tilde{A}$, then $u$ is connected to $1$ by a homotopy of $21\eps$-$r$-unitaries $(u_t)_{t\in[0,1]}$ of $\tilde{A}$ such that $u_t-1$ lies in $A$ for all $t$ in $[0,1]$. Hence proceeding as in the proof of Lemma \ref{lemma-decomp-invertible} we get:

\begin{lemma}\label{lemma-decomp-invertible-non-unital} For any positive number $c$,   there exist   positive numbers $\lambda,\,C$ and $N$, with $\lambda>1$ and $C>1$ such that the following holds.

Let $A$  be a non  unital $C^*$-algebra filtered by $(A_s)_{s>0}$, let $r$  and $\eps$ be  positive numbers such that $\eps<\frac{1}{4\lambda}$ and let
$(\Delta_1,\Delta_2)$ be a  coercive decomposition pair for $A$ of degree $r$ with coercity $c$. Then
for any $\eps$-$r$-unitary $u$ in $\tilde{A}$ homotopic to $1$ and such that $u-1$ lies in $A$, there exist
an integer $k$ and $P_1$ and $P_2$ in $M_k(\tilde{A}_{C r})$ such that
\begin{itemize}
\item $P_1$ and $P_2$ are invertible;
\item $P_i-I_k$ and $P^{-1}_i-I_k$ are elements in the matrix algebra   $M_n(C^*N^{(r,5r)}_{\Delta_i,C r})$ for $i=1,2$;
\item $\|P_i\|<N$ and  $\|P_i^{-1}\|<N$ for $i=1,2$;
\item for $i=1,2$, there exists a homotopy $(P_{i,t})_{t\in[0,1]}$ of invertible elements  in   $M_k(\tilde{A}_{C r})$ between $1$ and $P_i$ such that  $\|P_{i,t}\|<N,\, \|P_{i,t}^{-1}\|<N$ and $P_{i,t}-I_k$ and $P^{-1}_{i,t}-I_k$ are elements in the matrix algebra  $M_n(C^*N^{(r,5r)}_{\Delta_i,C r})$ for every $t$ in $[0,1]$.
\item $\|\diag(u,I_{k-1})-P_1P_2\|<\lambda\eps$.
\end{itemize}
\end{lemma}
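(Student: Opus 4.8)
The plan is to adapt the proof of Lemma \ref{lemma-decomp-invertible} to the non-unital setting, following the recipe already sketched in the paragraph preceding the statement. First I would invoke the remark after Lemma \ref{lemma-almost-canonical-form-odd}: given an $\eps$-$r$-unitary $u$ in $\tilde{A}$ with $u-1 \in A$ that is homotopic to $1$, we may replace the homotopy by one $(u_t)_{t\in[0,1]}$ of $21\eps$-$r$-unitaries with $u_t - 1 \in A$ throughout. We then pick a partition $t_0 = 0 < t_1 < \cdots < t_k = 1$ fine enough that $\|u_{t_i} - u_{t_{i-1}}\| < \eps$ (after rescaling $\eps$ by the factor $21$, which is absorbed into the final control pair), exactly as in the unital proof.

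Next I would set up the block-diagonal telescoping matrices $V$ and $W$ as in Lemma \ref{lemma-decomp-invertible}, but now all entries are chosen in $\tilde{A}$ with the property that each entry minus the appropriate scalar lies in $A$. Using the coercive decomposition pair $(\Delta_1, \Delta_2)$, I decompose each $u_{t_i} = v_i + w_i$; the key adjustment is that for the block corresponding to $u_{t_i}$ one writes $1 - v_i \in \Delta_1$ (or rather the relevant "identity-shifted" piece) and $w_i \in \Delta_2$, so that the elementary-matrix factors $X(x_j)$, $Y(y_j)$ produce the non-unital two-by-two factors $P_1$ and $P_2$ displayed in the excerpt. A direct computation — exactly the "straightforward computation" referenced before the statement — shows $P_1 - I_2$ has coefficients in $C^*N^{(r,2r)}_{\Delta_1,5r}$ and $P_2 - I_2$ has coefficients in $C^*N^{(r,2r)}_{\Delta_2,5r}$; tensoring up with the telescope contributes the extra propagation, landing in $C^*N^{(r,5r)}_{\Delta_i, Cr}$ for a suitable $C$ depending only on the coercity $c$. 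The norm bounds $\|P_i\|, \|P_i^{-1}\| < N$ follow since each $P_i$ is a product of a fixed number of elementary matrices $X(x)$, $Y(x)$ with $\|x\| \leq 2c$, and the homotopies are obtained by scaling $t \mapsto (tx_1, tx_2, \dots)$ in each elementary factor, precisely as at the end of the proof of Lemma \ref{lemma-decomp-invertible}.

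Finally, I would assemble the norm estimate: the telescoping identity gives $\|\diag(u, I_{2k+1}) - VW\| < 4\eps$ (now $< 84\eps$ after the $21$-rescaling), Lemma \ref{lem-commutator} applied blockwise contributes the $3\eps$-type commutator errors, and collecting everything yields $\|\diag(u, I_{k-1}) - P_1 P_2\| < \lambda \eps$ for an appropriate $\lambda > 1$ depending only on $c$ (via the number $p$ of elementary factors and the coercity). The main obstacle I anticipate is purely bookkeeping rather than conceptual: one must track carefully how the "$1$" summand distributes across the telescope blocks — in the non-unital case the identity can only be split inside $\tilde{A}$, not inside $A$ — and verify that after this splitting the elementary factors still have coefficients (minus identity) in the correct neighborhood algebras $C^*N^{(r,5r)}_{\Delta_i, Cr}$ with uniform propagation. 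Once the bookkeeping is organized so that the factor structure of $P_1$ and $P_2$ from the unital proof goes through verbatim with the modified endpoints, the norm and homotopy estimates are immediate.
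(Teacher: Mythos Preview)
Your proposal is correct and follows essentially the same approach as the paper: the paper does not give a standalone proof but simply points to the preceding paragraph (the explicit two-by-two factors $P_1,P_2$ for $\diag(u,u^*)$ arising from the asymmetric splitting $1-x_1\in\Delta_1$, $x_2\in\Delta_2$) together with the observation from Lemma~\ref{lemma-almost-canonical-form-odd} that the homotopy can be taken with $u_t-1\in A$, and then says ``proceeding as in the proof of Lemma~\ref{lemma-decomp-invertible}''. Your sketch fills in exactly these details; the only caution is that the regrouping of elementary factors in the non-unital case is the one displayed in that paragraph (conjugation by $X(x_2)$ of the $W(x_1)$-block), not the commutator regrouping via $Z,Z'$ used in the unital proof---this is why the neighborhood radius becomes $5r$ rather than $4r$ and why the rotation $U_{k+1}\widetilde{U_k}$ (which in the unital proof required decomposing $1=v_k+w_k$ inside $A$) does not appear.
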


\begin{proposition}\label{proposition-decom-unitaries} For every positive number $c$, 
there exists a control pair $(\alpha,l)$ such that the following holds.

\medskip

Let $A$  be a  unital  $C^*$-algebra filtered by $(A_s)_{s>0}$, let $r$  and $\eps$ be  positive numbers such that $\eps<\frac{1}{4\alpha}$ and let
$(\Delta_1,\Delta_2)$ be a  coercive decomposition pair for $A$ of degree $r$ with coercity $c$. Then  for any $\eps$-$r$-unitary $u$ in $A$ homotopic to $1$, there exist
a positive  integer $k$ and $w_1$ and $w_2$  two $\alpha\eps$-$l_\eps r$-unitaries in $M_k(A)$ such that
\begin{itemize}
\item $w_i-I_k$ is an  element  in the matrix algebra   $M_k(C^*N^{(r,4r)}_{\Delta_i,l_\eps r})$ for $i=1,2$;
\item for $i=1,2$, there exists  a homotopy $(w_{i,t})_{t\in[0,1]}$ of  $\alpha\eps$-$l_\eps r$-unitaries between  $1$ and $w_i$ such that
$w_{i,t}-I_k\in M_k(C^*N^{(r,4r)}_{\Delta_i,l_\eps r})$ for all $t$ in $[0,1]$.
\item $\|\diag(u,I_{k-1})-w_1w_2\|<\alpha\eps$.
\end{itemize}
\end{proposition}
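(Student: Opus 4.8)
The plan is to combine the decomposition of $\diag(u,I_{k-1})$ into structured \emph{invertible} factors given by Lemma~\ref{lemma-decomp-invertible} with the polar decomposition of Lemma~\ref{lemma-polar-decomp}, the latter serving to replace those invertible factors by genuine almost-unitaries without leaving the unitarizations of the relevant neighborhood $C^\ast$-algebras.

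First, fixing the coercity $c$, I would invoke Lemma~\ref{lemma-decomp-invertible} to produce constants $\lambda,C,N$ depending only on $c$ and, for the given $\eps$-$r$-unitary $u$ homotopic to $1$, an integer $k$ together with invertible $P_1,P_2\in M_k(A_{Cr})$ such that $P_i-I_k$ and $P_i^{-1}-I_k$ lie in $M_k(C^\ast N^{(r,4r)}_{\Delta_i,Cr})$, $\|P_i\|,\|P_i^{-1}\|<N$, each $P_i$ is joined to $I_k$ by a homotopy of invertibles with the same features, and $\|\diag(u,I_{k-1})-P_1P_2\|<\lambda\eps$. As each $P_i$ is invertible with $\|P_i\|,\|P_i^{-1}\|<N$, it is a fortiori an $\eps$-$Cr$-$N$-invertible, so Lemma~\ref{lemma-polar-decomp} applies with parameter $N$, producing a control pair $(\alpha_0,l_0)$ and a real polynomial $Q$ with $Q(1)=1$ whose degree depends only on $\eps$ and $N$ (hence the same $Q$ may be used for both factors). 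I would polar-decompose $P_1$ on the right and $P_2$ on the left, setting
$$w_1=P_1\,Q(P_1^\ast P_1),\qquad w_2=Q(P_2P_2^\ast)\,P_2;$$
by Lemma~\ref{lemma-polar-decomp} (the second obtained by applying it to $P_2^\ast$ and taking adjoints) these are $\alpha_0\eps$-$l_{0,\eps}Cr$-unitaries with $\|w_1-P_1|P_1|^{-1}\|=O(\eps)$ and $\|w_2-|P_2^\ast|^{-1}P_2\|=O(\eps)$. Since $C^\ast N^{(r,4r)}_{\Delta_i}$ is a $C^\ast$-subalgebra — closed under products and adjoints — and $Q(1)=1$, the elements $P_i^\ast P_i-I_k$, then $Q(P_1^\ast P_1)-I_k$ resp.\ $Q(P_2P_2^\ast)-I_k$, and finally $w_i-I_k$, all lie in $M_k(C^\ast N^{(r,4r)}_{\Delta_i})$; controlling the propagation of $w_i$ by the degree of $Q$ times $Cr$ and setting $l_\eps=Cl_{0,\eps}$ gives $w_i-I_k\in M_k(C^\ast N^{(r,4r)}_{\Delta_i,l_\eps r})$. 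Running this construction on the homotopies $(P_{i,t})$ of Lemma~\ref{lemma-decomp-invertible} yields homotopies $(w_{i,t})$ of $\alpha_0\eps$-$l_\eps r$-unitaries from $I_k$ — note $P_{i,0}=I_k$ and $Q(1)=1$ force $w_{i,0}=I_k$ — to $w_i$, staying inside $I_k+M_k(C^\ast N^{(r,4r)}_{\Delta_i,l_\eps r})$; these are precisely the homotopies built in the proof of Lemma~\ref{lemma-polar-decomp}.

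The only step needing an idea rather than bookkeeping is the product estimate $\|\diag(u,I_{k-1})-w_1w_2\|<\alpha\eps$, and the naive move of replacing each $P_i$ by a nearby unitary fails because $P_1,P_2$ are not individually near unitaries. The point is that their product is: since $\diag(u,I_{k-1})$ is an $\eps$-$r$-unitary and $\|\diag(u,I_{k-1})-P_1P_2\|<\lambda\eps$ with all norms bounded by $N^2$, one gets $\|(P_1P_2)(P_1P_2)^\ast-I_k\|=O(\eps)$, hence — conjugating by $P_1^{-1}$ — $\|P_2P_2^\ast-(P_1^\ast P_1)^{-1}\|=O(\eps)$, and since the square root is operator-Lipschitz on the spectral interval $[N^{-2},N^2]$, which is bounded away from $0$, this gives $\||P_2^\ast|-|P_1|^{-1}\|=O(\eps)$, i.e.\ $|P_1|\,|P_2^\ast|=I_k+O(\eps)$. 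Writing the exact polar decompositions $P_1=V_1|P_1|$ and $P_2=|P_2^\ast|V_2'$ with $V_1,V_2'$ unitary, we obtain $P_1P_2=V_1\bigl(|P_1|\,|P_2^\ast|\bigr)V_2'=V_1V_2'+O(\eps)$ — this cancellation of the positive parts is exactly why one takes the right polar decomposition for $P_1$ and the left one for $P_2$. As the polar-decomposition estimates give $w_1=V_1+O(\eps)$ and $w_2=V_2'+O(\eps)$, we conclude
$$\|\diag(u,I_{k-1})-w_1w_2\|\lq\|\diag(u,I_{k-1})-P_1P_2\|+\|P_1P_2-V_1V_2'\|+\|V_1V_2'-w_1w_2\|=O(\eps),$$
and it remains only to collect the finitely many constants that arose — all functions of $c$ alone via $\lambda,C,N$ and $(\alpha_0,l_0)$ — into a single control pair $(\alpha,l)$ for which $\eps<\frac{1}{4\alpha}$ forces the hypotheses of both lemmas and makes the last bound $<\alpha\eps$. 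The main obstacle is thus conceptual, namely spotting the cancellation $|P_1|\,|P_2^\ast|\approx I_k$; everything else is the careful but routine verification that polynomial functional calculus preserves membership in the neighborhood $C^\ast$-algebras at the price of a controlled growth in propagation.
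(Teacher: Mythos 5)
Your argument is correct and follows the same route as the paper: apply Lemma~\ref{lemma-decomp-invertible} to get the structured invertible factors $P_1,P_2$, then Lemma~\ref{lemma-polar-decomp} to replace them by almost-unitaries $w_1,w_2$ staying in the neighborhood algebras, and finally show the positive parts cancel via the square-root-is-Lipschitz estimate on the spectral interval $[N^{-2},N^2]$. The only cosmetic difference is that the paper establishes $\|h_1h_2-I\|=O(\eps)$ directly for the approximate positive parts $h_1,h_2$ produced by Lemma~\ref{lemma-polar-decomp}, whereas you route the estimate through the exact polar decompositions $P_1=V_1|P_1|$ and $P_2=|P_2^\ast|V_2'$, showing $|P_1|\,|P_2^\ast|\approx I_k$ and hence $P_1P_2\approx V_1V_2'\approx w_1w_2$; since $V_1,V_2'$ are used only as norm-estimate intermediaries with no propagation claimed for them, this is equally valid.
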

\begin{proof}
As in Lemma \ref{lemma-decomp-invertible}, let $\lambda,\,C$ and $N$ be positive numbers, $k$ be an integer and  $P_1$ and $P_2$  be matrices of $M_k(A_{C r})$ such that $\|\diag(u,I_{k-1})-P_1P_2\|<\lambda\eps$. Since $P_1$ and $P_2$ are $\eps$-$Cr$-$N$-invertible for every $\eps$ in $(0,1)$, then according to
 Lemma
\ref{lemma-polar-decomp}, there exists
\begin{itemize}
\item  a control pair $(\alpha,l)$;
\item $w_1$ an $\eps$-$l_{\eps/\alpha}r$-unitary and $h_1$ an $\eps$-$l_{\eps/\alpha}r$-$N$-invertible both  in
$$M_{2k+2}(C^*N^{(r,4r)}_{\Delta_1}+\C),$$ with $h_1$ positive and admitting a positive  $\eps$-$l_{\eps/\alpha}r$-$N$-inverse;
\item $w_2$ an $\eps$-$l_{\eps/\alpha}r$-unitary and $h_2$ an $\eps$-$k_{\eps/\alpha}r$-$N$-invertible both  in $$M_{2k+2}(C^*N^{(r,4r)}_{\Delta_2}+\C),$$ with $h_2$ positive and admitting a positive  $\eps$-$l_{\eps/\alpha}r$-$N$-inverse,
\end{itemize}
such that $\|P_1-w_1h_1\|< \eps\,,\|P_2^*-w_2h_2\|< \eps,\,\||P_1|-h_1\|<\eps$ and $\||P_2^*|-h_2\|<\eps$.
Then  \begin{equation}\label{equ-lemma-polar}\|w_1h_1h_2w_2^*-\diag(u,I_{2k+1})\|<(2N+\lambda+1)\eps\end{equation}  and hence, up to replacing $\lambda$ by $4(2N+\lambda+1)$, we know according to Lemma \ref{lemma-almost-closed} that $w=w_1h_1h_2w_2^*$ is a
$\lambda\eps$-$4l_{\eps/\alpha}r$-unitary. Let us prove that $h_1h_2$ is closed to $I_{2k+2}$.

\medskip

Let $h_1'$ be a positive  $\eps$-$l_{\eps/\alpha}r$-$N$-inverse for $h_1$.  Then we have  $\|w_1^*w-h_1h_2w_2^*\|<2\lambda\eps$ and then $\|h_1'w_1^*w-h_2w_2^*\|<4\lambda N\eps$.  This implies that
 $$\|h_1'w_1^*v(h_1'w_1^*w)^*-h_2w_2^*(h_2w_2^*)^*\|<16\lambda N\eps.$$
 Since $\|h_1'w_1^*w(h_1'w_1^*w)^*-{h'_1}^2\|<3\lambda N^2\eps$ and $\|h_2w_2^*(h_2w_2^*)^*-h_2^2\|<3\lambda N^2\eps$, we
 deduce that there exists $\lambda'\gq\lambda$ depending only on $\lambda$ and $N$ such that
 $\|{h'_1}^2-h_2^2\|<\lambda'\eps$. But,  since $h'_1$ and $h_2$ are  $\eps$-$l_{\eps/\alpha}r$-$N$-invertible with $\eps<1/2$, their  spectrum is bounded below by $\frac{1}{2N}$. The square root is   Lipschitz on the set of positive elements of $A$ with
 spectrum   bounded below by $\frac{1}{2N}$ (this can be checked easily by holomorphic functional calculus), thus there exists a positive number $M$, depending only on $N$ such that $\|{h'_1}-h_2\|<M\lambda'\eps$. Since $h'_1$ is
  an $\eps$-$l_{\eps/\alpha}r$-$N$-inverse for $h_1$,
 we finally obtain  that $\|h_1h_2-I_{2k+2}\|<(1+\lambda'MN)\eps$.

 \medskip
 Combining this inequality with equation (\ref{equ-lemma-polar}), we know that there exist a positive number $\lambda''>1$, depending only on
 $N$ and $\lambda'$ such that $$\|w_1w_2^*-\diag(u,I_{2k+1})\|<\lambda''\eps.$$
 According to Lemma \ref{lemma-polar-decomp}, $w_1=P_1Q(P_1^*P_1)$ where $Q$ is polynomial and such that $Q(1)=1$.
 Since $P_1-I_{2k+2}$ lies in $C^*N^{(r,4r)}_{\Delta_1}$, then the same holds for $w_1-I_{2k+2}$ and similarly, $w_2-I_{2k+2}$ lies in $C^*N^{(r,4r)}_{\Delta_2}$.

\end{proof}

Proceeding similarly, Lemma \ref{lemma-decomp-invertible-non-unital} allows to deal with the non unital case.
\begin{proposition}\label{proposition-decom-unitaries-non-unital} For every positive number $c$, 
there exists a control pair $(\alpha,l)$ such that the following holds.

\medskip

Let $A$  be a  non unital  $C^*$-algebra filtered by $(A_r)_{r>0}$, let $r$  and $\eps$ be  positive numbers such that $\eps<\frac{1}{4\alpha}$ and let
$(\Delta_1,\Delta_2)$ be a  coercive decomposition pair for $A$ of degree $r$ with coercity $c$. Then  for any $\eps$-$r$-unitary $u$ in $\tilde{A}$ homotopic to $1$ and such that $u-1$ lies in $A$, there exist
a positive  integer $k$ and $w_1$ and $w_2$  two $\alpha\eps$-$l_\eps r$-unitaries in $M_k(\tilde{A})$ such that
\begin{itemize}
\item $w_i-I_k$ is an  element  in the matrix algebra   $M_k(C^*N^{(r,5r)}_{\Delta_i,l_\eps r})$ for $i=1,2$;
\item for $i=1,2$, there exists  a homotopy $(w_{i,t})_{t\in[0,1]}$ of  $\alpha\eps$-$l_\eps r$-unitaries between  $1$ and $w_i$ such that
$w_{i,t}-I_k\in M_k(C^*N^{(r,5r)}_{\Delta_i,l_\eps r})$ for all $t$ in $[0,1]$.
\item $\|\diag(u,I_{k-1})-w_1w_2\|<\alpha\eps$.
\end{itemize}
\end{proposition}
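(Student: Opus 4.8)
The plan is to transcribe the proof of Proposition~\ref{proposition-decom-unitaries}, feeding in Lemma~\ref{lemma-decomp-invertible-non-unital} in place of Lemma~\ref{lemma-decomp-invertible} and carrying along the extra bookkeeping that every invertible and every unitary produced along the way differs from the identity by an element of $A$, and in fact by an element of a neighborhood algebra of $\De_1$ or of $\De_2$. First I would apply Lemma~\ref{lemma-decomp-invertible-non-unital} with coercity $c$ to the given $\eps$-$r$-unitary $u$ of $\tilde A$ (with $u-1\in A$ and $u$ homotopic to $1$), obtaining constants $\lambda,C,N$ depending only on $c$, an integer $k$, and invertibles $P_1,P_2$ in $M_k(\tilde A_{Cr})$ with $P_i-I_k$ and $P_i^{-1}-I_k$ in $M_k(C^*N^{(r,5r)}_{\Delta_i,Cr})$, with $\|P_i^{\pm1}\|<N$, with homotopies $(P_{i,t})_{t\in[0,1]}$ of invertibles from $1$ to $P_i$ enjoying the same bounds and membership, and with $\|\diag(u,I_{k-1})-P_1P_2\|<\lambda\eps$.

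Since $P_1$ and $P_2^{*}$ are $\eps$-$Cr$-$N$-invertible for every $\eps<1$, Lemma~\ref{lemma-polar-decomp} then produces a control pair $(\alpha,l)$ and a constant $N'$, all depending only on $N$ and hence only on $c$, together with $\alpha\eps$-$l_{\eps/\alpha}Cr$-unitaries $w_1,w_2$ and positive $\alpha\eps$-$l_{\eps/\alpha}Cr$-$N'$-invertibles $h_1,h_2$ (each admitting a positive inverse of the same type) such that $w_i=P_iQ_i(P_i^*P_i)$ (respectively with $P_2^{*}$ in place of $P_1$) for real polynomials $Q_i$ with $Q_i(1)=1$, and such that $\|P_1-w_1h_1\|<\alpha\eps$, $\||P_1|-h_1\|<\alpha\eps$, with the analogous estimates for $P_2^{*}$. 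Because $Q_i(1)=1$ and $P_i-I_k$, hence $P_i^*P_i-I_k$, lies in the (non-unital) algebra $M_k(C^*N^{(r,5r)}_{\Delta_i})$, polynomial functional calculus shows that $w_i-I_k$ and $h_i-I_k$ again lie in $M_k(C^*N^{(r,5r)}_{\Delta_i})$; applying the same polynomials to the homotopies $(P_{i,t}^*P_{i,t})$ yields homotopies $(w_{i,t})_{t\in[0,1]}$ of $\alpha\eps$-$l_{\eps/\alpha}Cr$-unitaries from $1$ to $w_i$ staying in $M_k(C^*N^{(r,5r)}_{\Delta_i})$, which is one of the two bullets to be proved.

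It then remains to absorb the positive factors $h_1,h_2$, exactly as in Proposition~\ref{proposition-decom-unitaries}: from $\|\diag(u,I_{k-1})-w_1h_1h_2w_2^{*}\|<(2N+\lambda+1)\alpha\eps$ and the existence of a positive inverse $h_1'$ for $h_1$ one obtains, after multiplying by appropriate unitaries and adjoints, that $\|{h_1'}^2-h_2^2\|$ is $O(\eps)$; since $h_1'$ and $h_2$ have spectrum bounded below by $1/(2N)$, the Lipschitz property of the square root on positive elements with spectrum bounded away from $0$ gives $\|h_1'-h_2\|=O(\eps)$, whence $\|h_1h_2-I_k\|=O(\eps)$ and finally $\|\diag(u,I_{k-1})-w_1w_2^{*}\|<\lambda''\eps$ for some $\lambda''$ depending only on $c$. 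Renaming $w_2^{*}$ as $w_2$ (still an $\alpha\eps$-$l_{\eps/\alpha}Cr$-unitary, with $w_2-I_k$ in $M_k(C^*N^{(r,5r)}_{\Delta_2})$ since that algebra is a $C^*$-algebra and so is $*$-closed), enlarging $\alpha$ and $l$ so that $\lambda''\lq\alpha$ and $l_{\eps/\alpha}Cr\lq l_\eps r$, and invoking Lemma~\ref{lemma-almost-closed} to replace the near-unitary by a genuine $\alpha\eps$-$l_\eps r$-unitary, yields the statement. The only step requiring genuine (though routine) care is the non-unital bookkeeping of the previous paragraph — verifying that neither the polar decomposition of Lemma~\ref{lemma-polar-decomp} nor the polynomial functional calculus used to build the homotopies pushes the factors out of the neighborhood algebras of $\De_1$ and $\De_2$; everything else is a direct transcription of the unital argument.
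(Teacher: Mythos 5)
Your proof is correct and follows the paper's approach exactly: the paper states that the non-unital case is proved ``proceeding similarly'' using Lemma~\ref{lemma-decomp-invertible-non-unital} in place of Lemma~\ref{lemma-decomp-invertible}, and you have faithfully carried out that transcription, including the key bookkeeping that $w_i-I_k$ and $h_i-I_k$ remain in $M_k(C^*N^{(r,5r)}_{\Delta_i})$ because the polynomials $Q_i$ appearing in the polar decomposition satisfy $Q_i(1)=1$. (The final appeal to Lemma~\ref{lemma-almost-closed} is superfluous — $w_1$ and $w_2^*$ are already genuine $\eps$-$r$-unitaries at the rescaled parameters — but this does not affect correctness.)
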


\subsection{Neighborhood $C^*$-algebras and controlled Mayer-Vietoris pair.}
In this subsection, we give the definition of neighborhood $C^\ast$-algebras and controlled Mayer-Vietoris pair. Our prominent examples will be given by Roe algebras.
\begin{definition}
Let  $A$ be a $C^*$-algebra   filtered by $(A_s)_{s>0}$, let $r$ be a positive number and let $\Delta$ be a closed  linear subspace of $A_r$. Then a sub-$C^*$-algebra $B$ of $A$ is called an $r$-controlled  $\Delta$-neighborhood-$C^*$-algebra  if
\begin{itemize}
\item $B$ is filtered by $(B\cap A_r)_{r>0}$;
\item $C^*N^{(r,5r)}_{\Delta}\subseteq B$.
\end{itemize}

\end{definition}
\begin{example}\label{example-Roe-algebras}
Let $\Si$ be a discrete metric space with bounded geometry and consider $C^*(\Si)$ the Roe Algebra of $\Si$. Recall that $C^*(\Si)$ is the closure of the algebra of locally compact and finite propagation operators on $\ell^2(\Si)\ts\H$, where $\H$ is a fixed separable Hilbert space. Then  $C^*(\Si)$ is filtered by the propagation.  For $r$ a positive number, let $(X_i)_{i\in\N}$ be a family of finite  subsets of $\Si$  whith uniformly bounded diameter which is $R$-disjoint
(i.e., $d(X_i,X_j)\gq R$ if $i\neq j$) for some positive number $R\gq 12r$. Let us consider the set $\Delta\subseteq C^*(\Si)_r$  of locally compact operators on $\ell^2(\Si)\ts\H$ with support in
$$\big\{(x,y)\in \Si\times\Si;\, x\in\bigcup_{i\in\N}X_i,\,  d(x,y)\lq r\big\}.$$ For a positive number $s$, let us set
$X_{i,s}=\{x\in X_i\text{ such that } d(x,X_i)<s\}$. If $s<R/2$,  then $(X_{i,s})_{i\in\N}$ is a family of $(R-2s)$-disjoint subsets of $\Si$ with  uniformly bounded diameter. Consider then the subalgebra $A_\De$ of   $C^*(\Si)$ of  operators
with support in $\bigcup_{i\in \N}X_{i,s}\times X_{i,s}$. Then
$$A_\De\cong \prod_{i\in\N}\Kp(\ell^2(X_{i,s})\ts\H)\cong \left(\prod_{i\in\N}\Kp(\ell^2(X_{i,s})\ts\Kp(\H)\right)$$
and $A_\De$  is for every $s$
with $5r<s<R/2$  an $r$-controlled  $\Delta$-neighborhood-$C^*$-algebra.
\end{example}

\begin{definition}\label{def-CIA} Let $S_1$ and $S_2$ be two subsets of a $C^*$-algebra $A$.
The pair $(S_1, S_2)$ is said to have  \emph{complete intersection approximation}  property (CIA) if
there exists $c>0$ such that for any positive number $\eps$, any   $x\in M_n(S_1)$ and $ y\in M_n(S_2)$ for some $n$ and $||x-y||<\eps $,
then there exists $z\in M_n(S_1\cap S_2)$ satisfying
$$||z-x||<c \eps, ~~~~||z-y||<c\eps.$$ The positive number $c$ is called the {\bf coercity} of the pair 
$(S_1, S_2)$.
\end{definition}

In the above definition, we note that the inequalities $||x-y||<\eps $ and  $||z-x||<c \eps$ implies $||z-y||<(c+1)\epsilon.$
Hence we can remove the condition $||z-y||<c\eps$ up to replacing the constant $c$ by $c+1$.

\begin{definition}\label{definition-weak-coarse-MV-pair}
Let  $A$ be a $C^*$-algebra filtered by $(A_s)_{s>0}$   and let $r$  be a  positive number. An $r$-controlled weak  Mayer-Vietoris pair for $A$ is a quadruple
 $(\De_1,\De_2,A_{\Delta_1},A_{\Delta_2})$ such that for some positive number $c$.
\begin{enumerate}
\item  $(\De_1,\De_2)$ is a completely coercive  decomposition pair for $A$ of order $r$ with coercitivity $c$.
\item  $A_{\Delta_i}$ is an $r$-controlled   $\Delta_i$-neighborhood-$C^*$-algebra  for $i=1,2$;
\item the pair $ (A_{ \Delta_1, s}, A_{\Delta_2, s})$ has the CIA property    with coercitivity $c$ as defined  above for any {positive number $s$} with $s\lq r$.
\end{enumerate}
The positive number $c$ is called the {\bf coercity} of the $r$-controlled weak  Mayer-Vietoris pair $(\De_1,\De_2,A_{\Delta_1},A_{\Delta_2})$.
\end{definition}

\begin{remark}
In the above definition,
\begin{enumerate}
\item $(\Delta_1\cap A_s,\Delta_2\cap A_s,A_{\Delta_1},A_{\Delta_2})$ is an  $s$-controlled Mayer-Vietoris pair for any $0<s\lq r$ with same coercitivity as 
$(\De_1,\De_2,A_{\Delta_1},A_{\Delta_2})$.
\item {$A_{\De_1}\cap A_{\De_2}$ is filtered by  $(A_{\De_1,r}\cap A_{\De_2,r})_{r>0}$.}
\end{enumerate}
\end{remark}

In order to ensure some persistence properties for  the controlled Mayer-Vietoris exact sequence (see Corollary \ref{cor-bound1}), we need to strenghen condition (iii) of Definition \ref{definition-weak-coarse-MV-pair}.
\begin{definition}\label{definition-coarse-MV-pair}
Let  $A$ be a $C^*$-algebra filtered by $(A_s)_{s>0}$   and let $r$  be a  positive number. An $r$-controlled  Mayer-Vietoris pair for $A$  is a quadruple
 $(\De_1,\De_2,A_{\Delta_1},A_{\Delta_2})$ such that for some positive number $c$.
\begin{enumerate}
\item  $(\De_1,\De_2)$ is a completely coercive  decomposition pair for $A$ of order $r$ with coercitivity $c$.
\item  $A_{\Delta_i}$ is an $r$-controlled   $\Delta_i$-neighborhood-$C^*$-algebra  for $i=1,2$;
\item the pair $ (A_{ \Delta_1, s}, A_{\Delta_2, s})$ has the CIA property for any {positive number $s$}   with coercitivity $c$ as defined  above.
\end{enumerate}
The positive number $c$ is called the {\bf coercity} of the $r$-controlled Mayer-Vietoris pair $(\De_1,\De_2,A_{\Delta_1},A_{\Delta_2})$.
\end{definition}

If $A$  is a unital  $C^*$-algebra filtered by $(A_s)_{s>0}$ and if $(\De_1,\De_2,A_{\Delta_1},A_{\Delta_2})$ is an $r$-controlled  Mayer-Vietoris pair, we will view  $\widetilde{A_{\Delta_1}}$  the unitarization of  ${A_{\Delta_1}}$ as
${A_{\Delta_1}}+\C\cdot 1\subseteq A$  and similarly for ${A_{\Delta_2}}$ and  ${A_{\Delta_1}}\cap {A_{\Delta_2}}$.

\begin{example}\label{example-roe}
Let $(\Si,d)$ be a proper metric discrete space, let $X^{(1)}$ and $X^{(2)}$ be subsets in $\Si$ such that  $\Si=X^{(1)}\cup X^{(2)}$ and let $r$ be a positive number.
Assume that $X^{(1)}=\cup_{i\in\N} X^{(1)}_i$ and $X^{(2)}=\cup_{i\in\N} X^{(2)}_i$,
where  $(X^{(1)}_i)_{i\in\N}$ and $(X^{(2)}_i)_{i\in\N}$ are  families  of $R$-disjoint subsets of $\Si$ with  uniformly bounded diameter   for some positive number $R\gq 10r$. Let us consider as in Example \ref{example-Roe-algebras}
for $j=1,2$ the sets $\Delta_j\subseteq C^*(\Si)_r$  of locally compact operators on $\ell^2(\Si)\ts\H$ with support in
$$\big\{(x,y)\in \Si\times\Si;\, x\in X^{(j)},\,  d(x,y)\lq r\big\}$$  and  let us  consider then 
  the  subalgebra    $A_{\De_j}$ of  $C^*(\Si)$ of operators
with support in $\bigcup_{i\in \N}X^{(j)}_{i,s}\times X^{(j)}_{i,s}$
  for  some fixed positive number $s$ with $5r<s<R/2$.
{{Let $\chi_{X^{(2)}_{i,5r}}$ for $i$ integer be the characteristic function of
$$\{x\in \Si\text{ such that }d(x,X^{(2)}_i)\lq 5r\}.$$ Set 
$$\Psi: C^*(\Si)\lto C^*(\Si);\, x\mapsto \sum_{i\in\N}\chi_{X^{(2)}_{i,5r}}x\chi_{X^{(2)}_{i,5r}}.$$ Then $\Psi$ is norm decreasing. Since $\Psi(x_2)=x_2$ for every $x_2$ in $A_{\De_2}$, we obtain
$$\|\Psi(x_1)-x_2\|\lq \|x_1-x_2\|$$ for every $x_1$ in $M_n(A_{\De_1})$ and $x_2$ in $M_n(A_{\De_2})$. }}Since $\Psi(x_1)$ lies in
 $M_n(A_{\De_1}\cap A_{\De_2})$, we see that $(\De_1,\De_2,A_{\Delta_1},A_{\Delta_2})$ is an
 $r$-controlled  Mayer-Vietoris pair with coercitivity $1$.
\end{example}
Next lemma makes in the set-up of controlled  Mayer-Vietoris pairs the connection
between  the decompositions of   Proposition \ref{proposition-decom-unitaries} corresponding respectively to  an $\eps$-$r$-unitary  and its adjoint.

\begin{proposition}\label{proposition-decom-unitaries-adjoint} For every positive number $c$, 
there exists a control pair $(\alpha,l)$   such that the following holds.

\medskip
Let $A$ be any   unital  filtered $C^*$-algebra, let $r$ be any  positive number, let  $(\De_1,\De_2,A_{\De_1},A_{\De_2})$ be any $r$-controlled Mayer-Vietoris pair for $A$ at order $r$ with coercitivity $c$,
and  let $\eps$ and $r'$ be positive numbers with $\eps\in (0,\frac{1}{4\al})$  and $r\lq r'$. Assume that for some  $\eps$-$r$-unitary  $u$ in some
$M_n(A)$,   there exist
$v_1$ and $v_1'$ two  $\eps$-$r'$-unitaries in  $M_n(\widetilde{A_{\De_1}})$ 
and 
$v_2$ and $v_2'$ two $\eps$-$r'$-unitaries in  $M_n(\widetilde{A_{\De_2}})$ such that
$\|u-v_1v_2\|<\eps$ and   $\|u^*-v_1'v_2'\|<\eps$. Then  there exists an integer $k$ and $v_1''$ and $v_2''$ respectively $\al\eps$-$l_\eps r'$-unitaries in 
$M_{n+k}(\widetilde{A_{\De_1}})$  and $M_{n+k}(\widetilde{A_{\De_2}})$ such that
\begin{itemize}
\item $\|\diag(u^*,I_k)-v_1''v_2''\|<\al\eps$;
\item $v_i''$ is homotopic to $\diag(v^*_i,I_k)$ as an $\al\eps$-$l_\eps r'$-unitary in  $M_{n+k}(\widetilde{A_{\De_i}})$ for $i=1,2$.
\end{itemize}
Moreover, if $v_i-I_{n+k}$ and  $v'_i-I_{n+k}$ lie  in $M_{n+k}(A_{\De_i})$ for $i=1,2$ then $v''_1$ and $v''_2$ can be chosen such that 
  $v''_i-I_{n+k}$ lies in $M_{n+m}(A_{\De_i})$ for $i=1,2$.\end{proposition}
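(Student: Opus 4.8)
The plan is to concentrate all the difficulty in a single \emph{correction unitary} $\beta$ living, after stabilisation, in a matrix algebra over the unitarisation of $A_{\De_1}\cap A_{\De_2}$, and to read off $v_1''$ and $v_2''$ from $\beta$ and the data by a formal computation. First I would record the compatibility fact that $v_1v_2v_1'v_2'$ is within $O(\eps)$ of $I_n$ (combine $\|u-v_1v_2\|<\eps$, $\|u^*-v_1'v_2'\|<\eps$ and $\|uu^*-I_n\|<\eps$): unfolding this product by Lemma \ref{cor-example-homotopy} and correcting via Lemma \ref{lemma-almost-closed} shows that $\diag(v_1,v_2,v_1',v_2')$ is connected to $I_{4n}$ through $O(\eps)$-$O(r')$-unitaries of $M_{4n}(A)$; equivalently, $[v_1'{}^*v_1^*]$ computed in $\widetilde{A_{\De_1}}$ and $[v_2v_2']$ computed in $\widetilde{A_{\De_2}}$ have the same image in $K_*(A)$. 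The key claim is then: there is a control pair $(\al_0,l_0)$ depending only on $c$, and (after replacing $n$ by $2n$) an $\al_0\eps$-$l_{0,\eps}r'$-unitary $\beta$ in $M_{2n}(\widetilde{A_{\De_1}\cap A_{\De_2}})$ with $\beta-I_{2n}\in M_{2n}(A_{\De_1}\cap A_{\De_2})$, which is homotopic to $\diag(v_1'{}^*v_1^*,I_n)$ as an $\al_0\eps$-$l_{0,\eps}r'$-unitary of $M_{2n}(\widetilde{A_{\De_1}})$ and to $\diag(v_2v_2',I_n)$ as an $\al_0\eps$-$l_{0,\eps}r'$-unitary of $M_{2n}(\widetilde{A_{\De_2}})$.

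Granting $\beta$, I would set $v_1''=\diag(v_1',I_n)\,\beta$ and $v_2''=\beta^*\,\diag(v_2',I_n)$ (output dimension $2n$, so $k=n$). Then $v_1''v_2''=\diag(v_1',I_n)\,\beta\beta^*\,\diag(v_2',I_n)$ is within $\al\eps$ of $\diag(v_1'v_2',I_n)$, hence of $\diag(u^*,I_n)$, since $\|\beta\beta^*-I_{2n}\|<\al_0\eps$ and $\|v_1'v_2'-u^*\|<\eps$. The element $v_1''$ lies in $M_{2n}(\widetilde{A_{\De_1}})$ and, by the homotopy for $\beta$, is homotopic there to $\diag(v_1'v_1'{}^*v_1^*,I_n)$, hence to $\diag(v_1^*,I_n)$ by Lemma \ref{lemma-almost-closed} (since $\|v_1'v_1'{}^*-I_n\|<\eps$). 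The element $v_2''$ lies in $M_{2n}(\widetilde{A_{\De_2}})$ --- because $\beta^*$ lies in the unitarisation of $A_{\De_1}\cap A_{\De_2}\subseteq A_{\De_2}$ --- and is homotopic there to $\diag(v_2'{}^*v_2^*v_2',I_n)$, which is homotopic to $\diag(v_2^*,I_n)$ after a further stabilisation, conjugation by the $\eps$-$r'$-unitary $v_2'$ being trivial up to controlled homotopy (Lemma \ref{cor-example-homotopy}). Finally, if $v_i'-I_n\in M_n(A_{\De_i})$, then since $A_{\De_i}$ is an ideal of $\widetilde{A_{\De_i}}$ containing $A_{\De_1}\cap A_{\De_2}$, the formulas above give $v_i''-I_{2n}\in M_{2n}(A_{\De_i})$, which is the last clause.

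The heart of the proof, and the main obstacle, is the construction of $\beta$, i.e.\ showing that the discrepancy between the $\De_1$-then-$\De_2$ factorisation $v_1v_2\approx u$ and the adjoint, $\De_2$-then-$\De_1$ factorisation $v_2^*v_1^*\approx u^*$ is witnessed by a unitary of the intersection. For this I would, as in the proof of Proposition \ref{proposition-decom-unitaries}, split $u$ (which has propagation $\lq r$) as $u=u_1+u_2$ with $u_i\in M_n(\De_i\cap A_r)$ of norm $\lq c\|u\|$, using the completely coercive decomposition, insert $(u_1,u_2)$ into the commutator identity of Lemma \ref{lem-commutator}, and \emph{sort} the resulting word in the elementary matrices $X(\cdot),Y(\cdot),Z(\cdot),Z'(\cdot)$: every commutator that appears is a commutator of elements of bounded propagation and therefore lies in a neighbourhood $C^*$-algebra $C^*N_{\De_i}^{(r,5r)}\subseteq A_{\De_i}$, so that after stabilisation the comparison of the two factorisations leaves a single residual word that lies in a matrix algebra over $\widetilde{A_{\De_1}}$ and is simultaneously norm-close to an element of a matrix algebra over $\widetilde{A_{\De_2}}$. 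The CIA property (Definition \ref{def-CIA}) then yields an element of a matrix algebra over $\widetilde{A_{\De_1}\cap A_{\De_2}}$ within $O(\eps)$ of it, and the polar-decomposition Lemma \ref{lemma-polar-decomp} makes it into $\beta$, the two homotopy statements for $\beta$ following from Lemmas \ref{lemma-almost-closed} and \ref{cor-example-homotopy} and Proposition \ref{prop-conjugate}. The delicate part is to run the sorting so that every commutator stays inside a neighbourhood small enough to sit inside $A_{\De_i}$ while the residual word keeps the correct $K_*$-classes ($[v_1'{}^*v_1^*]$ in $\widetilde{A_{\De_1}}$ and, compatibly, $[v_2v_2']$ in $\widetilde{A_{\De_2}}$), and to see that all the constants thereby produced depend only on the coercivity $c$.
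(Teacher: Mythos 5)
The overall shape of your proposal is right and matches the paper: locate a correction unitary (your $\beta$, the paper's $v$) living, after stabilisation, in $M_\bullet(\widetilde{A_{\De_1}\cap A_{\De_2}})$ with the right two homotopy types, and then define $v_1''=\diag(v_1',\cdot)\beta$, $v_2''=\beta^*\diag(v_2',\cdot)$. The verification of the three bullet points once $\beta$ is in hand is essentially the paper's argument, and your handling of the last clause (using that $A_{\De_1}\cap A_{\De_2}\subseteq A_{\De_i}$ and that $A_{\De_i}$ is an ideal in $\widetilde{A_{\De_i}}$) is fine.

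The gap is the construction of $\beta$, which is where all the work sits, and which your sketch treats as a ``delicate sorting.'' Two specific problems with the sketch.

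First, the CIA property as stated in Definition \ref{def-CIA} applies to the pair $(A_{\De_1,s},A_{\De_2,s})$, not to the unitarisations $\widetilde{A_{\De_i}}$. To invoke it you must first produce two elements whose scalar parts $\rho_{A_{\De_i}}(\cdot)$ are \emph{exactly} $I$, subtract $I$, and apply CIA to the differences. That is not automatic for $v_1^*$, $v_1'^*$, $v_2$, $v_2'$, whose scalar parts are arbitrary (only approximately unitary) elements of $M_n(\C)$. The paper's solution is to conjugate by the scalar matrices $W_j=\diag(\rho_{A_{\De_j}}(v_j)^{-1},\rho_{A_{\De_j}}(v_j')^{-1},I_k)$, which kills the scalar parts exactly; your sketch makes no mention of this step, and without it the CIA invocation is not available.

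Second, your ``sorting'' does not say which factorisation you compare the given data with. The paper gets a genuinely \emph{fresh} factorisation by applying Proposition \ref{proposition-decom-unitaries} as a black box to $W_1\diag(u,u^*,I_k)W_2$ (which is permissible because this element is homotopic to $I$), yielding $w_1w_2$ with $w_i-I$ in $M(A_{\De_i})$. Then a one-line rearrangement gives $\diag(v_1^*,v_1'^*,I_k)W_1^*w_1\approx\diag(v_2,v_2',I_k)W_2w_2^*$, with the left side in $M(\widetilde{A_{\De_1}})$ and scalar part $I$, the right side in $M(\widetilde{A_{\De_2}})$ and scalar part $I$; CIA applied to the differences from $I$ then hands you $v$ with the two homotopies $v\sim\diag(v_1^*,v_1'^*,I_k)$ and $v\sim\diag(v_2,v_2',I_k)$. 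Your sketch instead proposes to re-run the commutator machinery of Lemmas \ref{lem-commutator}--\ref{lemma-decomp-invertible} by hand; this is both heavier and leaves unexplained why the ``residual word'' lands near both of the prescribed targets. Note also that the form you asked $\beta$ to have, $\beta\sim\diag(v_1'^*v_1^*,I_n)$ and $\beta\sim\diag(v_2v_2',I_n)$, multiplies the two blocks together; that is not what falls out of the CIA comparison (one gets the separate-block form $\diag(v_1^*,v_1'^*,I_k)$, $\diag(v_2,v_2',I_k)$), and passing from the separate-block form to your multiplied form requires an additional Lemma \ref{cor-example-homotopy} step, which you should state rather than build into the claim. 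Finally, your count $k=n$ is too optimistic: stabilisation in Proposition \ref{proposition-decom-unitaries} introduces an extra integer $k$, so the output naturally sits in $M_{2n+k}$, not $M_{2n}$.
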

\begin{proof}
Let $(\al,l)$ be control pair as in Proposition \ref{proposition-decom-unitaries}. Since $\rho^{-1}_{A_{\De_j}}(v_j)$ and $\rho^{-1}_{A_{\De_j}}(v'_j)$ are for $j=1,2$ homotopic to $I_n$ as a $8\eps$-$s$-unitary of $M_n(\C)$ for every positive number $s$ \cite[Lemma 1.20]{oy2}, then up to replacing $\al$ by $90\al$, there exists an integer 
$k$  and  $w_1$ and $w_2$ be  two  $\al\eps$-$2l_\eps r$-unitaries respectively in $M_{2n+k}(A_{\De_1})$ 
and  $M_{2n+k}(A_{\De_2})$ such that  if we set $W_j=\diag(\rho^{-1}_{A_{\De_j}}(v_j),\rho^{-1}_{A_{\De_1}}(v'_j),I_k)$ for $j=1,2$, then
\begin{itemize}
\item $\|W_1 \diag(u,u^*,I_k)W_2 -w_1w_2\|<\al\eps$;
\item $w_j-I_{2n+k}$ is in  $M_{2n+k}(A_{\De_j})$ for $j=1,2$.
\item $w_j$ is homotopic to $I_{2n+k}$ as  an $\al\eps$-$2l_\eps r$-unitaries in $M_{2n+k}(A_{\De_j})$ for $j=1,2$.
\end{itemize}
Then
$$\|W_1diag(v_1v_2,v_1'v_2',I_k)W_2-w_1w_2\|<(\al+2)\eps$$ and hence we have

$$\|\diag(v_1^*,{v'}^*_1,I_k)W_1^*w_1-\diag(v_2,v_2',I_k)W_2w^*_2\|<5(\al+1)\eps.$$
Since  $\rho_{\A_{\De_j}}(\diag(v_j,v_j',I_k)W_j)=I_{2n+k}$ for $j=1,2$ and in view of   CIA property, there exist $v$ in $M_{2n+k}(A)$ with propagation less than $(2l_\eps+1)r'$ such that
\begin{itemize}
\item $v-I_{2n+k}$ is in $M_{2n+k}(A_{\De_1}\cap A_{\De_2})$;
\item $\|\diag(v_2,v'_2,I_k)W_2w^*_2-v\|<5c(\al+1)\eps$;
\item $\|\diag(v_1^*,{v'}^*_1,I_k)W_1^*w_1-v\|<5c(\al+1)\eps$.
\end{itemize}
In particular some control pair $(\al',l')$ depending only on $c$.
\begin{itemize}
\item  $v$ is an $\al'$-$l'_\eps$-$r'$-unitary in $M_{2n+k}(\widetilde{A_{\De_1}\cap A_{\De_2}})$;
\item $v$ is homotopic to $\diag(v^*_1,{v'_1}^*,I_k)$ as an $\al'$-$l'_\eps$-$r'$-unitary in $M_{2n+k}(\widetilde{A_{\De_1}})$;
\item $v$ is homotopic to $\diag(v_2,v'_2,I_k)$ as an $\al'$-$l'_\eps$-$r'$-unitary in $M_{2n+k}(\widetilde{A_{\De_2}})$.
\end{itemize}
Let us set  $v_1''=\diag(v'_1,I_{n+k})\cdot v$ and $v_2''=v^*\cdot \diag(v'_2,I_{n+k})$.  Then $v''_1$ and $v''_2$ satisfy the required properties for some suitable control pair depending only on $c$.
\end{proof}

\subsection{Controlled  Mayer-Vietoris pair associated to groupo\"\i ds}\label{subsection-groupoid} 

In this section, we discuss the example of a controlled  Mayer-Vietoris pair associated to groupo\"\i ds. Our method in this paper provides a different approach to the controlled Mayer-Vietoris sequence in the context of crossed product $C^\ast$-algebras in \cite{gwy}. Recall first the definition of a proper  symmetric length on an \'etale groupo\"\i d.
\begin{definition}
Let $\G$ be an \'etale groupo\"\i d, with compact base space $X$. A proper symmetric length on $\G$ is a continuous proper map
$\ell:\G\to\R^+$ such that
\begin{itemize}
\item $\ell(\ga)=0$ if and only $\ga$ is a unit  of $\G$;
\item $\ell(\ga)=\ell(\ga^{-1})$ for any $\ga$ in $\G$;
\item $\ell(\ga\cdot\ga')\lq \ell(\ga)+\ell(\ga')$ for any $\ga$ and $\ga'$ in $\G$ composable.
\end{itemize}
\end{definition}
Let $\G$ be an \'etale groupo\"\i d with compact base space $X$ and source map and range map $\textbf{r},\textbf{s}:\G\to X$  equipped with and symmetric proper  length $\ell$.
Then, if we set 
 $$\G_r=\{\ga\in\G;\, \text{such that }\ell(\ga)\lq r\},$$then 
  the reduced $C^*$-algebra $C^*_r(\G)$ of $\G$ is filtered by $(C^*_r(\G)_r)_{r>0}$ with 
  $$C^*_r(\G)_r=\{f\in C_c(\G)\text{ with support in }\G_r\}$$ for all positive number $r$.
  For every open subset $V$ of $X$ and every positive number $r$, set
$$V_r=\{\textbf{s}(\ga)\;, \ga\in\mathring{\G}_r\text{ and } \textbf{r}(\ga)\in V\}=\{\textbf{r}(\ga)\;, \ga\in\mathring{\G}_r\text{ and }\textbf{s}(\ga)\in V\}.$$ Then $V_r$ is an open subset of $X$ and $V\subseteq V_r$. If $Y$ and $Z$ are subsets of $ X$, then we set $\G_Y=\{\ga\in\G;\,\textbf{s}(\ga)\in Y\}\,,\G^Z=\{\ga\in\G;\,\textbf{r}(\ga)\in Z\}$ and
$\G_X^Y=\G_Y\cap\G^Z$. For every open subset $V$ of $X$ and every positive number $r$, let
$\G_V^{V,(r)}$ be the subgroupo\"\i d of $\G_V^{V}$ generated by $\G_{V,r}^{V}=\G_V^{V}\cap \mathring{\G}_r$. Then $\G_{V}^{V,(r)}$ is an open subgroupo\"\i d
 of $\G$. Let us set  $$\Delta_V=\{f\in C_0(\G_V)\text{ with support in }\G_r\}.$$Then $\Delta_V$ is a closed linear subspace of  $C^*_r(\G)_r$ and for every positive number $R$ and $R'$ with $r\leq R<R'$, then we have $C^*N_{\Delta_V}^{r,R}\subseteq  C^*_r\left(\G^{V_R',(R')}_{V_R'}\right)$. In particular, if $R>5r$, then $C^*_r\left(\G^{V_R,(R)}_{V_R}\right)$ is a $r$-controlled
 $\Delta_V$-neighborhood-$C^*$-algebra.

 \smallskip

Let $V^{(1)}$ and $V^{(2)}$ be two open subsets of $X$ such that $X=V^{(1)}\cup V^{(2)}$. Fix $R>5r$.
Set $\De_1=\De_{V^{(1)}}$ and $\De_2=\De_{V^{(2)}}$. Using partition of unity relatively to $V^{(1)}$ and $V^{(2)}$, we see that $(\De_1,\De_2)$ is a completely coercitive decomposition pair of order $r$ for $C^*_{red}(\G)$ with coercitivity $1$. Let us set also $A_{\De_1}=C^*_{red}\left(\G^{V^{(1)}_{R},(R)}_{V^{(1)}_{R}}\right)$ and $A_{\De_2}= C^*_{red}\left(\G^{V^{(2)}_{R},(R)}_{V^{(2)}_{R}}\right).$
Let $s$   be  positive number  with $s\lq r$, let $\eps$ be a positive number, let $x_1$ be an element
of $M_n\left(A_{\De_1, s}\right)$ and let $x_2$ be an element
of $M_n\left(A_{\De_2, s}\right)$ such that $\|x_1-x_2\|<\eps$. Let $x_1'$ and $x_2'$ be  respectively elements   in $M_n\left(C_c\left(\G^{V^{(1)}_{R},(R)}_{V^{(1)}_{R}}\right)\right)$ and
$M_n\left(C_c\left(\G^{V^{(2)}_{R},(R)}_{V^{(2)}_{R}}\right)\right)$ such that $\|x_1'-x_1\|<\eps$ and $\|x_2'-x_2\|<\eps$. Let  $K$ be a compact subset of $V^{(2)}_{R}$ such that all coefficients of $x'_2$ have support in $\G^K_K$ and let $h:X\to [0,1]$ be  a continuous function with support in $V^{(2)}_{R}$ and such
 that $h(z)=1$ for all $z$ in $K$. The Schur multiplication by $h\circ \textbf{s} \cdot h\circ \textbf{r}$
 $$C_c(\G)\to C_c(\G);\,f\mapsto f\cdot h\circ \textbf{s} \cdot h\circ \textbf{r} $$  extends to a completely positive  map
 $$\phi:C^*_{red}(\G)\to C^*_{red}(\G)$$ of complete norm less than $1$ and such that $\phi (x'_2) =x'_2$ and $\phi (x'_1)$ belongs to
 $C^*_{red}\left(\G^{V^{(1)}_{R},(R)}_{V^{(1)}_{R}}\right)_s\bigcap C^*_{red}\left(\G^{V^{(2)}_{R},(R)}_{V^{(2)}_{R}}\right)_s$. Moreover
 \begin{eqnarray*}
 \|\phi(x'_1) -x_2\|&\lq&\|\phi(x'_1) -x'_2\|+\|x_2'-x_2\|   \\
 &\lq&\|\phi(x'_1) -\phi(x'_2)\|+\eps\\
  &\lq&\|x'_1 -x'_2\|+\eps\\
 &\lq&4\eps.
 \end{eqnarray*} Hence,  $\left(\De_1,\De_2,C^*_{red}\left(\G^{V^{(1)}_{R},(R)}_{V^{(1)}_{R}}\right), C^*_{red}\left(\G^{V^{(2)}_{R},(R)}_{V^{(2)}_{R}}\right)\right)$     is for every $R>5r$  a $r$-controlled weak Mayer-Vietoris pair  for $C^*_{red}(\G))$ with coercitivity $5$. 
 Assume that there exists  a positive number $C$ such that for every compact subset $K$ of  $V^{(2)}$, there exists 
  a continuous function  $h:X\to [0,1]$ with support in $V^{(2)}$   that satisfies the following:
\begin{itemize}
\item    $h(z)=1$ for all $z$ in $K$;
\item the Schur multiplication by $h\circ \textbf{s} \cdot h\circ \textbf{r}$ extends to a completely bounded map
 $\Phi:C^*_{red}(\G)\to C^*_{red}(\G)$ with complete norm bounded by $C$ and such that for any $x$ in $C_c\left(\G^{V^{(2)}_{R},(R)}_{V^{(2)}_{R}}\right)$ with support in 
 $\G_K^K$, then $\Phi(x)=x$, 
 \end{itemize}
then 
 $\left(\De_1,\De_2,C^*_{red}\left(\G^{V^{(1)}_{R},(R)}_{V^{(1)}_{R}}\right), C^*_{red}\left(\G^{V^{(2)}_{R},(R)}_{V^{(2)}_{R}}\right)\right)$     is for every $R>5r$  a $r$-controlled  Mayer-Vietoris pair  for $C^*_{red}(\G))$ with coercitivity $c$ depending only on $C$. 
\section{Controlled Mayer-Vietoris six terms exact sequence  in quantitative $K$-theory}\label{section-six-terms}
In this section, we establish for a  control Mayer-Vietoris pair asssocied to a filtered $C^*$-algebra $A$ a controlled exact  sequence that allows to compute quantitative $K$-theory of $A$ up to a certain order.

\begin{notation}
Let $A$ be a unital $C^*$-algebra filtered by $(A_r)_{r>0}$, let $r$  be a  positive number and let $(\De_1,\De_2,A_{\Delta_1},A_{\Delta_2})$ be a  $r$-controlled  Mayer-Vietoris pair  for $A$. We denote by  $\jmath_{\Delta_1}: A_{\Delta_1}\to A\,,\jmath_{\Delta_2}: A_{\Delta_2}\to A\,,\jmath_{\Delta_1,\Delta_2}: A_{\Delta_1}\cap A_{\Delta_2} \to A_{\Delta_1}$ and $\jmath_{\Delta_2,\Delta_1}: A_{\Delta_1}\cap A_{\Delta_2} \to A_{\Delta_2}$ the obvious inclusion maps.
\end{notation}

\subsection{Controlled  half-exactness in the middle}

\begin{proposition}\label{prop-half-exact}For every positive number $c$, there exists a control pair $(\alpha,l)$ such that  for any filtered  $C^*$-algebra $A$, any   positive number $r$ and  any  $r$-controlled  weak Mayer-Vietoris pair $(\De_1,\De_2,A_{\Delta_1},A_{\Delta_2})$    for $A$
with coercitiviy $c$, then  the composition
$$\K_*(A_{\Delta_1}\cap A_{\Delta_2})
\stackrel{(\jmath_{\Delta_1,\Delta_2,*},\jmath_{\Delta_2,\Delta_1,*})}{-\!\!\!-\!\!\!-\!\!\!-\!\!\!-\!\!\!-\!\!\!-\!\!\!-\!\!\!-\!\!\!-\!\!\!-\!\!\!-\!\!\!\lto}
\K_*(A_{\Delta_1})\oplus \K_*(A_{\Delta_2})
\stackrel{(\jmath_{\Delta_1,*}-\jmath_{\Delta_2,*})}{-\!\!\!-\!\!\!-\!\!\!-\!\!\!-\!\!\!-\!\!\!-\!\!\!-\!\!\!\lto}\K_*(A)$$ is 
$(\alpha,l)$-exact at order $r$.
\end{proposition}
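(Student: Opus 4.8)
The plan is to establish the odd ($\K_1$) and even ($\K_0$) parts separately, the odd one being the model. First note that the composition is identically zero, not merely controlled-zero: both $\jmath_{\De_1}\circ\jmath_{\De_1,\De_2}$ and $\jmath_{\De_2}\circ\jmath_{\De_2,\De_1}$ are the inclusion $A_{\De_1}\cap A_{\De_2}\hookrightarrow A$, so $(\jmath_{\De_1,*}-\jmath_{\De_2,*})\circ(\jmath_{\De_1,\De_2,*},\jmath_{\De_2,\De_1,*})=0$ as $(1,1)$-controlled morphisms. For the exactness itself, fix $\eps$ small relative to $1$ and $s$ small relative to $r$, and take $(x_1,x_2)\in K_1^{\eps,s}(A_{\De_1})\oplus K_1^{\eps,s}(A_{\De_2})$ with $\jmath_{\De_1,*}^{\eps,s}(x_1)=\jmath_{\De_2,*}^{\eps,s}(x_2)$ in $K_1^{\eps,s}(A)$. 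By Lemma \ref{lemma-almost-canonical-form-odd}, after rescaling the control I represent $x_i$ by an $\eps$-$s$-unitary $u_i$ in some $M_n(\widetilde{A_{\De_i}})$ with $\rho_{A_{\De_i}}(u_i)=I_n$ (common $n$ after stabilizing). The hypothesis, the relation defining $K_1^{\eps,s}$, and Lemma \ref{cor-example-homotopy} then show that, after a further stabilization, $u_1u_2^*$ is an $O(\eps)$-$O(s)$-unitary in $M_n(\widetilde A)$ with $\rho_A(u_1u_2^*)=I_n$ which is homotopic to $1$; in particular $u_1u_2^*-I_n\in M_n(A)$.

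Since $(\De_1\cap A_{s'},\De_2\cap A_{s'})$ is a coercive decomposition pair for $A$ of degree $s'$ with coercivity $c$ for $s'$ a suitable multiple of $s$ --- this being legitimate because $s'\le r$ and because complete coercivity is preserved under passing to the ampliation $M_n(A)$ --- Proposition \ref{proposition-decom-unitaries-non-unital} (or \ref{proposition-decom-unitaries} when $A$ is unital) applied to $u_1u_2^*$ yields, for a control pair depending only on $c$, an integer $k$ and $O(\eps)$-$O(s)$-unitaries $w_1,w_2$ with $w_i-I\in M_k(C^*N^{(\cdot,\cdot)}_{\De_i})$, each homotopic to $1$ through such elements, and $\|\diag(u_1u_2^*,I_{k-1})-w_1w_2\|=O(\eps)$; here one uses that for $s$ small the neighborhood algebras that occur satisfy $C^*N^{(\cdot,\cdot)}_{\De_i}\subseteq A_{\De_i}$, since $A_{\De_i}$ is a $\De_i$-neighborhood $C^*$-algebra and these algebras are nested in the inner radius. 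Rearranging, $a:=w_1^*\diag(u_1,I_{k-1})\in M_{n+k}(\widetilde{A_{\De_1}})$ and $b:=w_2\diag(u_2,I_{k-1})\in M_{n+k}(\widetilde{A_{\De_2}})$ are $O(\eps)$-$O(s)$-unitaries with trivial scalar part, $a-I\in M_{n+k}(A_{\De_1})$ and $b-I\in M_{n+k}(A_{\De_2})$ have propagation at most some multiple $\hat s$ of $s$, and $\|a-b\|=O(\eps)$. As $\hat s\le r$ for $s$ small, the CIA property of $(A_{\De_1,\hat s},A_{\De_2,\hat s})$ produces $Z\in M_{n+k}(A_{\De_1,\hat s}\cap A_{\De_2,\hat s})$ with $\|Z-(a-I)\|$ and $\|Z-(b-I)\|$ of order $O(c\eps)$; setting $z:=Z+I$ and using Lemma \ref{lemma-almost-closed}, $z$ is an $O(\eps)$-$\hat s$-unitary in $M_{n+k}(\widetilde{A_{\De_1}\cap A_{\De_2}})$ homotopic to $a$ in $\U(\widetilde{A_{\De_1}})$ and to $b$ in $\U(\widetilde{A_{\De_2}})$. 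The class $x:=[z]$ at the accumulated control --- which still depends only on $c$ --- then satisfies $\jmath_{\De_1,\De_2,*}(x)=[a]=[w_1^*]+[u_1]=[u_1]$ and $\jmath_{\De_2,\De_1,*}(x)=[b]=[u_2]$ up to structure maps, using $w_i\sim 1$ and the group law of Lemma \ref{cor-example-homotopy}; this is the required $(\alpha,l)$-exactness at order $r$ in degree $1$.

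The even part follows the same scheme with $\eps$-$r$-projections replacing $\eps$-$r$-unitaries: using Lemma \ref{lemma-almost-canonical-form} one represents $x_i\in K_0^{\eps,s}(A_{\De_i})$ by an $O(\eps)$-$s$-projection $p_i$ in $M_{n_i}(\widetilde{A_{\De_i}})$ with $\rho_{A_{\De_i}}(p_i)=\diag(I_{k_i},0)$; the hypothesis together with Proposition \ref{prop-conjugate} produces, after stabilization and a suitable conjugation of the data by scalar unitaries (trivializing $\rho_A$ of the conjugator and aligning all scalar parts with $\diag(I,0)$), an $O(\eps)$-$O(s)$-unitary $W$ in some $M_N(\widetilde A)$ which is homotopic to $1$, has $\rho_A(W)=I_N$, and conjugates a representative $P_2$ of $x_2$ to within $O(\eps)$ of a representative $P_1$ of $x_1$; decomposing $W\approx W_1W_2$ by Proposition \ref{proposition-decom-unitaries-non-unital} (so that $W_i-I\in M_N(A_{\De_i})$ and $W_i\sim 1$ there) and applying the CIA property to $W_1^*P_1W_1$ and $W_2P_2W_2^*$ delivers the sought class in $\K_0(A_{\De_1}\cap A_{\De_2})$. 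The main obstacle is the simultaneous management of three ingredients: (i) the scalar parts, which must be trivialized --- via Lemmas \ref{lemma-almost-canonical-form-odd} and \ref{lemma-almost-canonical-form} --- before Proposition \ref{proposition-decom-unitaries-non-unital} and the CIA step become applicable; (ii) arranging, in the even case, that the conjugating unitary $W$ coming out of Proposition \ref{prop-conjugate} is homotopic to $1$, which is exactly what lets Proposition \ref{proposition-decom-unitaries-non-unital} be invoked and must be read off from the construction of $W$ along the given homotopy of projections; and (iii) keeping every intermediate propagation below $r$ --- this is precisely why exactness holds only ``at order $r$'', because both the CIA property and the inclusions $C^*N^{(\cdot,\cdot)}_{\De_i}\subseteq A_{\De_i}$ are available only up to scale $r$ --- all the while verifying that the resulting control pair $(\alpha,l)$ depends on the data only through the coercivity $c$.
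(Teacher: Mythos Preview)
Your proof is correct and follows the same overall strategy as the paper: decompose an almost-unitary homotopic to $1$ via Proposition \ref{proposition-decom-unitaries} (or its non-unital variant), then invoke the CIA property to produce the desired element in $A_{\De_1}\cap A_{\De_2}$. The paper treats the even case in detail and dismisses the odd case as ``similar'' or obtainable via controlled Bott periodicity, whereas you make the odd case the model; your odd-case route through $u_1u_2^*$ is in fact a bit slicker than the direct analog of the paper's conjugation argument, since it bypasses Proposition \ref{prop-conjugate} entirely.

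One small remark on your even-case sketch: to arrange that the conjugator $W$ from Proposition \ref{prop-conjugate} is homotopic to $1$, the paper does not try to read this off from the construction but simply replaces $u$ by $\diag(u,u^*)$ (and the projections by $\diag(q_i,0)$), invoking Lemma \ref{cor-example-homotopy}(ii). This is more direct than tracking the homotopy inside the proof of Proposition \ref{prop-conjugate}, and it costs nothing since you are stabilizing anyway.
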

\begin{proof} Let us first assume that $A$ is unital.
In the even case let $y_1$ and $y_2$ be respectively element in $K_0^{\eps,s}(A_{\De_1})$ and $K_0^{\eps,s}(A_{\De_2})$
such that $\jmath^{\eps,s}_{\Delta_1,*}(y_1)=\jmath^{\eps,s}_{\Delta_2,*}(y_2)$ in $K_0^{\eps,s}(A)$. In view of Lemma \ref{lemma-almost-canonical-form}, we can up to rescaling $\eps$ assume without loss of generality there exist integer $m$ and $n$ with $m\lq n$ and two $\eps$-$r$-projections $q_1$ and $q_2$  in $M_n(A)$ such that
\begin{itemize}
\item $q_1-\diag(I_m,0)$ is an element  in the matrix algebra   $M_n(A_{\Delta_1})$;
\item $q_2-\diag(I_m,0)$  is an element  in the matrix algebra   $M_n(A_{\Delta_2})$;
\item $y_1=[q_1,m]_{\eps,s}$;
\item $y_2=[q_2,m]_{\eps,s}$.
\end{itemize}
Up to stabilization, we can also assume that $q_1$ and $q_2$ are homotopic as $\eps$-$s$-projections in $M_n(A)$.
Let $(\alpha,k)$ be the control pair of Proposition \ref{prop-conjugate} . Up to stabilisation there exists $u$ a $\alpha\eps$-$k_{\eps}s$-unitary in $M_n(A)$ such that
$\|u^*q_1u-q_2\|<\alpha\eps$. Up to replacing
$u$ by $\diag(u,u^*)$, $q_1$ by $\diag(q_1,0)$  and  $q_2$ by $\diag(q_2,0)$, we can assume in view of Lemma \ref{cor-example-homotopy} that $u$ is homotopic to  $I_n$ as a $3\alpha\eps$-$2k_{\eps}s$-unitary in $M_n(A)$.
According to Proposition \ref{proposition-decom-unitaries}, then for some control pair $(\lambda,l)$  depending only on $(\alpha,k)$ and $c$  with $ (\alpha,k)\lq(\lambda,l)$ and up to stabilization, there exist $w_1$ and $w_2$ some $\lambda\eps$-$k_\eps s$ unitaries in $M_n(A)$ such that
\begin{itemize}
\item $w_i-I_k$ is an element  in the matrix algebra   $M_n(C^*N^{(r,4r)}_{\Delta_i,l_\eps s})$ for $i=1,2$;
\item $\|w_1^*q_1w_1-w_2q_2w_2^*\|<\lambda\eps$
\end{itemize}
Notice that $w_1^*q_1w_1-\diag(I_m,0)$ is an element  in the matrix algebra   $M_n(A_{\De_1,(2l_\eps+1) s})$ and $w_2 q_2w_2^*-\diag(I_m,0)$  is an element  in the matrix algebra  $M_n(A_{\De_2,(2l_\eps+1) s})$. By Definition \ref{def-CIA} of the CIA property, there exists $y$ in $$M_n(A_{\De_1,(2l_\eps+1) s}\cap A_{\De_2,(2l_\eps+1) s})$$ such that
$$\|y-(w_1^*q_1w_1^*-\diag(I_n,0))\|<\lambda c\eps$$ and $$\|y-(w_2q_2w_2^*-\diag(I_n,0))\|<\lambda c\eps.$$ Let us set then   $$p=y+\diag(I_m,0).$$  Since
$\|p-w_1^*q_1w_1\|<\lambda c\eps$ and $\|p-w_2q_2w_2^*\|<\lambda c\eps$, up to stabilization , in view  of the proof of  \cite[Lemma 1.9]{oy2} and according to \cite[Lemma 1.7]{oy2}, we know that  for some control pair $(\alpha',l')$ depending only  on $(\alpha,k)$ and $c$ and such that $((c+1)\lambda,2l+1)\lq (\alpha',l')$, then for $j=1,2$
\begin{itemize}
\item   $w_1^*q_1w_1$  is an  $\alpha'$-$l'_\eps s$-projection in $M_n(\widetilde{A}_{\De_1})$;
\item   $w_1^*q_1w_1$ is  homotopic to $q_1$   as   an  $\alpha'$-$l'_\eps s$-projections  in $M_n(\widetilde{A}_{\De_1})$;
\item $p$ is connected to $w_1^*q_1w_1$   as   an  $\alpha'$-$l'_\eps s$-projections  in $M_n(\widetilde{A}_{\De_1})$;
\end{itemize}

and

\begin{itemize}
\item   $w_2q_2w_2^*$  is an  $\alpha'$-$l'_\eps s$-projection in $M_n(\widetilde{A}_{\De_2})$;
\item   $w_2q_2w_2$ is  homotopic to $q_2$   as   an  $\alpha'$-$l'_\eps s$-projections  in $M_n(\widetilde{A}_{\De_2})$;
\item $p$ is connected to $w_2^*q_2w_2$   as   an  $\alpha'$-$l'_\eps s$-projections  in $M_n(\widetilde{A}_{\De_2})$.
\end{itemize}

  Now if we set $x=[p,m]_{\alpha'\eps,l'_\eps s}$ in $K_0^{\alpha'\eps,l'_\eps s}(A_{\De_1}\cap A_{\De_2})$,  we have
that $$\jmath_{\Delta_1,\Delta_2}^{\alpha'\eps,l'_\eps s}(x)=\iota^{\eps,\alpha'\eps,s,l'_\eps s}(y_1)$$ in  $K_*^{\alpha'\eps,l'_\eps s}(A_{\Delta_1})$ and
$$\jmath_{\Delta_2,\Delta_1}^{\alpha'\eps,l'_\eps s}(x)=\iota^{\eps,\alpha'\eps,s,l'_\eps s}(y_2)$$ in  $K_*^{\alpha'\eps,l'_\eps s}(A_{\Delta_2})$.

A similar proof can be carried out in the odd case but we can also use the controlled Bott periodicity \cite[Lemma 4.6]{oy3}. The non unital case can be proved in a similar way using Lemma \ref{proposition-decom-unitaries-non-unital}, noticing that in view of the proof of Proposition \ref{prop-conjugate} and following the proof of the unital case above above, we can  assume that $u$ which is now a 
$\alpha_h\eps$-$k_{h,\eps}s$-unitary in $M_n(\tilde{A})$  is such that $u-I_n$ has coefficient in $A$ (see the proofs of \cite[Lemma 1.11 and  Corollary 1.31]{oy2}).

\end{proof}

\subsection{Quantitative  boundary maps for  controlled Mayer-Vietoris pair}\label{subs-section-boundary}

In this subsection, we introduce the quantitative boundary map in the controlled Mayer-Vietoris sequence for quantitative $K$-theory of filtered $C^\ast$-algebras. 
\begin{lemma}\label{lemma-technic}For every positive number $c$, there exists a control pair $(\lambda,k)$ such that the following holds:

\smallskip

Let $A$ be a unital $C^*$-algebra filtered by $(A_s)_{s>0}$, let $r$  be a  positive number and  let $(\De_1,\De_2,A_{\Delta_1},A_{\Delta_2})$ be a  $r$-controlled  weak Mayer-Vietoris pair  for $A$ with coercitivity $c$. Let   $\eps$ and $s$ be positive numbers  with  $\eps<\frac{1}{4\lambda}$ and  $s\lq r/2$, let  $m$ and $n$ be integers and let  $u$ in $U_n^{\eps,s}(A)$,  $v$ in $U_m^{\eps,s}(A)$ and $w_1,\,w_2$ be $\eps$-$s$-unitaries  in $M_{n+m}^{\eps,s}(A)$ such that
\begin{itemize}
\item $w_i-I_{n+m}$ is an element  in the matrix algebra   $M_{n+m}({A_{\Delta_i}})$ for $i=1,2$;
\item $\|\diag(u,v)-w_1w_2\|<\eps$.
\end{itemize}
Then,
\begin{enumerate} 
\item there exists  a $\lambda\eps$-$k_\eps s$-projection $q$ in $M_{n+m}(A)$ such that
\begin{itemize}
\item $q-\diag (I_n,0)$  is an element  in the matrix algebra   $M_{n+m}(A_{\Delta_{1}}\cap A_{\Delta_{2}})$;
\item $\|q-w_1^*\diag (I_n,0)w_1\|<\lambda\eps$;
\item $\|q-w_2\diag (I_n,0)w_2^*\|<\lambda\eps$.
\end{itemize}
\item if $q$ and $q'$ are two $\lambda\eps$-$k_\eps s$-projections   in $M_{n+m}(A)$  that satisfies the first  point, then $[q,n]_{\lambda\eps,k_\eps s}=[q',n]_{\lambda\eps,k_\eps s}$ in $K_0(A_{\De_1}\cap A_{\De_2})$.
\item Let  $(w_1,w_2)$ and $(w'_1,w'_2)$ be two pairs of   $\eps$-$s$-unitaries  in $M_{n+m}^{\eps,s}(A)$ satisfying the assumption of the lemma and let $q$ and $q'$ be  $\lambda\eps$-$k_\eps s$-projections  in $M_{n+m}(A)$  that satisfies the first point relatively to  respectively  $(w_1,w_2)$ and $(w'_1,w'_2)$, then $[q,n]_{\lambda\eps,k_\eps s}=[q',n]_{\lambda\eps,k_\eps s}$ in $K_0(A_{\De_1}\cap A_{\De_2})$. \end{enumerate}
\end{lemma}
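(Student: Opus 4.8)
The plan is to build the $\eps$-$s$-unitaries $w_1, w_2$ into a "conjugating" device and extract from $w_1^* \diag(I_n,0) w_1$ a projection lying in the intersection algebra. First I would address point (i): since $w_1 - I_{n+m}$ lies in $M_{n+m}(A_{\De_1})$, the element $w_1^*\diag(I_n,0)w_1$ differs from $\diag(I_n,0)$ by an element of $M_{n+m}(A_{\De_1})$, and it is (for a suitable control pair) an $\eps'$-$s'$-projection by Lemma~\ref{lemma-almost-closed}; hence it is an $\eps'$-$s'$-projection in $M_{n+m}(\widetilde{A_{\De_1}})$. The hypothesis $\|\diag(u,v)-w_1w_2\|<\eps$ together with $\diag(u,v)^*\diag(I_n,0)\diag(u,v) = \diag(I_n,0)$ (up to $O(\eps)$, since $u,v$ are $\eps$-$s$-unitaries) gives $\|w_1^*\diag(I_n,0)w_1 - w_2\diag(I_n,0)w_2^*\| < \lambda_0\eps$ for some universal $\lambda_0$. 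Now $w_1^*\diag(I_n,0)w_1 - \diag(I_n,0) \in M_{n+m}(A_{\De_1})$ with propagation controlled by (some multiple of) $s$, and likewise $w_2\diag(I_n,0)w_2^* - \diag(I_n,0) \in M_{n+m}(A_{\De_2})$. Applying the CIA property of the pair $(A_{\De_1,s'}, A_{\De_2,s'})$ to these two elements (which are within $\lambda_0\eps$ of each other) produces $z \in M_{n+m}(A_{\De_1,s'} \cap A_{\De_2,s'})$ within $c\lambda_0\eps$ of both; setting $q = z + \diag(I_n,0)$ and invoking Lemma~\ref{lemma-almost-closed} once more to promote it to a genuine $\lambda\eps$-$k_\eps s$-projection gives the three required properties. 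This is essentially the construction already rehearsed inside the proof of Proposition~\ref{prop-half-exact}, so I would point to that argument rather than repeat it.

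For point (ii), suppose $q$ and $q'$ both satisfy (i) relative to the \emph{same} $(w_1,w_2)$. Then $\|q - q'\| < 2\lambda\eps$ by the triangle inequality through $w_1^*\diag(I_n,0)w_1$, and both $q - \diag(I_n,0)$ and $q' - \diag(I_n,0)$ lie in $M_{n+m}(A_{\De_1}\cap A_{\De_2})$. So $q$ and $q'$ are close self-adjoint elements of $M_{n+m}(\widetilde{A_{\De_1}\cap A_{\De_2}})$; by Lemma~\ref{lemma-almost-closed}(i) they are homotopic as $5\lambda\eps$-$k_\eps s$-projections there (after mild rescaling of the control pair), and since both have the same "rank datum" $n$ (because $\rho_{A_{\De_1}\cap A_{\De_2}}(q) = \rho_{A_{\De_1}\cap A_{\De_2}}(q') = \diag(I_n,0)$), we get $[q,n] = [q',n]$ in quantitative $K_0$, hence a fortiori in $K_0(A_{\De_1}\cap A_{\De_2})$ after applying $\iota_0^{\eps,s}$.

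For point (iii), the issue is that $(w_1,w_2)$ and $(w_1',w_2')$ are genuinely different decompositions of (possibly different) $\diag(u,v)$ and $\diag(u',v')$; but implicitly $u,v$ are fixed and only the factorizations vary (this is how the lemma will be used to \emph{define} the boundary map). The plan is to interpolate: consider $w_1'{}^* w_1$ and $w_2 w_2'{}^{*}$. From $\|\diag(u,v) - w_1 w_2\| < \eps$ and $\|\diag(u,v) - w_1' w_2'\| < \eps$ one gets $\|w_1'{}^* w_1 \cdot w_2 w_2'{}^{*} - I\|$ small, i.e. $w_1'{}^* w_1$ and $w_2' w_2^{*}$ are $O(\eps)$-close; since the first lies in $\widetilde{A_{\De_1}}$ and the second in $\widetilde{A_{\De_2}}$, the CIA property yields an $\eps$-$s'$-unitary $t$ in $\widetilde{A_{\De_1}\cap A_{\De_2}}$ close to both, with $\rho(t) = I$. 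Then $q' = t^*(w_1'{}^*\diag(I_n,0)w_1')t$ up to $O(\eps)$ wait—more carefully, one checks $w_1^*\diag(I_n,0)w_1$ is $O(\eps)$-close to $t^* \bigl(w_1'{}^*\diag(I_n,0)w_1'\bigr) t$ inside $M_{n+m}(\widetilde{A_{\De_1}\cap A_{\De_2}})$, via $w_1 \approx w_1' t$ in $\widetilde{A_{\De_1}}$. Conjugating by the $\eps$-$s'$-unitary $t$ (which is homotopic to $I$ after stabilization by Lemma~\ref{cor-example-homotopy}, since $t$ is an $\eps$-$s'$-unitary of $\widetilde{A_{\De_1}\cap A_{\De_2}}$ with $\rho(t)=I$, so $\diag(t,t^*)\sim I_2$) shows $q$ and $q'$ are connected by a homotopy of $\lambda\eps$-$k_\eps s$-projections in $M_{n+m}(\widetilde{A_{\De_1}\cap A_{\De_2}})$, up to stabilization, and both carry rank datum $n$. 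Hence $[q,n] = [q',n]$ in $K_0(A_{\De_1}\cap A_{\De_2})$ after applying the structure map. The main obstacle I anticipate is keeping all the propagations and controls uniformly bounded by a \emph{single} control pair $(\lambda,k)$ depending only on $c$ throughout the chain CIA $\to$ polar-type estimates $\to$ Lemma~\ref{lemma-almost-closed} $\to$ the homotopy-up-to-stabilization in Lemma~\ref{cor-example-homotopy}; each step multiplies constants in a way that must be shown to stabilize, which is routine but bookkeeping-heavy, so I would organize the estimates so that the CIA coercivity $c$ enters only multiplicatively and once per application.
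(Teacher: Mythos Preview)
Your proposal is correct and follows essentially the same route as the paper's proof: for (i) you compare $w_1^*\diag(I_n,0)w_1$ with $w_2\diag(I_n,0)w_2^*$ via the approximate commutation with $\diag(u,v)$, apply CIA to land in the intersection algebra, and invoke Lemma~\ref{lemma-almost-closed}; for (ii) closeness plus Lemma~\ref{lemma-almost-closed} gives the homotopy; for (iii) you extract an almost-unitary $t$ in $\widetilde{A_{\Delta_1}\cap A_{\Delta_2}}$ from $w_1'^{*}w_1\approx w_2'w_2^{*}$ via CIA and reduce to (ii) by conjugation, exactly as the paper does (the paper writes $v\approx w_1'w_1^{*}$, the adjoint of your $t$, and cites \cite[Lemma 1.9]{oy2} for the conjugation step). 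Your closing remark about absorbing all rescalings into a single $(\lambda,k)$ depending only on $c$ is precisely the bookkeeping the paper leaves implicit.
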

\begin{proof}
Since  $\diag(u,v)$ is  an $\eps$-$s$-unitary, we have that
\begin{equation*}\begin{split}\|w_1^*\diag (I_n,0)w_1-w_1^*\diag(u,v)\diag (I_n,0)&\diag(u^*,v^*)w_1\|\\
&=\|w_1^*\diag (I_n-u^*u,0)w_1\| \\
&<2\eps.\end{split}\end{equation*} Since  $\|w_1^*\diag(u,v)-w_2\|<4\eps$, we deduce
that
$$ \|w_1^*\diag (I_n,0)w_1-w_2\diag (I_n,0)w_2^*\| <8\eps.$$
With notations  as in Definition
\ref{definition-coarse-MV-pair}, let $y$ be an element in $M_{n+m}(A_{\Delta_{1, 2s}}\cap A_{\Delta_{2,2s}} )$ such that
$$ \|w_1^*\diag (I_n,0)w_1-\diag(I_n,0)-y\| <8c\eps$$ and
$$ \|y-w_2\diag (I_n,0)w_2^*-\diag(I_n,0)\| <8c\eps$$
and set   $$q=y+\diag (I_n,0).$$
Then  $q$ is close to a $2\eps$-$2s$-projection and thus  we obtain in view of Lemma \ref{lemma-almost-closed}  that there exists a control pair $(\lambda,k)$, depending only on $c$ such that the conclusion of the first point is  satisfied.
 With notations as in Lemma \ref{lemma-technic} and in view of Lemma \ref{lemma-almost-closed},   if     $q$ and $q'$ are  $\lambda\eps$-$k_\eps s$-projections of $M_{n+m}(A)$ that satisfies the first point, then   $$[q,n]_{10\lambda\eps,k_\eps s}=[q',n]_{10\lambda\eps,k_\eps s}.$$
 
If  $(w_1,w_2)$ and $(w'_1,w'_2)$ are  two pairs of   $\eps$-$s$-unitaries  in $M_{n+m}^{\eps,s}(A)$ that satisfy the assumption of the lemma and let $q$ and $q'$ be  $\lambda\eps$-$k_\eps s$-projections  in $M_{n+m}(A)$  that satisfy the first point relatively to  $(w_1,w_2)$ and $(w'_1,w'_2)$.
Then $\|{w_1}w_2-w'_1w'_2\|<2\eps$ and hence $\|{w'_2}^*w_2-w'_1{w_1}^*\|<10\eps$. Hence using the CIA condition, we see that there exists
$v$ in $M_{n+m}(A_{2s})$ such that $v-I_{n+m}$ is in $M_{n+n}(A_{\De_1}\cap A_{\De_2})$,  $\|v-w'_1{w_1}^*\|<10c\eps$ 
and  $\|{w'_2}^*w_2-v\|<10c\eps$. Since we have then $\|w_1-v^*w'_1\|<30c\eps$ and   $\|w_2-w'_2v\|<30c\eps$  the last point is consequence of \cite[Lemma 1.9]{oy2} and of the second 
point applied to $45c\eps,\, (w_1,w_2)\,,q$ and $v^*qv$.

%
 \end{proof}

\begin{remark}\label{remark-unicite-lemme-technic}\
We have a similar statement in the non-unital case with  $u$ in $U_n^{\eps,s}(\tilde{A})$ and  $v$ in $U_m^{\eps,s}(\tilde{A)}$  such that $u-I_n$ and $v-I_m$ have coefficients in $A$
\end{remark}

We are now in position the define the boundary map associated to a controlled Mayer-Vietoris pair.
Let $A$ be a  filtered   $C^*$-algebra and  let $(\De_1,\De_2,A_{\Delta_1},A_{\Delta_2})$ be a  $r$-controlled  weak Mayer-Vietoris pair  for $A$ with coercitivity $c$. Assume first that $A$ is unital.

Let $(\alpha,l)$ be a control pair as is Proposition \ref{proposition-decom-unitaries}. For any positive numbers $\eps$ and $s$ with $\eps<\frac{1}{4\alpha}$ and $s\lq r/2$  and any $\eps$-$s$-unitary $u$ in $\M_n(A)$, let $v$ be an $\eps$-$s$-unitary in some $\M_m(A)$ such that
$\diag(u,v)$ is homotopic to $I_{n+m}$ as a $3\eps$-$2s$-unitary in  $\M_{n+m}(A)$, we can take for instance $v=u^*$ (see Lemma \ref{cor-example-homotopy}). Since  $C^*N^{(2s,8s)}_{\De_i}\subset A_{\De_i}$  as a filtered subalgebra for $i=1,2$, then according to Proposition \ref{proposition-decom-unitaries} and up to replacing $v$ by $\diag(v,I_k)$ for some integer $k$, there exists   $w_1$ and $w_2$ two $3\alpha\eps$-$2l_{3\eps} r$-unitaries  in $M_{n+m}(A)$  such that
\begin{itemize}
\item $w_i-I_{n+m}$ is an element  in the matrix algebra  $M_{n+m}(A_{\Delta_i,2l_{3\eps} s})$ for $i=1,2$;
\item for $i=1,2$, there exists  a homotopy $(w_{i,t})_{t\in[0,1]}$ of  $3\alpha\eps$-$2l_{3\eps} s$-unitaries between  $1$ and $w_i$ such that
$w_{i,t}-I_{n+m}$ is an element  in the matrix algebra   $M_{n+m}(A_{\De_i,l_{3\eps} s})$ for all $t$ in $[0,1]$.
\item $\|\diag(u,v)-w_1w_2\|<3\alpha\eps$.
\end{itemize}
Let $(\lambda,k)$ be the control pair of  Lemma \ref{lemma-technic} (recall that $(\la,k)$ on depends only  $c$). Then if $\eps$ is in $(0,\frac{1}{12\al\la})$,  there exists
   a $3\alpha\lambda\eps$-$2l_{3\eps} k_{3\alpha\eps} s$-projection $q$ in $M_{n+m}(A)$ such that
\begin{itemize}
\item $q-\diag (I_n,0)$ is an element  in the matrix algebra    $$M_{n+m}(A_{\Delta_{1,2l_{3\eps} k_{3\alpha\eps} s}}\cap A_{\Delta_{2, 2l_\eps k_{3\alpha\eps} s}});$$
\item $\|q-w_1^*\diag (I_n,0)w_1\|<3\alpha\lambda\eps$;
\item $\|q-w_2\diag (I_n,0)w_2^*\|<3\alpha\lambda\eps$.
\end{itemize}
In view of second point of Lemma   \ref{lemma-technic},  the class $[q,n]_{3\alpha\lambda\eps,2l_{3\eps} k_{3\alpha\eps} s}$ in
$$K_0^{3\alpha\lambda\eps,2l_{3\eps} k_{3\alpha\eps} s}(A_{\Delta_1}\cap A_{\Delta_2})$$ does not depend on the choice of $q$.
Set then $\alpha_c=3\alpha\lambda$ and $$k_c:\left(0,\frac{1}{4\alpha_c}\right)\lto (1,+\infty),\,\eps\mapsto 2l_{3\eps} k_{3\alpha\eps}$$ and define
$\partial^{\eps,s,1}_{\Delta_1,\Delta_2,*}([u]_{\eps,s})=[q,n]_{\alpha_c\eps,k_c s}$ and let us prove that we define in this way a morphism
$$\partial^{\eps,s,1}_{\Delta_1,\Delta_2,*}:K_1^{\eps,s}(A)\to K_0^{\alpha_c\eps,k_c s}(A_{\Delta_1}\cap A_{\Delta_2}).$$
It is straightforward to check that (compare with \cite[Chapter 8]{we}).
\begin{itemize}
\item two choices of elements satisfying the conclusion of   Lemma \ref{lemma-technic} relative to $\diag(u,v)$ give
  rise to homotopic elements in
  $\P_{n+j}^{\alpha_\D\eps,k_\D s}(A_{\Delta_1}\cap A_{\Delta_2})$ (this is a consequence
  of Lemma \ref{lemma-technic}).
\item Replacing $u$ by $\diag(u,I_m)$
and $v$ by $\diag(v,I_k)$ gives also rise to  the same element of $K_0^{\alpha_c\eps,k_c s}(A_{\Delta_1}\cap A_{\Delta_2})$.
\end{itemize}

Applying now  Proposition \ref{proposition-decom-unitaries}  to  the  $r$-controlled Mayer-Vietoris pair
$$(C([0,1],\Delta_1),C([0,1], \Delta_2),C([0,1],A_{\Delta_1}),C([0,1], A_{\Delta_2}))$$   for the  $C^*$-algebra
$C([0,1],A)$ filtered by $(C([0,1],A_s))_{s>0}$,
  we see that
$\partial^{\eps,s,1}_{\Delta_1,\Delta_2,*}([u]_{\eps,s})$
\begin{itemize}
\item only depends on the class of $u$ in $K_1^{\eps,s}(A)$;
\item does not depend on the choice of $v$ such that $\diag(u,v)$ is
  connected to $I_{n+j}$ in $U_{n+j}^{3\eps,2s}(A)$.
\end{itemize}
In the non unital case $\partial^{\eps,s,1}_{\Delta_1,\Delta_2,*}$ is defined similarly by using point (ii) of Remark \ref{remark-unicite-lemme-technic}, noticing that in view of Lemma \ref{lemma-almost-canonical-form-odd} and up to replacing $\eps$ by $3\eps$, every element $x$ in $K_1(A)$ is the class of a $\eps$-$r$-unitary $u$  in $M_n(\tilde{A})$ such that $u-I_n$ has coefficients in $A$.
It is straightforward to check that
$\partial^{\bullet,\bullet,1}_{\Delta_1,\Delta_2,*}$ is compatible with the structure isomorphisms.
Then if we set $$\DD^1_{\Delta_1,\Delta_2,*}=(\partial^{\eps,s,1}_{\Delta_1,\Delta_2,*})_{0<\eps<\frac{1}{4\al_c},0<s<\frac{r}{k_{c,\eps}}},$$ then
 $$\DD^1_{\Delta_1,\Delta_2,*}:\K_1(A)\to\K_0(A_{\De_1}\cap A_{\De_1})$$ is a  odd degree $(\al_c,k_c)$-controlled morphism of order $r$.

 Let us now define the boundary map in the even case using controlled Bott periodicity.
For $\De$ a closed subspace in an $C^*$-algebra, let us define its suspension as $S\De=C_0((0,1),A)$.
Let $[\partial]$ be the element of $KK_1(\C,C_0(0,1))$ that implements the extension
$$0\to C_0(0,1)\to C_0[0,1) \stackrel{ev_0}{\to} \C\to 0,$$ where $ev_0:C_0[0,1) {\to} \C$ is the evaluation at $0$.
Then $[\partial]$ is an invertible  element of $KK_1(\C,C_0(0,1))$ and according to \cite[Lemma 1.6]{oy2},
$$\T_B([\partial]):\K_*(B)\to \K_*(SB)$$ is a $(\al_\T,k_\T)$-controlled isomorphism of degree one with controlled inverse
$$\T_B([\partial]^{-1}):\K_*(SB)\to \K_*(B).$$
Let $A$  be a    $C^*$-algebra filtered by $(A_s)_{s>0}$, let $r$  be a  positive number and  
let $(\De_1,\De_2,A_{\Delta_1},A_{\Delta_2})$ be a  $r$-controlled weak  Mayer-Vietoris pair for $A$ with coercitivity $c$. Then  $(S\De_1,S\De_2,SA_{\Delta_1},SA_{\Delta_2})$ is a  $r$-controlled  weak Mayer-Vietoris pair for $SA$ (filtered by $(SA_r)_{r>0}$) with coercitivity $c$.
Set then $\lambda=\alpha_\T^2\alpha_c$ and $h_\eps=k_{\T,\al_\T\al_c\eps}k_{c,\al_c\eps}k_{c,\eps}$.
Let us define in the even case the quantitative boundary map for the $r$-controlled  Mayer-Vietoris pair  
$(\De_1,\De_2,A_{\Delta_1},A_{\Delta_2})$ as the $(\lambda,h)$-controlled morphism of order $r$
$$\DD^0_{\De_1,\De_2,*}\defi \T_{A_{\De_1}\cap A_{\De_2},*}([\partial]^{-1})\circ
\DD^1_{S\De_1,S\De_2,*}\circ\T_A([\partial]):\K_0(A)\lto \K_1(A_{\De_1}\cap A_{\De_2}).$$
For sake of simplicity, we will  rescale $(\al_c,k_c)$ to $(\lambda,h)$ and use the same control pair in the odd and in the even case for the definition of 
$$\DD_{\De_1,\De_2,*}\defi \DD^0_{\De_1,\De_2,*}\oplus \DD^1_{\De_1,\De_2,*}:\K_*(A)\lto \K_{*+1}(A_{\De_1}\cap A_{\De_2})$$ as a odd degree $(\al_c,k_\D)$-controlled morphism of order $r$.
Notice that the quantitative boundary map of a  $r$-controlled  weak Mayer-Vietoris pair is natural in the following sense: let $A$ and $B$ be filtered $C^*$-algebras, let  $(\De_1,\De_2,A_{\Delta_1},A_{\Delta_2})$  and 
$(\De'_1,\De'_2,B_{\Delta'_1},B_{\Delta'_2})$ be respectively 
  $r$-controlled  weak Mayer-Vietoris pairs  for $A$ and $B$ with coercitivity $c$    and let $f:A\to B$ be a filtered morphism such that $f(\De_1)\subseteq \De'_1,\,f(\De_2)\subseteq \De'_2,\,f(A_{\De_1})\subseteq B_{\De'_1}$ and$f(A_{\De_2})\subseteq B_{\De'_2}$. Then we have
  \begin{equation}\label{equ-boundary-natural}
f_{/A_{\De_1}\cap A_{\De_2},*} \circ \DD_{\De_1,\De_2,*}=\DD_{\De'_1,\De'_2,*} \circ f_*,\end{equation}
where $f_{/A_{\De_1}\cap A_{\De_2}}:A_{\De_1}\cap A_{\De_2}\to B_{\De'_1}\cap B_{\De'_2}$ is the restriction of $f$ to $A_{\De_1}\cap A_{\De_2}$.
\subsection{The controlled six-term exact sequence}
In this subsection, we  state the Mayer-Vietoris    controlled six term exact sequence associated to a $r$-controlled Mayer-Vietoris of order $r$.
\begin{lemma}\label{lemma-bound1}
There exists a control pair $(\lambda,l)$ such that 
\begin{itemize}
\item for any unital filtered $C^*$-algebra $A$ filtered by $(A_s)_{s>0}$ and any subalgebras  $A_1$ and $A_2$   of $A$ such that $A_1,\,A_2$  and $A_1\cap A_2$ are respectively filtered by     $(A_1\cap A_r)_{r>0},\,(A_2\cap A_r)_{r>0}$ and $(A_1\cap A_2\cap A_r)_{r>0}$;
\item for any positive number $\eps$  with  $\eps<\frac{1}{4\lambda}$   any integers $n$ and $m$ and any $\eps$-$r$-unitaries $u_1$ in $M_n(A)$ and $u_2$ in $M_m(A)$;
\item for any $\eps$-$r$-unitaries $v_1$ and $v_2$ respectively in $M_{n+k}(\widetilde{A_1})$ and 
$M_{n+k}(\widetilde{A_2})$ such that  
\begin{itemize} 
\item $\|\diag(u_1,u_2)-v_1v_2\|<\eps$;
\item there exists an $\eps$-$r$-projection  $q$ in $M_{n+m}(A)$ such that $q-\diag(I_n,0)$ is in $M_{n+m}(A_{1}\cap A_{2}),\,\|q-v_1^*\diag(I_n,0) v_1\|<\eps$ and $[q,n]_{\eps,r}=0$ in $K^{\eps,r}_0(A_{1}\cap A_{2})$.
\end{itemize}
\end{itemize}
Then there exists an integer $k$ and  $\lambda \eps$-$l_{\eps} r$-unitaries $w_1$ and $w_2$ respectively in  $M_{n+k}(\widetilde{A_{1}})$ and $M_{n+k}(\widetilde{A_{2}})$ such that
 $\|\diag(u_1,I_k)-w_1w_2\|<\la \eps$.
 Moreover, if $v_i-I_{n+k}$ lies in $M_{n+k}(A_i)$ for $i=1,2$ then $w_1$ and $w_2$ can be chosen such that
  $w_i-I_{n+k}$ lies in $M_{n+m}(A_i)$ for $i=1,2$\end{lemma}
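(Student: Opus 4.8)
The plan is to unwind the two hypotheses on the pair $(v_1,v_2)$ into a correction of $v_1$ and $v_2$ by unitaries of $M_{n+k}(\widetilde{A_1\cap A_2})$, in such a way that after the correction the second factor of the product decomposition becomes (up to controlled error) the original $u_1$ alone. First I would exploit the hypothesis that $q-\diag(I_n,0)$ lies in $M_{n+m}(A_1\cap A_2)$ with $\|q-v_1^*\diag(I_n,0)v_1\|<\eps$: since $[q,n]_{\eps,r}=0$ in $K_0^{\eps,r}(A_1\cap A_2)$, after stabilising by some $\diag(\,\cdot\,,I_j)$ the $\eps$-$r$-projection $q$ is homotopic (through $\eps$-$r$-projections of $M_{n+m+j}(\widetilde{A_1\cap A_2})$) to $\diag(I_n,0)$. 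Using Proposition \ref{prop-conjugate} this homotopy yields, up to a further stabilisation and a controlled rescaling of $\eps$, an $\alpha\eps$-$k_\eps r$-unitary $z$ in $M_{n+m+j+l}(\widetilde{A_1\cap A_2})$ with $\|z^*\diag(I_n,0)z-\diag(q,I_{j},0_l)\|<\alpha\eps$, hence (combining with $\|q-v_1^*\diag(I_n,0)v_1\|<\eps$ and Lemma \ref{lemma-almost-closed}) with $\|\,(v_1 z)^*\diag(I_n,0)(v_1 z)-\diag(I_n,0)\,\|$ small after the appropriate block sums.

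Next I would observe that an $\eps$-$r$-unitary $V$ which almost commutes with $\diag(I_n,0)$ — in the sense $\|V^*\diag(I_n,0)V-\diag(I_n,0)\|$ small — is, after stabilisation and a controlled rescaling, homotopic to a block-diagonal $\diag(V',V'')$ with $V'\in M_n$, $V''\in M_k$; this is a standard controlled argument (cf.\ the proof of \cite[Lemma 1.9]{oy2} and Lemma \ref{lemma-almost-closed}), carried out inside $M_\bullet(\widetilde{A_1\cap A_2})$ since $v_1 z-I$ has coefficients in $A_1\cap A_2$ by construction. Absorbing the $M_k$-block of $v_1 z$ into the first factor and the $M_n$-block into a correction living in $A_1\cap A_2\subseteq A_2$, I would then replace $v_1$ by $w_1\defi v_1 z \cdot \diag(I_n,0)\text{-block}^{-1}$ up to the homotopies above, so that $w_1$ is a $\lambda\eps$-$l_\eps r$-unitary in $M_{n+k}(\widetilde{A_1})$ with $w_1^*\diag(u_1,u_2)$ close to $\diag(u_1,u_2)$ restricted appropriately; pushing the leftover correction term into the second factor and using $\|\diag(u_1,u_2)-v_1v_2\|<\eps$ gives $w_2\in M_{n+k}(\widetilde{A_2})$ (defined as the correction times $v_2$) with $\|\diag(u_1,I_k)-w_1w_2\|<\lambda\eps$ after discarding the now-trivial $u_2$-block via Lemma \ref{cor-example-homotopy}. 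The control pair $(\lambda,l)$ is produced by composing the finitely many controlled operations (Proposition \ref{prop-conjugate}, Lemma \ref{lemma-almost-closed}, Lemma \ref{cor-example-homotopy}), and since none of these depends on $A$ it is universal.

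The main obstacle I anticipate is bookkeeping the block structure so that the "$I_k$" actually ends up in the right place: one must arrange that the unitary $z$ coming from $[q,n]_{\eps,r}=0$, the conjugation diagonalising $v_1 z$, and the stabilisations all interact so that the surviving non-trivial part of $\diag(u_1,u_2)$ after cancellation is exactly $\diag(u_1,I_k)$ and not $\diag(u_1,u_2)$ or some conjugate thereof — this is where one needs $u_2$ to be killed, which is legitimate because $\diag(u_1,u_2)=v_1v_2$ only up to $\eps$ and $v_2$ is a $\widetilde{A_2}$-unitary we are free to re-absorb. The "moreover" clause requires merely noting that when $v_i-I_{n+k}\in M_{n+k}(A_i)$ every correction term constructed above already has coefficients in $A_1$, $A_2$ or $A_1\cap A_2$, so the $w_i$ inherit $w_i-I_{n+k}\in M_{n+k}(A_i)$; in the non-unital ambient case one invokes Remark \ref{remark-unicite-lemme-technic} and Lemma \ref{lemma-almost-canonical-form-odd} exactly as in the construction of $\DD^1_{\De_1,\De_2,*}$.
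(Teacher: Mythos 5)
Your approach matches the paper's: both use Proposition~\ref{prop-conjugate} on the nullhomotopy of $q$ (guaranteed by $[q,n]_{\eps,r}=0$) to produce a unitary $v$ in $M_\bullet(\widetilde{A_1\cap A_2})$ with $v\diag(I_n,0)v^*\approx q$, then note that $v_1 v$ and $v^*v_2$ nearly commute with $\diag(I_n,0)$ and hence decompose as near-block-diagonals, and finally read off the $n\times n$ blocks $v'_1\in M_n(\widetilde{A_1})$, $v'_2\in M_n(\widetilde{A_2})$ with $\|u_1-v'_1v'_2\|$ small. One small slip: $v_1 v - I$ has coefficients only in $A_1$, not in $A_1\cap A_2$ as you claim (only $v-I$ lives in $A_1\cap A_2$), but this is harmless since $w_1$ is only required to lie in $M_{n+k}(\widetilde{A_1})$.
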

\begin{proof}
  Up to replacing $u_2,\,v_1$ and $v_2$ respectively by $\diag(u_2,I_k),\,\diag(v_1,I_k)$ and $\diag(v_2,I_k)$ for  some integer $k$, we can assume that $q$ is homotopic to $\diag(I_n,0)$ as an
$\eps$-$r$-projection in $M_{n+m}(\widetilde{A_{1}\cap A_{2}})$. According to 
Lemma \ref{prop-conjugate}, there exist
\begin{itemize}
\item  a control pair $(\alpha,h)$;
\item  up to stabilization an $\alpha\eps$-$h_{\eps} r$-unitary
$v$ in $M_{n+m}(\widetilde{A_{\De_1}\cap A_{\De_2}})$ with  $v-I_{n+m}$  in $M_{n+m}({A_{\De_1}\cap A_{\De_2}})$\end{itemize}
 such that $$\|q-v\diag(I_n,0)v^*\|<\alpha\eps.$$ Up to take a larger 
control pair  $(\alpha,h)$, we can assume that  
$$\|v_1^*\diag(I_n,0)v_1-v\diag(I_n,0)v^*\|<\alpha\eps$$ and $$\|v_2\diag(I_n,0)v_2^*-v\diag(I_n,0)v^*\|<\alpha\eps$$
and hence even indeed that
$$\|v^*v_1^*\diag(I_n,0)v_1v-\diag(I_n,0)\|<\alpha\eps$$ and $$\|v^*v_2\diag(I_n,0)v_2^*v-\diag(I_n,0)\|<\alpha\eps.$$Hence, for some control pair $(\alpha',h')$ depending only on $(\alpha,h)$, there exist  $\alpha'\eps$-$h'_{\eps} s$-unitaries $v'_1$ in   $M_n(\widetilde{A_{\De_1})}$,  $v''_1$ in $M_m(\widetilde{A_{\De_1})}$, $v'_2$ in   $M_n(\widetilde{A_{\De_2})}$,  $v''_2$ in $M_m(\widetilde{A_{\De_2})}$ such that
$\|v_1v-\diag(v_1',v''_1)\|<\alpha'\eps$ and $\|v^*v_2-\diag(v_2',v''_2)\|<\alpha'\eps$. Thus,  for a control pair  $(\alpha'',h'')$ depending only on $(\alpha',h')$ we have,
$$\|\diag(u_1,u_2)-\diag(v_1'v_2',v''_1v''_2)\|<\alpha''\eps.$$Hence we deduce that   $\|u_1-v_1'v_2'\|<\alpha''\eps$.
\end{proof}
\begin{corollary}\label{cor-bound1}
For any positive number $c$, there exists a control pair $(\lambda,l)$ such that 
\begin{itemize}
\item for any  filtered $C^*$-algebra $A$;
\item for any positive number $r$ and any   $r$-controlled  weak Mayer-Vietoris pair $(\De_1,\De_2,A_{\Delta_1},A_{\Delta_2})$    for $A$ with coercitivity $c$;
\item for any positive numbers $\eps,\,\eps'$ and  $r'$ with   $0<\al_c\eps\lq \eps'<\frac{1}{4\lambda}$ and $r'\gq k_{c,\eps} r$
\end{itemize}
then for any $y$ in $K_1^{\eps,r}(A)$ such that $$\iota_*^{\al_c\eps,k_{c,\eps} r,\eps',r'}\circ\partial^{\eps,r} _{\De_1,\De_2,*}(y)=0$$ in 
$K_1^{\eps',r'}(A_{\De_1}\cap A_{\De_2})$,  there exist $x_1$ in $K_1^{\lambda\eps',l_{\eps' }r'}(A_{\De_1})$ and 
 $x_2$ in $K_1^{\lambda\eps',l_{\eps'}r'}(A_{\De_2})$ such that 
$$\iota_*^{\eps, \lambda\eps',r,l_{\eps'}r'}(y)=\jmath^{\lambda\eps',l_{\eps'}r'}_{\Delta_1,*}(x_1)-\jmath^{\lambda\eps',l_{\eps' }r'}_{\Delta_2,*}(x_2).$$\end{corollary}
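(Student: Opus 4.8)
The plan is to invert the construction of the controlled boundary map, with Lemma~\ref{lemma-bound1} doing the work, and then transcribe its output as a splitting of $\iota_*(y)$. I would first fix, once and for all, a control pair $(\lambda,l)$ depending only on $c$ and large enough to dominate the composite of $(\al_c,k_c)$, of the control pair furnished by Proposition~\ref{proposition-decom-unitaries} for coercity $c$, of the control pair of Lemma~\ref{lemma-bound1}, and of the universal constants arising from Lemma~\ref{lemma-almost-closed} and Lemma~\ref{cor-example-homotopy}; in particular every smallness constraint on $\eps'$ appearing below will follow from $\eps'<\frac{1}{4\lambda}$. Now take $y\in K_1^{\eps,r}(A)$. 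After rescaling $\eps$ --- and, in the non-unital case, invoking Lemma~\ref{lemma-almost-canonical-form-odd} together with Remark~\ref{remark-unicite-lemme-technic} to arrange that $u-I_n$ has coefficients in $A$ --- write $y=[u]_{\eps,r}$ with $u\in\U_n^{\eps,r}(A)$, and set $v=u^*$ (stabilised if necessary) so that $\diag(u,v)$ is homotopic to $I_{n+m}$. Running through the definition of $\partial^{\eps,r}_{\De_1,\De_2,*}$: Proposition~\ref{proposition-decom-unitaries} produces two unitaries $w_1,w_2$, of suitably rescaled control and propagation, with $w_i-I_{n+m}\in M_{n+m}(A_{\De_i})$ and $\|\diag(u,v)-w_1w_2\|$ small; then Lemma~\ref{lemma-technic} produces an $\al_c\eps$-$k_{c,\eps}r$-projection $q$ in $M_{n+m}(A)$ with $q-\diag(I_n,0)\in M_{n+m}(A_{\De_1}\cap A_{\De_2})$, with $\|q-w_1^*\diag(I_n,0)w_1\|$ small, and with $\partial^{\eps,r}_{\De_1,\De_2,*}(y)=[q,n]_{\al_c\eps,k_{c,\eps}r}$. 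This is precisely the data that Lemma~\ref{lemma-bound1} takes as input.

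The hypothesis $\iota_*^{\al_c\eps,k_{c,\eps}r,\eps',r'}\bigl(\partial^{\eps,r}_{\De_1,\De_2,*}(y)\bigr)=0$ asserts exactly that $[q,n]_{\eps',r'}=0$ in $K_0^{\eps',r'}(A_{\De_1}\cap A_{\De_2})$. I would push $u,v,w_1,w_2$ and $q$ forward to control $\eps'$ and propagation $r'$; this is legitimate because $\al_c\eps\lq\eps'$ and $k_{c,\eps}r\lq r'$ dominate all the scales in play, and --- the factor $\al_c$ having been absorbed into $\eps'$ --- the two smallness estimates now read $\|\diag(u,v)-w_1w_2\|<\eps'$ and $\|q-w_1^*\diag(I_n,0)w_1\|<\eps'$, as Lemma~\ref{lemma-bound1} demands. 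Applying Lemma~\ref{lemma-bound1} with $A_1=A_{\De_1}$, $A_2=A_{\De_2}$, $u_1=u$, $u_2=v$, $v_1=w_1$, $v_2=w_2$ and the projection $q$ then yields an integer $k$ and two unitaries $W_1\in M_{n+k}(\widetilde{A_{\De_1}})$ and $W_2\in M_{n+k}(\widetilde{A_{\De_2}})$, of control and propagation bounded by those of $(\lambda,l)$, with $\|\diag(u,I_k)-W_1W_2\|$ small; in the non-unital case the last clause of Lemma~\ref{lemma-bound1} guarantees $W_i-I_{n+k}\in M_{n+k}(A_{\De_i})$, so that $W_i$ really defines a class in quantitative $K_1$ of $A_{\De_i}$.

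It remains to read off the splitting. By Lemma~\ref{cor-example-homotopy}(i), $\diag(W_1,W_2)$ is homotopic to $\diag(W_1W_2,1)$ through unitaries of slightly worse control, while $\diag(u,I_k,1)$ is close to $\diag(W_1W_2,1)$ and hence, by Lemma~\ref{lemma-almost-closed}(ii), homotopic to it; combining these two facts in quantitative $K_1$ gives, in $K_1^{\lambda\eps',l_{\eps'}r'}(A)$, the equality $\iota_*^{\eps,\lambda\eps',r,l_{\eps'}r'}(y)=[W_1]+[W_2]$. Setting $x_1=[W_1]$ and $x_2=[W_2^*]$ --- using $[W_2^*]=-[W_2]$ from Lemma~\ref{cor-example-homotopy}(ii) --- both viewed at control $\lambda\eps'$ and propagation $l_{\eps'}r'$, one obtains $x_1\in K_1^{\lambda\eps',l_{\eps'}r'}(A_{\De_1})$, $x_2\in K_1^{\lambda\eps',l_{\eps'}r'}(A_{\De_2})$ and $\iota_*^{\eps,\lambda\eps',r,l_{\eps'}r'}(y)=\jmath^{\lambda\eps',l_{\eps'}r'}_{\De_1,*}(x_1)-\jmath^{\lambda\eps',l_{\eps'}r'}_{\De_2,*}(x_2)$, which is the assertion. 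The only genuine difficulty is the bookkeeping of control pairs: one must choose $(\lambda,l)$ at the outset so that all intermediate control pairs --- from the boundary-map construction, from Proposition~\ref{proposition-decom-unitaries}, from Lemma~\ref{lemma-bound1}, and from the two approximation lemmas --- are dominated, and then check that the hypotheses $\eps'<\frac{1}{4\lambda}$ and $r'\gq k_{c,\eps}r$ are strong enough at every step; conceptually nothing is needed beyond assembling already-established pieces, the choice $v=u^*$ and the persistence of the ``diagonal modulo $A_{\De_i}$'' condition throughout making the non-unital version go through essentially verbatim via Remark~\ref{remark-unicite-lemme-technic}.
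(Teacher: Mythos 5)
Your proof is correct and follows essentially the same route as the paper: unwind the definition of the controlled boundary map to obtain the projection $q$ and the factorization data, push forward to scale $(\eps',r')$ where the vanishing hypothesis gives $[q,n]_{\eps',r'}=0$, feed this into Lemma~\ref{lemma-bound1} to produce $W_1,W_2$ with $W_i-I\in M(A_{\De_i})$ and $\|\diag(u,I_k)-W_1W_2\|$ small, and then read off the class of $y$ as $[W_1]+[W_2]$. The only (cosmetic) divergence is that you explicitly repackage $[W_2]$ as $-[W_2^*]$ to match the sign in the statement, which the paper's own proof glosses over by simply writing $[w_1]+[w_2]$; you are also a bit more explicit about re-deriving $w_1,w_2,q$ from Proposition~\ref{proposition-decom-unitaries} and Lemma~\ref{lemma-technic}, which the paper compresses into ``according to the definition of $\partial^{\eps,r}_{\De_1,\De_2,*}$.''
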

\begin{proof}Let us assume for sake of simplicity that $A$ is unital, the non unital being similar (just extra notation are added).
Let $y$ be an element in $K^{\eps,r}_1(A)$  such that $\iota_*^{\al_c\eps,h_{\al_c,\eps} r,\eps',r'}\circ\partial^{\eps,r} _{\De_1,\De_2,*}(y)=0$ in 
$K_1^{\eps',r'}(A_{\De_1}\cap A_{\De_2})$. Let $(\la,l)$ be the controlled pair of Lemma \ref{lemma-bound1} and let $u$ be an $\eps$-$r$-unitary in some $M_n(A)$ such that $y=[u]_{\eps,r}$. Then according to the definition of  
$\partial^{\eps,r} _{\De_1,\De_2,*}(y)$, we see by using Lemma \ref{lemma-bound1} that  up  to replacing $u$ by $\diag(u,I_m)$ for some integer $m$, there exists 
two $\lambda \eps'$-$l_{\eps'} r'$-unitaries $w_1$ and $w_2$ respectively in  $M_{n}(\widetilde{A_{\De_1}})$ and
 $M_{n}(\widetilde{A_{\De_2}})$ such that   $\|u-w_1w_2\|<\la \eps'$.
 Then  $u$ is homotopic
to $w_1w_2$ as a $4\la\eps'$-$l_{\eps'}r'$-unitary in $M_{2n}(A)$. From this we deduce that
\begin{eqnarray*}
\iota_*^{\eps,4\la\eps' ,r, 2l_{\eps'} r'}(y)&=&[w_1w_2]_{4\la\eps',2l_{\eps'}r'}\\
&=&[w_1]_{4\la\eps',2l_{\eps'}r'}+[w_2]_{4\la\eps',2l_{\eps'}r'}\\
&=&\jmath_{\De_1,*}^{4\la\eps',2l_{\eps'}r'}(x_1)+\jmath_{\De_2,*}^{4\la\eps',2l_{\eps'}r'}(x_2)
\end{eqnarray*}
with $x_1=[w_1]_{4\la\eps',2l_{\eps'}r'}$ in $K_1^{4\la\eps,2l_{\eps}r'}(A_{\De_1})$ and
 $x_2=[w_2]_{4\la\eps,2l_{\eps'}r'}$ in $K_1^{4\la \eps,2l_{\eps'}r'}(A_{\De_2})$.
\end{proof}
As a consequence, we get following controlled exactness result.

\begin{proposition}\label{prop-bound1}For any positive number $c$, there exists a control pair $(\alpha,l)$ such that 
for any $C^*$-algebra $A$ filtered by  $(A_s)_{s>0}$, any positive number $r$ and any   $r$-controlled  weak Mayer-Vietoris pair $(\De_1,\De_2,A_{\Delta_1},A_{\Delta_2})$    for $A$ with coercitivity $c$ then the composition
$$
\K_1(A_{\Delta_1})\oplus \K_1( A_{\Delta_2})
\stackrel{\jmath_{\Delta_1,*}-\jmath_{\Delta_2,*}}{-\!\!\!-\!\!\!-\!\!\!-\!\!\!-\!\!\!-\!\!\!-\!\!\!-\!\!\!\lto}\K_1(A)
\stackrel{\DD_{\Delta_1,\Delta_2,*}}{-\!\!\!-\!\!\!-\!\!\!-\!\!\!-\!\!\!\lto}\K_0(A_{\Delta_1}\cap A_{\Delta_2})$$ is 
$(\alpha,l)$-exact at order $r$.
\end{proposition}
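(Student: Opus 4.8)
The strategy is to unwind what $(\al,l)$-exactness at order $r$ at the middle term $\K_1(A)$ demands: that the composition $\DD_{\De_1,\De_2,*}\circ(\jmath_{\De_1,*}-\jmath_{\De_2,*})$ vanish as a controlled morphism, and that whenever $\partial^{\eps,s}_{\De_1,\De_2,*}(y)=0$ for $y\in K_1^{\eps,s}(A)$, then $\iota_*(y)$ lies in the image of $\jmath_{\De_1,*}-\jmath_{\De_2,*}$ after a suitable rescaling of the propagation. The second assertion is exactly the content of Corollary \ref{cor-bound1}, so the work is to establish the first; at the end I would absorb the control pair of Corollary \ref{cor-bound1}, together with the coercity-dependent rescalings appearing below, into one control pair $(\al,l)$ depending only on $c$. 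Note that on the relevant degrees $\DD_{\De_1,\De_2,*}$ restricted to $\K_1(A)$ is $\DD^1_{\De_1,\De_2,*}$, so no controlled Bott periodicity is needed here.

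For the vanishing of the composition, fix $x_1\in K_1^{\eps,s}(A_{\De_1})$ and $x_2\in K_1^{\eps,s}(A_{\De_2})$ with $\eps$ small and $s\lq r/2$. Using Lemma \ref{lemma-almost-canonical-form-odd} (and its non-unital counterpart), after a harmless rescaling of $\eps$ absorbed into $(\al,l)$, I choose representatives of $x_1$ and $x_2$ by $\eps$-$s$-unitaries $u_1\in M_n(\widetilde{A_{\De_1}})$ and $u_2\in M_m(\widetilde{A_{\De_2}})$ with $u_i-I$ having coefficients in $A_{\De_i}$; then $\jmath_{\De_1,*}(x_1)-\jmath_{\De_2,*}(x_2)$ is represented by $u\defi\diag(u_1,u_2^*)\in U^{\eps,s}_{n+m}(\widetilde A)$. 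Since $\DD^1$ does not depend on the auxiliary choices in its definition (Lemma \ref{lemma-technic}, parts (2) and (3)), I may compute $\DD^1_{\De_1,\De_2,*}$ of this class by taking the auxiliary unitary $v=u^*=\diag(u_1^*,u_2)$ — so that $\diag(u,v)$ is homotopic to the identity as a $3\eps$-$2s$-unitary by Lemma \ref{cor-example-homotopy} — together with the \emph{exact} factorization $\diag(u,v)=w_1w_2$ where
\begin{equation*}
w_1=\diag(u_1,I_m,u_1^*,I_m),\qquad w_2=\diag(I_n,u_2^*,I_n,u_2).
\end{equation*}
Indeed each $w_i$ is an $\eps$-$s$-unitary and $w_i-I_{2(n+m)}$ has coefficients in $M_{2(n+m)}(A_{\De_i})$, so $(u,v,w_1,w_2)$ is an admissible input for Lemma \ref{lemma-technic} (in the notation of that lemma, our $u$ and $v$ both have size $n+m$).

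Now a one-line matrix computation gives
\begin{equation*}
w_1^*\diag(I_{n+m},0)w_1=\diag(u_1^*u_1,I_m,0,0),\qquad w_2\diag(I_{n+m},0)w_2^*=\diag(I_n,u_2^*u_2,0,0),
\end{equation*}
so the genuine projection $p\defi\diag(I_{n+m},0_{n+m})$ satisfies the three requirements of Lemma \ref{lemma-technic}(1) for this data: $p-\diag(I_{n+m},0)=0$ lies in $M_{2(n+m)}(A_{\De_1}\cap A_{\De_2})$, and $\|p-w_1^*\diag(I_{n+m},0)w_1\|$ and $\|p-w_2\diag(I_{n+m},0)w_2^*\|$ are both $<\eps$. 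Hence Lemma \ref{lemma-technic}(2) identifies $\DD^1_{\De_1,\De_2,*}\bigl(\jmath_{\De_1,*}(x_1)-\jmath_{\De_2,*}(x_2)\bigr)$ with $[p,n+m]$, which is of the form $[\diag(I_k,0),k]$ and hence is the zero element of the relevant quantitative $K_0$-group of $A_{\De_1}\cap A_{\De_2}$. The non-unital case is treated identically using Remark \ref{remark-unicite-lemme-technic}, exactly as in the construction of $\DD^1$.

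Combining this with Corollary \ref{cor-bound1} (applied with $\eps'=\al_c\eps$ and the attached $r'$) and merging all the constants — which depend only on the coercity $c$ — yields the control pair $(\al,l)$ making the sequence $(\al,l)$-exact at $\K_1(A)$ at order $r$. The one point needing attention is not conceptual but bookkeeping: one must verify that $u=\diag(u_1,u_2^*)$ and the auxiliary data $v,w_1,w_2$ genuinely meet the hypotheses of the definition of $\DD^1$ — in particular that their propagation remains $\lq r$, which is what forces the range $s\lq r/2$, and that the reduced-form representatives keep their coefficients in $A_{\De_i}$ — and that the successive rescalings coming from Lemma \ref{lemma-almost-canonical-form-odd}, from the definition of $\DD^1$ through Proposition \ref{proposition-decom-unitaries} and Lemma \ref{lemma-technic}, and from Corollary \ref{cor-bound1}, really do compose into a single control pair depending only on $c$.
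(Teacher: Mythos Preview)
Your proof is correct and follows the same line as the paper, which simply records Proposition \ref{prop-bound1} as ``a consequence'' of Corollary \ref{cor-bound1} without further argument. The only difference is that you supply an explicit verification that $\DD^1_{\De_1,\De_2,*}\circ(\jmath_{\De_1,*}-\jmath_{\De_2,*})=0$, which the paper leaves implicit in the construction of the boundary map; your choice of the explicit factorization $\diag(u,v)=w_1w_2$ with $w_1=\diag(u_1,I_m,u_1^*,I_m)$ and $w_2=\diag(I_n,u_2^*,I_n,u_2)$, together with the uniqueness clause (iii) of Lemma \ref{lemma-technic}, is exactly the right way to make this vanishing transparent.
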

\begin{lemma}\label{lemma-technic2}There exists a control pair $(\lambda,h)$ such that the following holds:

\smallskip

\begin{itemize}
\item Let $A$ be a unital $C^*$-algebra filtered by $(A_r)_{r>0}$ and let $A_1$ and $A_2$ be subalgebras of $A$ such that
$A_1,\,A_2$ and $A_1\cap A_2$ are respectively filtered by     $(A_1\cap A_r)_{r>0},\,(A_2\cap A_r)_{r>0}$ and
$(A_1\cap A_2\cap A_r)_{r>0}$;
\item let $\eps$ and $s$ be positive numbers with $\eps<\frac{1}{4\la}$;
\item  let $n$ and $N$ be positive integers with $n\lq N$ and let $p$ an $\eps$-$s$ projection in $M_N(\widetilde{A_1\cap A_2})$ such that $\rho_{A_1\cap A_2}(p)=\diag(I_n,0)$.
\end{itemize}
Assume that 
that
 \begin{itemize}
 \item $p$ is homotopic to $\diag(I_n,0)$ as an $\eps$-$s$-projection in
 $M_N(\widetilde{A_{1}})$;
 \item $p$ is  homotopic to $\diag(I_n,0)$ as an $\eps$-$s$-projection in
 $M_N(\widetilde{A_{2}})$.
 \end{itemize}Then there exist  an integer $N'$ with $N'\gq N$, $w_1$ and $w_2$ in $U_{N'}^{\la\eps,h_\eps s}(A)$,  $u$ in $U_n^{\la\eps,h_\eps s}(A)$ and $v$ in $U_{N'-n}^{\la\eps,h_\eps s}(A)$  such that
\begin{itemize}
\item $w_i-I_{N'}$ is an element  in   $M_{N'}(A_{i})$ for $i=1,2$;
\item $$\|w_1^*\diag(I_n,0)w_1-diag(p,0)\|<\la\eps$$ and
$$\|w_2\diag(I_n,0)w_2^*-\diag(p,0)\|<\la\eps.$$
 \item for $i=1,2$, then $w_i$ is connected to $I_{N'}$ by a homotopy
of $\la\eps$-$h_\eps s$-unitaries  $(w_{i,t})_{t\in[0,1]}$ in $M_{N'}(A)$ such  that $w_{i,t}-I_{N'}$ is in $M_{N'}(A_{i})$ for all $t$ in $[0,1]$. 
\item $\|\diag(u,v)-w_1w_2\|<\la\eps$.
\end{itemize}
\end{lemma}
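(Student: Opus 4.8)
The plan is to read off from the two homotopy hypotheses conjugating $\eps$-$s$-unitaries lying respectively in $I+M(A_1)$ and $I+M(A_2)$, and then to recover the pair $(u,v)$ by compressing their product to the two blocks of $\diag(I_n,0)$.

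First I would exploit the hypothesis that $p$ is homotopic to $e:=\diag(I_n,0)$ as an $\eps$-$s$-projection in $M_N(\widetilde{A_1})$. Since $\rho_{A_1}(p)=\rho_{A_1\cap A_2}(p)=\diag(I_n,0)$ is an honest scalar projection, after perturbing the homotopy by at most $2\eps$ (Lemma \ref{lemma-almost-closed}) and conjugating it by a scalar unitary path based at $I_N$, I may assume the homotopy $(p_t)_{t\in[0,1]}$ has constant scalar part $\diag(I_n,0)$, so that $p_t-\diag(I_n,0)\in M_N(A_1)$ for every $t$ (at the endpoint one uses $p-e\in M_N(A_1\cap A_2)\subseteq M_N(A_1)$). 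Applying Proposition \ref{prop-conjugate} to this homotopy and following its proof — the conjugating unitary is assembled from rotations built out of the spectral projections $\kappa_0(p_t)$, all of which lie in $\diag(I_n,0)+M_N(A_1)$, exactly as in the refinement already used at the end of the proof of Proposition \ref{prop-half-exact} — I obtain, after a harmless stabilization by an identity block and a scalar permutation absorbing the matrix padding of Proposition \ref{prop-conjugate} (neither of which affects the hypotheses), an integer $N'\gq N$ and an $\alpha\eps$-$k_\eps s$-unitary $\omega_1$ in $M_{N'}(\widetilde{A_1})$ with $\omega_1-I_{N'}\in M_{N'}(A_1)$ and $\|\omega_1 e\omega_1^*-\diag(p,0)\|<\alpha\eps$. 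Replacing $\omega_1$ by $\diag(\omega_1^*,\omega_1)$ and applying Lemma \ref{cor-example-homotopy}(ii) — whose homotopy visibly stays inside $I+M(A_1)$ when $\omega_1-I\in M(A_1)$, since it is built from $\diag(\omega_1^*,I)$, $\diag(I,\omega_1)$ and scalar rotation matrices — I obtain $w_1\in U^{3\alpha\eps,2k_\eps s}(A)$ in some matrix algebra with $w_1-I\in M(A_1)$, connected to the identity through $3\alpha\eps$-$2k_\eps s$-unitaries of the same form, and $\|w_1^*\diag(I_n,0)w_1-\diag(p,0)\|<\alpha\eps$. The identical argument applied to $A_2$ yields $w_2$ with $w_2-I\in M(A_2)$, connected to the identity through such elements, and $\|w_2\diag(I_n,0)w_2^*-\diag(p,0)\|<\alpha\eps$; enlarging the matrix sizes I arrange $w_1,w_2\in M_{N'}(A)$ and $e=\diag(I_n,0_{N'-n})$.

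Next I would produce $(u,v)$ from $w_1w_2$. Combining the two estimates gives $\|w_1^*ew_1-w_2ew_2^*\|<2\alpha\eps$; multiplying on the left by $w_1$ and on the right by $w_2$ (both of norm $<2$) and using $\|w_2^*w_2-I\|=O(\eps)$ turns this into $\|e\,w_1w_2-w_1w_2\,e\|<\mu\eps$ for a suitable constant $\mu$. Because $e$ is an exact scalar projection, $e(1-e)=0$, so the off-diagonal blocks $e\,w_1w_2\,(1-e)$ and $(1-e)\,w_1w_2\,e$ both have norm $<\mu\eps$; hence $w_1w_2$ is within $2\mu\eps$ of its block-diagonal compression $d:=e\,w_1w_2\,e+(1-e)\,w_1w_2\,(1-e)=\diag(u,v)$, with $u\in M_n(A)$ and $v\in M_{N'-n}(A)$ of propagation at most that of $w_1w_2$. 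Since $w_1w_2$ is an $O(\eps)$-unitary (a product of two $\eps$-$s$-unitaries, cf. Lemma \ref{cor-example-homotopy}(i)), Lemma \ref{lemma-almost-closed}(ii) shows $d$ is an $O(\eps)$-unitary, and being block diagonal this forces $u$ and $v$ to be $O(\eps)$-unitaries as well; moreover $\|\diag(u,v)-w_1w_2\|=\|d-w_1w_2\|<2\mu\eps$. Absorbing all the constants and the propagation growth into a single control pair $(\lambda,h)$ built from the universal control pairs of Proposition \ref{prop-conjugate} and Lemma \ref{cor-example-homotopy} — hence independent of $A,A_1,A_2$ — the data $N',w_1,w_2,u,v$ satisfy all the stated conclusions.

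The main obstacle is the first step. Proposition \ref{prop-conjugate} as stated only produces a conjugating unitary in $M_{N'}(\widetilde{A_1})$, whereas here one needs it in $I_{N'}+M_{N'}(A_1)$, together with a homotopy to the identity through unitaries of the same form, and with the auxiliary matrix padding written so that it can be permuted away without disturbing the hypotheses. Establishing this requires going into the construction behind Proposition \ref{prop-conjugate} and verifying that, fed a homotopy of $\eps$-$s$-projections with constant scalar part $\diag(I_n,0)$, everything stays inside $\diag(I_n,0)+M_N(A_1)$; this is the mechanism already exploited in the non-unital refinements of the proof of Proposition \ref{prop-half-exact}, but it must be carried out relative to the subalgebra $A_1$ rather than relative to an ideal. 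Once that is in place, the rest — the compression of an almost-unitary that almost commutes with a coordinate projection — is a routine norm estimate.
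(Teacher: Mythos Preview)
Your argument is correct and follows the same overall arc as the paper: apply Proposition~\ref{prop-conjugate} twice to get conjugators in $\widetilde{A_1}$ and $\widetilde{A_2}$, normalize them to lie in $I+M(A_i)$, stabilize via $\diag(w_i,w_i^*)$ to obtain the homotopy to the identity, and then compress $w_1w_2$ against the exact projection $\diag(I_n,0)$ to read off $u$ and $v$. The compression step you wrote is exactly what the paper does (equations (\ref{equ-proof-half-exacness-1}) and (\ref{equ-proof-half-exacness-2}) and the sentence following them).

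The one place where you work harder than necessary is precisely the step you flag as the main obstacle. You propose to reopen the construction behind Proposition~\ref{prop-conjugate} and track the scalar part through it, after first straightening the homotopy to have constant scalar part. The paper bypasses this entirely: it takes the $w_1$ produced by Proposition~\ref{prop-conjugate} in $M_N(\widetilde{A_1})$ as a black box and simply replaces it by $\rho_{A_1}(w_1^{-1})\,w_1$, which kills the scalar part at the cost of a factor $4$ in $\alpha$. The point is that applying $\rho_{A_1}$ to the estimate $\|w_1^*\diag(I_n,0)w_1-p\|<\alpha\eps$ and using $\rho_{A_1}(p)=\diag(I_n,0)$ shows $\rho_{A_1}(w_1)$ almost commutes with $\diag(I_n,0)$, so left-multiplication by its inverse preserves the conjugation estimate up to controlled constants. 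The analogous replacement $w_2\mapsto w_2\,\rho_{A_2}(w_2^{-1})$ handles $A_2$. This trick is much lighter than tracking the scalar part through the internals of Proposition~\ref{prop-conjugate}, and it is worth knowing since the same device recurs elsewhere in the paper.
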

\begin{proof}
 Let $(\al,k)$ be the control pair of
Proposition  \ref{prop-conjugate}, then there exist up to stabilization
\begin{itemize}
\item $w_1$ an $\al\eps$-$k_{\eps}s$-unitary in $M_N(\widetilde{A_{1}})$;
\item $w_2$ an $\al\eps$-$k_{\eps}s$-unitary in $M_N(\widetilde{A_{2}})$,
\end{itemize}
 such  that
$$\|w_1^*\diag(I_n,0)w_1-p\|<\al_h\eps$$ and
$$\|w_2\diag(I_n,0)w_2^*-p\|<\al_h\eps.$$  Up to  replacing $w_1$and $w_2$ respectively by $\rho_{A_1}(w_1^{-1})w_1$ and  
$w_2\rho_{A_2}(w_2^{-1})$ and up to replacing $\al$ by $4\al$,  we can assume that
$w_1-I_N$ is an element  in the matrix algebra   $M_N(A_{1})$ and $w_2-I_N$ is an element  in the matrix algebra   $M_N(A_{2})$.  
Hence there
exists a control pair $(\al',k')$, depending only on $(\al,k)$ and that we can choose larger such that
\begin{equation}\label{equ-proof-half-exacness-1} \|w_1w_2\diag(I_n,0)w_2^*w_1^*-\diag(I_n,0)\|<\al'\eps\end{equation}
and
\begin{equation}\label{equ-proof-half-exacness-2} \|w_2^*w_1^*\diag(I_n,0)w_2w_1-\diag(I_n,0)\|<\al'\eps.\end{equation}

Up to replacing $w_1\,,w_2,\,p$ and $(\al',k')$  respectively by $\diag(w_1,w_1^*)$, $\diag(w_2,w_2^*)$, $\diag(p,0)$ and    $(3\al,2k)$, we can assume that $w_i$ for $i=1,2$ is connected to $I_N$ by a homotopy
of $\al\eps$-$k_\eps s$-unitaries  $(w_{i,t})_{t\in[0,1]}$ in $M_N(A)$ such that $w_{i,t}-I_N$ is in $M_N(A_{i})$ for all $t$ in $[0,1]$. 
Equations (\ref{equ-proof-half-exacness-1}) and  (\ref{equ-proof-half-exacness-2}) imply that
 for  a control pair $(\al'',k'')$, depending only on $(\al',k')$, there
exist  $u$ and $v$ some $\al''\eps$-$k''_{\eps}s$-unitaries respectively in
 $M_n(A)$ and  $M_{N-n}(A)$ such that
 $$\|\diag(u,v)-w_1w_2\|<\al''\eps.$$
 \end{proof}

\begin{proposition}\label{prop-bound2}For every positive number $c$, there exists a control pair $(\al,l)$ such that 
for any filtered  $C^*$-algebra $A$,  any positive number $r$   and
any $r$-controlled  weak Mayer-Vietoris pair $ (\De_1,\De_2,A_{\Delta_1},A_{\Delta_2})$  for $A$ of order $r$ with coercitivity $c$, then the 
 composition
$$
\K_1(A)
\stackrel{\DD_{\Delta_1,\Delta_2,*}}{-\!\!\!-\!\!\!-\!\!\!-\!\!\!-\!\!\!\lto}\K_0(A_{\Delta_1}\cap A_{\Delta_2})\stackrel{(\jmath_{\Delta_1,\De_2,*},\jmath_{\Delta2,\De_1,*})}{-\!\!\!-\!\!\!-\!\!\!-\!\!\!-\!\!\!-\!\!\!-\!\!\!-\!\!\!-\!\!\!-\!\!\!-\!\!\!-\!\!\!-\!\!\!-\!\!\!-\!\!\!\lto}\K_0(A_{\Delta_1})\oplus \K_0( A_{\Delta_2})$$ is 
$(\alpha,l)$-exact at order $r$.
\end{proposition}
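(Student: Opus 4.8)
The statement asserts $(\alpha,l)$-exactness at order $r$ of the composition $\K_1(A)\stackrel{\DD_{\Delta_1,\Delta_2,*}}{\lto}\K_0(A_{\Delta_1}\cap A_{\Delta_2})\stackrel{(\jmath_{\Delta_1,\Delta_2,*},\jmath_{\Delta_2,\Delta_1,*})}{\lto}\K_0(A_{\Delta_1})\oplus\K_0(A_{\Delta_2})$. There are two things to check: first that the composition vanishes (up to rescaling), and second the exactness, i.e. that a class in $K_0^{\eps,s}(A_{\Delta_1}\cap A_{\Delta_2})$ which dies in both $K_0^{\eps,s}(A_{\Delta_i})$ lies, after rescaling, in the image of the controlled boundary map. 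The plan is to reduce to the explicit description of $\DD^1_{\Delta_1,\Delta_2,*}$ given in Subsection \ref{subs-section-boundary} and then invoke Lemma \ref{lemma-technic2}.

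For the vanishing: take $y=[u]_{\eps,s}\in K_1^{\eps,s}(A)$, unwind the definition of $\partial^{\eps,s,1}_{\Delta_1,\Delta_2,*}$. By Proposition \ref{proposition-decom-unitaries} write $\diag(u,u^*)$ as $\eps$-close to $w_1w_2$ with $w_i-I$ in a matrix algebra over $A_{\Delta_i}$ and with each $w_i$ homotopic to $I$ through such unitaries; then $q$ is the CIA-corrected version of $w_1^*\diag(I_n,0)w_1$, so $\partial^{\eps,s,1}_{\Delta_1,\Delta_2,*}(y)=[q,n]$. Pushing $[q,n]$ into $K_0(A_{\Delta_1})$: since $\|q-w_1^*\diag(I_n,0)w_1\|<\lambda\eps$ and $w_1$ is homotopic to $I_{n+m}$ through $\lambda\eps$-$l_\eps s$-unitaries over $\widetilde{A_{\Delta_1}}$, Lemma \ref{lemma-almost-closed} shows $[q,n]=[\diag(I_n,0),n]=0$ in the appropriate $K_0^{\bullet,\bullet}(A_{\Delta_1})$; symmetrically in $A_{\Delta_2}$ using $\|q-w_2\diag(I_n,0)w_2^*\|<\lambda\eps$. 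This gives $\jmath_{\Delta_1,\Delta_2,*}\circ\DD^1=0$ and $\jmath_{\Delta_2,\Delta_1,*}\circ\DD^1=0$ up to rescaling; the even case follows by the controlled Bott periodicity used to define $\DD^0$.

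For exactness: suppose $z\in K_0^{\eps,s}(A_{\Delta_1}\cap A_{\Delta_2})$ with $\jmath^{\eps,s}_{\Delta_1,\Delta_2,*}(z)=0$ in $K_0^{\eps,s}(A_{\Delta_1})$ and $\jmath^{\eps,s}_{\Delta_2,\Delta_1,*}(z)=0$ in $K_0^{\eps,s}(A_{\Delta_2})$. Using Lemma \ref{lemma-almost-canonical-form} present $z$, after a controlled rescaling, as $[p,n]_{\eps',s}$ with $p$ an $\eps'$-$s$-projection in some $M_N(\widetilde{A_{\Delta_1}\cap A_{\Delta_2}})$ and $\rho_{A_{\Delta_1}\cap A_{\Delta_2}}(p)=\diag(I_n,0)$. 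Vanishing in $K_0^{\bullet,\bullet}(A_{\Delta_i})$ means, after a further rescaling provided by the definition of the $K_0$ equivalence relation together with Proposition \ref{prop-conjugate}, that $p$ is homotopic to $\diag(I_n,0)$ as a projection over $\widetilde{A_{\Delta_i}}$ for $i=1,2$. This is exactly the hypothesis of Lemma \ref{lemma-technic2}, which produces an integer $N'$, $\lambda\eps'$-$h_{\eps'}s$-unitaries $w_1,w_2$ with $w_i-I_{N'}\in M_{N'}(A_{\Delta_i})$ (each homotopic to $I_{N'}$ through such unitaries), and $\lambda\eps'$-$h_{\eps'}s$-unitaries $u\in U_n(A)$, $v\in U_{N'-n}(A)$ with $\|w_1^*\diag(I_n,0)w_1-\diag(p,0)\|<\lambda\eps'$, $\|w_2\diag(I_n,0)w_2^*-\diag(p,0)\|<\lambda\eps'$ and $\|\diag(u,v)-w_1w_2\|<\lambda\eps'$. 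Then $w_1,w_2$ realize the decomposition of $\diag(u,v)$ used in the definition of $\partial^{\bullet,\bullet,1}_{\Delta_1,\Delta_2,*}$, and the associated $q$ satisfies the three conditions of Lemma \ref{lemma-technic}(i) with the very $p$ we started from; hence by Lemma \ref{lemma-technic}(ii) $\partial^{\bullet,\bullet,1}_{\Delta_1,\Delta_2,*}([u]_{\lambda\eps',h_{\eps'}s})=[q,n]=[p,n]=z$ up to the structure maps. This establishes exactness in the odd degree, and the even degree follows by applying the odd result to the suspension and transporting through the controlled isomorphisms $\T_{\bullet}([\partial]^{\pm1})$ as in the definition of $\DD^0$.

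The main obstacle will be bookkeeping of the control pairs: one must verify that the rescalings needed to (a) put $z$ in canonical form, (b) upgrade the two separate $K_0$-vanishings into honest projection-homotopies over $\widetilde{A_{\Delta_i}}$, and (c) feed the output of Lemma \ref{lemma-technic2} back into the definition of the boundary map, all compose into a single control pair $(\alpha,l)$ depending only on $c$ — and, crucially, that the propagation bound $h_{\eps'}s$ stays below $r$ so that everything remains within the order-$r$ range where $\DD_{\Delta_1,\Delta_2,*}$ is defined. The geometric content is entirely carried by Lemmas \ref{lemma-technic} and \ref{lemma-technic2}; the work here is checking that these plug together compatibly and tracking the suspension argument for the even case.
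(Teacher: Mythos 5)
Your plan is correct and matches the paper's argument: put the $K_0$ class in canonical form via Lemma \ref{lemma-almost-canonical-form}, upgrade the two $K_0$-vanishings to projection homotopies over $\widetilde{A_{\Delta_i}}$, feed this into Lemma \ref{lemma-technic2} to produce $u$, $w_1$, $w_2$, and then recognize the resulting class as $\DD_{\Delta_1,\Delta_2,*}$ applied to $[u]$ via Lemma \ref{lemma-technic}, with the even degree handled by controlled Bott periodicity. The paper omits the explicit check that the composition vanishes (which you include) but is otherwise the same argument, including the bookkeeping concerns you flag at the end.
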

\begin{proof} As in the previous proposition, let us assume that $A$ is unital. Let $y$ be an element in $K^{\eps,s}_0(A_{\De_1}\cap A_{\De_2})$  such that $\jmath^{\eps,s}_{\De_1,\De_2,*}(y)=0$ in
$K^{\eps,s}_0(A_{\De_1})$ and $\jmath^{\eps,s}_{\De_2,\De_1,*}(y)=0$ in $K^{\eps,s}_0(A_{\De_2})$.
Let $p$ be an $\eps$-$r$-projection in some $M_N(\widetilde{A_{\De_1}\cap A_{\De_2}})$ and $n$ be an integer
 such that  $y=[p,n]_{\eps,s}$. In view of Lemma \ref{lemma-almost-canonical-form} and up to replacing $\eps$ by $5\eps$, we can assume without loss of generality that $N\gq n$ and that $$\rho_{A_{\De_1}\cap A_{\De_2}}(p)=\diag(I_n,0).$$ Up to stabilization, we can also
 assume that
 \begin{itemize}
 \item $p$ is homotopic $\diag(I_n,0)$ as an $\eps$-$s$-projection in
 $M_N(\widetilde{A_{\De_1}})$;
 \item $p$ is homotopic $\diag(I_n,0)$ as an $\eps$-$s$-projection in
 $M_N(\widetilde{A_{\De_2}})$.
 \end{itemize}
 Let $(\al,k)$ be a control pair, $N'$ be an integer with $N'\gq N$, let $w_1$ and $w_2$ be in $U_{N'}^{\al\eps,k_\eps s}(A)$,  let $u$ be  in $U_n^{\al\eps,k_\eps s}(A)$ and let $v$ be  in $U_{N'-n}^{\al\eps,k_\eps s}(A)$  that satisfy all together the conclusion of Lemma \ref{lemma-technic2}.
 Since  $\|\diag(u,v)-w_1w_2\|<\al\eps$, we can  up to replacing $(\al,k)$ by $(4\al,2k)$ and in view of Lemma  \ref{lemma-almost-closed} moreover assume that $\diag(u,v)$ is homotopic
 to $I_N'$ as an $\al\eps$-$h_\eps r$-unitary of $M_{N'}(A)$.
 Since 
$$\|w_1^*\diag(I_n,0)w_1-\diag(p,0)\|<\al\eps$$ and
$$\|w_2\diag(I_n,0)w_2^*-\diag(p,0)\|<\al\eps$$ and in view of the definition of 
 the quantitative  boundary map of a control Mayer-Vietoris pair, there exists a control pair $(\al',k')$ depending only on 
 $(\al,k)$ and $c$ such that
$$\partial^{\al'\eps ,k'_\eps s}_{\De_1,\De_2,*}([u]_{\al''\eps ,k'_\eps s})=[p,l]_{\eps,\al'\alpha_c\eps ,s, k'_\eps k_{c,\lambda\eps} s}.$$Hence, if we set $x=[u]_{\al'\eps ,k'_\eps s}$, we get
$$\partial^{\al'\eps ,k'_\eps s}_{\De_1,\De_2,*}(x)=\iota_*^{\eps,\al'\alpha_c\eps ,s, k'_\eps k_{c,\lambda\eps} s}(y).$$
\end{proof}

Collecting together Propositions \ref{prop-half-exact}, \ref{prop-bound1} and \ref{prop-bound2} and using naturality of quantitative Bott isomorphism, we obtain the   controlled six terms exact sequence (at order $r$) associated to a weak $r$-controlled Mayer-Vietoris sequence.
\begin{theorem}\label{thm-six-term-exact-sequence}
For every positive number $c$, there exists a control pair $(\lambda,h)$ such that 
for any   $C^*$-algebra $A$ filtered by $(A_s)_{s>0}$,  any positive number $r$   and
any $r$-controlled  weak Mayer-Vietoris pair $ (\De_1,\De_2,A_{\Delta_1},A_{\Delta_2})$  for $A$ of order $r$, then 
we have a $(\lambda,h)$-exact six term exact sequence of order $r$:

$$\begin{CD}
\K_0(A_{\Delta_1}\cap A_{\Delta_2}) @>(\jmath_{\Delta_1,\De_2,*},\jmath_{\Delta2,\De_1,*})>> \K_0(A_{\Delta_1})\oplus \K_0( A_{\Delta_2}) @>\jmath_{\Delta_1,*}-\jmath_{\Delta_2,*}>>\K_0(A)\\
    @AA\DD_{\Delta_1,\Delta_2,*} A @.     @V\DD_{\Delta_1,\Delta_2,*} VV\\
\K_1(A) @<\jmath_{\Delta_1,*}-\jmath_{\Delta_2,*}<<\K_1(A_{\Delta_1})\oplus \K_1( A_{\Delta_2})@<(\jmath_{\Delta_1,\De_2,*},\jmath_{\Delta2,\De_1,*})<< \K_1(A_{\Delta_1}\cap A_{\Delta_2})
\end{CD}.
$$
\end{theorem}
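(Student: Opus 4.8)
The proof assembles, at the six corners of the square, the three exactness statements already established --- Propositions \ref{prop-half-exact}, \ref{prop-bound1} and \ref{prop-bound2} --- together with controlled Bott periodicity to pass from the odd boundary map $\DD^1_{\De_1,\De_2,*}$ to the even one $\DD^0_{\De_1,\De_2,*}$. Four corners are immediate: controlled exactness at $\K_0(A_{\De_1})\oplus\K_0(A_{\De_2})$ and at $\K_1(A_{\De_1})\oplus\K_1(A_{\De_2})$ is Proposition \ref{prop-half-exact}, which is stated for $\K_*$ in both parities simultaneously; controlled exactness at $\K_1(A)$, between $\jmath_{\De_1,*}-\jmath_{\De_2,*}$ and $\DD_{\De_1,\De_2,*}$ (which on $\K_1(A)$ is $\DD^1_{\De_1,\De_2,*}$), is Proposition \ref{prop-bound1}; and controlled exactness at $\K_0(A_{\De_1}\cap A_{\De_2})$, between $\DD^1_{\De_1,\De_2,*}$ and $(\jmath_{\De_1,\De_2,*},\jmath_{\De_2,\De_1,*})$, is Proposition \ref{prop-bound2}. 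Since Propositions \ref{prop-half-exact}--\ref{prop-bound2} are stated for arbitrary (possibly non-unital) filtered $C^*$-algebras and for weak $r$-controlled Mayer-Vietoris pairs, no separate argument is needed here.

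It remains to treat the two corners involving the even boundary map, namely controlled exactness at $\K_0(A)$ and at $\K_1(A_{\De_1}\cap A_{\De_2})$. The plan is first to apply Propositions \ref{prop-bound1} and \ref{prop-bound2} to the suspended quadruple $(S\De_1,S\De_2,SA_{\De_1},SA_{\De_2})$, which is an $r$-controlled weak Mayer-Vietoris pair for $SA$ with the same coercitivity $c$ (as noted just above Theorem \ref{thm-six-term-exact-sequence}); this yields controlled exactness at order $r$ of the suspended sequence at $\K_1(SA)$ and at $\K_0(S(A_{\De_1}\cap A_{\De_2}))$. One then transports these statements through the controlled Bott isomorphisms $\T_A([\partial])$, $\T_{A_{\De_i}}([\partial])$ and $\T_{A_{\De_1}\cap A_{\De_2}}([\partial])$, which by \cite[Lemma 1.6]{oy2} are $(\al_\T,k_\T)$-controlled isomorphisms of degree one with controlled inverses $\T_\bullet([\partial]^{-1})$. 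For this one needs two intertwinings, valid up to control: first, the Bott isomorphisms commute with the inclusion-induced maps $\jmath_{\De_i,*}$, $\jmath_{\De_i,\De_j,*}$ and their suspended analogues --- this is naturality of the controlled tensorization in the coefficient algebra, obtained by identifying $\T_\bullet([\partial])$ with the controlled boundary map of the evaluation extension $0\to S\bullet\to C_0[0,1)\ts\bullet\stackrel{ev_0}{\to}\bullet\to 0$ (Theorem \ref{thm-tensor}(7)) and invoking naturality of controlled boundary maps with respect to morphisms of extensions; and second, the Bott isomorphisms intertwine $\DD^0_{\De_1,\De_2,*}$ with $\DD^1_{S\De_1,S\De_2,*}$, which is exactly the definition $\DD^0_{\De_1,\De_2,*}=\T_{A_{\De_1}\cap A_{\De_2},*}([\partial]^{-1})\circ\DD^1_{S\De_1,S\De_2,*}\circ\T_A([\partial])$ together with $\T_\bullet([\partial])\circ\T_\bullet([\partial]^{-1})\aeq\Id$. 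A short controlled diagram chase --- push an element forward by a Bott isomorphism, use exactness of the suspended sequence, and pull the output back by the controlled inverse --- then converts exactness at $\K_1(SA)$ into exactness at $\K_0(A)$, and exactness at $\K_0(S(A_{\De_1}\cap A_{\De_2}))$ into exactness at $\K_1(A_{\De_1}\cap A_{\De_2})$.

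Finally one takes $(\lambda,h)$ to be a common upper bound for the control pairs furnished by Propositions \ref{prop-half-exact}, \ref{prop-bound1} and \ref{prop-bound2} (all depending only on $c$), for $(\al_\T,k_\T)$, and for the control pair of $\DD_{\De_1,\De_2,*}$; the compositions and inverses appearing in the diagram chase only rescale these by a bounded factor, so a single pair works at all six corners simultaneously. I expect the only real difficulty to be bookkeeping: keeping the order-$r$ constraints and the controlled (rather than strict) commutativities consistent through the Bott transfer, so that the resulting $(\lambda,h)$ is genuinely uniform in $A$ and in the Mayer-Vietoris pair. The substantive work has already been carried out in Propositions \ref{prop-half-exact}--\ref{prop-bound2}; what remains is routine assembly plus the standard naturality of controlled Bott periodicity.
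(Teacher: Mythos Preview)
Your proposal is correct and follows exactly the paper's approach: the paper states just before the theorem that it is obtained by ``collecting together Propositions \ref{prop-half-exact}, \ref{prop-bound1} and \ref{prop-bound2} and using naturality of quantitative Bott isomorphism,'' which is precisely what you have spelled out in detail. Your elaboration of the Bott transfer step (suspending the Mayer-Vietoris pair, applying the odd-case propositions to the suspension, and transporting back via $\T_\bullet([\partial])$ and its controlled inverse) is the intended argument, and the paper gives no further detail beyond this sentence.
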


\subsection{Quantitatively $K$-contractible $C^*$-algebra}In numerous case, the proof of the Baum-Connes conjecture and of its generalization amounts to show that the $K$-theory of some  obstruction
algebra vanishes \cite{y2}. In this subsection, we apply the controlled Mayer-Vietoris  six-term exact sequence to this situation.
\begin{definition}
Let $A$ be a filtered $C^*$-algebra.
$A$ is called  {\bf quantitatively $K$-contractible} if there exists a positive number $\lambda_0\gq1$ that satisfies the following holds:

\medskip

for any positive numbers $\eps$ and $r$ with $\eps<\frac{1}{4\lambda_0}$, there exists a positive number $r'$ with $r'\gq r$ such that $\iota_*^{\eps,\lambda_0\eps,r,r'}:K_*^{\eps,r}(A)\lto K_*^{\lambda_0\eps,r'}(A)$ is vanishing
(we say that $A$ is $K$-contractible with {\bf rescaling} $\lambda_0$).\end{definition}

\begin{example}\
\begin{enumerate}
\item Recall that a separable $C^*$-algebre $B$ is $K$-contractible if the  class of the identity map $Id_B:B\to B$ vanishes in $KK_*(B,B)$. According to Theorem \ref{thm-tensor}, if $B$  is $K$-contractible, then $A\ts B$ is quantitatively $K$-contractible for any filtered $C^*$-algebra $A$. Moreover, the rescaling does not depend  on $A$ or on  $B$;
\item Let $\Ga$ be a finitely generated group and let $A$ be a $C^*$-algebra provided with an action of $\Ga$ by automorphisms. Assume that
\begin{itemize}
\item the group $\Ga$ satisfies the Baum-Connes conjecture with coefficients.
\item for any finite subgroup $F$ of $\Ga$ the $K_*(A\rtimes F)=0$.
\end{itemize}\end{enumerate}
Then $A\rtr\Ga$ is  quantitatively $K$-contractible and the rescaling does not depend   on $\Ga$ or on $A$.
\end{example}
\begin{remark}\label{remark-universal-rescaling}
It can be proved that there exists a universal rescaling for quantitative $K$-contractibility, i.e there exists a positive number $\la_0$ with $\la_0\gq 1$ such that every quantitatively $K$-contractible $C^*$-algebra is indeed quantitatively $K$-contractible with rescaling $\la_0$.
\end{remark}
 \begin{theorem}\label{thm-quant-K-contractible}
 Let $A$  be a filtered $C^*$-algebra. Assume that  there exists   positive numbers $\lambda_0$  and $c$,  with $\lambda_0\gq 1$  such that for  every positive number $r$ there exists
 an  $r$-controlled  Mayer-Vietoris pair $(\De_1,\De_2,A_{\De_1},A_{\De_2})$ with coercitivity $c$   and with  $A_{\De_1},\,A_{\De_2}$ and $A_{\De_1}\cap A_{\De_2}$ quantitatively $K$-contractible with rescaling $\lambda_0$.
 Then there exists a positive number $\lambda_1$ depending only on $\lambda_0$ and on $c$ such that
 $A$  is quantitatively $K$-contractible with rescaling $\lambda_1$.
 \end{theorem}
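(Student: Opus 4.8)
The strategy is to run the controlled six-term exact sequence of Theorem~\ref{thm-six-term-exact-sequence} for the given $r$-controlled Mayer-Vietoris pair and exploit that all three ``piece'' algebras $A_{\De_1}$, $A_{\De_2}$ and $A_{\De_1}\cap A_{\De_2}$ have vanishing structure maps at a controlled scale. Fix the control pair $(\lambda,h)$ produced by Theorem~\ref{thm-six-term-exact-sequence} for coercitivity $c$, and let $\lambda_0$ be the rescaling for quantitative $K$-contractibility shared by the three pieces (using Remark~\ref{remark-universal-rescaling}, or simply the common rescaling in the hypothesis). Given $\eps$ and $r$, I would first apply the six-term exact sequence of order (something like) $2r$, coming from an $2r$-controlled Mayer-Vietoris pair: for $x\in K_*^{\eps,r}(A)$, chase $x$ around the sequence. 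Since $K$-theory of the pieces vanishes at controlled scale, the images $\jmath_{\Delta_1,*}(x_1)-\jmath_{\Delta_2,*}(x_2)$ and $\DD_{\De_1,\De_2,*}(x)$ can be killed after increasing the control by $\lambda_0$ and the propagation by some $r'$ depending on $r$; $(\lambda,h)$-exactness then forces $\iota_*^{\eps,\lambda'\eps,r,r'}(x)=0$ for a suitable $\lambda'$ built from $\lambda$, $\lambda_0$ and $\alpha_\DD$ and a suitable $r'$.

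More precisely, the argument splits by degree. In degree $1$: given $x\in K_1^{\eps,r}(A)$, apply $\DD_{\De_1,\De_2,*}$ to get an element of $K_0^{\alpha_c\eps,k_{c,\eps}r}(A_{\De_1}\cap A_{\De_2})$; by $K$-contractibility of $A_{\De_1}\cap A_{\De_2}$ with rescaling $\lambda_0$, after pushing the control to $\lambda_0\alpha_c\eps$ and the propagation to some $\rho_1\gq k_{c,\eps}r$ this element vanishes. By $(\lambda,h)$-exactness of Proposition~\ref{prop-bound1} (controlled exactness of $\K_1(A_{\De_1})\oplus\K_1(A_{\De_2})\to\K_1(A)\to\K_0(A_{\De_1}\cap A_{\De_2})$), the image of $x$ at an appropriately larger control/propagation lifts to $\jmath_{\Delta_1,*}(x_1)-\jmath_{\Delta_2,*}(x_2)$ with $x_i\in K_1^{\bullet,\bullet}(A_{\De_i})$. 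Now apply $K$-contractibility of each $A_{\De_i}$ (rescaling $\lambda_0$): after one more bump of the control by $\lambda_0$ and the propagation to some $\rho_2$, each $x_i$ dies, so $x$ itself maps to $0$ at control $\lambda_1\eps$ and propagation $\rho_2$ for $\lambda_1$ a fixed product of $\lambda$, $\lambda_0$, $\alpha_c$. The degree-$0$ case is identical using Proposition~\ref{prop-half-exact} and Proposition~\ref{prop-bound2} in the other half of the hexagon (or by passing to the suspension via the controlled Bott isomorphism of Theorem~\ref{thm-tensor}, which converts the even case to the odd one with a fixed loss in the control pair).

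The one genuine subtlety — and the main obstacle — is \emph{uniformity of the propagation scales across all $r$}. The six-term sequence and the boundary map are only ``of order $r$'', so for the argument above to close at a given input propagation $r$ I must have chosen the $r$-controlled (or $2r$-controlled) Mayer-Vietoris pair with degree large enough that all the intermediate propagations $k_{c,\eps}r$, $\rho_1$, $\rho_2$ that appear still lie below the order of the pair; this is exactly why the hypothesis asserts existence of such a pair \emph{for every} positive number $r$, and I would invoke it with the pair of order (say) $h_{\eps}\cdot k_{c,\eps}\cdot(\text{finitely many factors})\cdot r$, a quantity depending only on $\eps$, $r$ and the fixed data $c,\lambda_0$. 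Once the bookkeeping of these nested propagation bounds is arranged, the rescaling $\lambda_1$ is manifestly a fixed function of $\lambda_0$ and $c$ only (it does not see $r$ or $A$), which is precisely the claim. I would therefore devote the bulk of the write-up to carefully tracking these propagation inequalities and to the degree-$0$/degree-$1$ reduction, the $K$-theoretic content being a routine diagram chase in the controlled category.
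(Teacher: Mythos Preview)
Your overall strategy---apply $\DD_{\De_1,\De_2,*}$, kill the image using quantitative $K$-contractibility of $A_{\De_1}\cap A_{\De_2}$, lift via controlled exactness to a difference $\jmath_{\De_1,*}(x_1)-\jmath_{\De_2,*}(x_2)$, then kill $x_1,x_2$ using quantitative $K$-contractibility of $A_{\De_1}$ and $A_{\De_2}$---is exactly the paper's argument. The degree reduction via suspension is also how the paper handles the even case (implicitly, through the definition of $\DD^0$).

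There is, however, a genuine gap in your treatment of what you call the ``one genuine subtlety''. You propose to resolve the order-$r$ restriction on the six-term sequence by choosing, at the outset, a Mayer-Vietoris pair of order $h_\eps k_{c,\eps}\cdot(\text{finitely many factors})\cdot r$. But the intermediate propagation $\rho_1$ you need to absorb comes from the quantitative $K$-contractibility of $A_{\De_1}\cap A_{\De_2}$: given $(\al_c\eps,k_{c,\eps}r)$, there exists \emph{some} $\rho_1$ with the vanishing property, and this $\rho_1$ depends on the specific algebra $A_{\De_1}\cap A_{\De_2}$, which in turn depends on which Mayer-Vietoris pair you chose. It is \emph{not} bounded by any fixed function of $\eps,r,c,\la_0$. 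So you cannot choose the order of the pair large enough a priori: the required order depends on the pair, which depends on the order---a circularity your proposal does not break.

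The paper avoids this entirely by invoking Corollary~\ref{cor-bound1} rather than Proposition~\ref{prop-bound1}. Corollary~\ref{cor-bound1} is a \emph{persistence} statement: if $\partial_{\De_1,\De_2,*}^{\eps,r}(y)$ dies after pushing to some $(\eps',r')$ with $r'\gq k_{c,\eps}r$ \emph{arbitrarily large}, one can still lift at scale $(\la\eps',l_{\eps'}r')$. This works precisely because the hypothesis gives an $r$-controlled Mayer-Vietoris pair in the sense of Definition~\ref{definition-coarse-MV-pair} (not merely the weak version of Definition~\ref{definition-weak-coarse-MV-pair}): the CIA property holds for \emph{all} positive $s$, not just $s\lq r$, so Lemma~\ref{lemma-bound1} and hence the lifting argument remain valid at propagations far beyond the order of the pair. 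With this in hand one simply takes the $r$-controlled pair for the \emph{input} $r$, lets $\rho_1$ and then $\rho_2$ be whatever the $K$-contractibility of the pieces dictates, and obtains $\iota_*^{\eps,\la_1\eps,r,\rho_2}(y)=0$ with $\la_1=\la\la_0^2\al_c$ depending only on $c$ and $\la_0$. Your write-up should replace the ``choose the order large enough'' manoeuvre by this direct appeal to Corollary~\ref{cor-bound1}.
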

\begin{proof}Let $(\la,l)$ be the controlled pair of Corollary  \ref{cor-bound1}  and  set $\lambda_1=\la\lambda_0^2\al_c$, let $\eps$ and $r$ be positive numbers
with $\eps<\frac{1}{4\lambda_1}$ and let  
 $y$ be an element in $K^{\eps,r}_*(A)$. Let $(\De_1,\De_2,A_{\De_1},A_{\De_2})$ be a 
 $r$-controlled  Mayer-Vietoris pair for $A$  with coercitivity $c$. Let $r'$ be a positive number with $r'\gq k_{c,\eps}r$ such that $$\iota^{\al_c\eps,\la_0\al_c\eps,k_{c,\eps}r,r'}(z)=0$$ in
 $K_*^{\la_0\al_c\eps,r'}(A_{\De_1}\cap A_{\De_2})$ for all $z$ in $K_*^{\al_c\eps,k_{c,\eps}r}(A_{\De_1}\cap A_{\De_2})$. Since 
 $$\iota^{\al_c\eps,\la_0\al_c\eps,k_{c,\eps}r,r'}\circ \partial_{\De_1,\De_2,*}^{\eps,r}(y)=0$$ in
  $K_*^{\al_c\eps,k_{c,\eps}r}(A_{\De_1}\cap A_{\De_2})$ and  according to Corollary  \ref{cor-bound1}, then if we set $\la'=\al_c\la_0$,   there exist   an element  $x_1$ in $K_1^{\la\la'\eps  ,l_{\la'\eps }r'}(A_{\De_1})$ and  an element 
 $x_2$ in $K_1^{\la\la'\eps,l_{\la'\eps}r'}(A_{\De_2})$ such that 
$$\iota_*^{\eps, \la\la'\eps,r,l_{\la'\eps}r'}(y)=\jmath^{\la\lambda'\eps,l_{\la'\eps}r'}_{\Delta_1,*}(x_1)-\jmath^{\lambda\la',l_{\la'\eps}r'}_{\Delta_2,*}(x_2).$$ 
 Let $r''$ be a positive number with $r''\gq l_{\la'\eps}r'$ such that for $i=1,2$, 
 $$\iota^{\la\lambda'\eps,\la_0\la\lambda'\eps,l_{\la'\eps}r',r''}(z)=0$$ in
 $K_1^{\la_0\la\la'\eps  ,r''}(A_{\De_i})$ for all $z$ in $K_1^{\la\la'\eps  ,l_{\la'\eps }r'}(A_{\De_i})$.
 Then we eventually obtain that 
 \begin{eqnarray*}
 \iota_*^{\eps,\la_1\eps,r,r''}(y)&=&\jmath^{\la_1\eps,r''}_{\Delta_1,*}\circ\iota^{\la\la'\eps,\la_1\eps,l_{\la'\eps }r',r''}_*(x_1)-\jmath^{\la_1\eps,r''}_{\Delta_2,*}\circ\iota^{\la\la'\eps,\la_1\eps,l_{\la'\eps }r',r''}_*(x_2)\\
 &=&0.
 \end{eqnarray*} Hence $A$  is quantitatively $K$-contractible with rescaling $\lambda_1$.
\end{proof}

\section{Quantitative    K\"unneth formula}\label{sec-kunneth}

In this section, we formulate a quantitative K\"unneth formula for filtered $C^*$-algebras and we  discuss its connection with controlled Mayer-Vietoris pairs. We also show that finitely generated groups  for which  the Baum-Connes conjecture with coefficient holds  provides numerous examples of filtered $C^*$-algebras that satisfy   the quantitative K\"unneth formula
\subsection{Statement on the   formula}
 Recall that if $A$ and $B$ are $C^*$-algebras, then there is a morphism
 $$\omega_{A,B,*}:K_*(A)\ts K_*(B)\lto K_*(A\ts B)$$ given by the external Kasparov product i.e., $\omega_{A,B,*}(x\ts y)=x\ts\tau_A(y)$ for all $x$ in $K_*(A)$ and $y$ in $K_*(B)$. Indeed, in the case of unital $C^*$-algebras, if $p$ and $q$ are respectively projections  in $M_n(A)$ and $M_k(B)$ and if  $u$ and $v$ are respectively unitary elements in $M_n(A)$ and $M_k(B)$, then
 \begin{eqnarray*}
 \omega_{A,B,*} ([p]\ts[q])&=&[p\ts q];\\
 \omega_{A,B,*} ([u]\ts[q])&=&[u\ts q+I_n\ts (I_k-q)];\\
  \omega_{A,B,*} ([p]\ts[v])&=&[p\ts v+(I_n-p)\ts I_k ].\\
 \end{eqnarray*}
 Let $A$ be a $C^*$-algebra  filtered by $(A_r)_{r>0}$ and let $B$ be a $C^*$-algebra (with a trivial filtration). Recall that $A\ts B$ is then filtered by $(A_r\ts B)_{r>0}$. Let us  consider then the  quantitative object $\K_*(A)\ts K_*(B)=(K_*^{\eps,r}(A)\ts K_*(B))_{0<\eps<1/4,r>0}$. With notations of Theorem \ref{thm-tensor}, define the $(\alpha_\T,k_\T)$-control morphism
   $$\Omega_{A,B,*}=(\omega_{A,B}^{\eps,r})_{0<\eps<\frac{1}{4\al_\T},r>0}: \K_*(A)\ts K_*(B)\to \K_*(A\ts B),$$ by
   $$\omega_{A,B,*}^{\eps,r}: K_*^{\eps,r}(A)\ts K_*(B)\to \K_*^{\eps,r}(A\ts B);\,x\ts y\mapsto \tau_A^{\al_\T\eps,h_{\T,\eps}r}(y)(x).$$ Then $\Omega_{A,B,*}$ induces $\omega_{A,B,*}$ in $K$-theory, i.e.,
  \begin{equation}\label{eq-induce-kunneth}\iota_*^{\eps,r}\circ \omega_{A,B,*}^{\eps,r}=\omega_{A,B,*}\circ \left( \iota_*^{\eps,r}\ts \Id_{K_*(B)}\right)\end{equation}  for every positive numbers $r$ and $\eps$ with $0<\eps<\frac{1}{4\al_\T}$.
  
  \begin{remark}\label{remark-kunneth}
  Let $A$  be a   unital filtered $C^*$-algebra  and let $B$ be a unital $C^*$algebra. Let $\eps$ and $r$ be positive numbers with $\eps<\frac{1}{4\al_\T}$.
  \begin{enumerate}
  \item for any $\eps$-$r$-projection $p$ in some $M_n(A)$, any integer $l$ and any projection $q$  in some  $M_k(B)$ then
  $\omega_{A,B,*}^{\eps,r}([p,l]_{\eps,r}\ts [q])=[p\ts q+I_l\ts (I_k-q),lk]_{\al_\T\eps,h_{\T,\eps}r}$ in $K_0^{\al_\T\eps,h_{\T,\eps}r}(A\ts B)$.
  \item for any $\eps$-$r$-unitary  $u$ in some $M_n(A)$  and any projection $q$  in some $M_k(B)$ then
  $\omega_{A,B,*}^{\eps,r}([u]_{\eps,r}\ts [q])=[u\ts q+I_n\ts (I_k-q)]_{\al_\T\eps,h_{\T,\eps}r}$ in $K_1^{\al_\T\eps,h_{\T,\eps}r}(A\ts B)$.
  \end{enumerate}
  \end{remark}
  The quantitative morphism $\Omega_{\bullet,\bullet,*}$ is compatible with the Kasparov  tensorisation (controlled) morphsim.
 \begin{lemma}\label{lemma-compatibility-kasparov}
 There exists a control pair $(\al,k)$ such that the following assertion holds:
 
 \medskip
 
 For any filtered $C^*$-algebra $A$, for any separable $C^*$-algebras $B_1,\,B_2,\,D_1$ and $D_2$, any $z$ in $KK_*(B_1,B_2)$ and any $z'$ in $KK_*(D_1,D_2)$, then the following diagram  is $(\al,k)$-commutative.
  $$\begin{CD}
\K_*(A\ts B_1)\ts K_*(D_1)@>\omega_{A\ts B_1,D_1,*}>> \K_*(A\ts B_1\ts D_1)\\
      @V\T_{A}(z)\ts (\bullet\ts z')VV  @V\T_{A}(\tau_{D_1}(z)\ts\tau_{B_2}(z') )VV \\    
      \K_*(A\ts B_2)\ts K_*(D_2)@>\omega_{A\ts B_2,D_2,*}>>  \K_*(A\ts B_2\ts D_2)   \end{CD},$$ 
  where  $\bullet\ts z':K_*(D_1)\to K_*(D_2)$ is right multiplication by $z'$.
  \end{lemma}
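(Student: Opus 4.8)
The plan is to reduce the claimed $(\al,k)$-commutativity of the square to the compatibility properties of the controlled tensorization morphism $\TT$ already recorded in Theorems \ref{thm-tensor} and \ref{thm-product-tensor}, together with the definition of $\Omega_{\bullet,\bullet,*}$ via $\TT$. First I would unwind both composites on an elementary tensor $x\ts y$ with $x$ in $K_*^{\eps,r}(A\ts B_1)$ and $y$ in $K_*(D_1)$. Going right then down gives (up to structure maps) $\TT_{A}(\tau_{D_1}(z)\ts\tau_{B_2}(z'))\bigl(\omega_{A\ts B_1,D_1,*}^{\eps,r}(x\ts y)\bigr)$, and by definition of $\omega$ this is $\TT_{A}(\tau_{D_1}(z)\ts\tau_{B_2}(z'))\circ\tau_{A\ts B_1}^{?,?}(y)(x)$. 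Going down then right gives $\omega_{A\ts B_2,D_2,*}^{?,?}\bigl((\TT_A(z)(x))\ts(y\ts_{D_1}z')\bigr)=\tau_{A\ts B_2}^{?,?}(y\ts_{D_1}z')\circ\TT_A(z)(x)$. So the question becomes whether the two controlled morphisms
\[
\TT_{A}(\tau_{D_1}(z)\ts_{} \tau_{B_2}(z'))\circ\TT_{A\ts B_1}(\tau_{A}(y))\quad\text{and}\quad \TT_{A\ts B_2}(\tau_{A}(y\ts_{D_1}z'))\circ\TT_A(z)
\]
agree up to a universal control pair, where I have rewritten each $\tau$-multiplication as a $\TT$ applied to a $KK$-element pushed forward from a point.

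The key step is then purely $KK$-theoretic bookkeeping combined with Theorem \ref{thm-product-tensor} (functoriality of $\TT$ in Kasparov products, up to a control pair) and Theorem \ref{thm-tensor}(vi) (the identity $\TT_B(\tau_D(z))=\TT_{B\ts D}(z)$). Using (vi) I would rewrite $\TT_{A\ts B_1}(\tau_A(y))$ as $\TT_A(\tau_{B_1}(\tau_A(y)))$ and $\TT_{A\ts B_2}(\tau_A(y\ts_{D_1}z'))$ as $\TT_A(\tau_{B_2}(\tau_A(y\ts_{D_1}z')))$, so both composites become $\TT_A$ of a Kasparov product of elements living over $A$; by Theorem \ref{thm-product-tensor} it then suffices to check that the corresponding products of $KK$-classes coincide. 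That last identity is a standard associativity/commutativity statement for the external Kasparov product and the tensorization maps $\tau_{\bullet}$ — both sides equal $\tau_A$ applied to the class $\tau_{B_2}(z')\otimes(\text{appropriate rearrangement of }z\text{ and }y)$ — and is exactly the kind of manipulation already used implicitly in \cite{kas,oy2}. One must also invoke Theorem \ref{thm-tensor}(ii) (additivity of $\TT_B$) to pass from the elementary-tensor case to all of $K_*^{\eps,r}(A\ts B_1)\ts K_*(D_1)$, after first reducing to elementary tensors by the very definition of the algebraic tensor product of quantitative objects.

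The main obstacle I anticipate is purely one of propagation/control bookkeeping rather than conceptual content: each application of Theorems \ref{thm-tensor} and \ref{thm-product-tensor} introduces its own control pair, and one has to verify that the finitely many such pairs can be absorbed into a single universal $(\al,k)$ that depends on nothing (in particular not on $A,B_i,D_i,z,z'$). This is where one needs the uniformity clauses in the cited theorems — the control pairs $(\al_\TT,k_\TT)$ and the one in Theorem \ref{thm-product-tensor} are genuinely universal — so the composition rule for control pairs $(\la,h)\mapsto(\la\la',h*h')$ yields a universal answer. A secondary, minor subtlety is keeping straight the several flip isomorphisms on tensor products of algebras (which do not change $K$-theory and commute with everything up to the relevant control pair), and checking that the identifications $A\ts B_i\ts D_i$ carry the filtration $(A_r\ts B_i\ts D_i)_{r>0}$ consistently under these flips; this is routine once one fixes an ordering convention at the outset.
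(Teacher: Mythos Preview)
Your proposal is correct and follows essentially the same route as the paper: unwind the definition of $\Omega_{\bullet,\bullet,*}$ in terms of $\TT$, apply Theorem~\ref{thm-tensor}(vi) and Theorem~\ref{thm-product-tensor} to collapse both composites to $\TT_A$ of a single Kasparov product, and finish with the commutativity of the external Kasparov product ($z\ts\tau_{B_2}(y)=\tau_{B_1}(y)\ts\tau_{D_1}(z)$). Your write-up is more explicit about the control-pair bookkeeping and the passage from elementary tensors, but the argument is the same; note only that in your displayed composites you should write $\TT_{A\ts B_1}(y)$ rather than $\TT_{A\ts B_1}(\tau_A(y))$, since $y\in K_*(D_1)=KK_*(\C,D_1)$ is already the $KK$-class being fed to $\TT$.
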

  \begin{proof}Let $y$ be an element of $K_*(D_1)$.
  According to point (vi) of Theorem \ref{thm-tensor} and to Theorem \ref{thm-product-tensor}, there  exists a control pair $(\la,h)$ such
  that 
  $$\T_{A\ts B_2}(y\ts z')\circ \T_A(z)\aeq \T_{A}(z\ts\tau_{B_2}(y)\ts\tau_{B_2}(z')).$$
  Since external Kasparov is commutative, we have
  $$z\ts\tau_{B_2}(y)=\tau_{B_1}(y)\ts \tau_{D_1}(z)$$ Using once again Theorem  \ref{thm-product-tensor} and up to rescaling  the control pair $(\la,h)$, we get that
  $$\T_{A\ts B_2}(y\ts z')\circ \T_A(z)\aeq   \T_A(\tau_{D_1}(z)\ts\tau_{B_2}(z'))\circ \T_{A\ts B_1}(y)$$ and hence the diagram is commutative.
   \end{proof}

 \begin{definition}
Let $A$  be a   filtered $C^*$-algebra and let $\la_0$ be a positive number with $\la_0\gq 1$. We say that $A$ satisfies the quantitative K\"unneth formula with rescaling $\la_0$ if
 $$\Omega_{A ,B,*}: \K_*(A)\ts K_*(B)\to \K_*(A\ts B)$$ is a quantitative isomorphism with rescaling $\la_0$ for every  $C^*$-algebra $B$ such that $K_*(B)$ is free abelian group.
 \end{definition}
 \begin{remark}
 If  a filtered  $C^*$-algebra $A$ satisfies the quantitative  K\"unneth formula, then according to equation (\ref{eq-induce-kunneth})
 $$\omega_{A,B,*}:K_*(A)\ts K_*(B)\lto K_*(A\ts B)$$ is an isomorphism  for every  $C^*$-algebra $B$ such that $K_*(B)$ is free. Using geometric resolutions, it was proved in \cite{sc} that in this case,   the $C^*$-algebra  $A$ satisfies the  K\"unneth formula, i.e., for any $C^*$-algebra $B$ we have a natural short exact sequence
 $$0\lto K_*(A)\ts K_*(B)\lto  K_*(A\ts B)\lto \operatorname{Tor}(K_*(A),K_*(B))\lto 0$$ (see also \cite{ceo2} for the straightforward generalization to the non nuclear case).
 \end{remark}
 
 Next theorem provides many examples of filtered $C^*$-algebras that satisfy the  quantitative  K\"unneth formula and will be proved in Section \ref{subsec-kunneth-BC}.
 \begin{theorem}\label{thm-BC-Qkunneth}
 Let $\Ga$ be a finitely generated group, let $A$ be a $\Ga$-$C^*$-algebra. Assume that
 \begin{itemize}
 \item $\Ga$ satisfies the Baum-Connes conjecture with coefficients.
 \item For each subgroup subgroup $K$ of $\Ga$, then $A\rtimes K$ satisfies the K\"unneth formula.
 \end{itemize}
 Then $A\rtimes_r\Ga$ satisfies the quantitative K\"unneth formula, i.e.,   for any $C^*$-algebra $B$ such that $K_*(B)$ is a free abelian group,  $$\Omega_{A\rtimes_r\Ga ,B,*}: \K_*(A\rtimes_r\Ga)\ts K_*(B)\to \K_*((A\rtimes_r\Ga )\ts B)$$ is a quantitative isomorphism with rescaling that does not depend   on $\Ga$ or on $A$.

 \smallskip Moreover, under above assumption, when   the $C^*$-algebra  $A$ runs  through family of   $\Ga$-$C^*$-algebras and $B$ runs through  $C^*$-algebras  such that $K_*(B)$ is a free abelian group, the family of quantitative isomorphisms  $(\Omega_{A\rtimes_r \Ga ,B})_{A,B}$ is uniform.

 \end{theorem}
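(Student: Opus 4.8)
The plan is to reduce the quantitative Künneth formula for $A\rtimes_r\Ga$ to the Baum--Connes side via the quantitative assembly maps, and there to use the hypothesis that $A\rtimes K$ satisfies the Künneth formula for every (necessarily finite, since we may assume $\Ga$ discrete and the relevant subgroups are the isotropy groups of the $\Ga$-action on $\underline{E}\Ga$) subgroup $K$. Concretely, fix a $C^*$-algebra $B$ with $K_*(B)$ free abelian. First I would use the fact that $B$ carries the trivial $\Ga$-action, so that $(A\rtimes_r\Ga)\ts B\cong (A\ts B)\rtimes_r\Ga$ as filtered $C^*$-algebras (the filtration on the left being $((A\rtimes_r\Ga)_r\ts B)_{r>0}$, which matches the word-length filtration on the right). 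Under this identification, $\Omega_{A\rtimes_r\Ga,B,*}$ is, by Remark \ref{remark-kasp-tens} and the definition of $\T$ in terms of $\tau$, intertwined with the controlled Kasparov transform $\JR(\tau_A(\cdot))$ applied to the external product maps for the $\Ga$-algebras. So the first step is a careful naturality diagram: on Rips complexes $P_d(\Ga)$ one has $C_0(P_d(\Ga))$-modules, and the quantitative assembly maps $\mu^{\eps,r,d}_{\Ga,A\ts B,*}$ fit into a commutative square with $\mu^{\eps,r,d}_{\Ga,A,*}\ts\Id_{K_*(B)}$ and the external product $KK^\Ga_*(C_0(P_d(\Ga)),A)\ts K_*(B)\to KK^\Ga_*(C_0(P_d(\Ga)),A\ts B)$, the commutativity being a consequence of Lemma \ref{lemma-compatibility-kasparov} together with Theorem \ref{thm-kas}(i) and the naturality of $\nu/\mu$ in the coefficient algebra.

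The second step is to establish the Künneth isomorphism on the left-hand column of that square, i.e. that
$$KK^\Ga_*(C_0(P_d(\Ga)),A)\ts K_*(B)\lto KK^\Ga_*(C_0(P_d(\Ga)),A\ts B)$$
is an isomorphism for each $d$. Here I would invoke the hypothesis on subgroups: $P_d(\Ga)$ is a finite-dimensional, cocompact proper $\Ga$-complex, so by a Mayer--Vietoris/induction argument over the skeleta one reduces $KK^\Ga_*(C_0(P_d(\Ga)),-)$ to a finite iterated extension of groups of the form $KK^\Ga_*(\Ga/K\times \sigma, -)\cong KK^K_*(\sigma,-)\cong K_*((-)\rtimes K)$ for isotropy groups $K$ (which are finite) and simplices $\sigma$; the Künneth formula for $A\rtimes K$ and the five lemma (using that $K_*(B)$ is free, so $\operatorname{Tor}$ vanishes and $\ts K_*(B)$ is exact) then give the isomorphism. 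This is the classical topological-side Künneth argument and is where the subgroup hypothesis is consumed.

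The third step is to transport the isomorphism from the $KK^\Ga$-column across the quantitative assembly maps. Since $\Ga$ satisfies Baum--Connes with coefficients, Theorem \ref{thm-quant-surj} supplies, uniformly in the coefficient algebra, the quantitative surjectivity statement $QS_{\Ga,\bullet}$ and the quantitative injectivity statement $QI_{\Ga,\bullet}$ for $\mu^{\eps,r,d}_{\Ga,\bullet,*}$; applying these to both $A$ and $A\ts B$ and chasing the commutative square, one extracts exactly the statements $QI_{\Omega_{A\rtimes_r\Ga,B}}$ and $QS_{\Omega_{A\rtimes_r\Ga,B}}$ of Definition \ref{def-quantitative-morphism}, with a rescaling built from the universal constants $\al_0$ of Theorem \ref{thm-quant-surj}, $(\al_\JJ,k_\JJ)$ of Theorem \ref{thm-kas}, $(\al_\T,k_\T)$ of Theorem \ref{thm-tensor}, and the function $d\mapsto r_{d,\eps}$ — none of which depend on $A$ or $B$. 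This last independence is precisely what yields the uniformity assertion: all the controlled data are universal, and the only algebra-dependent input (the Künneth isomorphism of step two) is itself an honest isomorphism, not a controlled one, so it contributes no constants. The main obstacle I anticipate is step one: making the compatibility between $\Omega$, the external product in equivariant $KK$, and the quantitative assembly maps genuinely commute \emph{on the nose} at the controlled level (not merely up to a control pair would suffice, but one must track that the control pair is universal), which requires combining Remark \ref{remark-kasp-tens}, Lemma \ref{lemma-compatibility-kasparov}, and the functoriality of $\nu^{\eps,r,d}_{\Si,\bullet,*}$ carefully; the bookkeeping of propagation through $\tau_A$, $\tau_B$, and $J_\Ga^{red}$ is delicate but, given the cited theorems, routine.
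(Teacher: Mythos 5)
Your proposal follows essentially the same three-step route as the paper: (i) the commutativity of the square relating $\Omega_{A\rtimes_r\Ga,B,*}$, the quantitative assembly maps, and the external product on $KK^\Ga_*(C_0(P_d(\Ga)),-)$, which is the paper's Lemma \ref{lemma-kunneth} and rests precisely on Remark \ref{remark-kasp-tens}; (ii) the Künneth isomorphism on the left-hand ($KK^\Ga$) column by induction over the skeleta of $P_d(\Ga)$, reducing to finite isotropy groups — the paper's Theorem \ref{thm-BC-kunneth}; and (iii) transport across $\mu^{\eps,r,d}_{\Ga,\bullet,*}$ using the quantitative injectivity/surjectivity statements $QI$/$QS$ of Theorem \ref{thm-quant-surj}, with the uniformity coming from the universality of the control constants. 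The plan is correct and matches the paper's argument, including the observation that only finite subgroups of $\Ga$ actually enter.
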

The proof of this two results relies indeed on the quantitative statements of Theorem \ref{thm-quant-surj} which hold for groups that satisfy the Baum-Connes conjecture with coefficients. Similarly, using the geometric quantitative statements of Section \ref{subsection-quantitative-assembly-map} and Theorem \ref{thm-geo-statement}, we can prove the following result:
 \begin{theorem}
 Let  $\Si$ be a discrete proper metric space with bounded geometry that  coarsely embeds into a Hilbert space. If $A$ satisfies the K\"unneth formula, then $A\ts \Kp(\ell^2(\Si))$ 
  satisfies the quantitative K\"unneth formula  with rescaling that does not depend  on $\Si$ or on  $A$.
\end{theorem}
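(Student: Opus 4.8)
The plan is to derive this statement from Theorem~\ref{thm-BC-Qkunneth} by the standard coarse geometry trick of passing from a metric space $\Si$ to a groupoid or a group-like setting where the quantitative assembly maps are available. First I would recall that for a discrete proper metric space $\Si$ with bounded geometry, the filtered $C^*$-algebra $A\ts\Kp(\ell^2(\Si))$ plays the role of the crossed product: the distance on $\Si$ induces the filtration by propagation as described in Section~\ref{subsection-quantitative-assembly-map}, and the local quantitative coarse assembly maps $\nu_{\Si,A,*}^{\eps,r,d}:K_*(P_d(\Si),A)\to K_*^{\eps,r}(A\ts\Kp(\ell^2(\Si)))$ together with the geometric quantitative statements $QI_{\Si,A,*}$ and $QS_{\Si,A,*}$ of Theorem~\ref{thm-geo-statement} are exactly the analogues, in this setting, of the group-equivariant statements $QI_{\Ga,A,*}$, $QS_{\Ga,A,*}$ used to prove Theorem~\ref{thm-BC-Qkunneth}.

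Next I would set up the controlled tensorization morphism. Fix a $C^*$-algebra $B$ with $K_*(B)$ free abelian. Writing $D = A\ts\Kp(\ell^2(\Si))$, which is filtered by $(A_r\ts\Kp(\ell^2(\Si)))_{r>0}$, we have $D\ts B\cong (A\ts B)\ts\Kp(\ell^2(\Si))$, so the controlled morphism $\Omega_{D,B,*}:\K_*(D)\ts K_*(B)\to\K_*(D\ts B)$ is built from the controlled Kasparov tensorization $\TT$ of Theorem~\ref{thm-tensor}, and by Remark~\ref{remark-kasp-tens} its compatibility with $\nu$ is governed by the same control pairs. The key point is that, because $\nu_{\Si,\bullet,*}^{\eps,r,d}$ is natural in the coefficient algebra and induces the index map in $K$-theory, one gets a commuting (up to a fixed control pair, via Lemma~\ref{lemma-compatibility-kasparov}) diagram relating $\Omega_{D,B,*}$ to the external product on the representable $K$-homology groups $K_*(P_d(\Si),A)\ts K_*(B)\to K_*(P_d(\Si),A\ts B)$. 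Since $K_*(P_d(\Si),A)=\lim_{Z}KK_*(C(Z),A)$ is computed on compact subcomplexes $Z$ of the Rips complex, which are finite-dimensional and hence satisfy the ordinary K\"unneth formula (as $C(Z)$ is nuclear of type I), the external product on the $K$-homology side is an isomorphism; $K_*(B)$ being free makes the $\operatorname{Tor}$ term vanish.

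Then I would run the two-part argument exactly as for Theorem~\ref{thm-BC-Qkunneth}. For injectivity ($QI_{\Omega}$): given $x\in K_*^{\eps,r}(D)\ts K_*(B)$ with $\omega_{D,B,*}^{\eps,r}(x)=0$, use $QS_{\Si,A,*}$ to lift $x$ (after increasing the propagation) to an element of $K_*(P_d(\Si),A)\ts K_*(B)$, push it through the $K$-homology external product isomorphism and the naturality of $\nu$, then invoke $QI_{\Si,A\ts B,*}$ of Theorem~\ref{thm-geo-statement} applied to the coefficient algebra $A\ts B$ to conclude that the image of $x$ vanishes after a further controlled rescaling of $(d,\eps,r)$. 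For surjectivity ($QS_{\Omega}$): given $y\in K_*^{\eps,r}(D\ts B)$, apply $QS_{\Si,A\ts B,*}$ to lift $y$ to $K_*(P_d(\Si),A\ts B)$, use surjectivity of the $K$-homology external product (again $K_*(B)$ free) to write it as a product from $K_*(P_d(\Si),A)\ts K_*(B)$, and map back down by $\nu_{\Si,A,*}\ts\mathrm{Id}$. In both cases the rescaling $\la_0$ is the one furnished by Theorem~\ref{thm-geo-statement}, which depends neither on $\Si$ nor on $A$, which also gives the uniformity statement over families.

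The main obstacle I anticipate is the careful bookkeeping of control pairs in the commuting diagram identifying $\Omega_{D,B,*}$ with the tensor product of $\nu_{\Si,A,*}^{\eps,r,d}$ with $\mathrm{Id}_{K_*(B)}$ on the one hand and $\nu_{\Si,A\ts B,*}^{\eps,r,d}$ on the other: one needs $\nu$ to be compatible with $\TT$ up to a \emph{fixed} control pair independent of $d$, $\Si$, $A$ and $B$, which should follow from the construction of $\nu$ in \cite{oy3} together with Theorem~\ref{thm-tensor} and Theorem~\ref{thm-product-tensor}, but assembling this cleanly (and handling the inductive limit over compact $Z\subseteq P_d(\Si)$, which interacts with the non-exactness of $\ts$ with inductive limits only up to the usual continuity of $K$-theory) is the delicate part. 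A secondary point is that $QI$ and $QS$ in Theorem~\ref{thm-geo-statement} are stated for the algebra $A$; one must apply them with coefficient algebra $A\ts B$ as well, which is legitimate since those statements hold for \emph{any} $C^*$-algebra, but one should record that the resulting $d'$, $r'$ are still independent of $B$.
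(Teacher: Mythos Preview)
Your proposal is correct and matches the paper's own approach, which is not written out in detail: the paper simply states that the result follows \emph{similarly} to Theorem~\ref{thm-BC-Qkunneth}, replacing the group-equivariant assembly maps $\mu_{\Ga,A,*}^{\eps,r,d}$ and the statements $QI_{\Ga,A,*}$, $QS_{\Ga,A,*}$ of Theorem~\ref{thm-quant-surj} by the local quantitative coarse assembly maps $\nu_{\Si,A,*}^{\eps,r,d}$ and the geometric statements $QI_{\Si,A,*}$, $QS_{\Si,A,*}$ of Theorem~\ref{thm-geo-statement}. You have correctly identified all the ingredients, including the analogue of Lemma~\ref{lemma-kunneth} (compatibility of $\Omega$ with $\nu$ up to a fixed control pair) and the analogue of Theorem~\ref{thm-BC-kunneth} on the $K$-homology side.

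One small clarification: your remark that the $K$-homology side external product is an isomorphism ``as $C(Z)$ is nuclear of type~I'' slightly misplaces the emphasis. What is actually used is a cell-by-cell induction on the finite simplicial complex $Z$ (exactly as in the proof of Theorem~\ref{thm-BC-kunneth}), whose base case is the map $K_*(A)\ts K_*(B)\to K_*(A\ts B)$; this is where the hypothesis that $A$ satisfies the K\"unneth formula enters (together with $K_*(B)$ free). In the group case the $0$-skeleton consists of orbits $\Ga/F$ and the base case reduces to $A\rtimes F$ satisfying K\"unneth; here there is no group action, so the $0$-cells are points and the base case reduces to $A$ itself. Your reference to Remark~\ref{remark-kasp-tens} is also slightly off, since that remark concerns the equivariant Kasparov transformation; the relevant compatibility of $\nu$ with $\TT$ comes directly from the construction of $\nu$ in \cite{oy3} and Theorem~\ref{thm-tensor}, as you note in your discussion of the main obstacle.
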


\begin{remark}
As for quantitative $K$-contractibility (see Remark \ref{remark-universal-rescaling}), it can be proved that there exists a universal rescaling for the quantitative  K\"unneth formula.\end{remark}
\subsection{Quantitative  K\"unneth formula and controlled  Mayer-Vietoris pairs}
In this subsection, we state  a permanence result  (that we shall prove in next subsection) for the  quantitative  K\"unneth formula with respect to controlled Mayer-Vietoris pairs which satisfy  a nuclear type condition.
\begin{definition}\label{definition-nuclear-coarse-MV-pair}
Let  $A$ be a $C^*$-algebra filtered by $(A_s)_{s>0}$   and let $r$  be a  positive number. An $r$-controlled nuclear  Mayer-Vietoris pair is a quadruple
 $(\De_1,\De_2,A_{\Delta_1},A_{\Delta_2})$, where $\De_1$ and $\De_2$ are closed linear subspaces of $A_r$ stable under involution and  $A_{\Delta_1}$ and $A_{\Delta_2}$ are respectively  $r$-controlled   $\Delta_1$ and $\De_2$-neighborhood-$C^*$-algebras 
 such that for some positive number $c$ and for any $C^*$-algebra $B$
\begin{enumerate}
\item  $(\De_1\ts B,\De_2\ts B)$ is a  coercive  decomposition pair for $A\ts B$ of order $r$ with coercitivity $c$.
\item the pair $ (A_{ \Delta_1, s}\ts B, A_{\Delta_2, s}\ts B)$ has the CIA property  with coercitivity $c$ for any positive number $s$.
\end{enumerate}
The positive number $c$ is called the {\bf coercity} of the $r$-controlled nuclear Mayer-Vietoris pair $(\De_1,\De_2,A_{\Delta_1},A_{\Delta_2})$.
\end{definition}

\begin{remark}\
\begin{enumerate}
\item Notice that $A_{\Delta_1}\ts B$ and $A_{\Delta_2}\ts B$ are respectively  $r$-controlled   $\Delta_1\ts B$ and $\De_2\ts B$-neighborhood-$C^*$-algebras.
\item Condition (ii) amounts to the following: for any positive numbers $\eps$ and $s$, any $x$ in $A_{\Delta_1,s}\ts B$ and any $y$ in $A_{\Delta_2,s}\ts B$ such that  $||x-y||<\eps $,
then there exists $z$ in  $(A_{\Delta_1,s}\cap A_{\Delta_2,s}) \ts B$ satisfying
$$||z-x||<c \eps, ~~~~||z-y||<c\eps.$$\end{enumerate}
\end{remark}
\begin{example}
Replacing $M_n(\C)$ with $B$, we see that   Example \ref{example-roe} and examples of Section \ref{subsection-groupoid} are indeed $r$-controlled nuclear Mayer-Vietoris pairs (with the same coercitivity).
\end{example}
Next lemma shows that the controlled boundary maps of a $r$-controlled nuclear Mayer-Vietoris pair are indeed compatible with Kasparov external product.
\begin{lemma}\label{lemma-mv-kunneth}
For any positive number $c$, there exists a control pair $(\al,h)$ such that the following is satisfied:
let $(\De_1,\De_2,A_{\Delta_1},A_{\Delta_2})$ be an $r$-controlled nuclear  Mayer-Vietoris pair with coercitivty $c$, let $B$ and $B'$
be two $C^*$-algebras and $z$ be an element in $KK_*(B,B')$, then
$$\TT_{(A_{\Delta_1}\cap A_{\Delta_2})\ts B,*}(z)\circ \DD_{\De_1\ts B,\De_2\ts B}\stackrel{(\al,h)}{\sim}  \DD_{\De_1\ts B',\De_2\ts B'}\circ \TT_{A,*}(z).$$
\end{lemma}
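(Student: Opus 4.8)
The plan is to reduce everything to the odd case and the naturality of the controlled tensorization morphism $\TT$. Recall that by definition $\DD^0_{\De_1,\De_2,*}$ is built from $\DD^1$ by conjugating with the controlled Bott isomorphisms $\TT_A([\partial]^{\pm 1})$, and these Bott isomorphisms commute (up to a controlled equality) with $\TT_A(z)$ by point (vi) of Theorem \ref{thm-tensor} together with Theorem \ref{thm-product-tensor}. So if we establish the statement for $\DD^1$, the even case follows by a diagram chase through the definition of $\DD^0$, at the cost of enlarging the control pair. Thus the real content is the odd-degree identity
$$\TT_{(A_{\Delta_1}\cap A_{\Delta_2})\ts B,*}(z)\circ \DD^1_{\De_1\ts B,\De_2\ts B}\stackrel{(\al,h)}{\sim}  \DD^1_{\De_1\ts B',\De_2\ts B'}\circ \TT_{A\ts B,*}(z).$$

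First I would reduce to the case where $z$ is the $KK$-class of the boundary map of a semi-split extension $0\to J\to E\to B\to 0$ (with $B'=J$), using the standard fact that every $KK_*$-class factors through such a boundary class after composing with invertible Bott-type elements, together with points (ii)--(iv) of Theorem \ref{thm-tensor} (additivity and functoriality of $\TT$) and the naturality of $\DD^1$ recorded in equation (\ref{equ-boundary-natural}). For a class induced by a homomorphism $f:B\to B'$, the identity is immediate from (\ref{equ-boundary-natural}) applied to $\mathrm{Id}_A\ts f$, since $f(\De_i\ts B)\subseteq \De_i\ts B'$ and $f(A_{\De_i}\ts B)\subseteq A_{\De_i}\ts B'$; and for Bott elements one uses point (v) of Theorem \ref{thm-tensor}. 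Once $z=[\partial_{J,E}]$, point (vii) of Theorem \ref{thm-tensor} identifies $\TT_{D\ts B,*}([\partial_{J,E}])$ with the controlled boundary map $\DD_{D\ts J, D\ts E}$ (for $D=A$ and $D=A_{\De_1}\cap A_{\De_2}$), so the claim becomes a commutation of two controlled boundary maps: the Mayer-Vietoris boundary $\DD^1_{\De_1\ts(-),\De_2\ts(-)}$ and the extension boundary $\DD_{(-)\ts J,(-)\ts E}$.

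The main step, and the expected main obstacle, is this commutation of boundary maps. I would prove it at the level of the explicit formulas defining $\DD^1$ in Section \ref{subs-section-boundary}. Starting from an $\eps$-$s$-unitary $u$ over $A\ts B$, one applies Proposition \ref{proposition-decom-unitaries} (valid because $(\De_1\ts B,\De_2\ts B)$ is a coercive decomposition pair by Definition \ref{definition-nuclear-coarse-MV-pair}(i)) to factor $\diag(u,v)\approx w_1w_2$, then Lemma \ref{lemma-technic} (valid by the CIA property in Definition \ref{definition-nuclear-coarse-MV-pair}(ii)) to produce the projection $q$ over $(A_{\De_1}\cap A_{\De_2})\ts B$ representing $\DD^1(u)$; one must check that applying $\DD_{(-)\ts J,(-)\ts E}$ to $q$, or equivalently lifting through $0\to (A_{\De_1}\cap A_{\De_2})\ts J\to\cdots\to (A_{\De_1}\cap A_{\De_2})\ts B\to 0$, yields the same controlled $K_0$-class as first applying $\DD_{(-)\ts J,(-)\ts E}$ to $[u]$ and then running the Mayer-Vietoris construction inside $E$. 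This is because both constructions amount to the same finite sequence of elementary manipulations (lifting unitaries and projections, multiplying elementary matrices, invoking CIA) performed on the same filtered $C^*$-algebras, and these manipulations visibly commute with the tensor factor; the only work is bookkeeping the control pairs and using Lemma \ref{lemma-technic}(ii),(iii) and the naturality of $\DD_{J,E}$ from Proposition \ref{prop-bound} to see that all intermediate choices wash out. The non-unital case is handled as usual via Lemma \ref{lemma-almost-canonical-form-odd} and Remark \ref{remark-unicite-lemme-technic}, replacing $\eps$-$s$-unitaries by ones with $u-I_n$ having coefficients in $A\ts B$. Collecting the control pairs produced by Propositions \ref{proposition-decom-unitaries}, \ref{prop-bound}, Lemma \ref{lemma-technic} and Theorems \ref{thm-tensor}, \ref{thm-product-tensor} gives a single control pair $(\al,h)$ depending only on $c$, as required.
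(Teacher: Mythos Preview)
Your overall strategy is reasonable, but the paper takes a cleaner route that sidesteps your ``main step'' entirely, and that main step is where your argument has a genuine gap.

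The paper does \emph{not} reduce to boundary classes of extensions. Instead, for even $z$ it invokes Lafforgue's factorization lemma \cite[Lemma 1.6.9]{laff-inv}: there exist a $C^*$-algebra $D$ and homomorphisms $\theta:D\to B$, $\eta:D\to B'$ with $[\theta]\in KK_*(D,B)$ invertible and $z=\eta_*([\theta]^{-1})$. Naturality of the Mayer-Vietoris boundary with respect to the honest homomorphisms $\mathrm{Id}_A\ts\theta$ and $\mathrm{Id}_A\ts\eta$ is exactly equation~(\ref{equ-boundary-natural}); the passage to $[\theta]^{-1}$ then costs only the fact that $\TT_A([\theta])$ is $(\al_\TT,k_\TT)$-invertible (Theorem~\ref{thm-tensor}). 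The odd case is reduced to the even case by composing with $[\partial]$ and using point (vi) of Theorem~\ref{thm-tensor} and Theorem~\ref{thm-product-tensor}, much as you propose for the $\DD^0/\DD^1$ reduction. So the whole proof never leaves the realm of homomorphisms-plus-controlled-inverses.

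Your approach, by contrast, reduces to $z=[\partial_{J,E}]$ and then must commute the Mayer-Vietoris boundary $\DD^1_{\De_1\ts(-),\De_2\ts(-)}$ with the extension boundary $\DD_{(-)\ts J,(-)\ts E}$. You assert that ``both constructions amount to the same finite sequence of elementary manipulations \ldots\ and these manipulations visibly commute with the tensor factor,'' but this is not a proof. The two boundaries are built from genuinely different data: one from a factorization $\diag(u,u^*)\approx w_1w_2$ produced by Proposition~\ref{proposition-decom-unitaries} and a CIA approximation (Lemma~\ref{lemma-technic}), the other from a lift of $u$ through the quotient map $E\to B$ and a conjugation (the construction behind Proposition~\ref{prop-bound}). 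There is no a priori reason why performing these in one order gives a projection controlled-homotopic to performing them in the other order; in classical $K$-theory this commutation is usually deduced from associativity of the Kasparov product, but in the quantitative setting the Mayer-Vietoris boundary is not given by $\TT$ of any $KK$-element, so that argument is unavailable. Establishing this commutation directly would require tracking lifts of the $w_i$ through $A_{\De_i}\ts E$, checking that the CIA property interacts correctly with the ideal $A_{\De_i}\ts J$, and controlling all the resulting homotopies---substantially more than ``bookkeeping.'' The Lafforgue trick is precisely what lets the paper avoid this.
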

\begin{proof}
We first deal with the case $z$ even. According to
\cite[Lemma 1.6.9]{laff-inv}, there exists a $C^*$-algebra $D$
and homomorphisms $\theta:D \to B$ and $\eta:D \to B'$ such that
\begin{itemize}
\item the element $[\theta]$ of $KK_*(D,B)$ induced by $\theta$ is
   invertible.
\item $z=\eta_*([\theta]^{-1})$.
\end{itemize}
Let $\theta_A:A\ts B'\to A\ts D$ and $\theta_{A_{\Delta_1}\cap A_{\Delta_2}}:(A_{\Delta_1}\cap A_{\Delta_2})\ts B'\to (A_{\Delta_1}\cap A_{\Delta_2})\ts D$ be the homorphisms induced by $\theta$ on tensor products and define similarly $\eta_A:A\ts B'\to A\ts D$ and $\eta_{A_{\Delta_1}\cap A_{\Delta_2}}:(A_{\Delta_1}\cap A_{\Delta_2})\ts B'\to (A_{\Delta_1}\cap A_{\Delta_2})\ts D$ the homomorphisms induced by   $\eta$.

By naturality of  quantitative boundary morphism of $r$-controled Mayer-Vietoris pairs (see equation (\ref{equ-boundary-natural}) of Section \ref{subs-section-boundary}), we get that
$$\theta_{A_{\Delta_1}\cap A_{\Delta_2},*}\circ \DD_{\De_1\ts B',\De_2\ts B'}= \DD_{\De_1\ts D,\De_2\ts D}
\circ \theta_{A,*}.$$
According to Theorem \ref{thm-tensor}, the control morphisms  $\theta_{A,*}$ and $\theta_{A_{\Delta_1}\cap A_{\Delta_2},*}$ are
$(\al_\TT,k_\TT)$-invertible with $(\al_\TT,k_\TT)$-inverses respectively given by $\TT_{A,*}([\theta]^{-1})$ and
$\TT_{A_{\Delta_1}\cap A_{\Delta_2},*}([\theta]^{-1})$ and hence, there exists a control pair $(\al,k)$ depending only on  $c$ 
and $(\al_\TT,k_\TT)$ such that
$$\DD_{\De_1\ts B',\De_2\ts B'}\circ \TT_{A,*}([\theta]^{-1}) \stackrel{(\al,k)}{\sim} \TT_{A_{\Delta_1}\cap A_{\Delta_2},*}([\theta]^{-1})\circ \DD_{\De_1\ts D,\De_2\ts D}.$$
Then, using the bifunctoriality of $\TT_{A,*}$ (see Theorem  \ref{thm-tensor}), we obtain
\begin{eqnarray*}
\DD_{\De_1\ts B',\De_2\ts B'}\circ \TT_{A,*}(z)&=&\DD_{\De_1\ts B',\De_2\ts B'}\circ \TT_{A,*}([\theta]^{-1})\circ 
\eta_{A,*}\\
& \stackrel{(\al,k)}{\sim}&  \TT_{A_{\Delta_1}\cap A_{\Delta_2},*}([\theta]^{-1})\circ \DD_{\De_1\ts D,\De_2\ts D}\circ 
\eta_{A,*}\\
& \stackrel{(\al,k)}{\sim}&  \TT_{A_{\Delta_1}\cap A_{\Delta_2},*}([\theta]^{-1})\circ \eta_{A_{\Delta_1}\cap A_{\Delta_2},*}\circ \DD_{\De_1\ts B,\De_2\ts B}\\
& \stackrel{(\al,k)}{\sim}&  \TT_{A_{\Delta_1}\cap A_{\Delta_2},*}(z)\circ \DD_{\De_1\ts B,\De_2\ts B},
\end{eqnarray*}
where the third line is once again the consequence of  naturality of  quantitative boundary morphism of $r$-controled Mayer-Vietoris pairs (see equation (\ref{equ-boundary-natural}) of Section \ref{subs-section-boundary}).
If $z$ is an odd  element, recall that $[\partial]$ is the invertible element of $KK_1(\C,C(0,1))$ implementing the boundary 
morphism of the extension $$0\lto C(0,1) \lto C(0,1]\stackrel{ev_0}{\lto} \C\lto 0.$$ Then there exists a control pair $(\al,k)$ depending only 
on $c$ and on the control pairs of Theorems \ref{thm-tensor}  and \ref{thm-product-tensor} such that 
\begin{equation*}
\begin{split}
\TT_{A_{\Delta_1}\cap A_{\Delta_2},*}(z)&\circ \DD^0_{\De_1\ts B,\De_2\ts B}\\& \stackrel{(\al,k)}{\sim}\TT_{A_{\Delta_1}\cap A_{\Delta_2},*}(z)\circ \TT_{(A_{\Delta_1}\cap A_{\Delta_2})\ts B,*}([\partial]^{-1})\circ \DD^1_{S\De_1\ts B,S\De_2\ts B,*}\circ \TT_{A\ts B,*}( [\partial])\\
& \stackrel{(\al,k)}{\sim}\TT_{A_{\Delta_1}\cap A_{\Delta_2},*}(z)\circ \TT_{(A_{\Delta_1}\cap A_{\Delta_2}),*}(\tau_B([\partial]^{-1}))\circ \DD^1_{S\De_1\ts B,S\De_2\ts B,*}\circ \TT_{A\ts B,*}( [\partial])\\
& \stackrel{(\al,k)}{\sim}\TT_{A_{\Delta_1}\cap A_{\Delta_2},*}(\tau_B([\partial]^{-1})\ts_Bz)\circ  \DD^1_{S\De_1\ts B,S\De_2\ts B,*}\circ \TT_{A\ts B,*}( [\partial])\\
& \stackrel{(\al,k)}{\sim} \DD^1_{S\De_1\ts B',S\De_2\ts B',*}\circ \TT_{A,*}(\tau_B([\partial]^{-1})\ts_Bz)\circ \TT_{A,*}(\tau_{B,*}([\partial]))\\
& \stackrel{(\al,k)}{\sim} \DD^1_{S\De_1\ts B',S\De_2\ts B',*} \circ \TT_{A,*}(z),\end{split}
\end{equation*}
where
\begin{itemize}
\item the  first $\stackrel{(\al,k)}{\sim}$  holds by definition of  $\DD^0_{\De_1\ts B,\De_2\ts B}$;
\item the  $\stackrel{(\al,k)}{\sim}$ is a consequence of point (vi) of Theorem \ref{thm-tensor};
\item the third and fifth   $\stackrel{(\al,k)}{\sim}$ are consequences of Theorem \ref{thm-product-tensor};
\item the fourth  $\stackrel{(\al,k)}{\sim}$ is  a consequence of the even case;
\end{itemize}
Similarly, we can prove that there exists 
a control pair $(\al,k)$ depending only 
on $c$ and on the control pair of Theorems \ref{thm-tensor}  and \ref{thm-product-tensor} such that 
$$\TT_{A_{\Delta_1}\cap A_{\Delta_2},*}(z)\circ \DD^1_{\De_1\ts B,\De_2\ts B}  \stackrel{(\al,k)}{\sim}\DD^0_{S\De_1\ts B,S\De_2\ts B,*} \circ \TT_{A,*}(z)$$ and hence we obtain the result in the odd case.
\end{proof}

 \begin{theorem}\label{thm-mv-kunneth}
 Let $A$  be a filtered $C^*$-algebra. Assume there exists    positive numbers $c$  and $\lambda_0$ with $\la_0\gq 1$ that satisfies the following:   for  any positive number $r$, there exists
 an  $r$-controlled nuclear Mayer-Vietoris pair $(\De_1,\De_2,A_{\De_1},A_{\De_2})$ with coercitivity $c$ such that
 $A_{\De_1},\,A_{\De_2}$ and $A_{\De_1}\cap A_{\De_2}$ satisfies the 
  quantitative K\"unneth formula with rescaling $\la_0$. 
  
  \medskip
  
  Then $A$  satisfies the 
  quantitative K\"unneth formula with rescaling $\la_1$ for a positive number $\la_1$  with $\la_1\gq 1$ depending only on $c$ and $\la_0$.  \end{theorem}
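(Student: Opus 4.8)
The plan is to run a controlled five lemma argument on the controlled Mayer–Vietoris six term exact sequence (Theorem \ref{thm-six-term-exact-sequence}) for the pair $(\De_1,\De_2,A_{\De_1},A_{\De_2})$, tensored with a $C^\ast$-algebra $B$ whose $K$-theory is free abelian. First I would fix $B$ with $K_*(B)$ free abelian and tensor the whole controlled Mayer–Vietoris situation by $B$: by Definition \ref{definition-nuclear-coarse-MV-pair}, $(\De_1\ts B,\De_2\ts B,A_{\De_1}\ts B,A_{\De_2}\ts B)$ is an $r$-controlled weak Mayer–Vietoris pair for $A\ts B$ with coercitivity $c$, and $(A_{\De_i}\ts B)\cap(A_{\De_j}\ts B)=(A_{\De_1}\cap A_{\De_2})\ts B$. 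So Theorem \ref{thm-six-term-exact-sequence} supplies, for a control pair $(\lambda,h)$ depending only on $c$, a $(\lambda,h)$-exact six term sequence of order $r$ relating $\K_*((A_{\De_1}\cap A_{\De_2})\ts B)$, $\K_*(A_{\De_1}\ts B)\oplus\K_*(A_{\De_2}\ts B)$ and $\K_*(A\ts B)$, with controlled boundary $\DD_{\De_1\ts B,\De_2\ts B}$. Applying the same theorem to $B=\C$ (equivalently tensoring the original sequence trivially and then tensoring by $K_*(B)$, which is an exact functor since $K_*(B)$ is free) gives a second $(\lambda,h)$-exact sequence of order $r$ in the quantitative objects $\K_*(-)\ts K_*(B)$. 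The three vertical maps are $\Omega_{A_{\De_1}\cap A_{\De_2},B,*}$, $\Omega_{A_{\De_1},B,*}\oplus\Omega_{A_{\De_2},B,*}$ and $\Omega_{A,B,*}$.

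Next I would check the squares commute up to a controlled error. The squares built from the inclusion maps $\jmath_{\De_i}$ commute up to a universal control pair by naturality of the controlled tensorization morphism $\TT_{-,*}$ (Theorem \ref{thm-tensor}(iii),(iv)), since $\Omega_{-,B,*}$ is by definition built from $\TT$. The squares involving the controlled boundary maps commute up to a control pair depending only on $c$ by Lemma \ref{lemma-mv-kunneth} applied with the identity element giving the external product structure — more precisely, the compatibility $\omega_{-,B,*}^{\eps,r}(x\ts y)=\TT_{-}(y)(x)$ together with Lemma \ref{lemma-mv-kunneth} (which is exactly the statement that $\DD$ commutes with $\TT_{-,*}(z)$ up to control, here taking $z$ to run over the generators of $KK_*(\C,\C)$ paired against elements of the free group $K_*(B)$) yields that the boundary squares $(\lambda',h')$-commute for a control pair depending only on $c$. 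By rescaling I may take a single control pair $(\mu,\ell)$, depending only on $c$, for the exactness of both sequences and for the commutativity of all six squares.

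At this point I would invoke the controlled five lemma machinery: since $\Omega_{A_{\De_1},B,*}$, $\Omega_{A_{\De_2},B,*}$ and $\Omega_{A_{\De_1}\cap A_{\De_2},B,*}$ are quantitative isomorphisms with rescaling $\lambda_0$, a diagram chase on the two $(\mu,\ell)$-exact six term sequences of order $r$, using the controlled commutativity of the squares, produces quantitative statements $QI_{\Omega_{A,B}}$ and $QS_{\Omega_{A,B}}$ at a rescaling $\lambda_1$ depending only on $c$ and $\lambda_0$: injectivity of the middle vertical follows from surjectivity of the outer left verticals plus injectivity of the next outer verticals, and surjectivity of the middle vertical follows from the analogous combination, exactly as in the classical five lemma but tracking propagation. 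Since $r$ is arbitrary, the resulting $\lambda_1$ is uniform in $r$, and since none of the control pairs or rescalings in Theorems \ref{thm-six-term-exact-sequence}, \ref{thm-tensor}, \ref{thm-product-tensor} and Lemma \ref{lemma-mv-kunneth} depends on $B$, the family over all such $B$ is uniform; this gives that $A$ satisfies the quantitative K\"unneth formula with rescaling $\lambda_1$.

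The main obstacle I anticipate is bookkeeping the propagation growth through the diagram chase so that a \emph{single} rescaling $\lambda_1$, independent of $r$ and $B$, comes out: the six term sequence is only exact \emph{at order} $r$, so each use of exactness or commutativity consumes a bounded slice of the allowed propagation range $(0,r/\ell_\eps)$, and one must verify that the fixed, bounded number of steps in the five lemma chase stays within that range after composing the $O(1)$ control pairs. Relatedly, one must be careful that the controlled boundary $\DD_{\De_1\ts B,\De_2\ts B}$ is only defined on $\K_*$, not on $\K_*\ts K_*(B)$, so the "lower" sequence of order $r$ in $\K_*(-)\ts K_*(B)$ has to be manufactured by tensoring the six term sequence for $B=\C$ with the free group $K_*(B)$ and checking that tensoring a $(\lambda,h)$-exact sequence of order $r$ with a free abelian group preserves $(\lambda,h)$-exactness of order $r$ — this uses freeness of $K_*(B)$ in an essential way and should be isolated as a preliminary lemma.
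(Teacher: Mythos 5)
Your high-level strategy — a controlled five lemma on the controlled Mayer--Vietoris six-term sequences for $A$ and for $A\ts B$, with vertical maps $\Omega_{-,B,*}$ and boundary squares commuting by Lemma \ref{lemma-mv-kunneth} — is exactly the strategy the paper announces at the start of Section 4.3, and you correctly flag the two main technical hurdles (propagation bookkeeping, and manufacturing the row in $\K_*(-)\ts K_*(B)$). So as a plan this is pointing in the right direction.

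The genuine gap is the step ``invoke the controlled five lemma machinery''. There is no such general machinery available in the setting you are in, because the vertical maps are only \emph{quantitative} isomorphisms in the sense of Definition \ref{def-quantitative-morphism}, not $(\la,h)$-controlled isomorphisms: the QI and QS conditions return a new scale $s'$ that is allowed to be arbitrarily larger than $s$, with no control by a fixed function of $s$. A naive five lemma chase built only on ``$(\la,h)$-exactness of order $r$'' for the two rows would therefore force you out of the range $(0, r/h_\eps)$ after the first use of the quantitative Künneth hypothesis on $A_{\De_1}\cap A_{\De_2}$. The paper resolves this by running the chase by hand on explicit representatives ($\eps$-$r$-unitaries and $\eps$-$r$-projections), and by replacing bare exactness with the persistence-flavoured statements Corollary \ref{cor-bound1}, Lemma \ref{lemma-bound1} and Lemma \ref{lemma-technic2}, which are exactly the points where the chase is allowed to pass to a larger scale while still keeping the decompositions into $A_{\De_1}$ and $A_{\De_2}$ pieces. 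Your plan does not supply substitutes for these, and they are where almost all of the actual proof (Propositions \ref{proposition-technic} and \ref{proposition-technic-QS}) lives.

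Relatedly, the paper first reformulates $\Omega_{A,B,*}$ as a controlled morphism $\F_{A,B,*}:\O_*(A,B)\to\O'_*(A,B)$ using the torus $\To_2$ and the controlled isomorphism $\K_*(S^2(A\ts B))\cong\ker q_{A\ts B,*}$ of Corollary \ref{corollary-eight}. This is not cosmetic: it collapses the mixed-degree map $K_1(A)\ts K_0(B)\oplus K_0(A)\ts K_1(B)\to K_1(A\ts B)$ into a single map between objects that are both modeled on $\eps$-$r$-unitaries in $M_n(C(\To_2,A\ts B))$, which is precisely the setting in which the CIA property and the decomposition Propositions \ref{proposition-decom-unitaries}, \ref{proposition-decom-unitaries-adjoint} apply. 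Without some device of this sort, the ``diagram chase with representatives'' you would need cannot even be set up, since your proposed bottom row has no representative-level description on which to run Lemma \ref{lemma-technic2}. So your proposal, while correct in outline, is missing both the reformulation that makes the chase possible and the persistence lemmas that make it close.
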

The proof of this theorem will be given in next subsection.

\subsection{Proof of  Theorem \ref{thm-mv-kunneth}\label{subsubsection-preliminary}}
The idea of the proof of Theorem  \ref{thm-mv-kunneth} is to use the controlled exactness persistence properties stated in Corollary \ref{cor-bound1} and  Lemma \ref{lemma-technic2} to carry out  a controlled five lemma argument. This requires
 some preliminary work in order to formulate the quantitative K\"unneth formula in terms of  an (even degree) controlled morphism between quantitative $K$-theory groups.

  \subsubsection{Preliminaries}

We  denote by $SB=C_0((0,1),B)$ the suspension algebra of a $C^*$-algebra $B$. Let 
$A$ be a filtered $C^*$-algebra, let $B$ be a $C^*$-algebra and let $[\partial]$  be the invertible element  in $KK_1(\C,C(0,1))$ that implements the boundary of the Bott  extension,
$0\to C_0(0,1)\to C_0[0,1)\stackrel{ev_0}{\to}\C\to 0$. Applying Lemma \ref{lemma-compatibility-kasparov} to
$z=Id_\C$ and $z'=\tau_B([\partial])$, we see that we only have to consider the odd case
$$\Omega_{A,B,*}:\K_1(A)\ts K_0(B)\oplus \K_0(A)\ts K_1(B)\lto \K_1(A\ts B).$$

Let $\To_2=\{(z_1,z_2)\text{ such that }|z_1|=|z_2|=1\}$ be the two  torus. Let us view $(0,1)^2$ as an open subset of
$\To^2$ via the inclusion map $(0,1)^2\hookrightarrow \To^2;\, (s,t)\mapsto (e^{2\ip s},e^{2\ip t})$.
\begin{lemma}\label{lemma-eight} Possibly rescaling the control pair $(\al_\D,k_\D)$, then 
for any  filtered $C^*$-algebra $A$,  the filtered and semi-split extension
of filtered $C^*$-algebras \begin{equation} \label{equation-extension-eight} 0\lto S^2 A\stackrel{j_A}{\lto} C(\To_2,A)\stackrel{q_A}{\lto} C(\To_2\setminus (0,1)^2,A)\lto 0\end{equation}  has a vanishing controlled boundary map   $$\D_{S^2A,C(\To_2,A)}:\K_*(C(\To_2\setminus (0,1)^2,A))\lto   \K_{*+1}(S^2 A).$$ 
 \end{lemma}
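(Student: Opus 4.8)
The plan is to exploit naturality of the controlled boundary map together with the fact that the extension (\ref{equation-extension-eight}) is, up to controlled isomorphism, a "trivial" extension coming from the inclusion of a retract. First I would observe that $\To_2\setminus(0,1)^2$ is the wedge of the two circles $\{1\}\times\To$ and $\To\times\{1\}$ glued at the point $(1,1)$; more precisely, the boundary of the square $[0,1]^2$ maps onto $\To_2\setminus(0,1)^2$ with the four vertices identified. The key structural point is that the inclusion $\To_2\setminus(0,1)^2\hookrightarrow \To_2$ admits, after suspension/stabilization, a controlled splitting at the level of quantitative $K$-theory: the map $C(\To_2,A)\to C(\To_2\setminus(0,1)^2,A)$ is split by a filtered completely positive cross-section (restriction composed with a continuous extension off the square), so the extension (\ref{equation-extension-eight}) is semi-split and filtered, and $q_{A,*}$ is controlled split-surjective.

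The main step is then to show $\D_{S^2A,C(\To_2,A)}=0$ as a controlled morphism of appropriate order, i.e. $\D\aeq 0$. I would argue as follows. The controlled boundary map $\D_{S^2A,C(\To_2,A)}$ is by Theorem \ref{thm-tensor}(vii) controlled-equivalent to $\TT_A([\partial_{S^2,C(\To_2)}])$, where $[\partial_{S^2,C(\To_2)}]\in KK_1(C(\To_2\setminus(0,1)^2),S^2)$ implements the classical boundary of the underlying extension of $C^*$-algebras (without coefficients). But that classical boundary map $\partial\colon K_*(C(\To_2\setminus(0,1)^2))\to K_{*+1}(S^2)=K_*(\C)$ is zero: the restriction map $K^*(\To_2)\to K^*(\To_2\setminus(0,1)^2)$ is surjective (indeed split, since $\To_2\setminus(0,1)^2$ is a retract of $\To_2$ after collapsing — or directly, because $K^*$ of the wedge of circles injects via the vertex/point evaluations which factor through $\To_2$), so the six-term sequence forces $\partial=0$ and even the $KK_1$-class $[\partial_{S^2,C(\To_2)}]$ to be $0$ in $KK_1(C(\To_2\setminus(0,1)^2),S^2)$ — here I would use that $C(\To_2\setminus(0,1)^2)$ and $S^2$ are in the bootstrap class, so $KK_1$ is computed by the $K$-theory groups and the class of an extension with vanishing boundary vanishes. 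Then $\TT_A(0)=0$ by Theorem \ref{thm-tensor}(ii) applied with $z=0$, hence $\D_{S^2A,C(\To_2,A)}\aeq \TT_A([\partial_{S^2,C(\To_2)}])=\TT_A(0)=0$, which is exactly the assertion (the phrase "possibly rescaling $(\al_\D,k_\D)$" absorbs the control pair produced by the equivalence in Theorem \ref{thm-tensor}(vii)).

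The step I expect to be the main obstacle is making the identification $\D_{S^2A,C(\To_2,A)}\aeq \TT_A([\partial_{S^2,C(\To_2)}])$ genuinely legitimate: Theorem \ref{thm-tensor}(vii) is stated for semi-split extensions $0\to J\to A\to A/J\to 0$ and one must check that (\ref{equation-extension-eight}) is obtained from the coefficient-free extension $0\to S^2\to C(\To_2)\to C(\To_2\setminus(0,1)^2)\to 0$ by tensoring with $A$ in the filtered sense, i.e. that $C(\To_2,A)\cong C(\To_2)\ts A$ with the product filtration and similarly for the ideal and quotient, and that the completely positive cross-section can be chosen filtered (it can: extend functions off $[0,1]^2$ radially, which does not increase propagation in the $A$-variable). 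Once this is in place the argument is short. An alternative, should the bootstrap/$KK_1$-vanishing route feel delicate, is to give a direct controlled homotopy: produce an explicit filtered completely positive splitting $s\colon C(\To_2\setminus(0,1)^2,A)\to C(\To_2,A)$ of $q_A$ (restriction of a retraction $\To_2\to \To_2\setminus(0,1)^2$ is not continuous, but one can instead use that $\To_2\setminus(0,1)^2$ deformation retracts onto the wedge of circles and realize the section through the two coordinate circle inclusions $\To\hookrightarrow\To_2$, each of which does split $C(\To_2,A)\to C(\To,A)$ filteredly), whence $q_{A,*}$ is a controlled isomorphism onto its image with controlled inverse, forcing $\D\aeq 0$ directly from $(\lambda,h)$-exactness of the controlled six-term sequence of Theorem \ref{thm-six term}. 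I would write up the first route and mention the second as a remark.
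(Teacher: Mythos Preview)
Your primary route is correct and genuinely different from the paper's. You invoke Theorem~\ref{thm-tensor}(vii) to identify $\D_{S^2A,C(\To_2,A)}$ with $\TT_A([\partial_{S^2,C(\To_2)}])$ and then kill the $KK_1$-class via the UCT. This works, but you should tighten one point: the UCT does \emph{not} say that $KK_1$ is determined by the induced map on $K$-theory in general; there is an $\operatorname{Ext}^1_\Z$ term. What saves you here is that $K_*(C(\To_2\setminus(0,1)^2))\cong\Z\oplus\Z^2$ and $K_*(C_0((0,1)^2))\cong\Z\oplus 0$ are all free abelian, so the $\operatorname{Ext}$ contribution vanishes and the map $KK_1\to\operatorname{Hom}$ is injective. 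State this explicitly. Also, checking that $q_{A,*}$ is surjective on $K$-theory (your argument via the two coordinate-circle generators of $K_1$ and the unit class in $K_0$) is exactly what is needed to see that the image of $[\partial_{S^2,C(\To_2)}]$ in $\operatorname{Hom}$ is zero.

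The paper instead gives a direct, elementary argument that avoids $KK$-theory and the UCT entirely. It reduces via controlled Bott periodicity to the odd boundary $\K_1\to\K_0$, identifies $C(\To_2\setminus(0,1)^2,A)$ with $\{(f_1,f_2)\in C(\To,A)^2:f_1(1)=f_2(1)\}$, and for any $\eps$-$r$-unitary $(f_1,f_2)$ writes down the explicit lift $u_{f_1,f_2}(z_1,z_2)=f_1(z_1)f_1^*(1)f_2(z_2)$ in $C(\To_2,A)$. This is a $9\eps$-$3r$-unitary whose image under $q_A$ is $3\eps$-close to $(f_1,f_2)$, so by the very construction of the controlled boundary it vanishes. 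Your approach is cleaner conceptually and packages the work into existing machinery; the paper's approach is more self-contained, gives explicit constants, and is closer in spirit to the later constructions in Section~\ref{subsubsec-computation-F}, where the explicit lift $u_{f_1,f_2}$ reappears.

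Your alternative route should be dropped or rethought. There is \emph{no} filtered $*$-homomorphic splitting of $q_A$ (there is no retraction $\To_2\to\To_2\setminus(0,1)^2$, since the inclusion induces the non-injective abelianization $F_2\to\Z^2$ on $\pi_1$), and a completely positive section alone does not force $\D\aeq 0$. What one can do is lift $\eps$-$r$-unitaries, not algebra elements --- and that is precisely the paper's argument, which you might mention as the hands-on alternative.
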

\begin{proof}
By using  controlled Bott periodicity \cite[Lemma 4.6]{oy2} and in view of \cite[Proposition 3.19]{oy2}, we only need to check that the result holds for $$\D_{S^2A,C(\To_2,A)}:\K_1(C(\To_2\setminus (0,1)^2,A))\lto   \K_{0}(S^2 A).$$
But \begin{equation}\label{equation-eight}C(\To_2\setminus (0,1)^2,A)\cong \{(f_1,f_2)\in C(\To,A)\oplus \C(\To,A)\text{ such that }f_1(1)=f_2(1)\}.\end{equation} Let $(f_1,f_2)$ be an $\eps$-$r$-unitary  in $C(\To_2\setminus (0,1)^2,A)$ and define then $$u_{f_1,f_2}:\To^2\lto A;\, (z_1,z_2)\mapsto f_1(z_1)f^*_1(1)f_2(z_2)$$Then $u_{f_1,f_2}$ is a $9\eps$-$3r$-unitary in $C(\To_2,A)$. Moreover, under the identification of equation (\ref{equation-eight}), we have  $$q_A(u_{f_1,f_2})=(f_1f^*_1(1)f_1(1),f_1(1)f^*_1(1)f_2)$$ and hence $$\|q_A(u_{f_1,f_2})-(f_1,f_2)\|<3\eps.$$ Then the result holds by construction of the controlled boundary map in the odd case.
\end{proof}
In consequence,
by using the controlled six term exact sequence  (Theorem \ref{thm-six term}) associated to
the semi-split and filtered extension of equation (\ref{equation-extension-eight}), then if we set 
 $$ \ker q_{A,*}\defi (\ker q^{\eps,r}_{A,*}: K^{\eps,r}_*(C(\To_2, A))\lto  K^{\eps,r}_*(C(\To_2\setminus (0,1)^2, A))  )_{0<\eps<1/4,r>0},$$
we obtain the following corollary.
\begin{corollary}\label{corollary-eight}
There exists a controlled pair $(\la,h)$ such that for any filtered $C^*$-algebra $A$, then
$$j_{A,*}:\K_*(S^2 A)\lto \ker q_{A,*}$$ is a $(\la,h)$-controlled isomorphism. \end{corollary}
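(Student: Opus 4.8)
The plan is to apply the controlled six term exact sequence of Theorem \ref{thm-six term} to the semi-split filtered extension (\ref{equation-extension-eight}), using the vanishing of the boundary map obtained in Lemma \ref{lemma-eight}. Write the relevant portion of that six term sequence, which is controlled-exact for a universal control pair coming from Theorem \ref{thm-six term}:
$$\K_{*}(C(\To_2\setminus(0,1)^2,A))\stackrel{\DD}{\lto}\K_{*+1}(S^2A)\stackrel{j_{A,*}}{\lto}\K_{*+1}(C(\To_2,A))\stackrel{q_{A,*}}{\lto}\K_{*+1}(C(\To_2\setminus(0,1)^2,A)).$$
By Lemma \ref{lemma-eight} the first arrow $\DD$ is (controlled-)zero after rescaling, so controlled-exactness at $\K_{*+1}(S^2A)$ says that $j_{A,*}$ is controlled-injective: for any $x$ in $K^{\eps,r}_{*}(S^2A)$ with $j^{\eps,r}_{A,*}(x)=0$ in $K^{\lambda\eps,h_\eps r}_{*}(C(\To_2,A))$, there is a preimage under the zero map $\DD$ after a further rescaling, hence $\iota^{\eps,\lambda'\eps,r,h'_\eps r}_*(x)=0$. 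This is precisely one half of what a controlled isomorphism onto $\ker q_{A,*}$ requires. For the other half, controlled-exactness at $\K_{*+1}(C(\To_2,A))$ gives that any $y$ in $\ker q^{\eps,r}_{A,*}$ (i.e.\ $q^{\eps,r}_{A,*}(y)=0$) is, up to rescaling of control and propagation, in the image of $j_{A,*}$; that is, there exists $x$ in $K^{\lambda\eps,h_\eps r}_*(S^2A)$ with $j^{\lambda\eps,h_\eps r}_{A,*}(x)=\iota^{\eps,\alpha_\F\lambda\eps,r,\dots}_*(y)$. Combining these two statements, and using the standard bookkeeping for controlled morphisms (composition of control pairs, absorption of rescalings), one produces a controlled morphism $\K_*(S^2A)\to\ker q_{A,*}$, namely the corestriction of $j_{A,*}$, together with a controlled inverse defined on $\ker q_{A,*}$ by choosing such preimages $x$; one checks the two composites are $(\lambda,h)$-close to the respective identities using controlled-injectivity once more.

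The one subtlety is that $\ker q_{A,*}=(\ker q^{\eps,r}_{A,*})_{0<\eps<1/4,r>0}$ must first be verified to be a genuine quantitative object: the structure maps $\iota^{\eps,\eps',r,r'}_*$ of $\K_*(C(\To_2,A))$ restrict to $\ker q^{\eps,r}_{A,*}$ because $q_{A,*}$ is a controlled (indeed $(1,1)$-controlled, filtered) morphism and hence commutes with structure maps, so it sends the kernel at $(\eps,r)$ into the kernel at $(\eps',r')$. This is immediate and requires no work. One also needs that $j_{A,*}$ actually lands in $\ker q_{A,*}$, which holds since $q_{A,*}\circ j_{A,*}=0$ at the controlled level (it is induced by the composite of homomorphisms $S^2A\to C(\To_2,A)\to C(\To_2\setminus(0,1)^2,A)$, which is zero on the nose, so the induced controlled morphism is $0$ with propagation preserved).

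The main obstacle — really the only place any genuine argument is needed — is packaging the two controlled-exactness statements into a single control pair $(\lambda,h)$ that works uniformly for both directions and for the ``inverse up to $(\lambda,h)$'' relations $\G\circ\F\aeq\Id$ and $\F\circ\G\aeq\Id$. One must track how the rescalings in Theorem \ref{thm-six term} (for exactness at two consecutive spots) compose with the rescaling already absorbed in Lemma \ref{lemma-eight}, and then take $(\lambda,h)$ larger than all the control pairs produced, verifying that the defining conditions of a controlled isomorphism in the sense of the excerpt's definition (existence of $\G$ with the two $\aeq$-relations, each with respect to $(\lambda,h)$) are met. Since all these control pairs are universal — independent of $A$ — the resulting $(\lambda,h)$ is universal, which is exactly the content of Corollary \ref{corollary-eight}. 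No estimate on norms of $\eps$-$r$-unitaries or projections is needed here; this is purely a diagram-chase at the level of quantitative objects and controlled morphisms, so I would present it as such, citing Theorem \ref{thm-six term}, Lemma \ref{lemma-eight}, and the formalism of Section 1.
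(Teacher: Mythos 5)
Your argument is correct and follows precisely the route the paper leaves implicit: plug the vanishing of the controlled boundary map from Lemma~\ref{lemma-eight} into the controlled six-term exact sequence of Theorem~\ref{thm-six term} for the extension (\ref{equation-extension-eight}), read off controlled injectivity and controlled surjectivity of $j_{A,*}$ onto $\ker q_{A,*}$, and package them into a controlled inverse via the well-definedness argument you sketch. The paper states the corollary without a detailed proof, and your reconstruction --- including the check that $\ker q_{A,*}$ carries a quantitative-object structure, that $j_{A,*}$ corestricts to it, and that the resulting control pair is independent of $A$ --- fills that gap in the expected way.
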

Notice that, by construction of the controlled boundary map associated to controlled Mayer-Vietoris pair 
  $(\De_1,\De_2,A_{\De_1},A_{\De_2})$,  we have that 
 
\begin{equation}\label{equation-boundary-eight} 
q_{A_{\De_1}\cap A_{\De_2}\ts B,*}
\circ \D_{C(\To^2,A_{\De_1}),C(\To^2,A_{\De_2}),*}=
\D_{C_0(\To^2\setminus ]0,1[^2,A_{\De_1}),C_0(\To^2\setminus ]0,1[^2,A_{\De_2}),*}\circ q_{A\ts B,*}.
\end{equation}
Let us set with notation of Corollary \ref{corollary-eight}
$$\O'_*(A,B)=\O'_0(A,B)\oplus \O'_1(A,B)\defi   \ker q_{A\ts B,*}.$$
Set $z=[\partial]\ts\tau_{C_0(0,1)}([\partial])$ and define then the $(\al^2_\T,h_\T*h_\T)$-controlled morphism
$$ j_{A\ts B,*}\circ \T_{A\ts B}(z)\circ \Omega_{A,B,*}:\K_*(A)\ts K_0(B)\oplus \K_{*+1}(A)\ts K_1(B)\lto\O'_*(A,B).$$
Since  $z$ is an invertible element in $KK_0(\C,\C_0((0,1)^2))$ and hence, according to Theorem \ref{thm-product-tensor} and  to Corollary  \ref{corollary-eight}, 
\begin{itemize}
\item if $\Omega_{A,B,*}$  is a quantitative isomorphism with rescaling $\la_0$ then there exists a positive number $\la_1$ depending only on $\la_0$ with $\la_1\gq 1$ such that $j_{A\ts B,*}\circ \T_{A\ts B}(z)\circ \Omega_{A,B,*}$ is a quantitative isomorphism with rescaling $\la_1$;
\item if   $j_{A\ts B,*}\circ \T_{A\ts B}(z)\circ \Omega_{A,B,*}$  is a quantitative isomorphism with rescaling $\la_1$,  then there exists a positive number $\la_0$ depending only on $\la_1$ with $\la_0\gq 1$ such that $\Omega_{A,B,*}$ is a quantitative isomorphism with rescaling $\la_0$.
\end{itemize}
 According to Lemma \ref{lemma-compatibility-kasparov}, we get that
  \begin{equation}\label{equation-bott-kunneth}
\T_A(z)\circ \Omega_{A,B,*} \aeq \Omega_{SA,SB,*}\circ ((\T_A([\partial])\ts \tau_B([\partial])\end{equation}for some control pair $(\la,k)$ that   does not depend  on $A$ or on  $B$. Let us set 
$$\O_*(A,B)\defi \K_*(A)\ts K_0(B)\oplus  \K_*(SA)\ts K_0(SB)$$  and denote for positive numbers $\eps,\,\eps',r$ and $r'$
with $\eps\lq\eps'<1/4$ and $r\lq r'$ respectively by $\iota^{\eps,\eps',r,r'}_{\O,*}:O^{\eps,r}_*(A,B)\lto O^{\eps',r'}_*(A,B)$ and
$\iota^{\eps,\eps',r,r'}_{\O',*}:{O_*'}^{\eps,r}(A,B)\lto {O_*'}^{\eps',r'}(A,B)$ the structure morphisms corresponding to $\O_*(A,B)$ and ${\O'_*}(A,B)$.  For $(\al_\F,k_\F)=(\la,h)$, let us define the $(\al_\F,k_\F)$-controlled morphism $$\F_{A,B,*}=(F_{A,B,*}^{\eps,r})_{0<\eps<\frac{1}{4\al_\F},r>0}:\O_*(A,B)\lto \O_*'(A,B)$$ by 
\begin{equation*}\begin{split}F_{A,B,*}^{\eps,r}(x\oplus x')=j_{A\ts B,*}^{\al_\F\eps,k_{\F,\eps}r}\circ\iota_{\O',*}^{\al^2_\T\eps,\al_\F\eps,k_{\T,\eps}k_{\T,\al_\T\eps}r,k_{\F,\eps}r}\circ  \tau^{\al_\T\eps,k_{\T,\eps}r}_A(z)&\circ \omega^{\eps,r}_{A,B,*}(x)\\
+\iota_{\O',*}^{\al_\T\eps,\al_\F\eps,k_{\T,\eps}r,k_{\F,\eps}r}&\circ j_{A\ts B,*}^{\al_\T\eps,k_{\T,\eps}r}\circ   \omega^{\eps,r}_{A,B,*}(x').\end{split}\end{equation*}Notice that $\F_{\bullet,B,*}$ is obviously natural with respect to filtered morphisms.
 Since $[\partial]$ is invertible in $KK_1(\C,C(0,1))$ and according to equation (\ref{equation-bott-kunneth}) and 
Theorem \ref{thm-product-tensor},  Theorem \ref{thm-mv-kunneth} is equivalent to the following statement:
\medskip

 Let $A$  be a filtered $C^*$-algebra. Assume there exists    positive numbers $c$  and $\lambda_0$ with $\la_0\gq 1$ that satisfies the following:   for  any positive number $r$, there exists
 an  $r$-controlled nuclear Mayer-Vietoris pair $(\De_1,\De_2,A_{\De_1},A_{\De_2})$ with coercitivity $c$ such that
for any $C^*$-algebra $B$ with  $K_*(B)$  free abelian, then $\F_{A_{\De_1},B,*},\,\F_{A_{\De_2},B,*}$ and $\F_{A_{\De_1}\cap A_{\De_2},B,*}$ are quantitative isomorphisms  with rescaling $\la_0$.  Then for any 
 for any $C^*$-algebra $B$ with  $K_*(B)$  free abelian,
 $\F_{A,B,*}$  is a quantitative isomorphism with coercity $\la_1$ for a positive number $\la_1$ with $\la_1\gq 1$ depending only on $\la_0$ and $c$.
 
\smallskip 

Notice that,  by controlled Bott periodicity, we only need to consider the odd case.
\subsubsection{Notations}   Let us introduce some notations that we will use throughout  the proof.
  Let $v$ be a  unitary in a unital $C^*$-algebra $B$.
  Define the unitary of  $M_2(B)$
  $$R_v:[0,1]\lto U_2(B);\,t\mapsto \left(\begin{smallmatrix}v&0\\ 0&1\end{smallmatrix}\right)\cdot \left(\begin{smallmatrix}\cos \frac{t\pi}{2}&\sin\frac{t\pi}{2}\\ -\sin \frac{t\pi}{2}&\cos\frac{t\pi}{2}\end{smallmatrix}\right)\cdot \left(\begin{smallmatrix}v^*&0\\ 0&1\end{smallmatrix}\right)\cdot \left(\begin{smallmatrix}\cos\frac{t\pi}{2}&-\sin \frac{t\pi}{2}\\ \sin\frac{t\pi}{2}&\cos \frac{t\pi}{2}\end{smallmatrix}\right),$$ and the projection  of  $M_2(B)$
  $$P_v:[0,1]\lto M_2(B) ;\, t\mapsto  R_v(t)\cdot \left(\begin{smallmatrix}1&0\\ 0 &0\end{smallmatrix}\right) R^*_{v}(t).$$
  Define also 
  $$P_{\text{Bott}}:[0,1]\times [0,1]\to M_2(\C);\, (s,t) \mapsto R_{e^{2\imath \pi s}}(t)\cdot \left(\begin{smallmatrix}1&0\\ 0 &0\end{smallmatrix}\right) R^*_{e^{2\imath \pi s}}(t).$$
  Then,  if we view the $2$-sphere $\S^2$  as the one point compactification of $(0,1)^2$, then $P_{\text{Bott}}$ is a rank one projection
  in $M_2(C(\S^2))$ and $[P_{\text{Bott}}]-[1]$ is a generator for $K_0(C_0((0,1)))\cong\Z$.
  
  \medskip

   For a   $r$-controlled nuclear Mayer-Vietoris pair
 $(\De_1,\De_2,A_{\De_1},A_{\De_2})$  with respect to a filtered $C^*$-algebra $A$, let 
  \begin{eqnarray*} \jmath_{\De_1,\O,*}&=&(\jmath^{\eps,r}_{\De_1,\O,*})_{0<\eps<1/4,r>0}:\O_*(A_{\De_1},B)\lto \O_*(A,B),\\ 
   \jmath_{\De_2,\O,*}&=&(\jmath^{\eps,r}_{\De_2,\O,*})_{0<\eps<1/4,r>0}:\O_*(A_{\De_2},B)\lto \O_*(A,B),\\
  \jmath_{\De_1,\De_2,\O,*}&=&(\jmath^{\eps,r}_{\De_1,\De_2,\O,*})_{0<\eps<1/4,r>0}:\O_*(A_{\De_1}\cap A_{\De_2},B)\lto \O_*(A_{\De_1},B)\end{eqnarray*} and  $$\jmath_{\De_2,\De_1,\O,*}=(\jmath^{\eps,r}_{\De_2,\De_1,\O,*})_{0<\eps<1/4,r>0}:\O_*(A_{\De_1}\cap A_{\De_2},B)\lto \O_*(A_{\De_2},B)$$ be the morphisms respectively induced by
the inclusions of $C^*$-algebras
${\jmath_{\De_1}: A_{\De_1}\hookrightarrow A}$, ${\jmath_{\De_2} :A_{\De_2}\hookrightarrow A}$, ${\jmath_{\De_1,\De_2}: A_{\De_1}\cap A_{\De_2}\hookrightarrow A_{\De_1}}$ and
$\jmath_{\De_2,\De_1}: A_{\De_1}\cap A_{\De_2}\hookrightarrow A_{\De_2}$.
In the same way, we define
\begin{eqnarray*} \jmath_{\De_1,\O',*}&=&(\jmath^{\eps,r}_{\De_1,\O',*})_{0<\eps<1/4,r>0}:\O'_*(A_{\De_1},B)\lto \O'_*(A,B),\\ 
   \jmath_{\De_2,\O',*}&=&(\jmath^{\eps,r}_{\De_2,\O',*})_{0<\eps<1/4,r>0}:\O'_*(A_{\De_2},B)\lto \O'_*(A,B),\\
  \jmath_{\De_1,\De_2,\O',*}&=&(\jmath^{\eps,r}_{\De_1,\De_2,\O',*})_{0<\eps<1/4,r>0}:\O'_*(A_{\De_1}\cap A_{\De_2},B)\lto \O'_*(A_{\De_1},B)\end{eqnarray*} and  $$\jmath_{\De_2,\De_1,\O',*}=(\jmath^{\eps,r}_{\De_2,\De_1,\O',*})_{0<\eps<1/4,r>0}:\O'_*(A_{\De_1}\cap A_{\De_2},B)\lto \O'_*(A_{\De_2},B)$$

  \subsubsection{Computation for    $F^{\eps,r}_{A,B,*}$}\label{subsubsec-computation-F} Let us now compute $F^{\eps,r}_{A,B}(x)$  for $A$, a unital filtered  $C^*$-algebra, $B$ a unital $C^*$-algebra and  $x$ an element  in $O_*^{\eps,r}(A,B)$.
  
  \smallskip
  
  Let us consider first the case $x=[u]_{\eps,r}\ts [p]$ where 
 $u$ is an $\eps$-$r$-unitary in some $M_n(A)$ and $p$ is  a projection in some $M_m(B)$. Let us set
  $v_{u,p}=u\ts p+I_n\ts (I_m-p)$. Then $v_{u,p}$ is an $\eps$-$r$-unitary in $M_{nm}(A\ts B)$.
   According to 
 Remark \ref{remark-kunneth}, then
 $$F^{\eps,r}_{A,B,*}([u]_{\eps,r}\ts [p])=j_{A,*}^{\al_\T\eps,h_{k_\T,\eps}r}\circ \tau^{\eps,r}_A(z)(v_{u,p}).$$
  It is well known that
  $z=[P_{\text{Bott}}]-[P_1]$ (with $P_1=\diag(1,0)$). Let us define  for $u$ and $u'$ some $\eps$-$r$-unitaries in $M_n(A)$ and  $p$ a projection in $M_m(B)$ 
  
  \begin{eqnarray*}
  W_{u,u',p}:\To_2&\lto &M_n(A)\ts M_m(B)\ts M_4(\C)\\
   (e^{2\ip s},e^{2\ip t})&\mapsto& \diag(v_{u,p\ts P_{\text{Bott}}(s,t)},v_{u',p\ts P_{1}}).\end{eqnarray*}Then
 $W_{u,u',p}$ is an   $\eps$-$r$-unitary in $C(\To_2,M_n(A)\ts M_m(B)\ts M_4(\C))$. Moreover, if $u'=u^*$ then we see that 
 $q_{A,B,*}[W_{u,u^*,p}]_{\eps,r}=0$ in $K^{\eps,r}_1(C(\To_2\setminus (0,1)^2,A\ts B))$ and hence 
 \begin{equation}\label{equ-F}
 F^{\eps,r}_{A,B,*}([u]_{\eps,r}\ts [p])=[W_{u,u^*,p}]_{\al_\F\eps,h_{\F,\eps}r}.
 \end{equation}
 
 Consider now the case  $x=[u]_{\eps,r}\ts ([p]-[p(0)])$, where $u$ is an $\eps$-$r$-unitary  in some $M_n(\widetilde{SA})$ and $p$ is a projection in some $M_m(\widetilde{SB})$. 
 
 
 For $\eps$-$r$-unitaries $u$ and $u'$ in $M_n(\widetilde{SA})$  and a  projection $p$ in $M_n(\widetilde{SA})$,
 set \begin{eqnarray*}W'_{u,u',p}:\To_2&\lto &M_n(A)\ts M_m(B)\ts M_2(\C)\\ (e^{2\ip s},e^{2\ip t})&\mapsto &\diag(v_{u(s),p(t)},v_{u'(s),p(1)}).\end{eqnarray*}
 Then $W'_{u,u',p}$ is an $\eps$-$r$-unitary in $C(\To_2,M_n(A)\ts M_m(B)\ts M_2(\C))$  and we can easily check that 
$[W'_{u,u^*,p}]_{\eps,r}$  is in $${\O'_1}^{\eps,r}(A,B)=\ker  q^{\eps,r}_{A,*}: K^{\eps,r}_1(C(\To_2, A))\lto  K^{\eps,r}_1(C(\To_2\setminus (0,1)^2, A))$$ for all positive numbers $\eps$ and $r$  with $\eps\in(0,1/4)$.
 Moreover, according to 
 Remark \ref{remark-kunneth}, we have
  \begin{equation}\label{equ-F-suspension}
 F^{\eps,r}_{A,B,*}([u]_{\eps,r}\ts ([p]-[I_k]))=[W'_{u,u^*,p}]_{\al_\F\eps,h_{\F,\eps}r}.
 \end{equation} 
 
 \smallskip
 Consider now the case  $x=[q,m]_{\eps,r}\ts [p]$, where $q$ is an $\eps$-$r$-projection in some $M_n(A)$, $m$ is an integer with $m\lq n$  and $p$ is a projection in some $M_k(B)$.
  Let us set
when $q$ is an $\eps$-$r$-projection in some $M_n(A)$,  $m$ is a positive number and $p_1$ and $p_2$ are projections in some $M_k(B)$  
$$E_{q,m,p_1,p_2}=\diag(q\ts p_1+P_m\ts (I_k-p_1),P_{n-m}\ts (I_k-p_2)+(I_n-q)\ts p_2)$$ with $P_m=\diag(I_m,0)$ in $M_n(\C)\subseteq M_n(A)$.
Then $E_{q,m,p_1,p_2}$ is an $\eps$-$r$-projection in $M_{2kn}(A\ts B)$.
According to Lemma \ref{lemma-compatibility-kasparov}, we have that $$F_{A,B,*}(x)=\Omega_{A,B,*}([q,m]_{\eps,r}\ts ([p]\ts z)).$$
Since $[p]\ts z=[p\ts P_{Bott}]-[p\ts P_{1}]$ in $K_0(S^2B)$, we see according to Remark
 \ref{remark-kunneth}   that
\begin{equation}\label{equ-F-even}F^{\eps,r}_{A,B,*}([q,m]_{\eps,r}\ts [p])=[E_{q,m,p\ts P_{Bott},p\ts P_1},2nk]_{\eps,r}.\end{equation}
 
 \smallskip
 Consider finally  the case of $x=[q',m']_{\eps,r}\ts ([p'_1]-[p'_2])$, where $q'$ is an $\eps$-$r$-projection in some $M_n(\widetilde{SA})$, $m'$  is an  integer and $p'_1$ and $p'_2$ are  projections in some $M_k(\widetilde{SB})$,
 with $p'_1(0)=p'_2(0)$. Then 
  \begin{equation}\label{equ-F-even-suspension}F^{\eps,r}_{A,B,*}([q',m']_{\eps,r}\ts ([p'_1]-[p'_2]))=[E_{q',m',p'_1,p'_2},nk]_{\eps,r}.\end{equation}

\subsubsection{$QI$-condition}          Let $(\De_1,\De_2,A_{\De_1},A_{\De_2})$  be 
 an  $r$-controlled nuclear Mayer-Vietoris pair  with coercitivity $c$ such that
for any $C^*$-algebra $B$ with  $K_*(B)$  free abelian, then $\F_{A_{\De_1},B,*},\,\F_{A_{\De_2},B,*}$ and $\F_{A_{\De_1}\cap A_{\De_2},B,*}$ are quantitative isomorphisms  with rescaling $\la_0$.
  Let us check that there exists a positive number $\la_1$ depending only on $\la_0$ and $c$, with $\la_1\gq 1$ such that for any positive numbers $\eps$ and $s$ with $\eps$ in $(0,\frac{1}{4\la_1})$ and $s\lq \frac{r}{\al_{\F,\eps}}$, then $F^{\eps,s}_{A,B,*}$ satisfies the $QI$-condition of Definition \ref{def-quantitative-morphism}. We can assume without loss of generality that $A$ and $B$ are unital.
 Moreover,  up to replacing  $B$ by $B\oplus \C$, we can assume that there exists a system of 
  generators of $K_0(B)$ given by classes of projections. 
  Let us fix such a system of generator for $K_0(B)$ and fix also a system of generator for $K_1(B)$.  
As discussed  in Section \ref{subsubsection-preliminary}.1, we  only need to consider the odd case, i.e show that the control morphism
 $\F_{A,B,*}:\O_1(A,B)\lto \O'_1(A,B)$ sastifies  the required conditions.

According to Lemma \ref{lemma-mv-kunneth}, there exists a control pair $(\al,h)$ depending only on $c$ with $(\al_c,k_c)\lq (\al,h)$ such that  \begin{equation}\label{equation-compatiblility-F-boundary}\F_{A_{\Delta_1}\cap A_{\Delta_2},B,*}\circ\left(\D_{\De_1,\De_2,*}\ts \Id_{K_0(B)} \oplus\D_{S\De_1,S\De_2,*}\ts \Id_{K_0(SB)}\right)\stackrel{(\al,h)}{\sim}\D_{\De_1\ts B,\De_2\ts B,*}\circ \F_{A,B,*}\end{equation} at order $r$.
For $\eps$ in $(0,\frac{1}{4\al\la_0})$, let $r_{\eps}^{\F}$ be a positive numbers with $r_{\eps}^{\F} \gq h_{\eps}r$ such that
 $$\iota^{\al\eps,\la_0\al\eps,h_{\eps}r,r_{\eps}^{\F}}_{\O,*} (x)=0$$ for all $x$ in
 $K_*^{\al\eps,h_{\eps}r}(A_{\De_1}\cap A_{\De_2})\ts K_*(B)$ such that  $F^{\al\eps,h_{\eps}r}_{A_{\Delta_1}\cap A_{\Delta_2},B,*}(x)=0$.
\begin{proposition}\label{proposition-technic}There exists a control pair  $(\la,h)$ depending only on $\la_0$ and $c$ such that    for any positive numbers $\eps,\,s$ and $r'$ with $\eps<\frac{1}{4\la},\,s\lq \frac{r}{k_{\F,\eps}}$  and $r'\gq r_{\eps}^{\F}$, for any  $x$ in $O^{\eps,s}(A,B)$ such that  $F^{\eps,s}_{A,B}(x)=0$ in ${O'}^{\al_\F,k_{\F,\eps}s}(A,B)$, there exist
\begin{itemize}
 \item an element $x^{(1)}$ in $O_1^{\la\eps,h_\eps r'}(A_{\De_1},B)$ and an element  $x^{(2)}$ in $O_1^{\la\eps,h_\eps r'}(A_{\De_2},B)$;
\item an integer $n$ and an  $(\eps,s)$-unitary $W_x$ in $M_n(C(\To_2,A\ts B))$; \item for $i=1,2$, a  $(\la\eps,h_{\eps}r')$-unitary $W^{(i)}_x$ in $M_n(C(\To_{2},A\ts B))$ with 
$W^{(i)}_x-I_n$ in $M_n(C(\To^{2},A_{\De_i}\ts B))$;\end{itemize}
such that
\begin{enumerate}
\item $\iota_{\O,1}^{\eps,\la\eps,s,h_\eps r'}(x)=\jmath_{\De_1,\O,*}^{\la\eps,h_\eps r'}(x^{(1)})+\jmath_{\De_2,\O,*}^{\la\eps,h_\eps r'}(x^{(2)})$;
\item  $[W_x]_{\al_\F\eps,k_{\F,\eps}r}=0$    in ${O'_1}^{\al_\F\eps,k_{\F,\eps}r}(A,B)$;
\item  $[W^{(i)}_x]_{\la\al_\F\eps,k_{\F,\la\eps}h_\eps r'}$ is in ${O'_1}^{\la\al_\F\eps,k_{\F,\la\eps}h_\eps r'}(A_{\De_i},B)$ and $F_{A_{\De_i},B,*}^{\la\eps,h_\eps r'}(x^{(i)})=[W^{(i)}_x]_{\la\al_\F\eps,k_{\F,\la\eps}h_\eps r'}$ for $i=1,2$; 
\item $\|W_x-W^{(1)}_x W^{(2)}_x\|<\la\eps$;
\end{enumerate}
\end{proposition}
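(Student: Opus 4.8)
Looking at Proposition \ref{proposition-technic}, I need to extract from the hypothesis $F^{\eps,s}_{A,B,*}(x)=0$ enough structure to decompose $\iota_{\O,1}^{\eps,\la\eps,s,h_\eps r'}(x)$ through the two neighborhood algebras, while simultaneously producing the tori-valued unitaries $W_x$, $W^{(1)}_x$, $W^{(2)}_x$ witnessing the decomposition at the level of the $\O'$ groups.

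\textbf{The plan.} First I would rewrite everything through Corollary \ref{corollary-eight} and equations (\ref{equ-F})--(\ref{equ-F-even-suspension}): using controlled Bott periodicity I reduce to the odd case $\F_{A,B,*}:\O_1(A,B)\to\O'_1(A,B)$, and I may assume (up to replacing $B$ by $B\oplus\C$) that $K_0(B)$ and $K_1(B)$ have generating systems given by (suspended) projections. An element $x$ in $O^{\eps,s}_1(A,B)$ is then a finite sum of terms $[u]_{\eps,s}\ts[p]$ and $[q,m]_{\eps,s}\ts([p'_1]-[p'_2])$, and by the explicit formulas of Section \ref{subsubsec-computation-F}, $F^{\eps,s}_{A,B,*}(x)$ is represented (after the identification $j_{A\ts B,*}$) by the class of an $(\eps,s)$-unitary $W_x$ in some $M_n(C(\To_2,A\ts B))$ with $q_{A\ts B,*}[W_x]=0$; this gives points (ii). The condition $F^{\eps,s}_{A,B,*}(x)=0$ says $[W_x]$ vanishes in $K_1^{\al_\F\eps,k_{\F,\eps}s}(C(\To_2,A\ts B))$ after the structure map, i.e. the $(\eps,s)$-unitary $W_x$ is homotopic to the identity through $\bullet\eps$-unitaries (up to stabilization) after increasing propagation.

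\textbf{Key steps.} The heart of the matter is that $\big(C(\To_2,\De_1\ts B),C(\To_2,\De_2\ts B),C(\To_2,A_{\De_1}\ts B),C(\To_2,A_{\De_2}\ts B)\big)$ is again a controlled Mayer-Vietoris pair for $C(\To_2,A\ts B)$ with the same coercity (tensoring with $C(\To_2)$ and using Definition \ref{definition-nuclear-coarse-MV-pair}(i),(ii)). So I can apply Proposition \ref{proposition-decom-unitaries} (in the non-unital form, Proposition \ref{proposition-decom-unitaries-non-unital}, since $W_x-I_n$ lies in the non-unital part) to the unitary $W_x$: since it is connected to $I_n$, after stabilization there are $\al\eps$-$l_\eps s$-unitaries $W^{(1)}_x$, $W^{(2)}_x$ with $W^{(i)}_x-I_n$ in $M_n(C(\To_2,A_{\De_i}\ts B))$ and $\|W_x-W^{(1)}_xW^{(2)}_x\|<\al\eps$ — this is point (iv). To get point (iii), I apply the surjectivity half ($QS$) of the quantitative isomorphism hypothesis on $\F_{A_{\De_i},B,*}$ to lift $[W^{(i)}_x]$ (which lies in $\O'_1(A_{\De_i},B)$ because $q$ kills it, after checking the relevant $q$-image computation on the torus) to an element $x^{(i)}$ of $O_1(A_{\De_i},B)$; here I must also invoke the $QI$ half of the hypothesis to control how the lifts depend on propagation. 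Finally, for point (i) I use Corollary \ref{cor-bound1} (the persistence of controlled exactness at $\K_1(A)$) together with the decomposition $W_x\approx W^{(1)}_xW^{(2)}_x$ pushed through $j_{A\ts B,*}$ and the controlled isomorphism $j_{A\ts B,*}:\K_1(S^2 A)\to\ker q_{A,*}$, translating back via $\T$ to $\O_1(A,B)$; the equality $\iota_{\O,1}(x)=\jmath_{\De_1,\O,*}(x^{(1)})+\jmath_{\De_2,\O,*}(x^{(2)})$ then follows because $\F$ is natural and injective enough (via $QI$) on the relevant range.

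\textbf{Main obstacle.} The real difficulty is bookkeeping: every invocation of Proposition \ref{proposition-decom-unitaries}, Corollary \ref{cor-bound1}, and the $QI/QS$ conditions of the quantitative-isomorphism hypothesis costs a rescaling of $\eps$ and an inflation of propagation, and I must verify that the \emph{final} control pair $(\la,h)$ depends only on $\la_0$ and $c$ — not on $r$, $A$, or $B$. This is possible precisely because Proposition \ref{proposition-decom-unitaries-non-unital} and Corollary \ref{cor-bound1} have control pairs depending only on the coercity $c$, and the quantitative-isomorphism rescaling $\la_0$ is assumed uniform. The other subtle point is matching the $q$-images on $\To_2\setminus(0,1)^2$: I need $[W^{(i)}_x]$ to genuinely land in $\ker q_{A_{\De_i}\ts B,*}$, which requires checking that the decomposition produced by Proposition \ref{proposition-decom-unitaries} can be arranged compatibly with the boundary-of-$\To_2$ restriction — this uses equation (\ref{equation-boundary-eight}) and the naturality of the controlled boundary map, and is the place where I would be most careful that no hidden loss of propagation slips in. Once these uniformity checks are in place, assembling (i)--(iv) is routine.
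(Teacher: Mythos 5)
Your plan diverges from the paper's in a way that introduces a circularity you cannot resolve. The paper does not apply Proposition \ref{proposition-decom-unitaries} to the big torus-valued unitary $W_x$. Instead it feeds the hypothesis $F^{\eps,s}_{A,B,*}(x)=0$ through the boundary-compatibility relation (\ref{equation-compatiblility-F-boundary}), to obtain
$F_{A_{\De_1}\cap A_{\De_2},B,*}\bigl((\D_{\De_1,\De_2,*}\ts\Id)(x)\bigr)=0$, and then exploits two things simultaneously: that $K_*(B)$ is free with a fixed generating set $\{[p_i]\},\{[p'_i]-[p'_i(0)]\}$ (so the tensor can be read off coordinate-by-coordinate), and that $\F_{A_{\De_1}\cap A_{\De_2},B,*}$ satisfies the $QI$-condition (encoded in the choice $r'\gq r^\F_\eps$). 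Together these force each \emph{individual} class $\partial^{\eps,s}_{\De_1,\De_2,*}[u_i]$ and $\partial^{\eps,s}_{S\De_1,S\De_2,*}[u'_i]$ to vanish after suitable structure maps. That pointwise vanishing is what lets the paper invoke Lemma \ref{lemma-bound1} to decompose each $u_i\approx v_i^{(1)}v_i^{(2)}$ and each $u'_i\approx w_i^{(1)}w_i^{(2)}$ separately, and Proposition \ref{proposition-decom-unitaries-adjoint} to get compatible decompositions of $u_i^*$. Both $x^{(j)}$ and $W^{(j)}_x$ are then assembled from the \emph{same} pieces $v^{(j)}_i,w^{(j)}_i$, so condition (i) is verified directly by equation (\ref{equ-mv-w}), and condition (iii) is a tautology. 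No surjectivity lift is ever performed.

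Your route breaks at exactly the point you gloss over. You decompose $W_x$ once via Proposition \ref{proposition-decom-unitaries-non-unital}, then define $x^{(i)}$ by lifting $[W^{(i)}_x]$ through the $QS$-condition of $\F_{A_{\De_i},B,*}$, and then say (i) ``follows because $\F$ is natural and injective enough (via $QI$).'' But the only injectivity that could close the gap
$$F_{A,B,*}\bigl(\iota(x)\bigr)\approx [W_x]\approx [W^{(1)}_xW^{(2)}_x]\approx F_{A,B,*}\bigl(\jmath_{\De_1}(x^{(1)})+\jmath_{\De_2}(x^{(2)})\bigr)$$
is the $QI$-condition for $\F_{A,B,*}$ itself --- which is precisely the statement this proposition is a lemma toward, so you would be assuming what you want to prove. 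The $QI$-conditions you are entitled to use are only those for $\F_{A_{\De_1},B,*}$, $\F_{A_{\De_2},B,*}$, and $\F_{A_{\De_1}\cap A_{\De_2},B,*}$, and the paper's boundary-map argument is the device that routes the problem to those pieces. You also correctly flag that $[W^{(i)}_x]$ landing in $\ker q_{A_{\De_i}\ts B,*}$ is not automatic from Proposition \ref{proposition-decom-unitaries}, but then leave it unresolved; in the paper this is arranged by construction, because $W^{(j)}_x=\diag(W_{v^{(j)}_i,{v'_i}^{(j)},p_i},\ldots)$ is built from decompositions of both $u_i$ and $u_i^*$ (hence the need for Proposition \ref{proposition-decom-unitaries-adjoint}) so that the boundary restriction to $\To_2\setminus(0,1)^2$ visibly factors through the expected form. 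Without that explicit construction your $W^{(i)}_x$ has no reason to be in ${\O'_1}$ at all, so (iii) fails even before (i) does.
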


  \begin{proof} The proof of the proposition is quite long so we split it in several steps.
  
  \smallskip
  
  {\bf Step 1:}
  Let $x$ be an element in $O_1^{\eps,s}(A,B)$.  Then there exists  integers $l$ and $l'$ and \begin{itemize}
  \item for $i=1,\ldots l$
  \begin{itemize}
  \item an $\eps$-$s$-unitary $u_i$ in some $M_{n_i}(A)$;
  \item a projection $p_i$ in some $M_{m_i}(B)$ with $[p_i]$ in the prescribed system of generators for $K_0(B)$.
  \end{itemize}
  \item  for $i=1,\ldots l'$
  \begin{itemize}
  \item an $\eps$-$s$-unitary $u'_i$ in some $M_{n'_i}(\widetilde{SA})$;
  \item a projection   $p'_i$ in some $M_{m'_i}(\widetilde{SB})$ with  $[p'_i]-[{p'_i(0)}]$ in the prescribed system of generators for $K_1(SB)$;
 \end{itemize} \end{itemize}
 
 such that
  $$x=\sum_{i=1}^l[u_i]_{\eps,s}\ts [p_i]+\sum_{i=1}^{l'}[u'_i]_{\eps,s}\ts ([p'_i]-[{p'_i(0)}]).$$
  
 Assume that
  $F^{\eps,s}_{A,B,*}(x)=0$ in ${O'_1}^{\al_{\F}\eps,k_{\F,\eps}s}(A,B)$.
  Using Morita equivalence, and up to replacing $A$ by $M_{n}(A)$ and $B$ by $M_{m}(B)$ with $n=n_1+\ldots n_l+n'_1+\ldots n'_{l'}    $ and
   $m=m_1+\ldots m_l+m'_1+\ldots m'_{l'}$, we can assume that $n_i=m_i=1$ for $i=1,\ldots,l$ and  $n'_i=m'_i=1$ for $i=1,\ldots,l'$. Using Lemma \ref{lemma-almost-canonical-form-odd}, we can moreover assume without loss of generality
that $u'_i(0)=u'_i(1)=I_{k_i}$ for $i=1,\ldots,l'$. Let us set 
\begin{equation}\label{equ-kun}W_x=\diag(W_{u_1,u_1^*,p_1},\ldots, W_{u_l,u_l^*,p_l},W'_{u'_1,{u'_1}^*,p'_1},\ldots W'_{u'_{l'},{u'_{l'}}^*,p'_{l'}} ).\end{equation}    Then $W_x$ is an $\eps$-$s$-unitary in  $M_{2(2l+l')}(C(\To_2,A\ts B))$ and
$[W_x]_{\eps,s}$ is in ${O'_1}^{\eps,s}(A,B)$.   In view of equations (\ref{equ-F}) and (\ref{equ-F-suspension}), we see that 
 $[W_x]_{\al_{\F}\eps,k_{\F,\eps}s}=F^{\eps,s}_{A,B,*}(x)=0$  in ${O_1'}^{\al_{\F}\eps,k_{\F,\eps}s}(A,B)\subseteq K_1^{\al_{\F}\eps,k_{\F,\eps}s}(C(\To_2,A\ts B))$. 
 
  \smallskip

 {\bf Step 2:} Let $(\al,h)$ be the control pair of equation (\ref{equation-compatiblility-F-boundary}), then we have
 \begin{equation}\label{equ-diag} F^{\al\eps,h_{\eps}s}_{A_{\Delta_1}\cap A_{\Delta_2},B,*}\circ\iota_{\O,*}^{\al_c\eps,\al \eps,k_{c,\eps}s, h_{\eps}s  }\circ\left(\partial^{\eps,s}_{\De_1,\De_2,*}\ts \Id_{K_0(B)})\oplus\partial^{\eps,s}_{S\De_1,S\De_2,*}\ts \Id_{K_0(SB)}\right)
 (x)=0.\end{equation}
 
 Since $[p_1],\ldots,[p_l]$ belong  to  the prescribed system of generator for $K_0(B)$ and 
 $[p'_1]-[p'_1(0)],\dots,[p'_{l'}]-[p'_{l'}(0)]$  belong  to  the prescribed system of generator for $K_1(B)$, we deduce from
 equation (\ref{equ-diag}) and from the choice of $r'$ that 
  $$\iota^{\al_c\eps,\la_0\al\eps,k_{c,\eps}s,r'}_* \circ \partial^{\eps,s}_{\De_1,\De_2,*}[u_i]_{\eps,s}=0$$for $i=1,\ldots,l$ and
   $$\iota^{\al_c\eps,\la_0\al\eps,k_{c,\eps}s,r'}_* \circ \partial^{\eps,s}_{S\De_1,S\De_2,*}[u'_i]_{\eps,s}=0$$for $i=1,\ldots,l'$.
According to Lemma \ref{lemma-bound1} and in view of the definition of $\partial^{\eps,s}_{\De_1,\De_2,*}$, then for a control pair $(\la,k)$ depending only on $c$, there exists for $i=1,\ldots,n$ and up to replacing $u_i$ by some $\diag(u_i,I_{n_i-1})$ for some integer $n_i$ two 
$\lambda\lambda_0\al \eps$-$k_{\la_0\al\eps} r'$-unitaries $v^{(1)}_i$ and $v^{(2)}_i$ respectively in  $M_{n_i}(\widetilde{A_{\De_1}})$ and $M_{n_i}(\widetilde{A_{\De_2}})$, with $v^{(1)}_i-I_{n_i}$ and  $v^{(2)}_i-I_{n_i}$ respectively in  $M_{n_i}(A_{\De_1})$ and $M_{n_i}({A_{\De_2}})$ and such that
 $$\|u_i-v^{(1)}_iv^{(2)}_i\|<\la\la_0\al_c \eps.$$ 
 Since we also have  $$\iota^{\al_c\eps,\la_0\al\eps,k_{c,\eps}s,r'}_* \circ \partial^{\eps,r}_{\De_1,\De_2}[u^*_i]_{\eps,s}=0,$$ according to Proposition \ref{proposition-decom-unitaries-adjoint} and up to rescaling $(\la,k)$, we can assume that  there exists
  two 
$\lambda\lambda_0\al \eps$-$k_{\la_0\al\eps} r'$-unitaries ${v'_i}^{(1)}$ and ${v'_i}^{(2)}$ respectively in  $M_{n_i}(\widetilde{A_{\De_1}})$ and $M_{n_i}(\widetilde{A_{\De_2}})$ with ${v'_i}^{(1)}-I_{n_i}$ and  ${v'_i}^{(2)}-I_{n_i}$ respectively in  $M_{n_i}(A_{\De_1})$ and $M_{n_i}({A_{\De_2}})$ and such that
 $$\|u^*_i-{v'_i}^{(1)}{v'_i}^{(2)}\|<\la\la_0\al \eps$$ and 
${v'_i}^{(j)}$ is homotopic to ${v^{(j)*}_i}$ as an $\lambda\lambda_0\al \eps$-$k_{\la_0\al\eps} r'$-unitary in  $M_{n_i}(\widetilde{A_{\De_i}})$ for $j=1,2$.

 In the same way, there exists 
  two 
$\lambda\lambda_0\al \eps$-$k_{\la_0\al\eps} r'$-unitaries ${w_i}^{(1)}$ and ${w'_i}^{(1)}$  in  $M_{n'_i}(\widetilde{SA_{\De_1}})$ and        two 
$\lambda\lambda_0\al \eps$-$k_{\la_0\al\eps} r'$-unitaries ${w_i}^{(2)}$ and ${w'_i}^{(2)}$           in            $M_{n'_i}(\widetilde{SA_{\De_2}})$ for some integer $n'_i$ such that

$$\|u'_i -{w_i}^{(1)}{w_i}^{(2)}\|<\la\la_0\al \eps,$$

 $$\|{u'_i}^* -{w'_i}^{(1)}{w'_i}^{(2)}\|<\la\la_0\al \eps,$$ 
and ${w'_i}^{(j)}$ is homotopic to ${w^{(j)*}_i}$
as an $\lambda\lambda_0\al \eps$-$k_{\la_0\al\eps} r'$-unitary in  $M_{n'_i}(\widetilde{SA_{\De_i}})$ for $j=1,2$ and
${w_i}^{(1)}(0)={w_i}^{(2)}(0)= {w'_i}^{(1)}(0)={w'_i}^{(2)}(0)=I_{{n'_i}}$      for $i=1,\ldots,l'$.
In particular, 
$W_{v^{(j)}_i,{v_i}^{(j)*}}$ and $W_{v^{(j)}_i,{v'_i}^{(j)}}$ for $j=1,2$ and $i=1,\ldots,l$ are homotopic     
 $\lambda\lambda_0\al \eps$-$k_{\la_0\al\eps} r'$-unitaries  in  $M_{4n_i}(\widetilde{A_{\De_j}})$     
 and  $W'_{w^{(j)}_i,{w_i}^{(j)*}}$ and $W_{w^{(j)}_i,{w'_i}^{(j)}}$   for $j=1,2$ and $i=1,\ldots,l'$ are homotopic     
 $\lambda\lambda_0\al \eps$-$k_{\la_0\al\eps} r'$-unitaries  in  $M_{2n'_i}(\widetilde{A_{\De_j}})$.

%
 \smallskip

 {\bf Step 3:} 
 According to Lemmas  \ref{lemma-almost-closed}  and \ref{cor-example-homotopy}  and up to replacing  $\la$ by $12\la$, we have that
 \begin{equation}\label{equ-mv-w}
 \begin{split}
 \iota^{\eps,\lambda\lambda_0\al \eps,s,2k_{\la_0\al\eps} r'}_{\O,*}(x)= &\jmath^{\lambda\lambda_0\al \eps,2k_{\la_0\al\eps} r'}_{\De_1,*}\ts Id_{K_0(B)}(\sum_{i=1}^l[v_i^{(1)}]_{\lambda\lambda_0\al \eps,2k_{\la_0\al\eps} r'}\ts [p_i])\\
 &+\jmath^{\lambda\lambda_0\al \eps,2k_{\la_0\al\eps} r'}_{\De_2,*}\ts Id_{K_0(B)}(\sum_{i=1}^l[v_i^{(2)}]_{\lambda\lambda_0\al \eps,2k_{\la_0\al\eps} r'}\ts [p_i])\\   
 &+\jmath^{\lambda\lambda_0\al\eps,2k_{\la_0\al\eps} r'}_{S\De_1,*}\ts Id_{K_0(SB)}(\sum_{i=1}^{l'}[w_i^{(1)}]_{\lambda\lambda_0\al \eps,2k_{\la_0\al\eps} r'}\ts ([p'_i]-[p'_i(0)]))\\
 &+\jmath^{\lambda\lambda_0\al \eps,2k_{\la_0\al\eps} r'}_{S\De_2,*}\ts Id_{K_0(SB)}(\sum_{i=1}^{l'}[w_i^{(2)}]_{\lambda\lambda_0\al \eps,2k_{\la_0\al\eps} r'}\ts ([p'_i]-[p'_i(0)]))
 \end{split} \end{equation}

Let us set for $j=1,2$
 \begin{equation*}
  \begin{split} x^{(j)}
 = \jmath^{\lambda\lambda_0\al \eps,2k_{\la_0\al\eps} r'}_{\De_1,*}\ts & Id_{K_0(B)}\left(\sum_{i=1}^l[v_i^{(j)}]_{\lambda\lambda_0\al \eps,2k_{\la_0\al\eps} r'}\ts [p_i]\right)\\
 &+\jmath^{\lambda\lambda_0\al\eps,2k_{\la_0\al\eps} r'}_{S\De_1,*}\ts Id_{K_0(SB)}\left(\sum_{i=1}^{l'}[w_i^{(j)}]_{\lambda\lambda_0\al \eps,2k_{\la_0\al\eps} r'}\ts ([p'_i]-[p'_i(0)]\right)
 \end{split} \end{equation*}
 and
 
  \begin{equation*}W^{(j)}_x=\diag(W_{v^{(j)}_1,{v'_1}^{(j)},p_1},\ldots, W_{v^{(j)}_l,{v'_l}^{(j)},p_l},W'_{w^{(j)}_1,{{w'_1}^{(j)}},p'_1},\ldots W'_{w^{(j)}_j,{{w'_{l'}}^{(j)}},p'_{l'}} ).\end{equation*} Then $x^{(1)},\,x^{(2)},\,W_x,\, W^{(1)}_x$ and $W^{(2)}_x$ satisfy the required condition for some suitable control pair. \end{proof}
%
{\it End of the proof of the $QI$-statement.} We divide the end of the proof of the $Q_I$-statement in the four  following  steps.

\bigskip

{\bf Step 1: } Let $x$ be an element in $O^{\eps,s}_1(A,B)$ such that $F_{A,B,*}^{\eps,s}(x)=0$. Let $r'$ be a positive number such that $r'\gq r^{\F}_{\eps}$. With notations of Proposition \ref{proposition-technic},
applying  Proposition \ref{proposition-decom-unitaries} to $W_x$,  up to rescaling $(\la,h)$ and  to replacing 
$W^{(1)}_x$ and $W^{(2)}_x$ respectively by  $\diag(W^{(1)}_x,I_j)$ and $\diag(W^{(2)}_x,I_j)$ for some integer $j$, there exist for any positive number $\eps$ in $(0,\frac{1}{4\la})$
   two $\lambda\eps$-$h_\eps r'$-unitaries  $W'_1$ and $W'_2$ in some $M_n(C(\To_2,A\ts B))$ such that
\begin{itemize}
\item $W'_i-I_n$ is an  element  in the matrix algebra   $M_n(C(\To_2,A_{\De_i}\ts B))$ for $i=1,2$;
\item for $i=1,2$, there exists  a homotopy $({W'_{i,t}})_{t\in[0,1]}$ of  $\la\eps$-$h_\eps r'$-unitaries between  $I_n$ and $W'_i$ such that
$W'_{i,t}-I_k\in M_n(C(\To_2,A_{\De_i}\ts B))$ for all $t$ in $[0,1]$.
\item $\|W^{(1)}_xW^{(2)}_x-W'_1W'_2\|<\lambda\eps$.
\end{itemize}
Up to replacing   $\la$ by $5\la$, we can assume indeed  that $\|{W'_1}^*W^{(1)}_x-W'_2{W^{(2)*}_x}\|<\lambda\eps$.
If we apply the $CIA$ property to   ${W'_1}^*W^{(1)}_x-I_n$  and $W'_2W_x^{(2)*}-I_n$, we get that there exists
$V'$ in $M_n(C(\To_2,A\ts B))_{h_\eps r'}$ such that
\begin{itemize}
\item $\|{W'_1}^*W^{(1)}_x-V'\|<c\lambda\eps$;
\item $\|W'_2W_x^{(2)*}-V'\|<c\lambda\eps$;
\item  $V'-I_n$ lies in $M_n(C(\To_2,(A_{\De_1}\cap A_{\De_2})\ts B))$;
\end{itemize}
In particular, in view of Lemma \ref{lemma-almost-closed},	 $V'$ is a $4(c+3)\lambda\eps$-$2h_\eps r'$-unitary in  
$M_n(\widetilde{C}(\To_2,(A_{\De_1}\cap A_{\De_2})\ts B)))$ homotopic to ${W'_1}^*W^{(1)}_x$ (resp. to $W'_2W_x^{(2)*}$) as a 
$4(c+3)\lambda\eps$-$2h_\eps r'$-unitary in $M_n(\widetilde{C}(\To_2,A_{\De_1}\ts B))$ (resp. in $M_n(\widetilde{C}(\To_2,A_{\De_2}\ts B))$). 

\bigskip

{\bf Step 2: }
Let us define \begin{eqnarray*}
V_x:\To_2:&\lto& M_{3n}(A\ts B)\\
(z_1,z_2)&\mapsto &\diag({V'}^*(1,1)V'(z_1,z_2),{V'}^*(z_1,1){V'}(1,1),{V'}^*(1,z_2){V'}(1,1)).
\end{eqnarray*}
If we set $\la'=12(c+3)\lambda$, then
\begin{itemize}
\item  $V_x$ is a   $\lambda'\eps$-$4h_\eps r'$-unitary in  
$M_n({C}(\To_2,A\ts B))$;
\item  $V_x-I_{3n}$  is in $M_{3n}(C(\To_2,(A_{\De_1}\cap A_{\De_2})\ts B))$;
\item $[V_x]_{\la'\eps, 4h_\eps r'}$  lies in ${O'_1}^{\la'\eps, 4h_\eps r'}(A_{\De_1}\cap A_{\De_2},B)$;
\end{itemize}

 Then we have
 \begin{equation}\label{equ-Vx1}\jmath^{\la'\eps,4h_\eps r'}_{\De_1,\De_2,\O',*}([V_x]_{\la'\eps, 4h_\eps r'})=[{W_1'}^*W^{(1)}_x]_{\la'\eps, 4h_\eps r'}=[W^{(1)}_x]_{\la'\eps, 4h_\eps r'},\end{equation} where the the second equality holds because 
 $W'_1$ is connected to $I_n$ as a $\la'\eps$-$4h_\eps r'$-unitary  of
$M_n(\widetilde{SA_{\De_1}})$.
In the same way, we have 
 \begin{equation}\label{equ-Vx2}\jmath^{\la'\eps,4h_\eps r}_{\De_2,\De_1,\O',*}([V_x]_{\la'\eps, 4h_\eps r'})=-[W^{(2)}_x]_{\la'\eps, 4h_\eps r'}.\end{equation}
 
 \bigskip

{\bf Step 3: }
Let $r''$ be a positive integer with 
$k_{\F,\la_0 \la'\eps}          r''\gq 4k_\eps r'$  such that for any $z$ in  ${O'_*}^{\la'\eps,4k_\eps r'}(A_{\De_1}\cap A_{\De_2},B)$, there exists $y$ in $O_*^{\la_0\la'\eps,r''}(A_{\De_1}\cap A_{\De_2},B)$ such that 
$$\iota_{\O',*}^{\la'\eps,\la_0\la'\al_\F\eps,4k_\eps r',k_{\F,\la_0 \la'\eps} r''}(z)= F^{\la_0\la'\eps,r''}_{A_{\De_1}\cap A_{\De_2},B,*}(y).$$       Then there exists
$x'$ in  $O_1^{\la_0\la',r''}(A_{\De_1}\cap A_{\De_2},B)$  such that
$$[V_x]_{\la_0\la'\al_\F\eps,k_{\F,\la_0 \la'\eps} r''}= F^{\la_0\la'\eps,r''}_{A_{\De_1}\cap A_{\De_2},B,*}(x').$$ 

Hence we have 
\begin{eqnarray}\label{equ1-end-QI}
\nonumber F^{\la_0\la'\eps,r''}_{A_{\De_1},B,*}\circ  \jmath^{\la_0\la'\eps, r''}_{\De_1,\De_2,\O,*}(x')&=&\jmath^{\la_0\la'\al_\F\eps, k_{\F,\la_0\la'\eps}r''}_{\De_1,\De_2,\O',*}([V_x]_{\la_0\la'\al_\F\eps,k_{\F,\la_0 \la'\eps} r''})\\
 \nonumber&=& [W^{(1)}_x]_{\la_0\la'\al_\F\eps,k_{\F,\la_0 \la'\eps} r''}\\
&=&F^{\la_0\la'\eps,r''}_{A_{\De_1},B,*}\circ \iota_{\O,*}^{\la_0\la'\eps,r''}(x^{(1)})\end{eqnarray}
where the first equality holds by naturality of   $\F_{\bullet,B,*}$, the second  equality holds in view of equation (\ref{equ-Vx1})  and the last equality is a consequence of Proposition  \ref{proposition-technic}.
In the same way, using equation (\ref{equ-Vx2}), we get that
\begin{equation}\label{equ2-end-QI}F^{\la_0\la'\eps,r''}_{A_{\De_2},B,*}\circ  \jmath^{\la_0\la'\eps, r''}_{\De_2,\De_1,\O,*}(x')= -F^{\la_0\la'\eps,r''}_{A_{\De_2},B,*}\circ \iota_{\O,*}^{\la_0\la'\eps,r''}(x^{(2)}).\end{equation}

\bigskip

{\bf Step 4:}
Let $R$ be a positive number, with $R \gq    r'' $ such that
 $$\iota^{\la_0\la'\eps,\la_0^2\la''\eps,r'',R}_{\O,*}(y)=0$$ for all $y$ in
 $O_*^{\la_0\la'\eps,r''}(A_{\De_j},B)$ such that  $F^{\la_0\la'\eps,r''}_{A_{\Delta_j},B,*}(y)=0$ and $j=1,2$.
 In particular, from equations (\ref{equ1-end-QI})  and  (\ref{equ2-end-QI}),   we deduce
 $$\iota^{\la_0\la'\eps,\la_0^2\la''\eps,r'',R}_{\O,*}\circ\jmath^{\la_0\la''\eps, r''}_{\De_1,\De_2,\O,*}(x') 
 =\iota^{\la\eps,\la_0^2\la'\eps,h_\eps r',R}_{\O,*}(x^{(1)})$$ and 
$$\iota^{\la_0\la'\eps,\la_0^2\la''\eps,r'',R}_{\O,*}\circ\jmath^{\la_0\la''\eps, r''}_{\De_2,\De_1,\O,*}(x') 
 =-\iota^{\la\eps,\la_0^2\la'\eps,h_\eps r',R}_{\O,*}(x^{(2)}).$$ 
 Since $\iota_{\O,*}^{\eps,\la\eps,s,h_\eps r'}(x)=x^{(1)}+x^{(2)}$, this establishes the $QI$-statement.
 
 \subsubsection{$QS$-condition}  Let $(\De_1,\De_2,A_{\De_1},A_{\De_2})$  be 
 an  $r$-controlled nuclear Mayer-Vietoris pair  with coercitivity $c$ for $A$ such that
for any $C^*$-algebra $B$ with  $K_*(B)$  free abelian, then $\F_{A_{\De_1},B,*},\,\F_{A_{\De_2},B,*}$ and $\F_{A_{\De_1}\cap A_{\De_2},B,*}$ are quantitative isomorphisms  with rescaling $\la_0$.
  Let us check that  $\F_{A,B,*}$ satisfies the  $QS$-condition of definition \ref{def-quantitative-morphism}. As for the $QI$-condition,  we can assume without loss of generality that $A$ and $B$ are unital  and that 
there exists a system of 
  generators of $K_0(B)$ given by classes of projections. 
  Let us fix such a system of generator for $K_0(B)$ and fix also a system of generator for $K_0(SB)$.  
We  also only need to consider the odd case, i.e show that the control morphism
 $\F_{A,B,*}:\O_1(A,B)\lto \O'_1(A,B)$ sastifies  the required conditions. 
 
 \smallskip
 
 Let $\eps$ be a positive number with $\eps$ in $(0,\frac{1}{4\la_0\al_c})$ and let $r^{\F,(1)}_\eps$ be a positive number with
 $k_{c,\eps}r\lq k_{\F,\la_0\al_c\eps} r^{\F,(1)}_\eps$ such that  
 for any $y$ in in ${O'_*}^{\al_c\eps,k_{c,\eps}r}(A_{\De_1}\cap A_{\De_2},B)$, there exists an element $x$ in
 ${O_*}^{\la_0\al_c\eps,r^{\F_\eps,(1)}}(A_{\De_1}\cap A_{\De_2},B)$ such that
 $$\iota_{\O',*}^{\al_c\eps,\la_0\al_c\al_\F\eps,k_{c,\eps}r,k_{\F,\la_0\al_c}r^{\F_\eps,(1)}}(y)=F^{\la_0\al_c\eps,r^{\F_\eps,(1)}}_{A_{\De_1}\cap A_{\De_2},B,*}(x).$$ 
 
 \smallskip
 
  Let $r^{\F,(2)}_\eps$ be a positive number with $r^{\F,(1)}_\eps\lq r^{\F,(2)}_\eps$ such that 
  $$\iota^{\la_0\al_c\eps,\la^2_0\al_c\eps,r_\eps^{\F,(1)},r_\eps^{\F,(2)}}_{\O,*} (x)=0$$ for all $x$ in
 $K_*^{\la_0\al_c\eps,r_\eps^{\F,(1)}}(A_{\De_1}\cap A_{\De_2})\ts K_*(B)$ such that  $F^{\la_0\al_c\eps,r_\eps^{\F,(1)}}_{A_{\Delta_1}\cap A_{\Delta_2},B,*}(x)=0$. 
 
\begin{proposition}\label{proposition-technic-QS} There exists a control $(\la,h)$ depending only on $c$ and $\la_0$ with
$(\al_c,k_c)\lq (\la,h)$ such that for any positive numbers  $\eps$ and  $r'$ with $\eps$ in $(0,\frac{1}{4\la})$ and 
$r^{\F,(2)}_\eps\lq r'$   the following is satisfied:

\medskip

 For any element  $y$ 
 in $O'^{\eps,r}_1(A,B)\subseteq  K_1^{\eps,r}(C(\To^2,A\ts B))$, there exists
 \begin{itemize}
 \item an element $z_y$ in $\O_1^{\la\eps,h_\eps r'}(A,B)$;
 \item a positive integer $n_y$ and two  $\la'\eps$-$h'_\eps r'$-unitaries $W_{y}$ and $W'_{y}$ in  $M_{n_y}(C(\To_2,A\ts B))$;
 \item for $j=1,2$ a $\la\eps$-$h_\eps r'$-unitary    $W^{(j)}_{y}$     in 
$M_{2n_y}(C(\To_2,A\ts B))$ with  $W^{(j)}_{y}-I_{2n_y}$  in $M_{2n_y}(C(\To_2,A_{\De_j}\ts B))$;
\item a $\la\eps$-$h_\eps r'$-projection $q_y$ in  $M_{2n_y}(C(\To_2,(A_{\De_1}\cap A_{\De_2})\ts B))$
\end{itemize}
such that 
\begin{itemize}
\item $[W_{y}]_{\la\al_\F\eps,k_{\F,\la\eps}h_\eps r'}$ is in ${O'_1}^{\la\al_\F\eps,k_{\F,\la\eps}r'}(A,B)$ and $F^{\la\eps,h_\eps r'}_{A,B,*}(z_y)=[W_{y}]_{\la\al_\F\eps,k_{\F,\la\eps}h_\eps r'}$;
\item $[{q}_y,n_y]_{\la\eps,h_\eps r'}=\iota^{\al_c\eps,\la\eps,k_{c,\eps}r,h'_\eps r'}_{\O',*}\circ\partial^{\eps,r}_{C(\To_2,\De_1\ts B),C(\To_2,\De_2\ts B)}(y)$.
\item $\|\diag(W_{y},W'_{y})-W^{(1)}_{y}W^{(2)}_{y}\|<\la\eps$;
\item $\|W^{(1)*}_{y}\diag(I_{n_y},0)W^{(1)}_{y}-{q}_y\|<\la\eps$
 \end{itemize}
 \end{proposition}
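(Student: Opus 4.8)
The plan is to mirror, on the surjectivity side, the controlled five-lemma argument carried out for $QI$. First I would fix an element $y$ in $O_1'^{\eps,r}(A,B) \subseteq K_1^{\eps,r}(C(\To_2, A\ts B))$, represent it (after Morita equivalence and after applying Lemma \ref{lemma-almost-canonical-form-odd}) by an $\eps$-$r$-unitary $W$ in some $M_n(C(\To_2, A\ts B))$ lying in $\ker q_{A\ts B,*}$, and feed this $W$ into the decomposition machinery. Since $(\De_1\ts B, \De_2\ts B, A_{\De_1}\ts B, A_{\De_2}\ts B)$ is a controlled Mayer-Vietoris pair for $A\ts B$ with coercitivity $c$ (this is exactly the point of Definition \ref{definition-nuclear-coarse-MV-pair}), Proposition \ref{proposition-decom-unitaries} applied to $W$ produces, up to stabilization, $\al\eps$-$l_\eps r'$-unitaries $W^{(1)}$ and $W^{(2)}$ with $W^{(j)} - I$ in $M_\bullet(C(\To_2, A_{\De_j}\ts B))$ and $\|W - W^{(1)} W^{(2)}\| < \al\eps$, together with the homotopies to $1$ inside the respective neighborhood algebras. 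Lemma \ref{lemma-technic} (in the version for the pair $(A_{\De_1}\ts B, A_{\De_2}\ts B)$, using the $CIA$ property over $B$) then manufactures the $\la\eps$-$h_\eps r'$-projection $q_y$ in $M_{2n_y}(C(\To_2, (A_{\De_1}\cap A_{\De_2})\ts B))$ with $\|W^{(1)*}\diag(I_{n_y},0)W^{(1)} - q_y\| < \la\eps$, and by the definition of the controlled boundary map $\D_{C(\To_2,\De_1\ts B),C(\To_2,\De_2\ts B),*}$ its class is precisely $\iota_{\O',*}^{\cdots}\circ \partial^{\eps,r}_{C(\To_2,\De_1\ts B),C(\To_2,\De_2\ts B)}(y)$, giving the second bullet.

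The remaining content is to produce the element $z_y$ in $\O_1^{\la\eps, h_\eps r'}(A,B)$ together with the $\eps$-$r'$-unitaries $W_y, W_y'$ in $M_{n_y}(C(\To_2,A\ts B))$ realizing $F^{\la\eps, h_\eps r'}_{A,B,*}(z_y)$. Here I would argue exactly as in Step 1–Step 3 of Proposition \ref{proposition-technic} but run in the opposite direction: the classes $[W^{(j)}]$ live in $K_1^{\bullet,\bullet}(C(\To_2, A_{\De_j}\ts B))$, but because $W^{(j)}$ is a product of the $v$- and $w$-type factors coming from Proposition \ref{proposition-decom-unitaries-adjoint} (applied to $W$ and its adjoint so as to control the decomposition of the inverse) one checks $[W^{(j)}] \in {O'_1}^{\bullet,\bullet}(A_{\De_j},B)$; then the hypothesis that $\F_{A_{\De_j},B,*}$ is a quantitative isomorphism with rescaling $\la_0$ gives, by its $QS$-condition, elements $x^{(j)}$ in $O_1^{\bullet,\bullet}(A_{\De_j},B)$ with $F^{\bullet,\bullet}_{A_{\De_j},B,*}(x^{(j)}) = \iota_{\O',*}^{\cdots}([W^{(j)}])$. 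Setting $z_y = \jmath_{\De_1,\O,*}(x^{(1)}) + \jmath_{\De_2,\O,*}(x^{(2)})$ and using the explicit formulas (\ref{equ-F}), (\ref{equ-F-suspension}), (\ref{equ-F-even}), (\ref{equ-F-even-suspension}) for $F^{\eps,r}_{A,B,*}$ on generators $[u]\ts[p]$ etc., the product $W^{(1)} W^{(2)}$ of the two $W^{(j)}$'s — up to a $CIA$-correction absorbing the mismatch with the chosen factors and up to rescaling $\la$ — represents $F^{\la\eps,h_\eps r'}_{A,B,*}(z_y)$; this is the pair $(W_y, W_y')$. Throughout I use controlled Bott periodicity (Corollary \ref{corollary-eight}, \cite[Lemma 4.6]{oy3}) to reduce to the odd degree, as stated in the excerpt.

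The main obstacle I anticipate is bookkeeping the control pairs so that a \emph{single} $(\la,h)$ depending only on $c$ and $\la_0$ works uniformly: each invocation of Propositions \ref{proposition-decom-unitaries}, \ref{proposition-decom-unitaries-adjoint}, \ref{prop-conjugate}, Lemmas \ref{lemma-technic}, \ref{lemma-almost-closed}, \ref{cor-example-homotopy}, the $CIA$ property, and the $QS$-conditions of $\F_{A_{\De_j},B,*}$ and $\F_{A_{\De_1}\cap A_{\De_2},B,*}$ contributes its own rescaling, and these have to be composed and the propagation thresholds $r^{\F,(1)}_\eps \lq r^{\F,(2)}_\eps \lq r'$ chosen in the right nested order (first picking $r^{\F,(1)}_\eps$ from the $QS$-statement for $A_{\De_1}\cap A_{\De_2}$, then $r^{\F,(2)}_\eps$ from the $QI$-statement for $A_{\De_1}\cap A_{\De_2}$, as the statement already indicates). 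The substantive step — as opposed to this propagation accounting — is the compatibility of $\F_{\bullet,B,*}$ with the controlled Mayer-Vietoris boundary maps, which is exactly Lemma \ref{lemma-mv-kunneth}; granting that, the rest is the controlled five-lemma diagram chase. A secondary technical nuisance is ensuring that when one restricts the decomposition $W = W^{(1)} W^{(2)}$ the factors $W^{(j)} - I$ genuinely land in $M_\bullet(C(\To_2, A_{\De_j}\ts B))$ rather than merely in the neighborhood $C^*$-algebra of $\De_j$; this is handled, as in Sections \ref{subs-section-boundary} and \ref{section-six-terms}, by the inclusion $C^*N^{(r,5r)}_{\De_j} \subseteq A_{\De_j}$ built into the definition of an $r$-controlled Mayer-Vietoris pair and by passing from $\De_j$ to $\De_j \cap A_s$ for the appropriate $s$.
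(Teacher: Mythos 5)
Your plan breaks at the very first step: you propose to feed a representing $\eps$-$r$-unitary $W$ of $y$ directly into Proposition \ref{proposition-decom-unitaries} to obtain the factorisation $W\approx W^{(1)}W^{(2)}$. But Proposition \ref{proposition-decom-unitaries} (and the underlying Lemma \ref{lemma-decomp-invertible}) applies only to an $\eps$-$r$-unitary \emph{homotopic to $1$} — the whole construction of the factors $P_1,P_2$ is built out of a finite partition of such a homotopy. In the $QI$-statement this hypothesis was available because $[W_x]=F^{\eps,s}_{A,B,*}(x)=0$; in the $QS$-statement $y$ is an arbitrary class, so $W$ is in general \emph{not} connected to $1$, and there is simply no factorisation to start from. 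This is precisely why the paper's proof of Proposition \ref{proposition-technic-QS} takes the dual route: it first forms $y'=\partial^{\eps,r}_{C(\To_2,A_{\De_1}),C(\To_2,A_{\De_2}),*}(y)$, which lives in degree $0$, lifts it through $\F_{A_{\De_1}\cap A_{\De_2},B,*}$ via the $QS$-condition for the \emph{intersection} algebra, then uses naturality together with the $QI$-condition for $A_{\De_1}$ and $A_{\De_2}$ and freeness of $K_*(B)$ to show each generator of the lift pushes to zero in $K_0(A_{\De_j})$, and finally applies Lemma \ref{lemma-technic2} — the converse of Lemma \ref{lemma-technic}, which you do not invoke — to manufacture from the projection side the unitaries $v^{(j)}_i,w^{(j)}_i,u_i,u'_i$ that assemble into $W_y,W'_y,W^{(j)}_y,q_y,z_y$.

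Two secondary issues compound this. First, you move the use of the $QS$-condition for $\F_{A_{\De_j},B,*}$ into Proposition \ref{proposition-technic-QS} itself; in the paper this is reserved for the subsequent ``end of the proof of the $QS$-statement'', after Lemma \ref{lemma-bound1} has been applied to a second, independent representative of $\partial(-y)$ so that the combined projection is trivial in $K_0^{\eps,r}(A_{\De_1}\cap A_{\De_2})$ — without that cancellation Lemma \ref{lemma-bound1} does not apply. Second, even granting a factorisation, the claim that $[W^{(j)}]$ lies in ${O'_1}^{\bullet,\bullet}(A_{\De_j},B)=\ker q_{A_{\De_j}\ts B,*}$ is not automatic: the individual factors need not restrict trivially on $\To_2\setminus(0,1)^2$ even when the product does, and the explicit three-block normalisation $(z_1,z_2)\mapsto\diag(U^*(1,1)U(z_1,z_2),U^*(z_1,1)U(1,1),U^*(1,z_2)U(1,1))$ used in the paper to force membership in $\ker q$ is not mentioned in your sketch — ``one checks'' hides a real step. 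The compatibility of $\F_{\bullet,B,*}$ with the Mayer--Vietoris boundary (Lemma \ref{lemma-mv-kunneth}) is correctly identified as the conceptual ingredient, but without Lemma \ref{lemma-technic2} and the boundary-first structure the five-lemma chase cannot get started.
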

 \begin{proof}
 The proof of this proposition is also quite long so  is divided into four steps.
 
 \bigskip
 
 {\bf Step 1:} 
 According to equation (\ref{equation-boundary-eight}), we see that
 $y'=\partial^{\eps,r}_{C(\To_2,A_{\De_1}),C(\To_2,A_{\De_2}),*}(y)$ belongs to ${O'_0}^{\al_c\eps,k_{c,\eps}r}( A_{\De_1}\cap A_{\De_2},B)$.  
Set  $R=r^{\F,(1)}_\eps$. Then there exists an element $x$  in ${O_0}^{\la_0\al_c\eps,R}(A_{\De_1}\cap A_{\De_2},B)$ such that
 $$\iota_{\O',*}^{\al_c\eps,\la_0\al_c\al_\F\eps,k_{c,\eps}r,k_{\F,\la_0\al_c\eps}R}(y')=F^{\la_0\al_c\eps,R}_{A_{\De_1}\cap A_{\De_2},B,*}(x).$$

There exist two integers  $l$ and $l'$  and \begin{itemize}
  \item for $i=1,\ldots l$
  \begin{itemize}
  \item an $\la_0\al_c\eps$-$R$-projection $q_i$ in some $M_{n_i}(\widetilde{A_{\De_1}\cap A_{\De_2}} )$ and an integer $k_i$.
  \item a projection $p_i$ in some $M_{m_i}(B)$ with $[p_i]$ in the prescribed system of generators for $K_0(B)$;
  \end{itemize}
  \item  for $i=1,\ldots l'$
  \begin{itemize}
  \item an $\la_0\al_c\eps$-$R$-projection  $q'_i$ in some $M_{n'_i}(\widetilde{SA_{\De_1}\cap A_{\De_2} })$ and an integer $k'_i$;
  \item a projection   $p'_i$ in some $M_{m'_i}(\widetilde{SB})$ such that  $[p'_i]-[p'_i(0)]$ is in the prescribed system of generators for $K_1(SB)$;
 \end{itemize} \end{itemize}
 
 such that
  $$x=\sum_{i=1}^l[q_i,k_i]_{\la_0\al_c\eps,R}\ts [p_i]+\sum_{i=1}^{l'}[q'_i,k'_i]_{\la_0\al_c\eps,R}\ts ([p'_i]-[p'_i(0)]).$$
  By Morita equivalence, up to replacing   $B$ by $M_{m}(B)$ with  
   $m=m_1+\ldots m_l+m'_1+\ldots m'_{l'}$, we can assume that $m_i=1$ for $i=1,\ldots,l$ and  $m'_i=1$ for $i=1,\ldots,l'$. Using Lemma \ref{lemma-almost-canonical-form}, we can moreover assume without loss of generality
that $q_i-\diag(I_{k_i},0)$ is in $M_{n_i}({A_{\De_1}\cap A_{\De_2}} )$ for $i=1,\ldots,l$ and that $q'_i(0)=q'_i(1)=\diag(I_{k'_i},0)$ for $i=1,\ldots,l'$. 
Set $$q'_y=\diag(E_{q_1,n_1,p_1\ts P_{Bott},p_1\ts P_1},\ldots E_{q_l,n_l,p_l\ts P_{Bott},p_l\ts P_1},
E_{q'_1,n'_1,p'_1,p'_1(0)},\ldots E_{q'_l,n'_l,p'_l,p'_l(0)}).$$
In view of equations (\ref{equ-F-even}) and (\ref{equ-F-even-suspension}), 
 $$\iota_{\O',*}^{\al_c\eps,\la_0\al_c\al_\F\eps,k_{c,\eps}r,k_{\F,\la_0\al_c}R}(y')=[q'_y,n_y]_{\la_0\al_c\al_\F\eps,k_{\F,\la_0\al_c}R},$$ with $n_y=2(n_1+\ldots+n_l)+n'_1+\ldots+n'_l$.

 \bigskip
 {\bf   Step 2:}
By  naturality of $\F_{\bullet,B,*}$, we obtain
 $$F^{\la_0\al_c\eps, R}_{A_{\De_1},B,*}\circ \jmath^{\la_0\al_c\eps,R}_{\De_1,\De_2,\O,*}(x)=0$$ in 
 ${O'_0}^{\la_0\al_c\al_\F\eps,k_{\F,\la_0\al_c}R}(A_{\De_1},B)$  and 
   $$F^{\la_0\al_c\eps,r'}_{A_{\De_2},B,*}\circ \jmath^{\la_0\al_c\eps,R}_{\De_2,\De_1,\O,*}(x)=0$$ in 
 ${O'_0}^{\la_0\al_c\al_\F\eps,k_{\F,\la_0\al_c}R}(A_{\De_2},B)$. 
 
 Let $r'$ be  a positive number with $r^{\F,(2)}_\eps\lq r'$.   Since $[p_i]$ for $i=1,\ldots,l$ and   $[p'_i]-[p'_i(0)]$ for $i=1,\ldots,l'$ are respectively in the prescribed system of generator of $K_0(B)$ and $K_0(SB)$ and by definition of   $r^{\F,(2)}_\eps$,   we deduce that 
 \begin{itemize}
 \item $\jmath^{\la^2_0\al_c\eps,r'}_{\De_1,\De_2,*}([q_i,k_i]_{\la^2_0\al_c\eps,r'})=0$ in $K_0^{\la^2_0\al_c\eps,r'}(A_{\De_1})$ for $i=1,\dots,l$;
 \item  $\jmath^{\la^2_0\al_c\eps,r'}_{S\De_1,S\De_2,*}([q'_i,k'_i]_{\la^2_0\al_c\eps,r'})=0$ in $K_0^{\la^2_0\al_c\eps,r'}(SA_{\De_1})$ for $i=1,\dots,l'$;
 \item $\jmath^{\la^2_0\al_c\eps,r'}_{\De_2,\De_1,*}([q_i,k_i]_{\la^2_0\al_c\eps,r'})=0$ in $K_0^{\la^2_0\al_c\eps,r'}(A_{\De_2})$ for $i=1,\dots,l$;
  \item $\jmath^{\la^2_0\al_c\eps,r'}_{S\De_2,S\De_1,*}([q'_i,k'_i]_{\la^2_0\al_c\eps,r'})=0$ in $K_0^{\la^2_0\al_c\eps,r'}(SA_{\De_2})$ for $i=1,\dots,l'$.
  \end{itemize}
  Let $(\la,h)$ be the control pair of Lemma \ref{lemma-technic2}. Then for $i=1,\ldots l$ and up to stabilisation, we can assume that $n_i=2k_i$ and that 
  there exists
  $v^{(1)}_i$ and $v^{(2)}_i$ two $\la\la^2_0\al_c\eps$-$h_{\la^2_0\al_c\eps}r'$ unitaries in $M_{2n_i}(A)$ and  $u_i$ and $u'_i$  two $\la\la^2_0\al_c\eps$-$h_{\la^2_0\al_c\eps}r'$ unitaries in $M_{n_i}(A)$ such that
  
  \begin{itemize}
\item $v^{(j)}_i-I_{n_i}$ is an element  in   $M_{n_i}(A_{\De_j})$ for $j=1,2$;
\item $$\|v^{(1)*}_i\diag(I_{k_i},0)v^{(1)}_i-q_i\|<\la\la^2_0\al_c\eps$$ and $$\|v^{(2)}_i\diag(I_{k_i},0)v^{(2)*}_i-q_i\|<\la\la^2_0\al_c\eps.$$
 \item for $j=1,2$, then $v^{(j)}_i$ is connected to $I_{n_i}$ by a homotopy
of $\la\la^2_0\al_c\eps$-$h_{\la^2_0\al_c\eps}r''$-unitaries  $(v^{(j)}_{i,t})_{t\in[0,1]}$ in $M_{n_i}(A)$ such  that $v^{(j)}_{i,t}-I_{2n_i}$ is in $M_{n_i}(A_{\De_j})$ for all $t$ in $[0,1]$. 
\item $\|\diag(u_i,u'_i)-v^{(1)}_iv^{(2)}_i\|<\la\la^2_0\al_c\eps$.
\end{itemize}

In the same way, for $i=1,\ldots l'$ and up to stabilisation, we can assume that $n'_i=2k'_i$ and that 
  there exists
  $w^{(1)}_i$ and $w^{(2)}_i$ two $\la\la^2_0\al_c\eps$-$h_{\la^2_0\al_c\eps}r'$ unitaries in $M_{n'_i}(\widetilde{SA})$ and  $u''_i$ and $u'''_i$  two $\la\la^2_0\al_c\eps$-$h_{\la_0\al_c\eps}r'$ unitaries in $M_{n'_i}(\widetilde{SA})$ such that
  
  \begin{itemize}
\item $w^{(j)}_i-I_{n'_i}$ is an element  in   $M_{n'_i}(SA_{\De_j})$ for $j=1,2$;
\item $$\|w^{(1)*}_i\diag(I_{k_i},0)w^{(1)}_i-q'_i\|<\la\la^2_0\al_c\eps$$ and $$\|w^{(2)}_i \diag(I_{k'_i},0)w^{(2)*}_i-q'_i\|<\la\la^2_0\al_c\eps.$$
 \item for $j=1,2$, then $w^{(j)}_i$ is connected to $I_{n'_i}$ by a homotopy
of $\la\la^2_0\al_c\eps$-$h_{\la^2_0\al_c\eps}r'$-unitaries  $(w^{(j)}_{i,t})_{t\in[0,1]}$ in $M_{n'_i}(\widetilde{SA})$ such  that $w^{(j)}_{i,t}-I_{n'_i}$ is in $M_{n'_i}(SA_{\De_j})$ for all $t$ in $[0,1]$. 
\item $u''_i(0)=u''_i(1)=u'''_i(0)=u'''_i(1)=I_{2n'_i}$;
\item $\|\diag(u'_i,u''_i)-w^{(1)}_iw^{(2)}_i\|<\la\la^2_0\al_c\eps$.
\end{itemize}

 \bigskip
 
 {\bf   Step 3:} 
 Let us set  $\la'=\la\la^2_0\al_c$ and for any $\eps$ in $(0,\frac{1}{4\la'})$ set $h'_\eps=h_{\la\la^2_0\al_c\eps}$.
 Consider the element of $O^{\la'\eps,h'_\eps r'}_1(A,B)$ 
  $$z_y=[u_1]_{\la'\eps,h'_\eps r'}\ts [p_1]+\ldots+[u_l]_{\la'\eps,h'_\eps r'}\ts [p_l]+[u''_1]_{\la'\eps,h'_\eps r'}\ts ([p'_1]-[p'_1(0)]) +\ldots+[u''_{l'}]_{\la'\eps,h'_\eps r'}\ts ([p'_{l'}]- [p'_{l'}(0)]).$$ If we set 
 \begin{equation*}W_{y}=\diag(W_{u_1,u'_1,p_1},\ldots, W_{u_l,u'_l,p_l},W'_{u''_1,{u'''_1},p'_1},\ldots, W'_{u''_{l'},{u'''_{l'}},p'_{l'}} ),\end{equation*}  
then $W_{y}$ is a  $\la'\eps$-$h'_\eps r'$-unitary in  $M_{2n_y}(C(\To_2,A\ts B))$.  Using Lemmas \ref{lemma-almost-closed} and \ref{cor-example-homotopy} and up to replacing  $(\la',h')$ by  $(12\la',2h')$, then for $i=1,\ldots l$ (resp. for $i=1,\ldots,l'$)  $\diag(u'_i,I_{k_i)}$ is homotopic to $\diag(u^*_i,I_{k_i})$
(resp. $\diag(u'''_i,I_{k'_i})$ is homotopic to $\diag({u''_i}^*,I_{k'_i})$) as a $\la'\eps$-$h'_\eps r'$-unitary in $M_{n_i}(A)$
(resp. in $M_{n'_i}(\widetilde{SA})$). Hence we deduce that
$[W_{y}]_{\al_\F\la'\eps,k_{\F,\la'\eps}h'_\eps r'}$ belongs to ${O'_1}^{\al_\F\la'\eps,k_{\F,\la'\eps}h'_\eps r'}(A,B)$  and 
in view of equations (\ref{equ-F}) and (\ref{equ-F-suspension}), we see that 
 $$[W_{y}]_{\al_\F\la'\eps,k_{\F,\la'\eps}h'_\eps r'}=F^{\la'\eps,h'_\eps r'}_{A,B,*}(z_y)$$  in ${O'_1}^{\al_\F\la'\eps,k_{\F,\la'\eps}h'_\eps r'}(A,B)$. 
 In the same way, if we set 
  \begin{equation*}W'_{y}=\diag(W_{u'_1,u_1,p_1},\ldots, W_{u'_l,u_l,p_l},W'_{u'''_1,{u''_1},p'_1},\ldots, W'_{u'''_{l'},{u''_{l'}},p'_{l'}} ),\end{equation*}  
then $W'_{y}$ is also an $\la'\eps$-$h'_\eps r'$-unitary in  $M_{2n_y}(C(\To_2,A\ts B))$.

\bigskip
 
 {\bf   Step 4:}  For $i=1,\ldots,l$ and $j=1,2$, 
 let ${v'_i}^{(j)}$ be the matrix in $M_{n_i}(A)$ obtained from ${v_i}^{(j)}$ by flipping the $k_i$ first and the $k_i$-last coordinates, and define similarly ${w'_i}^{(j)}$ in  $M_{n'_i}(\widetilde{A})$ for $i=1,\ldots,l'$ and $j=1,2$.
 Up to replacing  $\la'$ by $2\la'$, we have  that \begin{itemize}
 \item $v'^{(1)}_i$ and $v'^{(2)}_i$ are  $\la'\eps$-$h'_{\eps}r'$ unitaries in $M_{n_i}(A)$;  
\item $$\|{v'_i}^{(1)*}\diag(I_{k_i},0){v'_i}^{(1)}-(I_{n_i}-\tilde{q_i})\|<\la'\eps$$ and $$\|{v'_i}^{(2)}\diag(I_{n_i},0){v'_i}^{(2)*}-(I_{n_i}-\tilde{q_i})\|<\la'\eps.$$
\item $\|\diag(u'_i,u_i)-{v'_i}^{(1)}{v'_i}^{(2)}\|<\la'\eps$.
\end{itemize}
where $\tilde{q_i}$ is obtained from $q_i$ by flipping the $k_i$ first and the $k_i$ last coordinates.
Similarly,  for $i=1,\ldots l'$, we have 
\begin{itemize}
\item   ${w'_i}^{(1)}$ and ${w'_i}^{(2)}$ are two  $\la'\eps$-$h'_{\eps}r'$ unitaries in $M_{n'_i}(\widetilde{SA})$;
\item ${w'_i}^{(j)}-I_{n'_i}$ is an element  in   $M_{2n'_i}(SA_{\De_j})$ for $j=1,2$;
\item $$\|{w'_i}^{(1)*}\diag(I_{k'_i},0){w'_i}^{(1)}-(I_{n'_i}-\tilde{q'_i})\|<2\la'\eps$$ and $$\|{w'_i}^{(2)}\diag(I_{n'_i},0){w'_i}^{(2)*}-(I_{n'_i}-\tilde{q'_i})\|<\la'\eps.$$
\item $\|\diag(u''_i,u'_i)-{w'_i}^{(1)}{w'_i}^{(2)}\|<\la'\eps$.
\end{itemize}where $\tilde{q'_i}$ is obtained from $q'_i$ by flipping  the $k'_i$ first and the $k'_i$ last coordinates.
Then we have  $$\|\diag(W_{u_i,u'_i,p_l},W_{u'_i,u_i,p_i})-W_{{v_i}^{(1)},{v'_i}^{(1)},p_i}W_{{v_i}^{(2)},{v'_i}^{(2)},p_i}\|<\la'\eps$$
and 
$$\|W^*_{{v_i}^{(1)},{v'_i}^{(1)},p_i}\cdot\diag(P_{k_i},P_{k_i})  \cdot W_{{v_i}^{(1)},{v'_i}^{(1)},p_i}-E'_{q_i,k_i,p_i\ts P_{Bott},p_i\ts P_1}\|<\la'\eps,$$

with $$E'_{{q_i,k_i,p_i\ts P_{Bott},p_i\ts P_1}}=\diag(q_i\ts p_i\ts P_{Bott}+P_{k_i}\ts (I_2-p_i\ts P_{Bott}),P_{k_i}\ts (I_2-p_i\ts P_{1})+(I_{n_i}-\tilde{q_i})\ts p_i\ts P_{1})$$
for $i=1\ldots l$. Similarly, we have 
 $$\|\diag(W'_{u''_i,u'''_i,p'_i},W_{u'''_i,u''_i,p'_i})-W_{{w_i}^{(1)},{w'_i}^{(1)},p'_i},W_{{w_i}^{(2)}{w'_i}^{(2)},p'_i})\|<\la'\eps,$$
 
 and 
 $$\|{W'}^*_{{w_i}^{(1)},{w'_i}^{(1)},p'_i}\cdot\diag(P_{k'_i},P_{k'_i})\cdot W'_{{w_i}^{(1)},{w'_i}^{(1)},p'_i}-E'_{q'_i,k'_i,p'_i,p'_i (0)}\|<\la'\eps,$$ for $i=1\ldots l'$ with 
  $$E'_{{q'_i,k'_i,p'_i,p'_i(0)\ts P_1}}=\diag(q'_i\ts p'_i+P'_{k'_i}\ts (I_2-p'_i),P_{k'_i}\ts (I_2-p'_i(0))+(I_{n'_i}-\tilde{q'_i})\ts p'_i(0))).$$ 
 
 From this we deduce that    there exist $W^{(1)}_{y}$ and  $W^{(2)}_{y}$ two $\la'$-$h'_\eps r'$-unitaries in 
$M_{2n_y}(C(\To_2,A\ts B))$ such that 
\begin{itemize}
\item  $W^{(i)}_{y}-I_{2n_y}$ is in $M_{2n_x}(C(\To_2,A_{\De_i}\ts B))$ for $i=1,\ldots n$;
\item $\|\diag(W_{y},W'_{y})-W^{(1)}_{y}W^{(2)}_{y}\|<\la'\eps$;
\item $\|W^{(1)*}_{y}\diag(I_{n_x},0)W^{(1)}_{y}-{q}_y\|<\la'\eps$,
 \end{itemize}
 where $${q}_y=\diag(E'_{q_1,k_1,p_1\ts P_{Bott},p_1\ts P_1},\ldots E'_{q_l,k_l,p_l\ts P_{Bott},p_l\ts P_1},
E'_{q'_1,k'_1,p'_1,p'_1(0)},\ldots E_{q'_{l'},k'_l,p'_{l'},p'_{l'}(0)}).$$ 
Clearly $q_y$ is $\la'\eps$-$h'_\eps r'$-projection in $M_{2n_y}(C(\To_2,A\ts B))$  such that
 $$[{q}_y,n_y]_{\la'\eps,h'_\eps r'}=[{q'_y},n_y]_{\la'\eps,h'_\eps r'}=\iota^{\eps,\la'\eps,r,h'_\eps r'}_{\O',*}\circ\partial^{\eps,r}_{C(\To_2,\De_1\ts B),C(\To_2,\De_2\ts B)}(y)$$ and hence 
 $z_y,\,{q}_y,\,n_y,W_{y},\,W'_{y},\,W^{(1)}_{y}$ and $W^{(2)}_{y}$ satisfy the required conditions for some suitable control pair.
\end{proof}

{\it End of the proof of the $QS$-statement.} Let $(\la,h)$ be a control pair as in Proposition \ref{proposition-technic-QS}, let $\eps$ be a positive number in $(0,\frac{1}{4\la})$, let $y$ be an element in ${O'_1}^{\eps,r}(A,B)$ and let $r',\, z_y,\,{q}_y,\,n_y,W_{y},\,W'_{y},\,W^{(1)}_{y}$ and $W^{(2)}_{y}$ as in the proposition.
Let $u$ be an $\eps$-$r$ unitary in some $M_n(C(\To_2,A\ts B))$, let $u_1$ and $u_2$ be  $\al_c\eps$-$k_{c,\eps} r$-unitaries in some $M_{2n}(C(\To_2,A\ts B))$ and let $q$ be an $\al_c\eps$-$k_{c,\eps}r$-projection in $M_{2n}(C(\To_2,A\ts B))$
such that
\begin{itemize}
\item $u_i-I_{2n}$ is in $M_{2n}(C(\To_2,A\ts B)$ for $i=1,2$;
\item $\|\diag(u,u^*)-u_1u_2\|<\al_c\eps$;
\item $q-\diag(I_n,0)$ is in $M_{2n}(C(\To_2,(A_{\De_1}\cap A_{\De_2})\ts B))$
\item $\|q-v^*_1\diag(I_n,0)v_1\|<\al_c\eps$;
\item $\|q-v_2\diag(I_n,0)v^*_2\|<\al_c\eps$;
\item $-y=[u]_{\eps,r}$;
\item $\partial^{\eps,r}_{C(\To_2,\De_1\ts B),C(\To_2,\De_2\ts B)}(-y)=[q,n]_{\al_c\eps,k_{c,\eps} r}$.
\end{itemize}
Then applying Lemma \ref{lemma-bound1} to $\diag(u,W_{y}),\,\diag(u^*,W'_{y})$ and to the   matrices respectively obtained from $\diag(u_1,W^{(1)}_{y}),\,\diag(u_2,W^{(2)}_{y})$ and $\diag(q,q_y)$ by swapping the order of coordinates $n+1,\ldots,2n$ and $2n+1,\ldots,2n+n_x$, we see that for a controlled pair $(\la',h')$ depending only on $\la_0$ and $c$, and if $\eps$ is in $(0,\frac{1}{4\la'})$,
there exist $U_1$ and $U_2$ some $\la'\eps$-$h'_\eps r'$-unitary in some $M_{n'}(C(\To_2,A\ts B))$ with
$U_1-I_{n'}$ in $M_{n'}(C(\To_2,A_{\De_1}\ts B))$ and $U_2-I_{n'}$ in $M_{n'}(C(\To_2,A_{\De_2}\ts B))$ such that
$$[U_1]_{\la'\eps,h'_\eps r'}+[U_2]_{\la'\eps,h'_\eps r'}=[W_{y}]_{\la'\eps,h'_\eps r'}-\iota_{\O',*}^{\eps,\la'\eps,r,h'_\eps r'}(y)$$     in $K_1^{\la',h'_\eps r'}(C(\To_2,A\ts B))$.
Up to replacing
$U_j$ for $j=1,2$ by $$\To_2\lto M_{3n'};(z_1,z_2)\mapsto \diag(U^*_j(1,1)U_j(z_1,z_2),U^*_j(z_1,1)U_j(1,1),U^*_j(1,z_2)U_j(1,1)),$$
and $(\la',h')$ by $(3\la',2h')$, we can assume that
$[U_j]_{\la'\eps,h'_\eps r'}$ belongs to ${O'_1}^{\la'\eps,h'_\eps r'}(A_{\De_j},B)$.

\medskip

 Let $r''$ be a positive number with
 $k_{\F,\la_0\la'\eps} r''\gq  h'_\eps r'$ such that  
 for $j=1,2$, any positive number $\eps$ in   $(0,\frac{1}{4\la_0\la'})$    and any $z$ in  ${O'_*}^{\la'\eps,h'_\eps r'}(A_{\De_j},B)$, there exists an element $x$ in
 ${O_*}^{\la_0\la'\eps,r''}(A_{\De_j},B)$ such that
 $$\iota_{\O',*}^{\la'\eps,\la_0\al_\F\la'\eps,h'_\eps r',k_{\F,\la_0\la'\eps}r''}(z)=F^{\la_0\la'\eps,r''}_{A_{\De_j},B,*}(x).$$Let then    $z_{y}^{(j)}$ be for $j=1,2$ an element in  ${O_1}^{\la_0\la'\eps,r''}(A_{\De_j},B)$ such that 
$$\iota_{\O',*}^{\la'\eps,\al_\F\la_0\la'\eps,h'_\eps r',k_{\F,\la_0\la'\eps}r''}([U_i]_{\la'\eps,h'_\eps r'})=F^{\la_0\la'\eps,r''}_{A_{\De_j},B,*}(z_{y}^{(j)}).$$ Let us set 
$$\tilde{z}_y=\iota_{\O,*}^{\la'\eps,\la_0\la'\eps,h'_\eps r'',h'_\eps R}(z_y)-\jmath^{\la_0\la'\eps,r''}_{\De_1,\O,*}(z_{y}^{(1)})-\jmath^{\la_0\la'\eps,r''}_{\De_2,\O,*}(z_{y}^{(2)})$$ in $\O_1^{\la_0\la'\eps,h'_\eps r''}(A,B)$.  By naturality of $\F_{\bullet,B,*}$, we see then that 
$$F^{\la_0\la'\eps,r''}_{A,B,*}(\tilde{z}_y)=\iota_{\O,*}^{\eps,\al_\F\la_0\la'\eps,r,k_{\F,\la_0\la'\eps}r''}(y)$$ and hence the $QS$-condition is satisfied.

 \subsection{Quantitative K\"unneth formula for crossed-product $C^*$-algebras}\label{subsec-kunneth-BC}
We shall next discuss the connection between the Baum-Connes conjecture and the quantitative K\"unneth formula. The connection between the usual Baum-Connes conjecture and the K\"unneth formula was studied 
in \cite{ceo2}.

  Before proving Theorem \ref{thm-BC-Qkunneth}, recall that article \cite{ceo2}  introduced an equivariant analogue of the map
$\omega_{\bullet,\bullet,*}$ for the topological $K$-theory of a locally compact group $G$ (i.e., the left-hand side of the Baum-Connes assembly map). Let $A$ be a $G$-$C^*$-algebra and let $B$ be a $C^*$-algebra. The $C^*$-algebra $B$ can be viewed as a $G$-$C^*$-algebra with  the  trivial action of $G$ and we equip $A\ts B$ with the diagonal action.
Then the elements  in $K_*(B)$ can be viewed as element of $K_*^G(B)$. If $X$ is a $G$-proper space, the map
$$\omega^{G,X}_{A,B,*}:KK^G_*(C_0(X),A)\ts K_*(B)\to KK^G_*(C_0(X),A\ts B);\, x\ts y\mapsto x\ts\tau_A(y),$$  is compatible with inclusion of $G$-proper cocompact spaces and hence gives rise to a morphism $$\omega^{G,top}_{A,B,*}:K^{top}_*(G,A)\ts \K_*(B)\to K^{top}_*(G,A\ts B).$$
 \begin{theorem}\label{thm-BC-kunneth}
 Let $\Ga$ be a discrete group  and  let $A$ be a $\Ga$-$C^*$-algebra. Assume that
 for every finite  subgroup $F$ of $\Ga$, the $C^*$-algebra  $A\rtimes F$ satisfies the K\"unneth formula,
 Then for any $C^*$-algebra $B$ such that $K_*(B)$ is a free abelian group and any positive number $d$
  $$\omega^{\Ga,P_d(\Ga)}_{A,B,*}:KK^{\Ga}_*(C_0(P_d(\Ga)),A)\ts  K_*(B)\to KK^{\Ga}_*(C_0(P_d(\Ga)),A\ts B)$$ is an isomorphism.
  \end{theorem}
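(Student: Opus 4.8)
\textbf{Proof plan for Theorem \ref{thm-BC-kunneth}.}

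The plan is to reduce the statement to the case of a cocompact proper $\Ga$-space by an inductive-limit argument, and then to run a Mayer--Vietoris/five-lemma induction on the number of cells, using the fact that for an orbit $\Ga/F$ with $F$ finite one has $KK^\Ga_*(C_0(\Ga/F),A)\cong K_*(A\rtimes F)$ equivariantly, so the Kasparov-product map $\omega^{\Ga,\Ga/F}_{A,B,*}$ becomes the ordinary $\omega_{A\rtimes F,B,*}$, which is an isomorphism by the hypothesis that $A\rtimes F$ satisfies the K\"unneth formula (and $K_*(B)$ is free, so the $\mathrm{Tor}$ term vanishes). First I would fix $d$ and observe that $P_d(\Ga)$ is a locally finite, finite-dimensional simplicial complex with a proper cocompact simplicial $\Ga$-action, hence exhausted by an increasing sequence of $\Ga$-invariant cocompact subcomplexes; since $KK^\Ga_*(C_0(-),A)$, the functor $KK^\Ga_*(C_0(-),A\ts B)$, and the tensor product $-\ts K_*(B)$ all commute with the relevant inductive limits (the latter because $K_*(B)$ is flat, being free), it suffices to prove the statement for each cocompact invariant subcomplex $Z$, i.e. that
$$\omega^{\Ga,Z}_{A,B,*}:KK^\Ga_*(C_0(Z),A)\ts K_*(B)\lto KK^\Ga_*(C_0(Z),A\ts B)$$
is an isomorphism.

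Next I would induct on the number of $\Ga$-orbits of simplices of $Z$ (equivalently on the dimension and on the number of top-dimensional cells). The base case is $Z$ a single orbit of vertices, $Z\cong\Ga/F$ with $F$ finite: here $C_0(Z)\cong\mathrm{Ind}_F^\Ga\C$, so $KK^\Ga_*(C_0(\Ga/F),A)\cong KK^F_*(\C,A)\cong K_*(A\rtimes F)$ by the Green--Julg isomorphism, and under this identification $\omega^{\Ga,\Ga/F}_{A,B,*}$ is intertwined with $\omega_{A\rtimes F,B,*}:K_*(A\rtimes F)\ts K_*(B)\to K_*((A\rtimes F)\ts B)\cong K_*((A\ts B)\rtimes F)$, which is an isomorphism because $A\rtimes F$ satisfies the K\"unneth formula and $K_*(B)$ is free. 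For the inductive step, write $Z=Z'\cup \Ga\cdot\sigma$ where $\sigma$ is a top simplex, so that there is a $\Ga$-equivariant pushout of locally compact spaces expressing $Z$ as $Z'$ glued to $\Ga\times_{F_\sigma}\sigma$ along $\Ga\times_{F_\sigma}\partial\sigma$; this yields a short exact sequence of $\Ga$-$C^*$-algebras relating $C_0$ of these pieces and hence a six-term Mayer--Vietoris sequence in $KK^\Ga_*(C_0(-),A)$, and likewise in $KK^\Ga_*(C_0(-),A\ts B)$. Tensoring the first sequence with $K_*(B)$ keeps it exact (flatness of $K_*(B)$), and $\omega^{\Ga,-}_{A,B,*}$ is natural, so it gives a map of six-term exact sequences; since the maps for $Z'$, for $\Ga\times_{F_\sigma}\sigma$ (which is $\Ga$-equivariantly homotopy equivalent to the orbit $\Ga/F_\sigma$, reducing to the base case), and for $\Ga\times_{F_\sigma}\partial\sigma$ (fewer cells, handled by the inductive hypothesis and a sub-induction on dimension) are all isomorphisms, the five lemma gives that the map for $Z$ is an isomorphism.

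The main obstacle I expect is the bookkeeping in the inductive step: one must set up the $\Ga$-equivariant decomposition of $P_d(\Ga)$ into cells and the associated semisplit extensions of $\Ga$-$C^*$-algebras carefully, verify the naturality of $\omega^{\Ga,-}_{A,B,*}$ with respect to the boundary maps of these Mayer--Vietoris sequences (this is the equivariant analogue of the compatibility of the external Kasparov product with boundary maps, which follows from Kasparov's machinery), and ensure the inductive hypothesis is formulated uniformly enough over all pairs of invariant subcomplexes that the sub-induction on orbits and dimension closes up. The flatness of $K_*(B)$ is used repeatedly and is exactly where the freeness hypothesis enters; once the exactness of the tensored sequences and the naturality of $\omega$ are in hand, the five-lemma argument is routine. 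This theorem, combined with the quantitative surjectivity/injectivity statements of Theorem \ref{thm-quant-surj} applied to $\mu_{\Ga,A,*}$ and $\mu_{\Ga,A\ts B,*}$ together with the compatibility of $\mu$ with $\omega$ via $\Omega$, then yields Theorem \ref{thm-BC-Qkunneth}.
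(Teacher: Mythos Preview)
Your approach is correct and follows essentially the same strategy as the paper: induction on the cell structure of $P_d(\Ga)$, base case via the identification $KK^\Ga_*(C_0(\Ga/F),A)\cong K_*(A\rtimes F)$, and a five-lemma argument using flatness of $K_*(B)$. The differences are only organizational: your inductive-limit reduction to cocompact subcomplexes is unnecessary since $P_d(\Ga)$ is already $\Ga$-cocompact, and the paper inducts on the skeleton dimension $Z_0\subset\cdots\subset Z_n$ (after a barycentric subdivision to make the action type-preserving, so that $Z_j\setminus Z_{j-1}$ is $\Ga$-equivariantly the product of the open standard $j$-simplex with a discrete proper $\Ga$-set $\Sigma_j$, reducing via Bott periodicity to the zero-skeleton case) rather than orbit-cell by orbit-cell via pushouts as you propose.
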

  \begin{proof}(Compare with the proof of \cite[Lemma 1.7]{ceo2})
 The action of $\Ga$ on $P_{r}(\Ga)$ is simplicial and up
 take a barycentric subdivision of $P_{d}(\Ga)$, we can assume that
 $P_{d}(\Ga)$ is a locally finite and finite dimension typed
 simplicial complex, equipped with a simplicial and type preserving
 action of $\Ga$. Let $Z_0,\cdots,Z_n$ be the skeleton decomposition
 of $P_d(\Ga)$. Then $Z_j$ is a simplicial complex of dimension $j$,
 locally finite and equipped with a proper, cocompact  and type
 preserving simplicial action of $\Ga$. Let us prove by induction on
 $j$ that $$\omega_{A,B,*}^{\Ga,Z_j}:KK^G_*(C_0(Z_j),A)\ts K_*(B)\to KK^G_*(C_0(Z_j),A\ts B)$$
 is an isomorphism. The $0$-skeletton $Z_0$ is a
finite union of orbits and thus, for $j=0$, it is enought to prove that
 $$\omega^{\Ga,\Ga/F}_{A,B,*}:KK^\Ga_*(C_0(\Ga/F),A)\ts K_*(B)\to KK^G_*(C_0(Z_j),A\ts B)$$ is an isomorphism when $F$ is a finite subgroup of
$\Ga$. Let us recall from \cite{oyono}  that for every
$C^*$-algebra $B$ equipped with an action of $\Ga$, there is a natural
restriction isomorphism $$\res^B_{F,\Ga,*}:KK^\Ga_*(\Ga/F,B)\lto
KK^F_*(\C,B)\cong K_*(B\rtimes F).$$ Moreover, we have    the following
commutative diagram
$$
\begin{CD}
 KK^\Ga_*(C_0(\Ga/F),A)\ts K_*(B)  @>\omega^{\Ga/F,\Ga}_{A,B,*}  >>KK^\Ga_*(C_0(\Ga/F),A\ts B) \\
        @V\res^{A}_{F,\Ga,*}VV            @VV\res^{A\ts B}_{F,\Ga,*} V\\
 K_*(A\rtimes F)\ts  K_*(B)   @>\omega_{A\rtimes F,B,*}>> K_*(A\rtimes F\ts B)
\end{CD}.
$$
The bottom row being by assumption  an isomorphism, the top row is then also an isomorphism.
Let us assume that we have proved that  $\omega^{Z_{j-1},\Ga}_{A,B,*}$ is an
isomorphism. Then the short exact sequence
$$0\lto C_0(Z_j\setminus Z_{j-1}) \lto C_0(Z_j)\lto C_0(Z_{j-1})\lto 0  $$
gives rise to an natural long exact sequence
$$\lto KK^\Ga_*(C_0(Z_{j-1}),\bullet)\lto KK^\Ga_*(C_0(Z_{j}),\bullet)\lto
KK^\Ga_*(C_0(Z_{j}\setminus Z_{j-1}),\bullet)\lto
KK^\Ga_{*+1}(C_0(Z_{j-1}),\bullet)$$ and thus by naturality and since    $K_*(B)$ is a free abelian group, we get a
commutative diagram
{\tiny{$$
\begin{CD}
 KK^\Ga_*(C_0(Z_{j-1}),A)\ts\cdots @>>> KK^\Ga_*(C_0(Z_{j}),A) \ts\cdots@>>>
KK^\Ga_*(C_0(Z_{j}\setminus Z_{j-1}),A)\cdots @>>>
KK^\Ga_{*+1}(C_0(Z_{j-1}),A )\ts\cdots \\
        @V\omega^{Z_{j-1},\Ga}_{A,B,*}VV
        @V\omega^{Z_{j},\Ga}_{A,B,*}VV @V\omega^{Z_{j}\setminus Z_{j-1},\Ga}_{A,B,*}VV  @V\omega^{Z_{j-1},\Ga}_{A,B,*} VV    \\
   KK^\Ga_*(C_0(Z_{j-1}),\cdots @>>> KK^\Ga_*(C_0(Z_{j}),\cdots @>>>
KK^\Ga_*(C_0(Z_{j}\setminus Z_{j-1}),\cdots @>>>
KK^\Ga_{*+1}(C_0(Z_{j-1})\cdots \end{CD},
$$}}

Let $\intsi_j$  be the interior of the standard $j$-simplex. Since the
action of $\Ga$ is type preserving, then $Z_{j}\setminus Z_{j-1}$ is
equivariantly homeomorphic  to $\intsi_j\times\Sigma_j$, where $\Sigma_j$ is the
set of center of $j$-simplices  of $Z_{j}$, and where $\Ga$ acts
trivially on $\intsi_j$. This identification, together with Bott
periodicity, provides a commutative diagram

$$
\begin{CD}
KK^\Ga_*(C_0(Z_{j}\setminus Z_{j-1}),A)\ts K_*(B) @>>>KK^\Ga_{*+1}(C_0(\Sigma_j),A)\ts K_*(B)\\
  @V\omega^{Z_{j}\setminus Z_{j-1},\Ga}_{A,B,*} VV
        @V\omega^{\Si_j,\Ga,*}_{A,B,*}VV\\
KK^\Ga_*(C_0(Z_{j}\setminus Z_{j-1}),A\ts B) @>>>
KK^\Ga_{*+1}(C_0(\Sigma_j),A\ts B) \end{CD}.
$$

By the first step of induction, $\omega^{\Si_j,\Ga}_{A,B,*}$ is an isomorphism,
and hence  $\omega^{Z_{j}\setminus Z_{j-1},\Ga}_{A,B,*} $ is an
isomorphism. Using the induction hypothesis and the five lemma, we conclude
  that $\omega^{Z_{j},\Ga}_{A,B,*}$ is an isomorphism.\end{proof}

  \begin{lemma}\label{lemma-kunneth} There exits a positive number $\la_0$ and a
  function $$(0,+\infty)\times\left(0,\frac{1}{4\la_0}\right);\,(d,\eps)\mapsto r'_{d,\eps}$$ non decreasing in $d$ and non increasing in
  $\eps$ with $r_{d,\eps}\lq r'_{d,\eps}$ for all $\eps$ in $(0,\frac{1}{4\la_0})$ and $d>1$ such that the following holds:

  \medskip
   for any finitely generated group $\Ga$, any $\Gamma$-$C^*$-algebra $A$, any $C^*$-algebra $B$  and any positive numbers $\eps$, $r$ and
$d$ with $\eps<\frac{1}{4\la_0}$ and $r\gq r'_{d,\eps}$, then we have
$$\omega^{\eps,r}_{A\rt\Ga,B,*}\circ  (\mu_{\Gamma,A,*}^{\eps,r,d}\ts \Id_{K_*(B)})=
\mu_{\Gamma,A\ts B,*}^{\al_\T\eps,k_{\T,\eps}r,d}\circ \omega_{A,B,*}^{\Ga,P_d(\Ga)}.$$
\begin{proof}



Let $z$  be an element in $KK_*^\Ga(C_0(P_d(\Ga)),A)$ and let $y$ be an element in $K_*(B)$.  Then
\begin{equation*}\begin{split}
\omega^{\eps,r}_{A\rt\Ga,B,*}\left( J_\Gamma^{red,\frac{\eps}{\alpha_J},\frac{r}{k_{J,{\eps}/{\alpha_J}}}}(z)\left([p_{\Ga,d},0]_{\frac{\eps}{\alpha_J},\frac{r}{k_{J,{\eps}/{\alpha_J}}}}\right)\ts y\right)&\\
=\tau_{A\rtr,*}^{\eps,r}(y)\circ J_\Gamma^{red,\frac{\eps}{\alpha_J},\frac{r}{k_{J,{\eps}/{\alpha_J}}}}&(z)\left([p_{\Ga,d},0]_{\frac{\eps}{\alpha_J},\frac{r}{k_{J,{\eps}/{\alpha_J}}}}\right).\end{split}\end{equation*} The result is then a consequence of Remark \ref{remark-kasp-tens}.


\end{proof}
%
\end{lemma}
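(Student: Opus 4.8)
The final statement to prove is Lemma~\ref{lemma-kunneth}, a compatibility statement between the controlled assembly maps $\mu^{\eps,r,d}_{\Ga,A,*}$, the topological-side external product $\omega^{\Ga,P_d(\Ga)}_{A,B,*}$, and the quantitative external product $\Omega_{\bullet,\bullet,*}$. The plan is to unwind both sides of the claimed identity through their definitions (Definition~\ref{def-quantitative-assembly-map} for the assembly maps, and the definition of $\Omega_{A,B,*}$ via the controlled tensorization morphism $\TT_A$) and reduce everything to a single associativity/compatibility property of the controlled Kasparov tensorization morphism, already packaged in Theorem~\ref{thm-tensor} and Remark~\ref{remark-kasp-tens}.

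First I would fix $z$ in $KK^\Ga_*(C_0(P_d(\Ga)),A)$ and $y$ in $K_*(B)$, and compute $\mu^{\eps,r,d}_{\Ga,A,*}(z)$ by definition: it is $\bigl(J_\Ga^{red,\eps/\al_\JJ,\,r/k_{\JJ,\eps/\al_\JJ}}(z)\bigr)\bigl([p_{\Ga,d},0]_{\eps/\al_\JJ,\,r/k_{\JJ,\eps/\al_\JJ}}\bigr)$. Applying $\omega^{\eps,r}_{A\rt\Ga,B,*}(-\ts y)$, which by its own definition is right multiplication by the controlled element $\tau_{A\rt\Ga}^{\al_\T\eps,k_{\T,\eps}r}(y)$ (i.e.\ $\T_{A\rt\Ga,*}(y)$ applied via $\TT$), produces $\TT_{A\rt\Ga,*}(y)$ composed with $\JR(z)$ evaluated on the projection $[p_{\Ga,d},0]$. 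On the other side, $\mu^{\al_\T\eps,k_{\T,\eps}r,d}_{\Ga,A\ts B,*}\bigl(\omega^{\Ga,P_d(\Ga)}_{A,B,*}(z\ts y)\bigr)$ is $\JR(z\ts_A\tau_A(y))$ evaluated on the same projection $[p_{\Ga,d},0]$ (noting that $p_{\Ga,d}$ does not change under tensoring with $B$, up to the canonical identification $(A\ts B)\rt\Ga \cong (A\rt\Ga)\ts B$ since $\Ga$ acts trivially on $B$). So the whole identity comes down to the statement that $\JR(z\ts_A\tau_A(y))$ agrees, as a controlled morphism up to the relevant control pair, with $\TT_{A\rt\Ga,*}(y)\circ\JR(z)$. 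This is exactly the content combining the bifunctoriality in Theorem~\ref{thm-tensor}(iv), the product formula of Theorem~\ref{thm-product}, and the identification $\T_{A\rt\Ga,*}(y) = \JR(\tau_{A,*}(y))$ from Remark~\ref{remark-kasp-tens} (reading $y\in K_*(B)=KK_*(\C,B)$ as a $\Ga$-equivariant element with trivial action).

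The key steps, in order, are: (1) choose $\la_0$ and the function $r'_{d,\eps}$ large enough that all the control pairs appearing ($\al_\JJ,k_\JJ$; $\al_\T,k_\T$; and the control pair from Theorem~\ref{thm-product}) can be absorbed, and so that the various propagation thresholds $r_{d,\cdot}$ are respected after the rescalings; monotonicity of $r'_{d,\eps}$ in $d$ and $\eps$ follows from the corresponding monotonicity of $r_{d,\eps}$ and of $k_\JJ,k_\T$. (2) Unwind $\omega^{\eps,r}_{A\rt\Ga,B,*}$ as right multiplication by $\tau^{\eps,r}_{A\rt\Ga}(y)$, which is literally how $\Omega_{A\rt\Ga,B,*}$ was defined at the start of Section~\ref{sec-kunneth}. (3) Unwind both quantitative assembly maps via Definition~\ref{def-quantitative-assembly-map}, observing that $p_{\Ga,d}\in C_0(P_d(\Ga))\rt\Ga$ and its class are untouched by $\ts B$. (4) Invoke Remark~\ref{remark-kasp-tens} to rewrite $\tau_{A\rt\Ga,*}(y)$-multiplication as $\JR(\tau_{A,*}(y))$, and then Theorem~\ref{thm-product} (compatibility of $\JR$ with Kasparov products) together with Theorem~\ref{thm-tensor} bifunctoriality to collapse $\JR(\tau_{A,*}(y))\circ\JR(z)$ into $\JR(z\ts_A\tau_A(y))$. (5) Conclude by matching $z\ts_A\tau_A(y)$ with $\omega^{\Ga,P_d(\Ga)}_{A,B,*}(z\ts y)$, which is its definition.

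The step I expect to be the main obstacle is bookkeeping step (4): making sure that the controlled-morphism identity $\JR(\tau_{A,*}(y))\circ\JR(z)\aeq\JR(z\ts_A\tau_A(y))$ is applied with the correct order of arguments and the correct control pair, and that after composing with the rescaling inherent in $\omega^{\eps,r}$ (the $\al_\T,k_\T$ shift) the propagation and control parameters on both sides of the displayed equation genuinely coincide rather than merely agreeing after a further structure map. This is precisely why the statement is phrased with the auxiliary threshold $r'_{d,\eps}\gq r_{d,\eps}$ and with $\la_0$ available for rescaling; choosing these appropriately (large enough to dominate $\al_\JJ\al_\T$ and the product control pair of Theorem~\ref{thm-product}) is the technical heart, and once that is arranged the identity is a formal consequence of the cited theorems. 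Everything else is a direct substitution of definitions, and I would present it as the short computation already sketched, citing Remark~\ref{remark-kasp-tens} for the final identification.
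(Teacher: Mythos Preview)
Your proposal is correct and follows essentially the same approach as the paper: unwind both sides via the definitions of the quantitative assembly map and of $\Omega_{\bullet,\bullet,*}$, then reduce to the identification $\T_{A\rt\Ga,*}(y)=\JR(\tau_{A,*}(y))$ from Remark~\ref{remark-kasp-tens}. You are more explicit than the paper in invoking Theorem~\ref{thm-product} to collapse $\JR(\tau_{A,*}(y))\circ\JR(z)$ into $\JR(z\ts_A\tau_A(y))$ and in tracking the control-pair bookkeeping needed to turn the $\aeq$ into an actual equality at the chosen parameters; the paper's one-line proof leaves these implicit.
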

\medskip\ \\
{\it Proof of Theorem \ref{thm-BC-Qkunneth}.} Let $\lambda_0$ and  $(0,+\infty)\times\left(0,\frac{1}{4\la_0}\right);\,(d,\eps)\mapsto r'_{d,\eps}$ as in  Lemma \ref{lemma-kunneth}.
Let
 $\alpha_0$  be a positive number as in as in Theorem \ref{thm-quant-surj}, let $\eps$ and $r$ be positive numbers with $\eps<\frac{1}{4\lambda_0\alpha_0\al_\T}$.
 Let   $d$ and $R$ be  positive numbers with   $R\gq r'_{d,\eps}$ such that
 $QS_{\Ga,A}(d,\eps,\al_0\eps,r,R)$ is satisfied for every $\Ga$-$C^*$-algebra $A$. Let $d'$ be a positive number such that
 $QI_{\Ga,A}(d,d',\al_\T\al_0\eps,k_{\T,\al_0\eps}R)$ is satisfied for every $\Ga$-$C^*$-algebra $A$.
  \begin{itemize}
\item  Let $y$ be an element in $K_*^{\eps,r}((A\rtr\Ga)\ts B)$. Since $(A\rtr\Ga)\ts B\cong (A\ts B)\rtr\Ga$ and
 $QS_{\Ga,A\ts B }(d,\eps,\al_0\eps,r,R)$ is satisfied, there exists an  element $z_1$ in
 $KK_*^\Ga(C_0(P_d(\Ga)),A\ts B)$ such
 that $$\iota_*^{\eps,\al_0\eps,r,R}(y)=\mu^{\al_0\eps,R,d}_{\Ga,A\ts B,*}(z_1).$$ According to Theorem \ref{thm-BC-kunneth} there exists $z_0$ in $KK_*^\Ga(C_0(P_d(\Ga)),A)\ts K_*(B)$ such that
 $z_1=\omega_{A,B,*}^{\Ga,P_d(\Ga)}(z_0)$. Now if we set $\eps'=\al_0\lambda_0\eps$ and $r'=k_{\T,\al_0\eps}R$, then
 $x=\mu_{\Ga,A,*}^{\eps',r',d}\ts \Id_{K_*(B)}(z_0)$ is in $K^{\eps',r'}_{*}(A\rtr\Ga)\ts K_*(B)$ and from  Lemma
\ref{lemma-kunneth} we deduce that $$\iota_*^{\eps,\al_\T\eps',r,k_{\T,\eps'}r'}(y)=\omega^{\eps',r'}_{A\rt\Ga,B,*}(x).$$
\item Let $x$ be an element in $K^{\eps,r}_{*}(  A\rtimes_{red} \Gamma  )\ts K_*(B)$ such that $\omega^{\eps,r}_{A\rt\Ga,B,*}(x)=0$ in
$K^{\al_\T\eps,k_{\T,\eps}r}_{*}((A\rtr\Ga)\ts  B)$, let $z_0$ be an element in $KK_*^\Ga(C_0(P_d(\Ga)),A)\ts K_*(B)$ such that $$\iota_*^{\eps,\al_0\eps,r,R}\ts \Id_{K_*(B)}(x)=\mu_{\Ga,A,*}^{\al_0\eps,R,d}\ts \Id_{K_*(B)}(z_0)$$ and let us set
$$z_1=\omega_{A,B,*}^{\Ga,P_d(\Ga)}(z_0).$$ According to Lemma \ref{lemma-kunneth}, we  have that
$$\mu^{\al_\T\al_0\eps,k_{\T,\al_0\eps}R,d}_{\Ga,A\ts B,*}(z_1)=0$$ in $K_*^{\al_\T\al_0\eps,k_{\T,\al_0\eps}R}((A\rtr\Ga)\ts B)$  and hence since $QI_{\Ga,A\ts B}(d,d',\al_\T\al_0\eps,k_{\T,\al_0\eps}R)$ is satisfied, we have
$q_{d,d',*}(z_1)=0$ in $KK_*^\Ga(C_0(P_{d'}(\Ga)),A\ts B)$. According to Theorem \ref{thm-BC-kunneth} and since
$\omega_{A,B,*}^{\Ga,P_d(\Ga)}$ is compatible with inclusion $$P_d(\Ga)\hookrightarrow P_{d'}(\Ga),$$ we deduce that
$q_{d,d',*}(z_0)=0$ in  $KK_*^\Ga(C_0(P_{d'}(\Ga)),A)\ts K_*(B)$. Set $\eps'=\al_0\eps$ and pick any positive number $r'$ such that $r'\gq R$ and $r'\gq r_{d',\al_0\eps}$. Then we have
\begin{eqnarray*}
\iota_*^{\eps,\eps',r,r'}\ts \Id_{K_*(B)}(x)&=&
(\iota_*^{\al_0\eps,R,r'}\ts \Id_{K_*(B)})\circ \mu_{\Ga,A,*}^{\al_0\eps,R,d'}\ts \Id_{K_*(B)}(z_0)\\&=&0.\end{eqnarray*}
\end{itemize}
\qed

\section{$C^*$-algebras with finite  asymptotic nuclear  decomposition and quantitative K\"unneth formula}
In this section, we introduce the concept of finite asymptotic nuclear  decomposition for filtered $C^*$-algebras and and we prove for this class of $C^*$-algebras the quantitative  K\"unneth formula. We deduce from  this that uniform Roe algebras of  discrete metric spaces with bounded geometry and finite  asymptotic dimension satisfy the K\"unneth formula. \subsection{Locally  bootstrap $C^*$-algebras} Let us first recall the definition of the bootstrap category.
\begin{definition}
The bootstrap category $\mathcal{N}$  is  the smallest class of nuclear separable $C^\ast$-algebras  such that
\begin{enumerate}
\item $\mathcal{N}$ contains $\C$;
\item $\mathcal{N}$ is closed under countable inductive limits;
\item $\mathcal{N}$ is stable under extension, i.e for any extension of $C^*$-algebras $$0\to J\to A\to A/J\to 0,$$ if any two of the $C^*$-algebras are in $\mathcal{N}$ then so is the third;
\item $\mathcal{N}$ is closed under $KK$-equivalence.
\end{enumerate}
\end{definition}
Next we introduce the concept of locally bootstrap $C^*$-algebras.
\begin{definition} A filtered $C^*$-algebra $A$ with filtration  $(A_r)_{r>0}$ is called locally bootstrap if for all
positive number $s$ there exists a positive number $r$ with $r\gq s$ and a sub-$C^*$-algebra $A^{(s)}$ of $A$ such that
\begin{itemize}
\item $A^{(s)}$ belongs to the bootstrap class;
\item $A_s\subseteq A^{(s)}\subseteq A_r$.
\end{itemize}
\end{definition}
\begin{proposition}\label{prop-locally-bootstrap}There exists a positive number $\la_0$ with $\la_0\gq 1$ such that any 
 locally bootstrap $C^*$-algebra  satisfies the quantitative K\"unneth formula with rescaling $\la_0$.
\end{proposition}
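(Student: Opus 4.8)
The plan is to show that a locally bootstrap $C^*$-algebra $A$ satisfies the quantitative K\"unneth formula by reducing, at each scale, to the classical K\"unneth formula for separable nuclear $C^*$-algebras in the bootstrap class $\mathcal N$, and then letting the scale go to infinity. Fix a $C^*$-algebra $B$ with $K_*(B)$ free abelian; we must show $\Omega_{A,B,*}:\K_*(A)\ts K_*(B)\to\K_*(A\ts B)$ is a quantitative isomorphism with a rescaling $\la_0$ independent of $A$ and $B$. The key point is that for a $C^*$-algebra $D$ in $\mathcal N$ the ordinary map $\omega_{D,B,*}:K_*(D)\ts K_*(B)\to K_*(D\ts B)$ is an isomorphism (this is the Schochet K\"unneth theorem, valid since $K_*(B)$ is free), and, since $A\ts B$ is filtered by $(A_r\ts B)_{r>0}$ and $D\ts B$ inherits the induced filtration whenever $D$ is a filtered subalgebra of $A$ with $A_s\subseteq D\subseteq A_r$, the inclusion $D\hookrightarrow A$ and the inclusion $D\ts B\hookrightarrow A\ts B$ are filtered homomorphisms compatible with $\Omega_{\bullet,B,*}$. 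So the strategy is: use the bootstrap subalgebras $A^{(s)}$ to witness surjectivity and injectivity of $\Omega_{A,B,*}$ at the controlled level, up to a uniform rescaling of control and propagation.

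First I would set up the comparison. Given $s>0$, pick $r\gq s$ and a bootstrap subalgebra $A^{(s)}$ with $A_s\subseteq A^{(s)}\subseteq A_r$, filtered by $(A^{(s)}\cap A_t)_{t>0}$; then $A^{(s)}\ts B$ is filtered by $(A^{(s)}_t\ts B)_{t>0}$ and sits between $A_s\ts B$ and $A_r\ts B$ inside $A\ts B$. Let $\jmath_s:A^{(s)}\hookrightarrow A$ denote the inclusion. By naturality of $\Omega_{\bullet,B,*}$ (which follows from functoriality of $\tau_B$ and of the controlled tensorization morphism $\T_B$ in Theorem \ref{thm-tensor}), the square relating $\Omega_{A^{(s)},B,*}$ and $\Omega_{A,B,*}$ through $\jmath_{s,*}$ and $(\jmath_s)_{B,*}$ is $(\al_\T,k_\T)$-commutative. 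Now for \textbf{surjectivity} ($QS$): given $\eps$ and a propagation bound, any class in $K_*^{\eps,s_0}(A\ts B)$ is represented by an $\eps$-$s_0$-(projection or unitary) with coefficients in $A_{s_0}\ts B\subseteq A^{(s_0)}\ts B$ for $s_0$ large enough that the representing element has propagation $s_0$; hence it comes from $K_*^{\eps,s_0}(A^{(s_0)}\ts B)$. Since $A^{(s_0)}\in\mathcal N$ and $K_*(B)$ is free, by Proposition \ref{proposition-approximation-K-th} and the classical K\"unneth isomorphism $\omega_{A^{(s_0)},B,*}$ together with equation (\ref{eq-induce-kunneth}), that class is hit, after increasing propagation and applying the uniform $\la_0$ of Proposition \ref{proposition-approximation-K-th}, by an element of $K_*^{\eps,r'}(A^{(s_0)})\ts K_*(B)$, which we push forward to $K_*^{\eps,r'}(A)\ts K_*(B)$. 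Chasing the $(\al_\T,k_\T)$-commutative square gives the required preimage for $\Omega_{A,B,*}$ up to the combined rescaling $\la_0\al_\T$ (times the control pair of Theorem \ref{thm-product-tensor} used implicitly). For \textbf{injectivity} ($QI$): if $x\in K_*^{\eps,s}(A)\ts K_*(B)$ has $\omega^{\eps,s}_{A,B,*}(x)=0$, write $x$ as a finite sum $\sum x_i\ts y_i$; all the $x_i$ come from $K_*^{\eps,s}(A^{(s)})$ for a single $s$-dependent bootstrap subalgebra, so $x$ is the image of $x'\in K_*^{\eps,s}(A^{(s)})\ts K_*(B)$; then $\omega^{\eps,s}_{A^{(s)},B,*}(x')$ need not vanish, but it maps to $0$ in $K_*^{\al_\T\eps,k_{\T,\eps}s}(A\ts B)$, hence, by Proposition \ref{proposition-approximation-K-th}(ii) applied in $A^{(s)}\ts B$, after increasing propagation $\omega^{\eps',r'}_{A^{(s)},B,*}(\iota_*(x'))=0$ in quantitative $K$-theory of $A^{(s)}\ts B$; since $A^{(s)}\in\mathcal N$, $\Omega_{A^{(s)},B,*}$ induces the classical isomorphism $\omega_{A^{(s)},B,*}$, and using once more Proposition \ref{proposition-approximation-K-th} we get that $\iota_*(x')=0$ in $K_*^{\eps'',r''}(A^{(s)})\ts K_*(B)$ for suitable rescaled indices, hence $\iota_*(x)=0$ in $K_*^{\eps'',r''}(A)\ts K_*(B)$.

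The main technical care is to make the rescaling $\la_0$ truly universal: it must not depend on $A$, $B$, or the choice of bootstrap subalgebras $A^{(s)}$. This is where the point that the control pair $(\al_\T,k_\T)$ of Theorem \ref{thm-tensor}, the control pair of Theorem \ref{thm-product-tensor}, and the universal constant $\la_0$ of Proposition \ref{proposition-approximation-K-th} are all independent of the algebras involved becomes essential; composing these a bounded number of times yields a single $\la_1$ depending on nothing. A mild annoyance is the degree bookkeeping — for the $K_0$ part one works with the standard form from Lemma \ref{lemma-almost-canonical-form} (and Lemma \ref{lemma-almost-canonical-form-odd} in odd degree) to ensure representatives have coefficients in $A_{s}$ rather than merely in $\tilde A$, so that they genuinely lie in $\widetilde{A^{(s)}}$. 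Another subtlety is that in the injectivity argument one needs $\omega_{A^{(s)},B,*}$ to be an \emph{isomorphism}, not just surjective, which is exactly the content of the classical K\"unneth theorem for $A^{(s)}\in\mathcal N$ with $K_*(B)$ free; this is the only input from outside quantitative $K$-theory, and it is the place where the freeness hypothesis on $K_*(B)$ is used. Once these points are in place, the argument is a routine diagram chase through $(\al,k)$-commutative squares, so I expect no serious obstacle beyond the careful tracking of the successive rescalings and the reduction of arbitrary quantitative $K$-theory classes to ones supported in a single bootstrap subalgebra.
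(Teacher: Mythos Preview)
Your overall strategy matches the paper's: reduce at each scale to a bootstrap subalgebra $A^{(s)}$, invoke the classical K\"unneth theorem there, and transfer back via Proposition~\ref{proposition-approximation-K-th}. However, there is a genuine gap in your $QI$ argument. You have $\omega^{\eps,s}_{A^{(s)},B,*}(x')\in K_*^{\al_\T\eps,k_{\T,\eps}s}(A^{(s)}\ts B)$ mapping to $0$ in $K_*^{\al_\T\eps,k_{\T,\eps}s}(A\ts B)$ under the inclusion $A^{(s)}\hookrightarrow A$, and you then invoke Proposition~\ref{proposition-approximation-K-th}(ii) ``in $A^{(s)}\ts B$'' to conclude vanishing there after increasing propagation. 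But Proposition~\ref{proposition-approximation-K-th}(ii) requires vanishing in ordinary $K$-theory of the \emph{same} algebra, whereas you only have vanishing in (quantitative, hence ordinary) $K$-theory of the larger algebra $A\ts B$; there is no reason the map $K_*(A^{(s)}\ts B)\to K_*(A\ts B)$ should be injective, so the hypothesis of Proposition~\ref{proposition-approximation-K-th}(ii) is not available.

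The paper closes this gap by choosing $A^{(s)}$ not merely to contain $A_s$ but to contain $A_{k_{\T,\eps}s}$. Then for every $s'\lq k_{\T,\eps}s$ one has $A_{s'}=A^{(s)}\cap A_{s'}$, so there is a well-defined identity-on-representatives map $K_*^{\al_\T\eps,k_{\T,\eps}s}(A\ts B)\to K_*^{\al_\T\eps,k_{\T,\eps}s}(A^{(s)}\ts B)$ and the vanishing of $\omega^{\eps,s}_{A,B,*}(x)$ transports directly to $A^{(s)}\ts B$; from there classical K\"unneth plus Proposition~\ref{proposition-approximation-K-th}(ii) apply legitimately. A second point you gloss over is why the propagation bound $s'$ in both $QI$ and $QS$ is uniform in the element, as Definition~\ref{def-quantitative-morphism} requires. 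This comes from the other half of the locally bootstrap hypothesis, $A^{(s)}\subseteq A_r$: the filtration of $A^{(s)}$ is constant beyond $r$, so the quantitative $K$-groups of $A^{(s)}$ and of $A^{(s)}\ts B$ stabilize at level $r$, and every ``there exists $r'$'' coming out of Proposition~\ref{proposition-approximation-K-th} may be replaced by the fixed $r$ determined by $s$ alone.
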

\begin{proof}
Let $\lambda_0$ be as in the  second part  of Proposition \ref{proposition-approximation-K-th} and let $B$ be a separable $C^*$-algebra with $K_*(B)$ free abelian.
\begin{itemize}
\item Let us prove first the  $QI_{\Omega_{A,B,*}}$-statement. Let $\eps$ and $s$ be positive numbers with $\eps<\frac{4}{\lambda_0\al_\tau}$. Let then $r$ be a positive number with $r\gq k_{\tau,\eps}s$ and let $A^{(s)}$ be a $C^*$-algebra such that $A^{(s)}$ belongs to the bootstrap class
and  $A_{k_{\tau,\eps}s}\subseteq A^{(s)}\subseteq A_r$. Then $A^{(s)}$ is filtered by $(A^{(s)}\cap A_{s'})_{s'>0}$ and the filtration is indeed finite, i.e.  $A^{(s)}\cap A_{s'}=A^{(s)}$ for any positive number $s'$ with $s'\gq r$   . Let us consider  the commutative diagram
\begin{equation*}\begin{CD}
K_*^{\eps,s}(A)\ts K_*(B)@>>>K_*^{\eps,s}(A^{(s)})\ts K_*(B)@>\iota_*^{\eps,s,r}\ts Id_{K*(B)}>>K_*^{\eps,r}(A^{(s)})\ts K_*(B)\\
      @V\omega_{A,B,*}^{\eps,s} VV  @V\omega_{A^{(s)},B,*}^{\eps,s} VV         @VV\omega_{A^{(s)} ,B,*}^{\eps,r}V \\
      K_*^{\al_\tau\eps,k_{\tau,\eps}s}(A\ts B)@>>>K_*^{\al_\tau\eps,k_{\tau,\eps}s}(A^{(s)}\ts B)@>\iota_*^{\al_\tau\eps,k_{\tau,\eps}s,k_{\tau,\eps}r}>>K_*^{\al_\tau\eps,k_{\tau,\eps}r}(A^{(s)}\ts B),\end{CD}\end{equation*}where the  left  bottom and left top maps  are induced by  the
inclusion
$A_{s'}\subseteq A^{(s)}_{s'}$ for any  $s'\lq   k_{\tau,\eps}s$. Let $x$ be an element in $K_*^{\eps,s}(A)\ts K_*(B)$ such that 
$\omega_{A,B,*}^{\eps,s}(x)=0$ and let then $y$ in $K_*^{\eps,r}(A^{(s)})\ts K_*(B)$ be the image of $x$ under the compositions of the top row. Then $\omega_{A^{(s)},B,*}^{\eps,r}(y)=0$ and hence  
$$\omega_{A^{(s)},B,*}\circ (\iota^{\eps,r}_*\ts Id_{K_*(B)})(y)=\iota_*^{\al_\tau\eps,k_{\tau,\eps}r}\circ \omega_{A^{(s)},B,*}^{\eps,r}(y)=0.$$
Since $A^{(s)}$  is in the bootstrap class, then $$\omega_{A^{(s)},B,*}:K_*(A^{(s)})\ts K_*(B)\to K_*(A\ts B)$$ is an isomorphism and hence
$(\iota^{\eps,r}_*\ts Id_{K_*(B)})(y)=0$ in $K_*(A^{(s)})\ts K_*(B)$. Since $K_*(B)$ is free abelian and according to Proposition 
 \ref{proposition-approximation-K-th}, there exists a positive number $r'$, with $r'\geq r$ such that
 $$(\iota^{\eps,\lambda_0\eps,r,r'}_*\ts Id_{K_*(B)})(y)=0$$ in $K_*^{\lambda_0\eps,r'}(A^{(s)})\ts K_*(B)$. But since 
 $A^{(s)}$ has propagation less than $r$, then   $(\iota^{\eps,\lambda_0\eps,r}_*\ts Id_{K_*(B)})(y)=0$ in 
  $K_*^{\lambda_0\eps,r}(A^{(s)})\ts K_*(B)$.
  Hence composing with  the map  $$K_*^{\lambda_0\eps,r}(A^{(s)})\ts K_*(B)\lto K_*^{\lambda_0\eps,r}(A)\ts K_*(B)$$ induced by the inclusion 
 $A^{(s)}\hookrightarrow A$, we get then that 
  
 $$(\iota^{\eps,\lambda_0\eps,s,r}_*\ts Id_{K_*(B)})(x)=0.$$ 
 \item Let us prove now the $QS_{\Omega_{A,B,*}}$-statement. Let $s$ and $\eps$ be positive numbers with $\eps<\frac{1}{4\lambda_0\al_\tau}$  and let $A^{(s)}$ be a $C^*$-algebra such that $A^{(s)}$ belongs to the bootstrap class
and  $A_{s}\subseteq A^{(s)}\subseteq A_r$.
 Let $z$ be an element in some  $K_*^{\eps,s}(A\ts B)$ and let $z'$ in $K_*^{\eps,s}(A^{(s)}\ts B)$ be the image of $z$ under the map  $$K_*^{\eps,s}(A\ts B)\to K_*^{\eps,s}(A^{(s)}\ts B)$$   induced by the inclusion
 $A_s \subseteq A^{(s)}$.
  Since $A^{(s)}$ is in the bootstrap class, there exists $y$ in $K_*(A^{(s)})\ts K_*(B)$ such that 
  $\iota^{\eps,s}_*(z')=\omega_{A^{(s)},B,*}(y)$  in  $K_*(A^{(s)}\ts B)$. Since any element of $A^{(s)}$ has propagation less than $r$, there exists  an element $x$ in $K^{\eps,r}_*(A^{(s)}\ts B)$ such that 
  $(\iota^{\eps,r}_*\ts  Id_{K_*(B)})(x)=y$ in $K_*(A^{(s)})\ts K_*( B)$.
 Since   $$\iota^{\al_\tau\eps,k_{\tau,\eps}r}_*\circ \omega^{\eps,r}_{A^{(s)},B,*}= \omega_{A^{(s)},B,*}\circ (\iota^{\eps,r}_*\ts  Id_{K_*(B)}),$$we get that 
 $ \omega^{\eps,r}_{A^{(s)},B,*}(x)$ and $\iota^{\eps,\al_\tau\eps,s,k_{\tau,\eps}r}_*(z')$ have same image under the map
 $$\iota_*^{\al_\tau\eps,k_{\tau,\eps}r}:K^{\al_\tau\eps,k_{\tau,\eps}r}_*(A^{(s)}\ts B)\lto K_*(A^{(s)}\ts B).$$
 Hence, according to 
 Proposition 
 \ref{proposition-approximation-K-th}, there exists a positive number $r'$, with $r'\geq k_{\tau,\eps}r$, such that 
  $$\iota^{\al_\tau\eps,\lambda_0\al_\tau\eps,k_{\tau,\eps}r,r'}_* \omega^{\eps,r}_{A^{(s)},B,*}(x)=\iota^{\eps,\lambda_0\al_\tau\eps,s,r'}_*(z').$$ 
  But since $A^{(s)}_{r}=A^{(s)}_{r''}$ for all $r''\gq r$ we get that
  $$\iota^{\al_\tau\eps,\lambda_0\al_\tau\eps,k_{\tau,\eps}r, k_{\tau,\la_0\eps}r  }_* \omega^{\eps,r}_{A^{(s)},B,*}(x)=\iota^{\eps,\lambda_0\al_\tau\eps,s,k_{\tau,\la_0\eps}r}_*(z').$$ Composing with the map
   $$K_*^{\al_\tau\lambda_0\eps,k_{\tau,\eps}r}(A^{(s)}\ts B)\lto K_*^{\al_\tau\lambda_0\eps,k_{\tau,\eps}r}(A \ts B)$$ induced by the inclusion 
 $A^{(s)}\hookrightarrow A$, we get then that   $$\omega^{\la_0\eps,r}_{A,B,*}(x')=\iota^{\eps,\lambda_0\al_\tau\eps,s,k_{\tau,\la_0\eps}r}_*(z),$$ where $x'$ is the image of $\iota_{*}^{\eps,\la_0\eps,r}\ts Id_{K_*(B)}(x)$ under the composition
  $$K_*^{\lambda_0\eps,r}(A^{(s)}\ts B)\lto K_*^{\lambda_0\eps,r}(A \ts B)$$ induced by the inclusion 
 $A^{(s)}\hookrightarrow A$.
\end{itemize}
\end{proof}

We will need a uniform version of Proposition \ref{prop-locally-bootstrap}.
\begin{definition} A family of filtered $C^*$-algebras  $(A_i)_{i\in\N}$  is uniformly  locally bootstrap if for all integer $i$ 
and for all positive number $s$, there exist a positive number $r$ with $r\gq s$ and a sub-$C^*$-algebra $A_i^{(s)}$ of $A_i$ such that for all integer $i$,
\begin{itemize}
\item $A_i^{(s)}$ belongs to the bootstrap class;
\item $A_{i,s}\subseteq A_i^{(s)}\subseteq A_{i,r}$
\end{itemize}
($A_i$ being filtered by $(A_{i,r})_{r>0}$).
\end{definition}

Proposition \ref{prop-locally-bootstrap} can be extended to uniformly locally bootstrap families of $C^*$-algebras.
\begin{proposition}\label{prop-uniformly-locally-bootstrap}There exists a positive number $\la_0$ with $\la_0\gq 1$ such that any uniformly locally bootstrap family
 $(A_i)_{i\in\N}$ of  filtered  $C^*$-algebras and any $C^*$-algebra $B$ with $K_*(B)$-free abelian  then  
 $$(\Omega_{A_i,B,*}:\K_*(A_i)\ts K_*(B)\lto\K_*(A_i\ts B))_{i\in\N}$$ is  a uniform family of quantitative isomorphisms with rescalling $\la_0$ 
 \end{proposition}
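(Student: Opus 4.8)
The plan is to mimic the proof of Proposition \ref{prop-locally-bootstrap} verbatim, tracking carefully that the positive number $\lambda_0$ and all the intermediate radii can be chosen independently of the index $i\in\N$. Concretely, fix $\lambda_0$ to be the constant of the second part of Proposition \ref{proposition-approximation-K-th} (which by its very statement does not depend on the $C^*$-algebra), fix a separable $C^*$-algebra $B$ with $K_*(B)$ free abelian, and fix a control pair $(\al_\T,k_\T)$ as in Theorem \ref{thm-tensor}. We must check the two defining conditions of a uniform family of quantitative isomorphisms (Definition \ref{def-quantitative-morphism}), namely that for each $\eps,s$ there is a single $s'$ for which $QI_{\Omega_{A_i,B,*}}(\eps,\lambda_0\eps,s,s')$ holds \emph{for all $i$ simultaneously}, and likewise for $QS$.

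For the $QI$-statement: given $\eps<\frac{1}{4\lambda_0\al_\T}$ and $s>0$, by uniform local bootstrapness there is a single $r\gq k_{\T,\eps}s$ and sub-$C^*$-algebras $A_i^{(s)}$, all in the bootstrap class, with $A_{i,k_{\T,\eps}s}\subseteq A_i^{(s)}\subseteq A_{i,r}$. As in the proof of Proposition \ref{prop-locally-bootstrap}, each $A_i^{(s)}$ is filtered by a \emph{finite} filtration stabilizing at $r$; running the commutative diagram of that proof, an element $x\in K_*^{\eps,s}(A_i)\ts K_*(B)$ killed by $\omega_{A_i,B,*}^{\eps,s}$ maps to some $y\in K_*^{\eps,r}(A_i^{(s)})\ts K_*(B)$ killed by $\omega_{A_i^{(s)},B,*}^{\eps,r}$; since $A_i^{(s)}$ is bootstrap, $\omega_{A_i^{(s)},B,*}$ is an isomorphism in ordinary $K$-theory, so $(\iota_*^{\eps,r}\ts\Id)(y)=0$ in $K_*(A_i^{(s)})\ts K_*(B)$; by Proposition \ref{proposition-approximation-K-th}(2) and freeness of $K_*(B)$ the class $(\iota_*^{\eps,\lambda_0\eps,r,r'}\ts\Id)(y)$ vanishes for some $r'\gq r$, but finiteness of the filtration of $A_i^{(s)}$ forces this already at $r'=r$, and pushing forward along $A_i^{(s)}\hookrightarrow A_i$ yields $(\iota_*^{\eps,\lambda_0\eps,s,r}\ts\Id)(x)=0$. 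The crucial point is that the radius $r$ produced here depends only on $\eps$ and $s$ (through uniform local bootstrapness) and on the universal $\lambda_0$, hence is the same for all $i$; so we may take $s'=r$ uniformly. The $QS$-statement is handled symmetrically: for $\eps<\frac{1}{4\lambda_0\al_\T}$ and $s>0$ pick the common $r$ and the $A_i^{(s)}$ with $A_{i,s}\subseteq A_i^{(s)}\subseteq A_{i,r}$; given $z\in K_*^{\eps,s}(A_i\ts B)$, restrict to $A_i^{(s)}\ts B$, invert $\omega_{A_i^{(s)},B,*}$, lift back to quantitative $K$-theory at propagation $r$ (possible since $A_i^{(s)}$ has finite filtration stabilizing at $r$), apply Proposition \ref{proposition-approximation-K-th}(1) to control the difference up to a radius which again by the finite filtration can be taken to be $k_{\T,\lambda_0\eps}r$, and push forward along $A_i^{(s)}\hookrightarrow A_i$; the resulting output radius depends only on $\eps,s,\lambda_0$, uniformly in $i$.

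The only genuine subtlety — and the one place where uniformity could a priori fail — is ensuring that the radius $r$ and the subalgebra $A_i^{(s)}$ can be chosen with a common dependence on $(\eps,s)$; this is exactly what the definition of \emph{uniformly locally bootstrap} guarantees, and it is why that definition is stated with "for all $i$" quantified after the choice of $r$. Every other ingredient (the constant $\lambda_0$ from Proposition \ref{proposition-approximation-K-th}, the control pair $(\al_\T,k_\T)$ from Theorem \ref{thm-tensor}, the isomorphism $\omega_{C,B,*}$ for bootstrap $C$) is already uniform in the algebra by construction. Thus the same $\lambda_0$ works for all members of the family, and $(\Omega_{A_i,B,*})_{i\in\N}$ is a uniform family of quantitative isomorphisms with rescaling $\lambda_0$.
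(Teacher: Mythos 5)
Your proposal is correct and takes precisely the approach the paper intends; the paper itself offers no separate proof, saying only that Proposition \ref{prop-locally-bootstrap} ``can be extended'' to the uniform setting, and you have correctly identified that the only new content is tracking that the propagation $r$ furnished by the uniformly locally bootstrap hypothesis, the universal constant $\la_0$ from Proposition \ref{proposition-approximation-K-th}, and the control pair $(\al_\T,k_\T)$ are all independent of $i$, while the finite (stabilizing) filtration of each $A_i^{(s)}$ collapses the a priori $i$-dependent radii from Proposition \ref{proposition-approximation-K-th} back down to the uniform $r$. (One trivial slip: in the $QS$ step, ``control the difference'' invokes part (ii) of Proposition \ref{proposition-approximation-K-th}, not part (i), which is used only for the lifting.)
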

 
 \subsection{Finite asymptotic nuclear  decomposition}
Let us define $\CC^{(0)}_{fand}$ as the class of uniformly locally bootstrap families of $C^*$-algebras.
Then we define by  induction $\CC^{(n)}_{fand}$ as the class of family $\A^{(1)}$ for which there exists a positive number $c$ such that for every positive number $r$, the following is satisfied: 

\smallskip
there exists a family $\A^{(2)}$ in $\CC^{(n-1)}_{fand}$ and 
 for any $C^*$-algebra $A$ in $\A^{(1)}$  
 an  $r$-controlled nuclear Mayer-Vietoris pair $(\De_1,\De_2,A_{\De_1},A_{\De_2})$ with coercitivity $c$  for $A$ with 
 $A_{\De_1},\,A_{\De_2}$ and $A_{\De_1}\cap A_{\De_2}$ in  $\A^{(2)}$.
 \smallskip
 
 Define then $\CC_{fand}$ as the class of families $\A$ such that $\A$ is in  $\CC^{(n)}_{fand}$ for some integer $n$.
Theorem \ref{thm-mv-kunneth} obviously admits a uniform version for families and hence, together with 
  Proposition \ref{prop-locally-bootstrap}, we obtain the following result.
  \begin{proposition}\label{prop-fnc} Let $\A$ be a family in $\CC_{fand}$. Then there    exists a positive number $\la_\A$ with $\la_\A\gq 1$  such that for any $C^*$-algebra $B$ with $K_*(B)$ free abelian, then  $$(\Omega_{A,B,*}:\K_*(A)\ts K_*(B)\lto\K_*(A\ts B))_{A\in \A}$$ is a uniform family of quantitative isomorphisms with rescalling  $\la_\A$ (indeed $\A$ only depends on $n$ such that $\A$ lies in $\CC^{(n)}_{fand}$. 
  \end{proposition}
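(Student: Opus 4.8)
The plan is to prove Proposition \ref{prop-fnc} by induction on the integer $n$ such that $\A$ lies in $\CC^{(n)}_{fand}$, using the uniform versions of the two building blocks already available: Proposition \ref{prop-uniformly-locally-bootstrap} for the base case and (the uniform-for-families version of) Theorem \ref{thm-mv-kunneth} for the inductive step. First I would record the trivial observation that Proposition \ref{prop-uniformly-locally-bootstrap} is exactly the statement that any family $\A$ in $\CC^{(0)}_{fand}$ (i.e. a uniformly locally bootstrap family) is a uniform family of quantitative isomorphisms with a rescaling $\la_0$ that is \emph{universal}, depending on nothing. This gives the base case $n=0$ with $\la_\A=\la_0$.

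For the inductive step, suppose the statement holds for all families in $\CC^{(n-1)}_{fand}$ with a common rescaling $\la_{n-1}$ depending only on $n-1$ (one should in fact phrase the induction hypothesis so that the rescaling depends only on $n$, not on the particular family, which is legitimate since at each level the coercivity $c$ enters only through the control pairs and these are produced uniformly in Theorem \ref{thm-mv-kunneth}). Let $\A=\A^{(1)}$ be a family in $\CC^{(n)}_{fand}$ with associated coercivity constant $c$. Unwinding the definition, for every positive $r$ there is a family $\A^{(2)}\in\CC^{(n-1)}_{fand}$ such that each $A\in\A^{(1)}$ admits an $r$-controlled nuclear Mayer-Vietoris pair $(\De_1,\De_2,A_{\De_1},A_{\De_2})$ with coercivity $c$ and with $A_{\De_1},A_{\De_2},A_{\De_1}\cap A_{\De_2}\in\A^{(2)}$. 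By the induction hypothesis, for any $C^*$-algebra $B$ with $K_*(B)$ free abelian the family $(\Omega_{A',B,*})_{A'\in\A^{(2)}}$ is a uniform family of quantitative isomorphisms with rescaling $\la_{n-1}$; in particular, for each fixed $A\in\A^{(1)}$ the three algebras $A_{\De_1}$, $A_{\De_2}$, $A_{\De_1}\cap A_{\De_2}$ satisfy the quantitative K\"unneth formula with rescaling $\la_{n-1}$, and — crucially — the quantitative statements $QI$ and $QS$ hold with rescaling data that do not depend on $A$. This is precisely the hypothesis of Theorem \ref{thm-mv-kunneth} (equivalently, of its reformulation in Section \ref{subsubsection-preliminary} in terms of the controlled morphism $\F_{A,B,*}$), so one concludes that $\F_{A,B,*}$, hence $\Omega_{A,B,*}$, is a quantitative isomorphism with a rescaling $\la_n$ depending only on $c$ and $\la_{n-1}$ — and not on $A$ or $B$.

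The main obstacle is not any new mathematics but the bookkeeping needed to see that Theorem \ref{thm-mv-kunneth} ``obviously admits a uniform version for families.'' One must reread the proof of Theorem \ref{thm-mv-kunneth} in Section \ref{subsec-kunneth-BC} (Propositions \ref{proposition-technic} and \ref{proposition-technic-QS} together with the ``End of the proof'' arguments) and check that every control pair and every auxiliary scale ($r^{\F}_\eps$, $r'$, $r''$, $R$, etc.) that is produced there depends on the Mayer-Vietoris data only through the coercivity $c$ and through the rescaling $\la_0$ supplied by the pieces $A_{\De_1},A_{\De_2},A_{\De_1}\cap A_{\De_2}$ — it never depends on $A$ itself or on $B$. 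This is visibly true of each individual ingredient (Lemma \ref{lemma-mv-kunneth}, Corollary \ref{cor-bound1}, Lemma \ref{lemma-technic2}, Lemma \ref{lemma-bound1}, and the controlled tensorization Theorems \ref{thm-tensor} and \ref{thm-product-tensor} all come with control pairs independent of the algebras), so that when one feeds in a \emph{uniform} family of quantitative isomorphisms for $\A^{(2)}$ the output is again a uniform family for $\A^{(1)}$. Once this uniformity is in place, the induction closes, and taking $\la_\A=\la_n$ for the $n$ with $\A\in\CC^{(n)}_{fand}$ yields the claim; the final remark that $\la_\A$ depends only on this $n$ follows because at each stage the increment from $\la_{n-1}$ to $\la_n$ depends only on $\la_{n-1}$ and on the coercivity, and the coercivities occurring along the decomposition are bounded (one may, if desired, absorb them into a single constant by noting the definition of $\CC^{(n)}_{fand}$ only asks for \emph{some} $c$ at each level).
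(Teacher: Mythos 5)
Your proposal is correct and matches the paper's own route exactly: the paper treats the proposition as an immediate consequence of Proposition \ref{prop-uniformly-locally-bootstrap} (the base case $n=0$) together with the observation that Theorem \ref{thm-mv-kunneth} admits a uniform version for families, and your write-up is simply the explicit induction on $n$ that the paper leaves to the reader. The one soft spot is your closing parenthetical about absorbing the coercivities into a constant depending only on $n$: since the coercivity $c$ in the definition of $\CC^{(n)}_{fand}$ is allowed to depend on the family $\A$, the increment $\la_{n-1}\rightsquigarrow\la_n$ produced by Theorem \ref{thm-mv-kunneth} a priori depends on $c$ as well as on $\la_{n-1}$, so the claim that $\la_\A$ depends only on $n$ is not actually established by what you wrote (nor by the paper, whose own parenthetical remark is unjustified and contains a typo); the main assertion — the existence of a uniform rescaling $\la_\A$ — is nonetheless fully proved.
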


  \begin{definition}
 A filtered $C ^*$-algebra $A$  has finite asymptotic nuclear decomposition  if the single family $\{A\}$ is in  $\CC_{fand}$. \end{definition}
 
 As a consequence of Proposition  \ref{prop-fnc}, we obtain

\begin{theorem}\label{theorem-kunneth}  If $A$ is a filtered $C^*$-algebra with finite asymptotic nuclear decomposition, then the quantitative  K\"unneth formula holds for $A$.
\end{theorem}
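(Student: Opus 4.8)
The plan is to obtain Theorem~\ref{theorem-kunneth} as an immediate specialization of Proposition~\ref{prop-fnc} to the one-element family. First I would unwind the definitions: to say that $A$ has finite asymptotic nuclear decomposition means precisely that the singleton family $\{A\}$ lies in $\CC_{fand}$, and by the construction of $\CC_{fand}$ this amounts to $\{A\}\in\CC^{(n)}_{fand}$ for some integer $n$. Thus $\{A\}$ is one of the families to which Proposition~\ref{prop-fnc} applies, with the coercity bound $c$ and the integer $n$ furnished by the decomposition of $A$.

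Next I would invoke Proposition~\ref{prop-fnc} for $\A=\{A\}$: it produces a positive number $\la_\A\gq 1$, depending only on $n$ (and on the coercity bounds), such that for every $C^*$-algebra $B$ with $K_*(B)$ free abelian the controlled morphism $\Omega_{A,B,*}\colon\K_*(A)\ts K_*(B)\to\K_*(A\ts B)$ is a quantitative isomorphism with rescaling $\la_\A$. By the definition of the quantitative K\"unneth formula with rescaling, this says exactly that $A$ satisfies the quantitative K\"unneth formula, which is the assertion of the theorem. (As noted in the remark following that definition, combined with equation~(\ref{eq-induce-kunneth}) this further yields the ordinary K\"unneth formula for $A$, via the argument of \cite{sc}.)

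The substance of the argument is of course not in this last deduction but in the results it rests on, so I would point out the structure rather than redo it. The inductive skeleton is: the base case $\CC^{(0)}_{fand}$ (uniformly locally bootstrap families) is handled by Proposition~\ref{prop-uniformly-locally-bootstrap}, whose proof pushes the density/approximation argument of Proposition~\ref{proposition-approximation-K-th} through the \emph{finite} filtration of the bootstrap subalgebras $A^{(s)}$; the inductive step is carried by the family version of Theorem~\ref{thm-mv-kunneth}, that is, stability of the quantitative K\"unneth formula under $r$-controlled nuclear Mayer--Vietoris decomposition, which in turn is the controlled five-lemma argument built on Corollary~\ref{cor-bound1}, Lemma~\ref{lemma-technic2}, and Lemma~\ref{lemma-mv-kunneth}. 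For the theorem as stated there is no remaining obstacle; the one point requiring attention is that the rescaling constant $\la_\A$ coming out of the induction must depend only on $n$ and the uniform coercity, not on the particular algebras occurring along the decomposition, so that it is legitimate to attach it to $A$ itself. This is precisely what the ``uniform family of quantitative isomorphisms'' formulation in Definition~\ref{def-quantitative-morphism} and Proposition~\ref{prop-fnc} is designed to guarantee, so the deduction is clean.
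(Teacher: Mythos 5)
Your proposal is correct and matches the paper's argument exactly: the theorem is obtained by specializing Proposition~\ref{prop-fnc} to the singleton family $\{A\}$, with the inductive structure behind that proposition (base case Proposition~\ref{prop-uniformly-locally-bootstrap}, inductive step the family version of Theorem~\ref{thm-mv-kunneth}) being precisely what the paper relies on. Your remark that the rescaling depends only on $n$ and the coercity is the right point to emphasize and is indeed what the uniformity in Proposition~\ref{prop-fnc} guarantees.
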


\begin{corollary}   If $A$ is a filtered $C^*$-algebra with  finite asymptotic nuclear decomposition, then $A$ satisfies  the K\"unneth formula in $K$-theory,  i.e. there exists a natural short exact sequence

$$0\rightarrow K_\ast(A) \otimes K_\ast(B) \rightarrow K_\ast(A\otimes B) \rightarrow Tor (K_\ast(A), K_\ast(B))\rightarrow 0$$ for any other $C^\ast$-algebra $B$.

\end{corollary}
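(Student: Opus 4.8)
The plan is to deduce this corollary from Theorem \ref{theorem-kunneth} in exactly the way sketched in the remark following the definition of the quantitative K\"unneth formula. First I would recall that if $A$ has finite asymptotic nuclear decomposition, then by Theorem \ref{theorem-kunneth} the controlled morphism $\Omega_{A,B,*}:\K_*(A)\ts K_*(B)\to\K_*(A\ts B)$ is a quantitative isomorphism for every $C^*$-algebra $B$ with $K_*(B)$ free abelian. Using equation (\ref{eq-induce-kunneth}), a quantitative isomorphism of quantitative $K$-theory induces an isomorphism in the limit, so that $\omega_{A,B,*}:K_*(A)\ts K_*(B)\to K_*(A\ts B)$ is an isomorphism whenever $K_*(B)$ is free abelian. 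This is precisely the hypothesis under which the argument of Schochet \cite{sc} (see also \cite{ceo2} for the non-nuclear generalization) produces the short exact sequence: one chooses a geometric (free) resolution $0\to K_1\to K_0\to K_*(B)\to 0$ of the $\Z/2$-graded abelian group $K_*(B)$, realizes it by a mapping-cone/extension $0\to B_1\to B_0\to B\to 0$ with $K_*(B_0),K_*(B_1)$ free abelian, and runs the long exact sequences in $K$-theory for $A\ts(-)$ and in the algebraic tensor product against $K_*(A)$, comparing them via $\omega_{A,\bullet,*}$.

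The key steps, in order, are: (1) invoke Theorem \ref{theorem-kunneth} to get that $\Omega_{A,B,*}$ is a quantitative isomorphism for all $B$ with $K_*(B)$ free abelian; (2) apply equation (\ref{eq-induce-kunneth}) together with the fact (recalled just after Definition \ref{def-quantitative-morphism}) that a quantitative isomorphism $\G:\K_*(A)\ts K_*(B)\to\K_*(A\ts B)$ induces an isomorphism $G$ in ordinary $K$-theory, to conclude $\omega_{A,B,*}$ is an isomorphism for such $B$; (3) cite \cite{sc,ceo2} to pass from ``$\omega_{A,B,*}$ is an isomorphism for all $B$ with free $K$-theory'' to the natural short exact sequence $0\to K_*(A)\ts K_*(B)\to K_*(A\ts B)\to\operatorname{Tor}(K_*(A),K_*(B))\to 0$ for arbitrary $B$. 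Since the third step is a direct quotation of an existing homological-algebra argument (the remark in Section \ref{sec-kunneth} already states it explicitly), the corollary is essentially immediate once Theorem \ref{theorem-kunneth} is in hand.

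The main obstacle, such as it is, is purely one of bookkeeping: making sure that the rescaling $\la_0$ furnished by Theorem \ref{theorem-kunneth} (equivalently, by Proposition \ref{prop-fnc} applied to the singleton family $\{A\}$) is genuinely independent of $B$ among all $B$ with $K_*(B)$ free abelian, so that the limit argument in step (2) applies uniformly and yields the plain isomorphism $\omega_{A,B,*}$. This is guaranteed by the definition of ``quantitative isomorphism with rescaling'' and by the uniformity built into $\CC_{fand}$, so no real difficulty arises. I would therefore keep the proof to a few lines.

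\begin{proof}
By Theorem \ref{theorem-kunneth}, the filtered $C^*$-algebra $A$ satisfies the quantitative K\"unneth formula; that is, for every $C^*$-algebra $B$ with $K_*(B)$ free abelian, the controlled morphism
$$\Omega_{A,B,*}:\K_*(A)\ts K_*(B)\to\K_*(A\ts B)$$
is a quantitative isomorphism. Consequently, the induced morphism in $K$-theory is an isomorphism, and by equation (\ref{eq-induce-kunneth}) this induced morphism is exactly
$$\omega_{A,B,*}:K_*(A)\ts K_*(B)\to K_*(A\ts B).$$
Thus $\omega_{A,B,*}$ is an isomorphism for every $C^*$-algebra $B$ with $K_*(B)$ free abelian. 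By the argument of \cite{sc} using geometric resolutions (see also \cite{ceo2} for the non-nuclear case), it follows that for any $C^*$-algebra $B$ there is a natural short exact sequence
$$0\rightarrow K_*(A)\otimes K_*(B)\rightarrow K_*(A\otimes B)\rightarrow \operatorname{Tor}(K_*(A),K_*(B))\rightarrow 0,$$
which is the K\"unneth formula for $A$.
\end{proof}
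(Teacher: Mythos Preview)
Your proof is correct and is exactly the argument the paper has in mind: the corollary is stated without proof in the paper, being an immediate consequence of Theorem \ref{theorem-kunneth} together with the remark following the definition of the quantitative K\"unneth formula (which already records that a quantitative isomorphism induces an isomorphism on $\omega_{A,B,*}$ for $B$ with free $K$-theory, and then cites \cite{sc,ceo2} for the geometric-resolution passage to arbitrary $B$).
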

%
%
%
%

Typical examples of family of filtered $C^*$-algebra in   $\CC_{fand}$ are provided by  spaces with  asymptotic dimension.
Recall that for a metric space $X$ and a positive number $r$, a cover $(U_i)_{i\in\N}$ has $r$-multiplicity $n$ if any ball of radius $r$ in $X$ intersects at most $n$ elements in $(U_i)_{i\in\N}$.

\begin{definition}
Let $\Sigma$ be a proper discrete metric space.  Then $\Sigma$ has {\bf finite asymptotic dimension} if there exists an integer $m$ such that for any positive number  $r$, there exists a  uniformly bounded cover $(U_i)_{i\in\N}$ with finite $r$-multiplicity $m+1$.  The smallest integer that satisfies this condition is called the {\bf asymptotic dimension} of $\Si$.
\end{definition}
Recall the following characterization of finite asymptotic dimension.
\begin{proposition}\label{proposition-finite-asymptotic-dim}
Let $\Si$ be a proper discrete metric space and let $m$ be an integer. Then the following assertions are equivalent:
\begin{enumerate}
\item $\Si$ has asymptotic dimension $m$;
\item For every positive number $r$ there exist $m+1$ subsets $X^{(1)},\ldots,X^{(m+1)}$   of $\Si$ such that
\begin{itemize}
\item $\Si=X^{(1)}\cup\ldots\cup  X^{(m+1)}$;
\item for $i=1,\ldots m+1$, then $X^{(i)}$ is the $r$-disjoint union of a family  $(X^{(i)}_k)_{k\in\N}$ of subsets of $X^{(i)}$ with uniformly bounded diameter, i.e  $X^{(i)}=\cup_{k\in\N}X^{(i)}_k,\, d_i(X^{(i)}_k,X^{(i)}_l)\gq r$ if $k\neq l$ and there exists a positive number $C$ such $diam\, X^{(i)}_k\lq C$ for all integer $k$.
\end{itemize}
\end{enumerate}
\end{proposition}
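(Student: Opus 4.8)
The plan is to prove the equivalence of the two characterizations of finite asymptotic dimension $m$ by a direct translation argument, going in both directions.

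\smallskip

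First I would prove (i) $\Rightarrow$ (ii). Suppose $\Si$ has asymptotic dimension $m$, and fix $r>0$. By definition, applied to the scale $4r$ (a convenient inflation to absorb the later shrinking), there is a uniformly bounded cover $\mathcal U=(U_i)_{i\in\N}$ whose $4r$-multiplicity is at most $m+1$: every ball of radius $4r$ meets at most $m+1$ of the $U_i$. The standard combinatorial argument — the one used in the proof that finite asymptotic dimension implies finite decomposition complexity — then colors the index set $\N$ by $m+1$ colors so that, within each color class, the sets of a suitably shrunk cover become $r$-disjoint. Concretely, define $U_i' = \{x\in U_i : d(x,\Si\setminus U_i)>r\}$ (or the $r$-interior), which still covers $\Si$ because $\mathcal U$ is uniformly bounded and has bounded $4r$-multiplicity, hence has a Lebesgue-type number bounded below; then build a graph on the index set by joining $i\sim j$ when $d(U_i',U_j')<r$, observe this graph has degree at most (a function of) the $4r$-multiplicity, hence is $(m+1)$-colorable, and for each color $k\in\{1,\dots,m+1\}$ set $X^{(k)}=\bigcup_{i \text{ of color }k}U_i'$, which is then the $r$-disjoint union of the family $(X^{(k)}_\ell)$ indexed by the sets $U_i'$ of color $k$. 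The verification that $\Si=X^{(1)}\cup\dots\cup X^{(m+1)}$, that the $X^{(k)}_\ell$ have uniformly bounded diameter, and that $d(X^{(k)}_\ell,X^{(k)}_{\ell'})\gq r$ for $\ell\ne\ell'$ is then routine from the construction.

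\smallskip

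Next I would prove (ii) $\Rightarrow$ (i). Given $r>0$, apply (ii) at scale $r$ to obtain subsets $X^{(1)},\dots,X^{(m+1)}$, each an $r$-disjoint union $X^{(i)}=\bigcup_k X^{(i)}_k$ of uniformly bounded pieces. The pieces $\{X^{(i)}_k : 1\lq i\lq m+1,\ k\in\N\}$ form a uniformly bounded cover of $\Si$. I claim its $(r/2)$-multiplicity is at most $m+1$: a ball $B(x,r/2)$ can meet at most one $X^{(i)}_k$ for each fixed $i$, since two pieces with the same $i$ are at distance $\gq r$ and the ball has diameter $<r$; summing over $i=1,\dots,m+1$ gives the bound $m+1$. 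Hence $\Si$ has asymptotic dimension at most $m$; minimality of $m$ on both sides (which follows by the same counting once one knows the two notions of "smallest integer" coincide up to this translation) gives exactly $m$.

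\smallskip

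The main obstacle I anticipate is the coloring step in (i) $\Rightarrow$ (ii): one must carefully arrange the quantitative bookkeeping so that the bounded $4r$-multiplicity of the cover really does produce a graph of bounded degree on the index set (this uses uniform boundedness of the $U_i$ together with properness of $\Si$, so that only finitely many $U_j$ can be $r$-close to any given $U_i'$), and so that the $r$-shrinking does not destroy the covering property. Everything else — the diameter bounds, the disjointness, and the reverse implication's counting argument — is elementary. I would present the coloring lemma as the technical heart and treat the rest as direct verification.
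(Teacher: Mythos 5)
The paper itself gives no proof of this proposition; it is stated as a recalled standard fact (cf.\ Bell--Dranishnikov), so your argument has to be judged on its own.

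Your direction (ii)~$\Rightarrow$~(i) is fine: an $r$-ball has diameter $\lq 2r$, so it meets at most one piece per colour, giving a cover of $(r/2)$-multiplicity $\lq m+1$; adjusting the scale at which you invoke (ii) makes the bookkeeping exact. The gap is in (i)~$\Rightarrow$~(ii), precisely at the step you flag as the heart of the argument. You build the intersection graph on the index set by joining $i\sim j$ when $d(U_i',U_j')<r$, note it has bounded degree, and conclude it is $(m+1)$-colourable. But a bounded-degree graph of degree $D$ is only $(D+1)$-colourable in general, and $D$ is \emph{not} bounded by $m$: the $4r$-multiplicity says only that each ball of radius $4r$ meets at most $m+1$ of the $U_i$, whereas a single $U_i'$ can have diameter comparable to $R\gg r$ (the diameters of the $U_i$ are uniformly bounded by some $R$ depending on $r$, not by $r$ itself), so the set of $j$ with $d(U_i',U_j')<r$ sweeps out a region of diameter $\approx R+2r$ and may contain far more than $m$ indices. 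In short, bounded degree gives \emph{some} finite colouring, not the sharp $(m+1)$-colouring that the statement requires; the intersection graph of a cover of pointwise multiplicity $\lq m+1$ need not have chromatic number $\lq m+1$ (the triangle, viewed as three edges covering three vertices, already has multiplicity $2$ but intersection graph $K_3$).

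The standard proof of this direction does not proceed by greedy colouring of the intersection graph. One route (Bell--Dranishnikov, Thm.~19): from a cover of $r$-multiplicity $\lq m+1$ build a partition of unity $(\phi_i)$ subordinate to it, which defines a nerve map $p:\Si\to |N|$ into a uniform simplicial complex $N$ of dimension $\lq m$ (with the $\ell^1$-metric); then use the fact that any uniform $m$-dimensional complex admits, for each $\lambda>0$, a cover by $m+1$ families of uniformly bounded, $\lambda$-disjoint sets (built from a suitable barycentric subdivision and the open stars of the barycentres of $k$-simplices for $k=0,\ldots,m$); finally pull this decomposition back through $p$ and intersect with the original $U_i$. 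The sharp constant $m+1$ comes from the dimension of the nerve, not from a degree bound. If you want to keep your combinatorial flavour, you would need a genuinely different colouring argument that exploits the nerve structure rather than just local finiteness; as written, the argument proves only that $\Si$ admits a decomposition into \emph{some} finite number of $r$-disjoint families, which is strictly weaker.
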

\begin{example}\label{example-tree}
If $T$ is a tree, then $T$ has asymptotic dimension equal to $1$.
\end{example}
Let $\Si$ be a proper metric space with  asymptotic dimension $m$, then there exists a sequence of positive numbers $(R_k)_{k\in\N}$  and for any  integer $k$ a cover $(U^{(k)}_i)_{i\in\N}$ of $\Si$ such that
\begin{itemize}
\item $R_{k+1}>4R_k$ for every  integer $k$;
\item  $U^{(k)}_i$ has diameter less than $R_k$ for every integers $i$ and $k$;
\item for any integer $k$, the $R_k$-multiplicity of $(U^{(k+1)}_i)_{i\in\N}$ is $m+1$.
\end{itemize}
The sequence $(R_k)_{k\in\N}$ is called {\bf the $\mathbf{m}$-growth} of $\Si$.
\begin{lemma}\label{lemma-afd}
Let  $m$ be an integer, let $(R_k)_{k\in\N}$ be a sequence of positive numbers such that  $R_{k+1}>4R _k$ for every   integer $k$.
Let $(\Si_i)_{i\in\N}$ be a family of proper metric spaces 
 with asymptotic dimension $m$ and $m$-growth  $(R_k)_{k\in\N}$. Then  for any family $(A_i)_{i\in I}$ in the bootstrap category, the family   $(A_i\ts \Kp(\ell^2(\Si_i)))_{i\in\N}$
belongs to $\CC_{fand}$. 
\end{lemma}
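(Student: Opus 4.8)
The plan is to prove Lemma~\ref{lemma-afd} by induction on the asymptotic dimension~$m$, using Proposition~\ref{proposition-finite-asymptotic-dim} to produce at each scale a covering by $m+1$ pieces, each of which is an $R$-disjoint union of uniformly bounded sets, and then packaging this geometric decomposition into a controlled nuclear Mayer-Vietoris pair as in Example~\ref{example-roe}. The base case $m=0$ is immediate: a proper metric space of asymptotic dimension~$0$ is, at every scale, a uniformly bounded $R$-disjoint union of finite sets, so $\Kp(\ell^2(\Si_i))$ is, after grouping, a filtered $C^*$-algebra which at each scale~$s$ sits between $\Kp(\ell^2(\Si_i))_s$ and a product $\prod_k\Kp(\ell^2(X_{i,k}))$ of matrix algebras; tensoring with $A_i$ in the bootstrap class and using that countable products of bootstrap algebras can be handled by the inductive-limit / extension axioms, one checks directly that $(A_i\ts\Kp(\ell^2(\Si_i)))_{i\in\N}$ is a uniformly locally bootstrap family, i.e.\ lies in $\CC^{(0)}_{fand}$.

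For the inductive step I would fix the scale~$r$ and apply Proposition~\ref{proposition-finite-asymptotic-dim}(ii) with radius (say) $10r$, obtaining for each $\Si_i$ a splitting $\Si_i=X^{(1)}_i\cup X^{(2)}_i$ where $X^{(1)}_i$ is itself an $R$-disjoint union of uniformly bounded sets (asymptotic dimension~$0$ at this scale) and $X^{(2)}_i=X^{(2)}_{i,1}\cup\dots$ with each $X^{(2)}_{i,j}$ of asymptotic dimension $\le m-1$ with $(m-1)$-growth a suitable tail of $(R_k)_{k\in\N}$ --- this is exactly the standard "peeling off one colour" argument for asymptotic dimension, and the uniformity of the $m$-growth across the family is what makes the estimates uniform in~$i$. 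Exactly as in Example~\ref{example-roe}, set $\De_1,\De_2$ to be the subspaces of $A_i\ts\Kp(\ell^2(\Si_i))$ of propagation-$\le r$ operators supported respectively over $X^{(1)}_i$ and $X^{(2)}_i$, and $A_{\De_1},A_{\De_2}$ the corresponding neighbourhood $C^*$-algebras of operators supported in $\bigcup_j X^{(1)}_{i,j,s}\times X^{(1)}_{i,j,s}$ and $\bigcup_j X^{(2)}_{i,j,s}\times X^{(2)}_{i,j,s}$ for a fixed $s$ with $5r<s<R/2$; the Schur-multiplier argument of Example~\ref{example-roe} (valid after tensoring with any $C^*$-algebra, by the remark following Definition~\ref{definition-nuclear-coarse-MV-pair}) shows this is an $r$-controlled nuclear Mayer-Vietoris pair with coercitivity~$1$, uniformly in~$i$. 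Then $A_{\De_1}$ is a uniformly locally bootstrap family (its pieces are $A_i\ts\Kp$ of matrix algebras), so in $\CC^{(0)}_{fand}\subseteq\CC^{(m-1)}_{fand}$; $A_{\De_2}$ is, up to a Morita/product manipulation, $A_i$ tensored with $\Kp(\ell^2)$ of a disjoint union of spaces of asymptotic dimension $\le m-1$ with the right growth, hence in $\CC^{(m-1)}_{fand}$ by the induction hypothesis; and $A_{\De_1}\cap A_{\De_2}$ is supported over the (uniformly bounded, $R'$-disjoint) intersections $X^{(1)}_{i,\bullet,s}\cap X^{(2)}_{i,\bullet,s}$, so it is again uniformly locally bootstrap. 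Feeding this into the definition of $\CC^{(m)}_{fand}$ gives $(A_i\ts\Kp(\ell^2(\Si_i)))_{i\in\N}\in\CC^{(m)}_{fand}\subseteq\CC_{fand}$.

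The main obstacle I anticipate is purely bookkeeping rather than conceptual: one must verify that the recursion on asymptotic dimension can be carried out \emph{uniformly over the family} $(\Si_i)_{i\in\N}$ and \emph{with control pairs and coercitivities independent of~$i$ and of the scale~$r$}, which is precisely what the common $m$-growth hypothesis $(R_k)_{k\in\N}$ is there to guarantee --- the peeling argument consumes one term of the growth sequence at each level of the recursion, and one has to check the $4$-fold growth condition survives passing to tails and is enough at every stage. A secondary technical point is the passage, at each step, between "$\Kp(\ell^2)$ of a disjoint union of bounded spaces" and "a countable product of matrix algebras", and the observation that tensoring a bootstrap algebra with such a product and then grouping appropriately stays inside the bootstrap class at each fixed scale; this uses stability under countable inductive limits and extensions from the definition of $\mathcal N$, together with the fact that a finite propagation operator over a disjoint union decomposes as a finite (block-diagonal at the given scale) direct sum. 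None of this requires new ideas beyond Example~\ref{example-roe} and Proposition~\ref{proposition-finite-asymptotic-dim}, so I would keep the written proof short, citing Example~\ref{example-roe} for the Mayer-Vietoris construction and the definitions of $\CC^{(n)}_{fand}$ for the organisation of the induction.
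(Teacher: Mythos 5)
Your strategy---direct induction on the asymptotic dimension $m$ via ``peeling off one colour''---is genuinely different from the paper's. The paper packages the family into a single space $\Si=\coprod_i\Si_i$, invokes the Dranishnikov--Zarichnyi theorem to embed $\Si$ coarsely into a product of $n$ trees, and then inducts on $n$: decomposing the last tree factor at scale $r$ into two annular colours, each an $R$-disjoint union of bounded sets, induces on $\Si$ a controlled Mayer--Vietoris decomposition whose pieces are $R$-disjoint unions of sets coarsely contained in a product of $n-1$ trees, so the induction closes with a genuine drop in the number of tree factors.

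There is a concrete gap in your inductive step. You split $\Si_i=X^{(1)}_i\cup X^{(2)}_i$ with $X^{(1)}_i$ one colour (an $R$-disjoint union of uniformly bounded sets) and $X^{(2)}_i$ the union of the remaining $m$ colours, and you then want $X^{(2)}_i$ to decompose into an $R$-disjoint family of subsets of asymptotic dimension $\le m-1$, ``by the standard peeling off one colour argument.'' No such statement holds: peeling a colour off an $(m+1)$-colouring at scale $r$ gives a cover of $X^{(2)}_i$ by $m$ colours at the single scale $r$, but does not lower the asymptotic dimension of $X^{(2)}_i$ as a metric space. In $\Z^2$, for instance, deleting one colour of a $3$-colouring at a fixed scale leaves a coarsely dense subset which still has asymptotic dimension $2$. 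For the Roe algebra $A_{\De_2}$ over $X^{(2)}_i$ to lie in $\CC^{(m-1)}_{fand}$ you would need controlled Mayer--Vietoris decompositions of it at \emph{every} scale, which requires $\operatorname{asdim} X^{(2)}_i\le m-1$, not merely one good cover at scale $r$; so the recursion does not terminate. A second, related difficulty is that Example~\ref{example-roe}, which you cite for the construction of the controlled nuclear Mayer--Vietoris pair, requires \emph{both} $X^{(1)}$ and $X^{(2)}$ to be $R$-disjoint unions of uniformly bounded sets; your $X^{(2)}_i$ is not, so the Schur-multiplier argument for the CIA property and the identification of the neighbourhood algebras as products of blocks do not go through as stated. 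The paper's passage through products of trees is exactly the device that turns ``peel one colour'' into ``peel one coordinate'': a bounded annulus in one tree factor crossed with the remaining factors really is coarsely a product of one fewer tree, so the complexity genuinely drops.
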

\begin{proof}
 Let us equip $\Si=\coprod_{i\in\N}\Si_i$ with a distance $d_\Si$ such that the inclusion $\Si_i\hookrightarrow\Si$ are isometric for all integer $i$ and $d_\Si(\Si_i,\Si_j)\lq i+j$ for all integers $i$ and $j$ with $i\neq j$.
 Then $\Si$ has asymptotic dimension $m$ and hence according to \cite{dz}, the metric space $\Si$ embeds uniformly in a product 
 of trees $\prod_{j=1}^n T_j$. Let $d$ be the metric  on $X=\prod_{j=1}^n T_j$  and $d_i$ the distance on $\Si_i$ when $i$ runs through integers. Then there exists two non-decreasing functions
 $\rho_\pm:[0,+\infty)\to [0,+\infty)$   and for every integer $i$ a map $f_i:\Si_i\to \prod_{j=1}^n T_j$ such that
  \begin{itemize}
 \item $\lim_{r\mapsto +\infty}\rho_{\pm}(r)=+\infty$;
 \item  $\rho_-(d_i(x,y))\lq d(f_i(x),f_i(y))\lq \rho_+(d_i(x,y))$ for all integer $i$ and all $x$ and $y$ in $\Si_i$.
 \end{itemize}
 If $n=1$, then $X$ is a tree and then the result holds in view  of  Examples \ref{example-Roe-algebras},    \ref{example-roe} and  \ref{example-tree}. A straightforward induction shows that if $\Si$ embeds uniformly in a product of $n$ trees, then
  $(A_i\ts \Kp(\ell^2(\Si_i)))_{i\in\N}$ is in $\CC_{fand}$. \end{proof}
In order study the structure of Roe algebras we need to add  some infinite product decompositions in the  quantitative decomposition process.

\begin{definition}
Let $(A^{(i)})_{i\in I}$ be a family of filtered $C^*$-algebras. Then the uniform products of $(A^{(i)})_{i\in I}$, denoted by $\prod^u_{i\in I} A^{(i)}$ is the closure
of $$\{(x_i)_{i\in I}\in\prod_{i\in I}A^{(i)}_r,\, r>0\}$$ in $\prod_{i\in I} A^{(i)}$ equipped with the supremium norm.
The uniform product  $\prod^u_{i\in I} A^{(i)}$ is then obviously a filtered $C^*$-algebra.
\end{definition}

It was proved in \cite[Lemma 1.11]{oy3} that the quantitative $K$-theory of a uniform product of a stable filtered $C^*$-algebra is computable in term of the quantitative $K$-theory of the algebras of the family.

\begin{definition}
 A $C^*$-algebra  is said to be   of finite asymptotic nuclear $\pi$-decomposition  if there exists a positive number $c$ and an integer $n$ such that for any positive number $r$, there exists 
  a $r$-controlled Mayer-Vietoris pair $(\De_1,\De_2,A_{\De_1},A_{\De_2})$  that satisfies the following.
  
  \medskip
  There exist  three families of filtered $C^*$-algebras $(B^{(1)}_k)_{k\in\N}\,,(B^{(2)}_k)_{k\in\N}$ and $(B^{(1,2)}_k)_{k\in\N}$ in $\CC_{fand}$ such that
  $A_{\De_1},\,A_{\De_2}$ and $A_{\De_1}\cap A_{\De_2}$ are respectively isomorphic  as filtered $C^*$-algebras to $\prod^u_{k\in\N} B^{(1)}_k,\,\prod^u_{k\in\N} B^{(2)}_k$ and 
  $\prod^u_{k\in\N} B^{(1,2)}_k$.
  
  If the $C^*$-algebras in the families  $(B^{(1)}_k)_{k\in\N}\,,(B^{(2)}_k)_{k\in\N}$ and $(B^{(1,2)}_k)_{k\in\N}$ are stable, then $A$ is said to be 
   of   stably asymptotic finite nuclear $\pi$-decomposition.
  \end{definition}

  \begin{proposition}
  If $\Si$ is a proper metric set of bounded geometry and with finite asymptotic dimension.
  Then the uniform Roe algebra $C^u_*(\Si)$ has asymptotic finite nuclear $\pi$-decomposition  and the Roe algebra $C^*(\Si)$ has stably asymptotic finite  nuclear $\pi$-decomposition.
  \end{proposition}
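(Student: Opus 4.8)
The plan is to verify the three requirements in the definition of (stably) asymptotic finite nuclear $\pi$-decomposition directly from the structure of Roe algebras, building on Example \ref{example-roe} and Lemma \ref{lemma-afd}. First I would recall that since $\Si$ has bounded geometry and finite asymptotic dimension $m$, Proposition \ref{proposition-finite-asymptotic-dim} gives for every positive number $r$ a decomposition $\Si = X^{(1)}\cup\cdots\cup X^{(m+1)}$, where each $X^{(j)}$ is an $R$-disjoint union of uniformly bounded subsets $(X^{(j)}_k)_{k\in\N}$ for some $R\gq 10r$. Iterating the two-step Mayer-Vietoris decomposition of Example \ref{example-roe} over the $m+1$ pieces $X^{(j)}$ (first split off $X^{(1)}$ from $X^{(2)}\cup\cdots\cup X^{(m+1)}$, then recurse), I get after finitely many controlled steps a controlled decomposition whose building blocks $A_{\De_j}$, $A_{\De_1}\cap A_{\De_2}$ are, by the computation at the end of Example \ref{example-Roe-algebras}, isomorphic as filtered $C^*$-algebras to uniform products $\prod^u_{k\in\N}\K(\ell^2(X^{(j)}_{k,s})\ts\H)$ in the Roe-algebra case, or $\prod^u_{k\in\N}\K(\ell^2(X^{(j)}_{k,s}))$ in the uniform Roe-algebra case (where the index algebras are replaced by $\C$, hence not stable, matching the distinction between the two assertions).

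Next I would identify the families appearing in these uniform products as lying in $\CC_{fand}$. Each factor is $\K(\ell^2(X^{(j)}_{k,s}))\cong M_{N_{j,k}}(\C)$ with $N_{j,k}$ uniformly bounded (bounded geometry), and such finite-dimensional families are trivially uniformly locally bootstrap, so they lie in $\CC^{(0)}_{fand}$; tensoring with $\K(\H)$ in the stable case preserves this. But this is not quite enough, because the controlled Mayer-Vietoris pairs produced by iterating Example \ref{example-roe} have neighborhood algebras that are themselves Roe algebras of the subsets $X^{(j)}_{k,s}$, which are uniformly bounded, so at each level the intersection algebras are uniform products of bounded-size matrix algebras. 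The key is that the $m$-growth condition of Lemma \ref{lemma-afd} is exactly what is needed to fit these pieces into $\CC_{fand}$ uniformly: I would invoke Lemma \ref{lemma-afd} with the family of bounded metric spaces $(X^{(j)}_{k})_{k\in\N}$ (which have asymptotic dimension $0$, a fortiori bounded by any $m$, with the appropriate growth), concluding $(M_{N_{j,k}}(\C)\ts\K(\ell^2(X^{(j)}_k)))_{k}\in\CC_{fand}$.

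The main obstacle — and the step requiring the most care — is bookkeeping the coercivity and the control-pair constants uniformly across the $m+1$ iterated decompositions, and checking that the Mayer-Vietoris pairs in Example \ref{example-roe} are genuinely \emph{nuclear} (Definition \ref{definition-nuclear-coarse-MV-pair}) rather than merely controlled, so that Theorem \ref{thm-mv-kunneth} and Proposition \ref{prop-fnc} apply. Here I would use the remark after Example \ref{example-roe} that tensoring $M_n(\C)$ with an arbitrary $B$ only reproduces the same completely positive Schur-multiplier construction $\Psi$, so the coercivity $1$ (or $c$ depending only on the structural constant) persists when $\De_j$ is replaced by $\De_j\ts B$ and the CIA property holds with $B$-coefficients; thus each pair is an $r$-controlled nuclear Mayer-Vietoris pair with coercivity independent of $r$ and of $\Si$. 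Finally, assembling: the integer $n$ in the definition of asymptotic finite nuclear $\pi$-decomposition is $m$ (the number of iterated splittings, determined by $\operatorname{asdim}\Si$), the coercivity $c$ is the universal constant from Example \ref{example-roe}, and the three families of factors $(B^{(j)}_k)_k$ and $(B^{(1,2)}_k)_k$ lie in $\CC_{fand}$ by the previous paragraph; in the stable Roe-algebra case the extra $\K(\H)$ makes them stable. This gives the stated conclusion for $C^u_*(\Si)$ and $C^*(\Si)$ respectively.
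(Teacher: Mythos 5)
Your proposed decomposition does not match the structure required by the definition of (stably) asymptotic finite nuclear $\pi$-decomposition, and the paper's proof uses a genuinely different split. The definition requires a \emph{single} $r$-controlled Mayer--Vietoris pair $(\De_1,\De_2,A_{\De_1},A_{\De_2})$ all three of whose neighbourhood algebras $A_{\De_1}$, $A_{\De_2}$, $A_{\De_1}\cap A_{\De_2}$ are uniform products $\prod^u_k B_k$ of families in $\CC_{fand}$; the iteration over steps is not allowed at this stage, because it is already built into the membership of the $(B_k)$ in $\CC_{fand}$. You propose to split $\Si=X^{(1)}\cup(X^{(2)}\cup\cdots\cup X^{(m+1)})$ using the cover of Proposition \ref{proposition-finite-asymptotic-dim}. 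After that first step $A_{\De_1}$ is indeed a uniform product (the $X^{(1)}_k$ are $R$-disjoint and uniformly bounded), but $A_{\De_2}$ is a neighbourhood Roe algebra of $X^{(2)}\cup\cdots\cup X^{(m+1)}$, which is \emph{not} an $R$-disjoint union of uniformly bounded pieces, so $A_{\De_2}$ has no uniform product structure, and the $\pi$-decomposition condition already fails at the first step. Iterating $m$ more times does not repair this, since the uniform products have to appear after one split, not at the leaves of a recursion.

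The paper instead uses a radial (annular) decomposition around a fixed base point $x_0$: it takes $X^{(1)}_k=\{x:2kR\lq d(x,x_0)\lq (2k+1)R\}$ and $X^{(2)}_k=\{x:(2k+1)R\lq d(x,x_0)\lq (2k+2)R\}$, with $R>10r$, so that \emph{both} $X^{(1)}$ and $X^{(2)}$ are $R$-disjoint unions, and the corresponding $A_{\De_1},A_{\De_2},A_{\De_1}\cap A_{\De_2}$ are all uniform products of compact operator algebras $\Kp(\ell^2(X^{(i)}_{k,s}))$ (resp.\ $\ts\Kp(\H)$ for $C^*(\Si)$). Note this split does not use finite asymptotic dimension at all. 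The finite asymptotic dimension enters only through Lemma \ref{lemma-afd}: the annuli $X^{(i)}_{k,s}$ have \emph{unbounded} diameters but are subspaces of $\Si$ and therefore have asymptotic dimension $\lq m$ with a uniform $m$-growth, and Lemma \ref{lemma-afd} (via the Dranishnikov--Zarichnyi embedding into a product of trees) is what puts the families $(\Kp(\ell^2(X^{(i)}_{k,s})))_k$ into $\CC_{fand}$. This also explains why your invocation of Lemma \ref{lemma-afd} is misplaced: in your set-up the $X^{(j)}_k$ are uniformly bounded, the fibres are matrix algebras $M_{N_{j,k}}(\C)$ which are trivially uniformly locally bootstrap, and there is nothing for Lemma \ref{lemma-afd} to do -- the lemma is precisely the device that handles the fact that the annular pieces in the paper's proof are \emph{not} bounded. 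Finally, the parameter $n$ in the definition of $\pi$-decomposition is not the number of iterated splittings of $\Si$ as you suggest; it indexes the depth at which the families land inside $\CC^{(n)}_{fand}$, and that depth is controlled by $m$ through the proof of Lemma \ref{lemma-afd}, not by iterating Example \ref{example-roe} on $\Si$ itself.
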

  \begin{proof}Let us prove de result for $C^u_*(\Si)$, the proof for  $C^*(\Si)$ being similar.
  Let us fix $x_0$ in $\Si$ and let $r$ be a positive number. Let us fix $s$ and $R$ two positive numbers such that $10r<2s<R$.
  Set for $k$ integer $$X^{(1)}_k=\{x\in \Si\text{ such that }2kR\lq d(x,x_0)\lq (2k+1)R\}$$ and
  $$X^{(2)}_k=\{x\in \Si\text{ such that }(2k+1)R\lq d(x,x_0)\lq (2k+2)R\}.$$
  Then  $\Si=X^{(1)}\cup X^{(2)}$  and $X^{(i)}$ is for $i=1,2$ the $R$-disjoint union of the  family $(X^{(i)}_k)_{k\in\N}$.   Let  $\De_i$ be  for $i=1,2$ the set of element in $C^u_*(\Si)$ with support in $$\{(x,y)\in\Si\times\Si \text{ such that }d(x,y)<r \text{ and } x\in X^{(i)}\}.$$  Since $\Si$ has bounded geometry, then with notations of Example \ref{example-Roe-algebras}, there exists a controlled Mayer-Vietoris $(\De_1,\De_2,A_{\De_1},A_{\De_2})$ of order $r$ and coercitivity $1$ such that 
 $$A_{\De_i}\cong \prod^u_{k\in\N}\Kp(\ell^2(X^{(i)}_{k,s}))$$ for $i=1,2$ and 
 $$A_{\De_1}\cap A_{\De_2} \cong \prod^u_{(k,l)\in\N^2}\Kp(\ell^2(X^{(i)}_{k,s}\cap X^{(i)}_{l,s})).$$ The result is now a consequence of Lemma \ref{lemma-afd}.
 \end{proof}
 Proceeding similarly we can prove the quantitative K\"unneth formula for uniform Roe algebras of spaces with finite asymptotic dimension.
 \begin{theorem}
  If  $\Si$ is a discrete proper metric set of bounded geometry and with finite asymptotic dimension.
  Then the uniform Roe algebra $C^u_*(\Si)$ satisfies the quantitative K\"unneth formula for some rescaling $\la$.
  \end{theorem}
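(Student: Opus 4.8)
The plan is to mirror, almost verbatim, the proof given just above for the Roe algebra $C^*(\Si)$, replacing $C^*(\Si)$ by the uniform Roe algebra $C^u_*(\Si)$ throughout, and then invoking the already-established machinery relating finite asymptotic nuclear $\pi$-decomposition to the quantitative K\"unneth formula. Concretely, fix a base point $x_0\in\Si$ and a propagation scale $r>0$. As in the preceding proposition, choose positive numbers $s$ and $R$ with $10r<2s<R$ and set $X^{(1)}_k=\{x\in\Si:2kR\le d(x,x_0)\le(2k+1)R\}$ and $X^{(2)}_k=\{x\in\Si:(2k+1)R\le d(x,x_0)\le(2k+2)R\}$, so that $\Si=X^{(1)}\cup X^{(2)}$ with $X^{(i)}=\bigcup_k X^{(i)}_k$ an $R$-disjoint union. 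Let $\De_i$ be the subset of $C^u_*(\Si)_r$ consisting of operators with support in $\{(x,y):d(x,y)<r,\ x\in X^{(i)}\}$, and let $A_{\De_i}$ be the corresponding uniform neighborhood algebra as in Example~\ref{example-Roe-algebras}. Bounded geometry then gives, exactly as in Example~\ref{example-roe}, an $r$-controlled (indeed $r$-controlled nuclear) Mayer--Vietoris pair $(\De_1,\De_2,A_{\De_1},A_{\De_2})$ of coercitivity $1$.

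The key structural point is that, because we are in the \emph{uniform} Roe algebra, the neighborhood algebras are \emph{uniform products} rather than honest $\ell^\infty$-products: $A_{\De_i}\cong\prod^u_{k\in\N}\Kp(\ell^2(X^{(i)}_{k,s}))$ for $i=1,2$, and $A_{\De_1}\cap A_{\De_2}\cong\prod^u_{(k,l)\in\N^2}\Kp(\ell^2(X^{(1)}_{k,s}\cap X^{(2)}_{l,s}))$. Since $\Si$ has finite asymptotic dimension, Lemma~\ref{lemma-afd} shows that the families of matrix-type algebras appearing here — after fibering over the finitely many disjoint pieces and using the product-of-trees embedding — lie in $\CC_{fand}$, so each factor family is uniformly locally bootstrap and hence each $A_{\De_i}$ and $A_{\De_1}\cap A_{\De_2}$ is of finite asymptotic nuclear $\pi$-decomposition (via the uniform product). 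Feeding this into the definition of $\CC_{fand}$ one more level up, the single family $\{C^u_*(\Si)\}$ is of finite asymptotic nuclear $\pi$-decomposition.

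Finally, to pass from this decomposition statement to the quantitative K\"unneth formula I would apply Theorem~\ref{thm-mv-kunneth} together with the uniform-family version noted in Proposition~\ref{prop-fnc}, using Proposition~\ref{prop-uniformly-locally-bootstrap} and the computation of quantitative $K$-theory of uniform products (\cite[Lemma 1.11]{oy3}) at the base of the induction: the families $(B^{(1)}_k)_k$, $(B^{(2)}_k)_k$, $(B^{(1,2)}_k)_k$ satisfy the quantitative K\"unneth formula uniformly with a rescaling depending only on the decomposition depth $n$, hence so does their uniform product, hence so does $C^u_*(\Si)$ by Theorem~\ref{thm-mv-kunneth} applied to the controlled nuclear Mayer--Vietoris pair above. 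This gives a rescaling $\la$ depending only on the asymptotic dimension of $\Si$ (through $n$), as claimed.

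I expect the main obstacle to be the bookkeeping in the base case: one must check carefully that the uniform product $\prod^u_{k}\Kp(\ell^2(X^{(i)}_{k,s}))$, when the pieces $X^{(i)}_{k,s}$ themselves form a family of finite-asymptotic-dimension spaces with controlled $m$-growth, is genuinely covered by Lemma~\ref{lemma-afd} (which is stated for families $A_i\ts\Kp(\ell^2(\Si_i))$ in the bootstrap category), and that the coercitivity and the control pairs produced at each inductive step remain bounded independently of $r$ — i.e.\ that the decomposition really has bounded depth $n$ equal to (a function of) the asymptotic dimension. Once these uniformity estimates are in place, the rest is a direct citation of the results established earlier in the paper; no new analytic input beyond bounded geometry and finite asymptotic dimension is required.
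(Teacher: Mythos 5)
Your proposal matches what the paper means by ``Proceeding similarly'': combine the immediately preceding proposition (which already establishes that $C^u_*(\Si)$ has finite asymptotic nuclear $\pi$-decomposition, so your first two paragraphs re-derive rather than cite that result) with Theorem~\ref{thm-mv-kunneth}, Proposition~\ref{prop-fnc}, and the computation of quantitative $K$-theory of uniform products from \cite[Lemma~1.11]{oy3}. The step you yourself flag as delicate is indeed the genuine extra content that Theorem~\ref{theorem-kunneth} does not cover: passing the quantitative K\"unneth formula from the factor families $(B^{(j)}_k)_k$ to the uniform products $\prod^u_k B^{(j)}_k$, which needs not only \cite[Lemma~1.11]{oy3} (stated there for stable factors, whereas here the factors $\Kp(\ell^2(X^{(i)}_{k,s}))$ are not stable, which is precisely why the paper distinguishes the stable and non-stable $\pi$-decomposition notions) but also a comparison of $\bigl(\prod^u_k B_k\bigr)\ts B$ with $\prod^u_k(B_k\ts B)$; the paper leaves this implicit, and your proposal reproduces rather than closes that gap.
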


\bibliographystyle{plain}

 \end{document}